\newtheoremstyle{dotless}{}{}{\itshape}{}{\bfseries}{}{}{}
\theoremstyle{dotless}
\theoremstyle{plain}
\newtheorem{thm}{Theorem}[section]
\newtheorem{prop}[thm]{Proposition}
\newtheorem{cor}[thm]{Corollary}
\theoremstyle{definition}
\newtheorem{defn}[thm]{Definition}
\newtheorem{rem}[thm]{Remark}
\newtheorem{exa}[thm]{Example}
\newtheorem{conv}[thm]{Convention}
\newcommand{\N} {\mathbb{N}}
\newcommand{\R} {\mathbb{R}}
\newcommand{\C} {\mathbb{C}}
\newcommand{\K} {\mathbb{K}}
\newcommand{\D} {\mathbb{D}}
\newcommand{\F} {\mathcal{F}(\Omega)}
\DeclareMathOperator{\id}{id}
\DeclareMathOperator{\re}{Re}
\providecommand{\differential}{\mathrm{d}}
\renewcommand{\d}{\differential}
\newcommand{\euler}{\mathrm{e}}
\renewcommand{\i}{\mathrm{i}}
\newcommand\rlim{
\mathchoice{\vcenter{\hbox{${\scriptstyle{+}}$}}}
{\vcenter{\hbox{$\scriptstyle{+}$}}}
{\vcenter{\hbox{$\scriptscriptstyle{+}$}}}
{\vcenter{\hbox{$\scriptscriptstyle{+}$}}}}
\newcommand\llim{
\mathchoice{\vcenter{\hbox{${\scriptstyle{-}}$}}}
{\vcenter{\hbox{$\scriptstyle{-}$}}}
{\vcenter{\hbox{$\scriptscriptstyle{-}$}}}
{\vcenter{\hbox{$\scriptscriptstyle{-}$}}}}
\newcommand{\vertiii}[1]{{\left\vert\kern-0.25ex\left\vert\kern-0.25ex\left\vert #1 
    \right\vert\kern-0.25ex\right\vert\kern-0.25ex\right\vert}}
\newcommand*{\dt}[1]{%
  \accentset{\mbox{\large\bfseries .}}{#1}}
\newcommand{\fakephantomsection}{%
  \Hy@GlobalStepCount\Hy@linkcounter%
  \Hy@MakeCurrentHref{\@currenvir.\the\Hy@linkcounter}
  \Hy@raisedlink{\hyper@anchorstart{\@currentHref}\hyper@anchorend}%
}
\begin{document}

\title[Weighted composition semigroups]{Weighted composition semigroups on spaces of continuous functions and their subspaces}
\author[K.~Kruse]{Karsten Kruse\,\orcidlink{0000-0003-1864-4915}}
\address[Karsten Kruse]{University of Twente, Department of Applied Mathematics, P.O. Box 217, 7500 AE Enschede, The Netherlands, and Hamburg University of Technology, Institute of Mathematics, Am Schwarzenberg-Campus~3, 21073 Hamburg, Germany}

\email{k.kruse@utwente.nl}
\thanks{K.~Kruse acknowledges the support by the Deutsche Forschungsgemeinschaft (DFG) within the Research Training
 Group GRK 2583 ``Modeling, Simulation and Optimization of Fluid Dynamic Applications''.}

\subjclass[2020]{Primary 47B33, 47D06 Secondary 46A70, 37C10}

\keywords{weighted composition semigroup, semiflow, semicocycle, Saks space, mixed topology}

\date{\today}
\begin{abstract}
This paper is dedicated to weighted composition semigroups on spaces of continuous functions 
and their subspaces. We consider 
semigroups induced by semiflows and semicocycles on Banach spaces $\F$ of continuous functions on a Hausdorff space $\Omega$ 
such that the norm-topology is stronger than the compact-open topology like the Hardy spaces, the weighted Bergman spaces, 
the Dirichlet space, the Bloch type spaces, the space of bounded Dirichlet series and weighted spaces of continuous or 
holomorphic functions. 
It was shown by Gallardo-Guti\'errez, Siskakis and Yakubovich that there are no non-trivial norm-strongly continuous
weighted composition semigroups on Banach spaces $\mathcal{F}(\D)$ of holomorphic functions on the open unit disc $\D$ such that 
$H^{\infty}\subset\mathcal{F}(\D)\subset\mathcal{B}_{1}$ where $H^{\infty}$ is the Hardy space of bounded holomorphic functions 
on $\D$ and $\mathcal{B}_{1}$ the Bloch space. However, we show that there are non-trivial weighted composition semigroups 
on such spaces which are strongly continuous w.r.t.~the mixed topology between the norm-topology and the compact-open topology. 
We study such weighted composition semigroups in the general setting of Banach spaces of continuous functions 
and derive necessary and sufficient conditions on the spaces involved, the semiflows and semicocycles 
for strong continuity w.r.t.~the mixed topology and as a byproduct for norm-strong continuity as well. 
Moreover, we give several characterisations of their generator and their space of norm-strong continuity. 
\end{abstract}
\maketitle

\section{Introduction}

Let $(\F,\|\cdot\|)$ be a Banach space of scalar-valued continuous functions on a Hausdorff space $\Omega$ such that 
the $\|\cdot\|$-topology is stronger than the compact-open topology $\tau_{\operatorname{co}}$ on $\F$. 
Suppose that $\varphi\coloneqq (\varphi_{t})_{t\geq 0}$ is a semiflow and $m\coloneqq (m_{t})_{t\geq 0}$ 
an associated semicocycle on $\Omega$ such that the induced weighted composition semigroup $(C_{m,\varphi}(t))_{t\geq 0}$ 
given by $C_{m,\varphi}(t)f\coloneqq m_{t}\cdot (f\circ \varphi_{t})$ for all $t\geq 0$ and $f\in\F$ is a well-defined 
semigroup of linear maps from $\F$ to $\F$. 

In the case that $\Omega=\D\subset\C$ is the open unit disc, $\varphi$ a jointly continuous holomorphic semiflow and 
$\mathcal{F}(\D)$ a space of holomorphic functions on $\D$ such semigroups are well-studied 
in the unweighted case, i.e.~$m_{t}(z)=1$ for all $t\geq 0$ and $z\in\D$, on the Hardy spaces 
$H^{p}$ for $1\leq p<\infty$ in \cite{berkson1978}, 
on the weighted Bergman spaces $A_{\alpha}^{p}$ for 
$\alpha>-1$ and $1\leq p<\infty$ in \cite{siskakis1987}, on the Dirichlet space $\mathcal{D}$ in \cite{siskakis1996}  
and on more general spaces $\mathcal{F}(\D)$ in \cite{arendt2019,blasco2013,daskalogiannis2021,gutierrez2019,siskakis1998}. 
In particular, they are always $\|\cdot\|$-strongly continuous on $H^{p}$, $A_{\alpha}^{p}$ and $\mathcal{D}$.
 
The weighted case, where $m$ is a jointly continuous holomorphic semicocycle, 
is more complicated and got more attention recently 
\cite{arevalo2017,bernard2022,chalendar2023,chalendar2022,gutierrez2023,perlich2019,wu2021}. 
However, to the best of our knowledge from the spaces mentioned above 
only for the Hardy spaces $H^{p}$, $1\leq p<\infty$, and the weighted Bergman spaces $A_{\alpha}^{p}$, 
$1\leq p<\infty$ and $\alpha>-1$, sufficient (non-trivial) conditions on general
$m$ are known such that the weighted composition semigroup becomes $\|\cdot\|$-strongly continuous 
\cite{koenig1990,siskakis1986,siskakis1998,perlich2019,wu2021}. In \cite{arevalo2017} non-trivial sufficient conditions for 
$\|\cdot\|$-strong continuity are given for Banach spaces $\mathcal{F}(\D)$ of holomorphic functions 
in which the polynomials are dense in the case that $m_{t}=\varphi_{t}'$ for all $t\geq 0$.

Considering the Hardy space for $p=\infty$, it is shown in \cite{gutierrez2023} that the only 
$\|\cdot\|$-strongly continuous weighted composition semigroups on $\mathcal{F}(\mathbb{D})$ such that 
$H^{\infty}\subset\mathcal{F}(\mathbb{D})\subset\mathcal{B}_{1}$ are the trivial ones, 
i.e.~$\varphi_{t}=\id$ for all $t\geq 0$. Here, $\mathcal{B}_{1}$ stands for the Bloch space. 
In the unweighted case this has already been observed in \cite{anderson2017,gutierrez2019}. 
Similarly, it is shown in \cite[Theorem 7.1, p.~34]{contreras2023} that there no non-trivial 
$\|\cdot\|$-strongly continuous composition semigroups on the space $\mathscr{H}^{\infty}$ of bounded Dirichlet series 
on the open right half-plane. 
Nevertheless, there are non-trivial weighted composition semigroups on such spaces 
$\mathcal{F}(\mathbb{D})$ resp.~$\mathscr{H}^{\infty}$ as well and it is said in \cite[p.~494]{gutierrez2019} 
that it would be desirable to substitute the $\|\cdot\|$-strong continuity by a 
weaker property so that \cite[Main theorem, p.~490]{gutierrez2019} 
(\cite[Theorems 2.1, 3.1, p.~68--69]{gutierrez2023} in the weighted case), 
which describes the generator of a $\|\cdot\|$-strongly continuous (weighted) composition semigroup, remains valid. 
This is one of the problems we solve in the present paper. 
We substitute the $\|\cdot\|$-strong continuity by $\gamma$-strong continuity where 
$\gamma\coloneqq\gamma(\|\cdot\|,\tau_{\operatorname{co}})$ is the mixed topology 
between the $\|\cdot\|$-topology and $\tau_{\operatorname{co}}$, which was introduced in \cite{wiweger1961} and is 
a Hausdorff locally convex topology. 

Let us outline the content of our paper. In \prettyref{sect:notions} we recall the notions of a Saks space 
$(X,\|\cdot\|,\tau)$, where $(X,\|\cdot\|)$ is a normed space and $\tau$ a coarser norming Hausdorff locally convex topology on $X$, 
the mixed topology $\gamma\coloneqq\gamma(\|\cdot\|,\tau)$, some background on semigroups on Hausdorff locally convex spaces 
and in \prettyref{thm:mixed_equi_sg_bi_cont} how $\gamma$-strongly continuous, locally $\gamma$-equicontinuous semigroups 
are related to the concept of a $\tau$-bi-continuous semigroup, which was introduced in \cite{kuehnemund2001,kuehnemund2003}. 
We then give several examples of Saks spaces of the form $(\F,\|\cdot\|,\tau_{\operatorname{co}})$, which include 
among others the Hardy spaces $H^{p}$ for $1\leq p< \infty$, the weighted Bergman spaces $A_{\alpha}^{p}$ 
for $1\leq p<\infty$ and $\alpha>-1$, and the Dirichlet space $\mathcal{D}$ in \prettyref{ex:hardy_bergman_dirichlet}, 
the $v$-Bloch spaces w.r.t.~a continuous weight $v$ in \prettyref{ex:bloch_saks}, especially the Bloch type spaces 
$\mathcal{B}_{\alpha}$ for $\alpha>0$, as well as weighted spaces of continuous resp.~holomorphic functions, especially, 
the Hardy space $H^{\infty}$ and the space $\mathscr{H}^{\infty}$ of bounded Dirichlet series in 
\prettyref{ex:weighted_holom_saks}, \prettyref{ex:bDirichlet_series} and \prettyref{ex:cont_saks}. 

In \prettyref{sect:semiflows_cocycles} we recap the notions of a semiflow $\varphi$, a semicocycle $m$ for $\varphi$ and 
introduce the notion of a co-semiflow $(m,\varphi)$. We give equivalent characterisations of their joint continuity 
depending on the topological properties of $\Omega$, present several examples and generalise the concept of a generator 
of a semiflow, which was introduced in \cite{berkson1978} for jointly continuous holomorphic semiflows. 

In \prettyref{sect:sg_weighted_comp} we use the concepts and results of the preceding sections to prove one of 
our main results \prettyref{prop:strongly_mixed_cont}, which generalises \cite[Proposition 2.10, p.~5]{farkas2020} 
and \cite[Corollary 4.3, p.~20]{kruse_schwenninger2022}, that the weighted composition semigroup $(C_{m,\varphi}(t))_{t\geq 0}$ 
on a Saks space $(\F,\|\cdot\|,\tau_{\operatorname{co}})$ is $\gamma$-strongly continuous and locally $\gamma$-equicontinuous 
if the semigroup is locally bounded (w.r.t.~the operator norm) and the co-semiflow $(m,\varphi)$ is jointly continuous. 
Then we derive sufficient conditions depending on $(m,\varphi)$ for the local boundedness of the semigroup 
$(C_{m,\varphi}(t))_{t\geq 0}$ for the Saks spaces mentioned above, in particular in the case $\F=\mathscr{H}^{\infty}$ 
in \prettyref{thm:loc_bound_sg_bDirichlet_series}. 

In \prettyref{sect:generators} we turn to the generator $(A,D(A))$ of locally bounded, $\gamma$-strongly continuous 
weighted composition semigroups and show in \prettyref{prop:lie_generator} that it coincides with the Lie generator, 
i.e.~the pointwise generator, if the Saks space $(\F,\|\cdot\|,\tau_{\operatorname{co}})$ is sequentially 
complete (w.r.t.~the mixed topology $\gamma$) and the co-semiflow $(m,\varphi)$ is jointly continuous. 
This generalises \cite[Proposition 2.12, p.~6]{farkas2020} and \cite[Proposition 2.4, p.~118]{dorroh1996} 
where $\F=\mathcal{C}_{b}(\Omega)$ is the space of bounded continuous functions 
on a completely regular Hausdorff $k$-space resp.~Polish space $\Omega$ and $m$ trivial. 
The connection to $\tau$-bi-continuous semigroups from \prettyref{sect:notions} also allows us to deduce 
in \prettyref{prop:norm_generator} that on sequentially complete Saks spaces $(\F,\|\cdot\|,\tau_{\operatorname{co}})$ 
such semigroups are $\|\cdot\|$-strongly continuous if $(\F,\|\cdot\|)$ is reflexive, and to show that the space of 
$\|\cdot\|$-strong continuity coincides with the $\|\cdot\|$-closure of $D(A)$. 
In \prettyref{thm:mixed_gen_max_dom_diff_everywhere} 
we turn to the special case that $\Omega\subset\C$ is open, the jointly continuous co-semiflow $(m,\varphi)$ 
continuously differentiable w.r.t.~$t$ and $\F$ for instance a space of holomorphic functions, which results in 
the representation
\begin{equation}\label{eq:gen_intro}
D(A)=\{f\in\F\;|\;Gf'+gf\in\F\},\quad Af=Gf'+gf,\,f\in D(A),
\end{equation}
of the generator, where $G\coloneqq\dt{\varphi}_{0}$ and $g\coloneqq\dt{m}_{0}$ 
denote the derivatives w.r.t.~$t$ of $\varphi$ and $m$ in $t=0$, respectively. 
In \prettyref{thm:mixed_gen_max_dom_real} we also handle the more complicated case where $\Omega\subset\R$ is open and 
the space $\F$ need not be a space of continuously differentiable or holomorphic functions, for example 
$\F=\mathcal{C}_{b}(\Omega)$.

\prettyref{sect:converse_generators} is dedicated to the converse of \eqref{eq:gen_intro} in the sense 
that a $\gamma$-strongly continuous semigroup with such a generator for some holomorphic functions $G$ and $g$ on $\Omega$ 
must be a weighted composition semigroup w.r.t.~some jointly continuous holomorphic co-semiflow $(m,\varphi)$ 
at least if $\Omega=\D$, $(\mathcal{F}(\D),\|\cdot\|,\tau_{\operatorname{co}})$ is a sequentially complete Saks space 
of holomorphic functions and the embedding $\mathcal{H}(\overline{\D})\hookrightarrow (\mathcal{F}(\D),\|\cdot\|)$ 
continuous where $\mathcal{H}(\overline{\D})$ denotes the space of holomorphic germs near $\overline{\D}$ 
with its inductive limit topology (see \prettyref{thm:converse_gen}). 
In \prettyref{ex:cont_embed_induc_lim} we show that this embedding condition is fulfilled 
for the spaces like $H^{p}$ for $1\leq p\leq \infty$, $A_{\alpha}^{p}$ for $\alpha>-1$ and $1\leq p<\infty$, $\mathcal{D}$ and 
$\mathcal{B}_{\alpha}$ for $\alpha>0$. \prettyref{thm:converse_gen} in combination 
with \prettyref{thm:mixed_gen_max_dom_diff_everywhere} (see also \prettyref{thm:holomorphic_weighted_comp}) 
is the counterpart of \cite[Main theorem, p.~490]{gutierrez2019} and \cite[Theorems 2.1, 3.1, p.~68--69]{gutierrez2023} 
we were searching for. 

In the closing \prettyref{sect:applications} we generalise in \prettyref{prop:mult_sg_Cv}, 
\prettyref{prop:weighted_translation_Cb} and \prettyref{prop:weighted_flow_Cb} results 
from \cite{budde2019a,dorroh1993,dorroh1996,kruse_schwenninger2022} on multiplication semigroups 
on $\mathcal{C}_{b}(\Omega)$ for locally compact Hausdorff $\Omega$ and unweighted composition semigroups on 
$\mathcal{C}_{b}(\R)$ to the more general setting of weighted composition semigroups on weighted spaces of continuous functions. 
Further, \prettyref{thm:holomorphic_weighted_comp_norm_strong} gives us necessary and sufficient conditions for 
a weighted composition semigroup $(C_{m,\varphi}(t))_{t\geq 0}$ on a sequentially complete Saks space 
$(\F,\|\cdot\|,\tau_{\operatorname{co}})$ of holomorphic functions on an open connected set 
$\Omega\subset\C$ to be $\|\cdot\|$-strongly continuous resp.~$\gamma$-strongly continuous. 
In combination with the results from \prettyref{sect:sg_weighted_comp}, 
see \prettyref{thm:loc_bound_sg_hardy_bergman_dirichlet}, \prettyref{thm:loc_bound_sg_bloch} and \prettyref{thm:loc_bound_sg_Hv},
we obtain sufficient conditions on $(m,\varphi)$ so that $(C_{m,\varphi}(t))_{t\geq 0}$ is $\|\cdot\|$-strongly continuous 
on the Hardy spaces $H^{p}$ for $1<p<\infty$ and the weighted Bergman spaces $A_{\alpha}^{p}$ for $\alpha>-1$ 
and $1< p<\infty$, where we get back the ones from \cite{perlich2019,wu2021} by a different proof which improve the already known ones 
from \cite{koenig1990,siskakis1986,siskakis1998}, and on the Dirichlet space $\mathcal{D}$ 
as well as $\gamma$-strongly continuous on the Hardy space $H^{\infty}$ and the Bloch type spaces 
$\mathcal{B}_{\alpha}$ for $\alpha>0$.

\section{Background on semigroups on Saks spaces}
\label{sect:notions}

In this section we recall some basic notions and results in the context of semigroups on Hausdorff locally convex spaces, 
bi-continuous semigroups, the mixed topology and Saks spaces to keep this work practically self-contained. We refer 
the interested reader for more detailed information to \cite{cooper1978,komatsu1964,komura1968,kuehnemund2001,yosida1968}. 
For a Hausdorff locally convex space $(X,\tau)$ over the field $\K=\R$ or $\C$ we use the symbol $\mathcal{L}(X,\tau)$ 
for the space of continuous linear operators from $(X,\tau)$ to $(X,\tau)$.
If $(X,\|\cdot\|)$ is a normed space, we just write $\mathcal{L}(X)\coloneqq \mathcal{L}(X,\tau_{\|\cdot\|})$
where $\tau_{\|\cdot\|}$ is the $\|\cdot\|$-topology. First, we recall the notions of strong continuity and equicontinuity.

\begin{defn}\label{defn:strong_cont}
Let $(X,\tau)$ be a Hausdorff locally convex space, $I$ a set and $(T(t))_{t\in I}$ 
a family of linear maps $X\to X$.
\begin{enumerate}
\item Let $I$ be a Hausdorff space. $(T(t))_{t\in I}$ is called $\tau$\emph{-strongly continuous} 
if $T(t)\in\mathcal{L}(X,\tau)$ for every $t\in I$ and 
the map $T_{x}\colon I\to (X,\tau)$, $T_{x}(t)\coloneqq T(t)x$, is continuous for every $x\in X$. 
\item Let $\sigma$ be an additional Hausdorff locally convex topology on $X$. 
$(T(t))_{t\in I}$ is called \emph{$\sigma$-$\tau$-equicontinuous} if 
\[
\forall\;p\in\Gamma_{\tau}\;\exists\;\widetilde{p}\in\Gamma_{\sigma},\;C\geq 0\;
\forall\;t\in I,\,x\in X:\;p(T(t)x)\leq C\widetilde{p}(x)
\]
where $\Gamma_{\tau}$ and $\Gamma_{\sigma}$ are directed systems of continuous seminorms that generate $\tau$ and 
$\sigma$, respectively. If $\tau=\sigma$, we just write \emph{$\tau$-equicontinuous} instead of 
\emph{$\tau$-$\tau$-equicontinuous}.
\end{enumerate}
\end{defn}

In the context of semigroups of linear maps there are different degrees of equicontinuity 
and boundedness. In the following definition we use the symbol $\id$ for the identity map on a set $X$, i.e.~the map 
$\id\colon X\to X$, $\id(x)\coloneqq x$.

\begin{defn}
Let $X$ be a linear space and $(T(t))_{t\geq 0}$ a family of linear maps $X\to X$. 
\begin{enumerate}
\item $(T(t))_{t\geq 0}$ is called a \emph{semigroup} if $T(0)=\id$ and $T(t+s)=T(t)T(s)$ for all $t,s\geq 0$. 
\item Let $(X,\tau)$ be a Hausdorff locally convex space. $(T(t))_{t\geq 0}$ is called \emph{locally $\tau$-equicontinuous} 
if $(T(t))_{t\in[0,t_{0}]}$ is $\tau$-equicontinuous for all $t_{0}\geq 0$. 
$(T(t))_{t\geq 0}$ is called \emph{quasi-$\tau$-equicontinuous} if there exists $\omega\in\R$ such that 
$(\euler^{-\omega t}T(t))_{t\geq 0}$ is $\tau$-equicontinuous.
\item Let $(X,\|\cdot\|)$ be a normed space. $(T(t))_{t\geq 0}$ is called \emph{locally bounded} 
if for all $t_{0}\geq 0$ it holds that
\[
\sup_{t\in[0,t_{0}]}\|T(t)\|_{\mathcal{L}(X)}<\infty
\]
where $\|T(t)\|_{\mathcal{L}(X)}\coloneqq \sup_{x\in X,\,\|x\|\leq 1}\|T(t)x\|$. 
$(T(t))_{t\geq 0}$ is called \emph{exponentially bounded} if there exist $M\geq 1$ and 
$\omega\in\R$ such that $\|T(t)\|_{\mathcal{L}(X)}\leq M\euler^{\omega t}$ for all $t\geq 0$.
\end{enumerate}
\end{defn}

Since local boundedness of semigroups of linear maps on normed spaces will be an essential condition 
in our work, we give the following characterisation, which carries over from the case of 
norm-strongly continuous semigroups on Banach spaces. 

\begin{prop}\label{prop:loc_bounded_sg}
Let $(X,\|\cdot\|)$ be a normed space and $(T(t))_{t\geq 0}$ a semigroup of linear maps $X\to X$. 
Then the following assertions are equivalent.
\begin{enumerate}
\item $(T(t))_{t\geq 0}$ is exponentially bounded.
\item $(T(t))_{t\geq 0}$ is locally bounded.
\item There exists $t_{0}>0$ such that $\sup_{t\in[0,t_{0}]}\|T(t)\|_{\mathcal{L}(X)}<\infty$.
\end{enumerate}
If $(X,\|\cdot\|)$ is additionally complete, then each of the preceding assertions is equivalent to:
\begin{enumerate}
\item[(d)] There exists $t_{0}>0$ such that $\sup_{t\in[0,t_{0}]}\|T(t)x\|<\infty$ for all $x\in X$ and 
$T(t)\in\mathcal{L}(X)$ for all $t\in[0,t_{0}]$.
\end{enumerate}
\end{prop}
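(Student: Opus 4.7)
The plan is to prove (a) $\Rightarrow$ (b) $\Rightarrow$ (c) $\Rightarrow$ (a) and then, under completeness, (a) $\Rightarrow$ (d) $\Rightarrow$ (c). The only non-trivial steps are the bootstrap from a bound on a small interval to an exponential bound on $[0,\infty)$, and the use of Banach--Steinhaus to get from pointwise boundedness back to operator-norm boundedness.

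First I would dispose of the easy implications. For (a) $\Rightarrow$ (b): if $\|T(t)\|_{\mathcal{L}(X)}\leq M\euler^{\omega t}$ then on any $[0,t_0]$ this is bounded by $M\euler^{\omega t_0}$ (or $M$ if $\omega\leq 0$). The implication (b) $\Rightarrow$ (c) is immediate. Under completeness, (a) $\Rightarrow$ (d) is also immediate because exponential boundedness gives both the membership $T(t)\in\mathcal{L}(X)$ and the pointwise bound $\sup_{t\in[0,t_0]}\|T(t)x\|\leq M\euler^{\omega t_0}\|x\|$.

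The core step is (c) $\Rightarrow$ (a). Set $M\coloneqq \sup_{t\in[0,t_{0}]}\|T(t)\|_{\mathcal{L}(X)}$. If $X=\{0\}$ there is nothing to show, so assume $X\neq\{0\}$; then $T(0)=\id$ yields $M\geq 1$. For arbitrary $t\geq 0$ write $t=nt_{0}+s$ with $n\in\N_{0}$ and $s\in[0,t_{0})$. The semigroup property gives $T(t)=T(s)T(t_{0})^{n}$, hence
\[
\|T(t)\|_{\mathcal{L}(X)}\leq M\cdot M^{n}\leq M\cdot M^{t/t_{0}}=M\euler^{\omega t},\qquad \omega\coloneqq \frac{\ln M}{t_{0}},
\]
which is (a) with constants $M$ and $\omega\geq 0$.

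For the remaining implication (d) $\Rightarrow$ (c) under completeness, I would invoke the Banach--Steinhaus theorem: by assumption the family $\{T(t):t\in[0,t_{0}]\}\subset\mathcal{L}(X)$ is pointwise bounded on the Banach space $(X,\|\cdot\|)$, hence uniformly bounded in operator norm, which is precisely (c). The main (and really only) obstacle is seeing that completeness is essential here; without it, pointwise boundedness need not imply uniform operator-norm boundedness and the equivalence with (d) can genuinely fail, which is why the last equivalence is stated separately.
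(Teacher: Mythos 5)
Your proposal is correct and rests on the same two ingredients as the paper's proof: the semigroup law to bootstrap a bound on a short interval into an exponential bound, and the uniform boundedness principle for the equivalence with (d). The only difference is organisational: you inline the standard argument by writing $t=nt_{0}+s$, $T(t)=T(s)T(t_{0})^{n}$ directly for arbitrary $t_{0}>0$ (correctly noting $M\geq 1$ so that $M^{n}\leq M^{t/t_{0}}$), whereas the paper cites the proof of \cite[Chap.~I, 5.5 Proposition, p.~39]{engel_nagel2000} for the case $t_{0}\geq 1$ and then reduces the case $t_{0}<1$ to it by finitely many interval-doubling steps of the form $\|T(t)\|_{\mathcal{L}(X)}\leq\|T(t-t_{0})\|_{\mathcal{L}(X)}\|T(t_{0})\|_{\mathcal{L}(X)}\leq M^{2}$; your self-contained version makes that two-step reduction unnecessary.
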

\begin{proof}
The implications (a)$\Rightarrow$(b)$\Rightarrow$(c)$\Rightarrow$(d) are obvious. 
The proof of \cite[Chap.~I, 5.5 Proposition, p.~39]{engel_nagel2000} yields the implication (c)$\Rightarrow$(a) 
if $\sup_{t\in[0,1]}\|T(t)\|_{\mathcal{L}(X)}<\infty$ 
(we note that proof of \cite[Chap.~I, 5.5 Proposition, p.~39]{engel_nagel2000} still works in our situation if we replace 
its condition that $(X,\|\cdot\|)$ is a Banach space and the semigroup $\|\cdot\|$-strongly continuous by the condition 
$\sup_{t\in[0,1]}\|T(t)\|_{\mathcal{L}(X)}<\infty$). So, if $t_{0}\geq 1$, we are done. 
If $t_{0}<1$, we set $M\coloneqq \sup_{t\in[0,t_{0}]}\|T(t)\|_{\mathcal{L}(X)}$. 
Then we get for all $t_{0}\leq t\leq 2t_{0}$ that 
\[
\|T(t)\|_{\mathcal{L}(X)}=\|T(t-t_{0})T(t_{0})\|_{\mathcal{L}(X)}\leq \|T(t-t_{0})\|_{\mathcal{L}(X)}\|T(t_{0})\|_{\mathcal{L}(X)}
\leq M^2
\]
and thus $\sup_{t\in[0,2t_{0}]}\|T(t)\|_{\mathcal{L}(X)}\leq M+M^2$. By repetition of this procedure we get in 
finitely many steps that $\sup_{t\in[0,1]}\|T(t)\|_{\mathcal{L}(X)}<\infty$.

The equivalence of the first three statements to (d) is a consequence of the uniform boundedness principle if $(X,\|\cdot\|)$ 
is additionally complete.
\end{proof}

Let us recall the definition of the mixed topology, \cite[Section 2.1]{wiweger1961}, and the notion of a Saks space, 
\cite[I.3.2 Definition, p.~27--28]{cooper1978}, which will be important for the rest of the paper.

\begin{defn}[{\cite[2.1 Definition, p.~3--4]{kruse2023c}}]\label{defn:mixed_top_Saks}
Let $(X,\|\cdot\|)$ be a normed space and $\tau$ a Hausdorff locally convex topology on $X$ that is coarser 
than the $\|\cdot\|$-topology $\tau_{\|\cdot\|}$. Then
\begin{enumerate}
\item the \emph{mixed topology} $\gamma \coloneqq \gamma(\|\cdot\|,\tau)$ is
the finest linear topology on $X$ that coincides with $\tau$ on $\|\cdot\|$-bounded sets and such that 
$\tau\subset \gamma \subset \tau_{\|\cdot\|}$; 
\item the triple $(X,\|\cdot\|,\tau)$ is called a \emph{Saks space} 
if there exists a directed system of continuous seminorms $\Gamma_{\tau}$ that generates the topology $\tau$ such that   
\begin{equation}\label{eq:saks}
\|x\|=\sup_{p\in\Gamma_{\tau}} p(x), \quad x\in X.
\end{equation}
\end{enumerate}
\end{defn}

The mixed topology is actually Hausdorff locally convex and the definition given above is equivalent to the one 
introduced by Wiweger \cite[Section 2.1]{wiweger1961} due to \cite[Lemmas 2.2.1, 2.2.2, p.~51]{wiweger1961}. 

We recall from \cite[p.~4]{kruse2023c} that it is often useful to have a characterisation of the mixed topology 
by generating systems of continuous seminorms, e.g.~the definition of dissipativity in Lumer--Phillips generation theorems 
for bi-continuous semigroups depends on the choice of the generating system of seminorms of the mixed topology 
(see \cite{kruse_seifert2022b}). For that purpose we recap the following auxiliary topology whose origin is 
\cite[Theorem 3.1.1, p.~62]{wiweger1961}. 

\begin{defn}[{\cite[Definition 3.9, p.~9]{kruse_schwenninger2022}}]\label{defn:submixed_top}
Let $(X,\|\cdot\|,\tau)$ be a Saks space and $\Gamma_{\tau}$ a directed system of continuous seminorms 
that generates the topology $\tau$ and fulfils \eqref{eq:saks}. We set 
\[
\mathcal{N}\coloneqq \{(p_{n},a_{n})_{n\in\N}\;|\;(p_{n})_{n\in\N}\subset\Gamma_{\tau},\,(a_{n})_{n\in\N}\in c_{0}^{+}\}
\]
where $c_0^{+}$ is the family of all real non-negative null-sequences.
For $(p_{n},a_{n})_{n\in\N}\in\mathcal{N}$ we define the seminorm
\[
 \vertiii{x}_{(p_{n},a_{n})_{n\in\N}}\coloneqq\sup_{n\in\N}p_{n}(x)a_{n},\quad x\in X.
\]
We denote by $\gamma_s\coloneqq\gamma_s(\|\cdot\|,\tau)$ the Hausdorff locally convex topology that is generated by 
the system of seminorms $(\vertiii{\cdot}_{(p_n,a_n)_{n\in\N}})_{(p_n,a_n)_{n\in\N}\in\mathcal{N}}$ and call it the 
\emph{submixed topology}.
\end{defn}

By \cite[I.1.10 Proposition, p.~9]{cooper1978}, \cite[Theorem 3.1.1, p.~62]{wiweger1961} 
(cf.~\cite[I.4.5 Proposition, p.~41--42]{cooper1978}) and 
\cite[Lemma A.1.2, p.~72]{farkas2003} we have the following relation between the mixed and the submixed topology.

\begin{rem}[{\cite[Remark 3.10, p.~9]{kruse_schwenninger2022}}]\label{rem:mixed=submixed}
Let $(X,\|\cdot\|,\tau)$ be a Saks space, $\Gamma_{\tau}$ a directed system of continuous seminorms 
that generates the topology $\tau$ and fulfils \eqref{eq:saks}, $\gamma\coloneqq\gamma(\|\cdot\|,\tau)$ the mixed 
and $\gamma_{s}\coloneqq\gamma_{s}(\|\cdot\|,\tau)$ the submixed topology.
\begin{enumerate}
\item[(a)] We have $\tau\subset \gamma_s\subset \gamma$ and $\gamma_{s}$ has the same convergent 
sequences as $\gamma$.
\item[(b)] If 
 \begin{enumerate}
 \item[(i)] for every $x\in X$, $\varepsilon>0$ and $p\in\Gamma_{\tau}$ there are $y,z\in X$ such that $x=y+z$, 
 $p(z)=0$ and $\|y\|\leq p(x)+\varepsilon$, or 
 \item[(ii)] the $\|\cdot\|$-closed unit ball $B_{\|\cdot\|}\coloneqq\{x\in X\;|\; \|x\|\leq 1\}$ is $\tau$-compact,
 \end{enumerate}
then $\gamma=\gamma_s$ holds. 
\end{enumerate}
\end{rem}

Further, we will use the following notions.  

\begin{defn}[{\cite[Definitions 2.2, 5.7, p.~423, 433]{kruse_seifert2022a}, \cite[2.4 Definition, p.~4--5]{kruse2023c}}]
Let $(X,\|\cdot\|,\tau)$ be a Saks space. 
\begin{enumerate}
\item We call $(X,\|\cdot\|,\tau)$ \emph{(sequentially) complete} if $(X,\gamma)$ is (sequentially) complete.
\item We call $(X,\|\cdot\|,\tau)$ \emph{semi-Montel} if $(X,\gamma)$ is a semi-Montel space.
\item We call $(X,\|\cdot\|,\tau)$ \emph{C-sequential} if $(X,\gamma)$ is C-sequential, i.e.~every convex 
sequentially open subset of $(X,\gamma)$ is already open (see \cite[p.~273]{snipes1973}).
\end{enumerate}
\end{defn}

A Saks space is complete if and only if the $\|\cdot\|$-closed unit ball 
$B_{\|\cdot\|}$ is $\tau$-complete by \cite[I.1.14 Proposition, p.~11]{cooper1978}.
We note that condition (ii) in \prettyref{rem:mixed=submixed} (b) is equivalent to 
$(X,\|\cdot\|,\tau)$ being a semi-Montel space 
by \cite[I.1.13 Proposition, p.~11]{cooper1978}. If $X$ is a space of $\K$-valued functions on some set $\Omega$, then 
the semi-Montel property may be used to derive linearisations of weak vector-valued versions of $X$, i.e.~of 
$X(E)\coloneqq\{f\colon\Omega\to E\;|\;\forall\;e'\in E':\; e'\circ f\in X\}$ 
where $E$ is a Hausdorff locally convex space and $E'$ its topological linear dual space (see \cite[3.3 Theorem, p.~7]{kruse2023c}). 
Since the space $(X,\gamma)$ is usually not barrelled by 
\cite[I.1.15 Proposition, p.~12]{cooper1978}, one cannot apply automatic local equicontinuity results like \cite[Proposition 1.1, p.~259]{komura1968} to $\gamma$-strongly continuous semigroups. 
A way to circumvent this problem is the condition that the space is C-sequential. 
A sufficient condition that guarantees 
that $(X,\gamma)$, and thus $(X,\|\cdot\|,\tau)$, is C-sequential is that $\tau$ is metrisable on $B_{\|\cdot\|}$ 
by \cite[Proposition 5.7, p.~2681--2682]{kruse_meichnser_seifert2018}. 

\begin{rem}\label{rem:seq_complete_mixed_Banach}
If $(X,\|\cdot\|,\tau)$ is a sequentially complete Saks space, then the normed space $(X,\|\cdot\|)$ is complete because 
$\gamma$ is a coarser topology than the norm $\|\cdot\|$-topology and completeness of a normed space 
is equivalent to sequential completeness.
\end{rem}

On sequentially complete Saks spaces $(X,\|\cdot\|,\tau)$ there is another notion of strong\-ly continuous semigroups, namely, 
so-called \emph{$\tau$-bi-continuous semigroups} which were introduced in \cite[Definition 1.3, p.~6--7]{kuehnemund2001} 
(cf.~\cite[Definition 3, p.~207]{kuehnemund2003}). 
Due to \cite[2.9 Remark (b)]{kruse_seifert2022b} (cf.~\cite[Proposition A.1.3, p.~73]{farkas2003}) and 
the comments after \cite[Definition 2.2, p.~423]{kruse_seifert2022a}, a semigroup $(T(t))_{t\geq 0}$ of linear maps $X\to X$ is $\tau$-bi-continuous if and only if it is locally sequentially $\gamma$-equicontinuous and the map 
$T_{x}\colon [0,\infty)\to (X,\gamma)$, $T_{x}(t)=T(t)x$, is continuous for every $x\in X$ (we note that in contrast to \prettyref{defn:strong_cont} (a) the definition of $\gamma$-strong continuity in \cite{kruse_seifert2022b} does not include the condition that $T(t)\in\mathcal{L}(X,\gamma)$ for every $t\geq 0$). 
Hence every $\gamma$-strongly continuous and locally $\gamma$-equicontinuous semigroup on a sequentially complete Saks space is 
$\tau$-bi-continuous and the converse is not true in general by \cite[Example 4.1, p.~320]{farkas2011}. 
However, on sequentially complete C-sequential Saks spaces 
the converse is also true by \cite[Theorem 7.4, p.~180]{kraaij2016} and 
\cite[Theorem 7.4, p.~52]{wilansky1981}, 
even more, every $\tau$-bi-continuous semigroup is $\gamma$-strongly continuous and 
quasi-$\gamma$-equicontinuous by \cite[Theorem 3.17, p.~13]{kruse_schwenninger2022}. 
Moreover, there is another notion related 
to quasi-$\gamma$-equicontinuity on Saks spaces, namely, \emph{$(\|\cdot\|,\tau)$-equitightness}, 
which was introduced in \cite[Definitions 3.4, 3.5, p.~6, 7]{kruse_schwenninger2022} and is important in perturbation theory for 
$\tau$-bi-continuous semigroups (see \cite{budde2019a,es_sarhir2006,farkas2003,kruse_schwenninger2022} and the references therein). 
The notion goes back to \cite[Definitions 1.2.20, 1.2.21, p.~12]{farkas2003} and 
\cite[Definition 1.1, p.~668]{es_sarhir2006}. 

\begin{defn}
Let $(X,\|\cdot\|,\tau)$ be a Saks space. A family $(T(t))_{t\in I}$ of linear maps $X\to X$ is called \emph{$(\|\cdot\|,\tau)$-equitight} if 
\[
\forall\;\varepsilon>0,\,p\in\Gamma_{\tau}\;\exists\;\widetilde{p}\in\Gamma_{\tau},\,C\geq 0\;
\forall\;t\in I,\,x\in X:\;p(T(t)x)\leq C \widetilde{p}(x)+\varepsilon\|x\|
\]
where $\Gamma_{\tau}$ is a family of continuous seminorms that generates $\tau$. 
A semigroup $(T(t))_{t\geq 0}$ of linear maps $X\to X$ is called 
\emph{locally $(\|\cdot\|,\tau)$-equitight} if $(T(t))_{t\in [0,t_{0}]}$ is 
$(\|\cdot\|,\tau)$-equitight for every $t_{0}\geq 0$. $(T(t))_{t\geq 0}$ is called 
\emph{quasi-$(\|\cdot\|,\tau)$-equitight} if there is $\alpha\in\R$ such that 
$(\euler^{-\alpha t}T(t))_{t\geq 0}$ is $(\|\cdot\|,\tau)$-equitight.
\end{defn}
 
In general, local $(\|\cdot\|,\tau)$-equitightness is really weaker than local 
$\tau$-equicontinuity by \cite[Examples 6 (a), p.~209--210]{kuehnemund2003} and \cite[Example 4.2 (b), p.~19]{kruse_schwenninger2022} and the same is true for their quasi-counterparts by 
\cite[Example 3.2, p.~549]{kunze2009} and \cite[Example 4.2 (a), p.~19]{kruse_schwenninger2022}. 
Local $(\|\cdot\|,\tau)$-equitightness (quasi-$(\|\cdot\|,\tau)$-equitightness) of a semigroup of linear maps is 
stronger than local $\gamma$-equicontinuity (quasi-$\gamma$-equicontinuity), 
but they are equivalent if $\gamma=\gamma_{s}$ and the semigroup is $\gamma$-strongly continuous 
by \cite[Proposition 3.16, p.~12--13]{kruse_schwenninger2022} and 
\cite[Remark 2.6 (b), p.~5--6]{kruse_schwenninger2022}. Summarising, we have the following theorem. 

\begin{thm}\label{thm:mixed_equi_sg_bi_cont}
Let $(X,\|\cdot\|,\tau)$ be a sequentially complete C-sequential Saks space and $(T(t))_{t\geq 0}$ a semigroup of linear maps 
$X\to X$. Then the following assertions are equivalent.
\begin{enumerate}
\item $(T(t))_{t\geq 0}$ is $\tau$-bi-continuous.
\item $(T(t))_{t\geq 0}$ is $\gamma$-strongly continuous and locally $\gamma$-equicontinuous.
\item $(T(t))_{t\geq 0}$ is $\gamma$-strongly continuous and quasi-$\gamma$-equicontinuous.
\end{enumerate}
If in addition $\gamma=\gamma_{s}$, then each of the preceding assertions is equivalent to 
each of the following ones:
\begin{enumerate}
\item[(d)] $(T(t))_{t\geq 0}$ is $\gamma$-strongly continuous and locally $(\|\cdot\|,\tau)$-equitight.
\item[(e)] $(T(t))_{t\geq 0}$ is $\gamma$-strongly continuous and quasi-$(\|\cdot\|,\tau)$-equitight.
\end{enumerate}
\end{thm}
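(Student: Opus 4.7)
The plan is to assemble the equivalences directly from the characterisations and cited results recalled in the paragraphs immediately preceding the theorem; the statement is essentially a clean repackaging rather than a genuinely new result. The key tool is the characterisation of $\tau$-bi-continuity quoted from \cite{kruse_seifert2022b}: a semigroup $(T(t))_{t\geq 0}$ is $\tau$-bi-continuous if and only if it is locally sequentially $\gamma$-equicontinuous and the orbit map $T_{x}\colon [0,\infty)\to (X,\gamma)$ is continuous for every $x\in X$. I will use this throughout to translate between (a) and the quantitative equicontinuity conditions.

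For the core triangle (a)$\Leftrightarrow$(b)$\Leftrightarrow$(c), the implication (b)$\Rightarrow$(a) is exactly the sentence recalled before the theorem that a $\gamma$-strongly continuous, locally $\gamma$-equicontinuous semigroup on a sequentially complete Saks space is $\tau$-bi-continuous. Conversely, given (a), the hypothesis that $(X,\gamma)$ is C-sequential together with \cite[Theorem~7.4]{kraaij2016} and \cite[Theorem~7.4]{wilansky1981} upgrades local sequential $\gamma$-equicontinuity to genuine local $\gamma$-equicontinuity, and in particular yields $T(t)\in\mathcal{L}(X,\gamma)$ for every $t\geq 0$, reconciling the slight asymmetry with Definition~\ref{defn:strong_cont}(a). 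This establishes (a)$\Rightarrow$(b). For (a)$\Rightarrow$(c) I would invoke \cite[Theorem~3.17, p.~13]{kruse_schwenninger2022} directly, which states that in the sequentially complete C-sequential Saks setting, every $\tau$-bi-continuous semigroup is $\gamma$-strongly continuous and quasi-$\gamma$-equicontinuous. The implication (c)$\Rightarrow$(b) is immediate since on every interval $[0,t_{0}]$ the factor $\euler^{\omega t}$ is bounded, so quasi-$\gamma$-equicontinuity always implies local $\gamma$-equicontinuity.

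For the equivalences involving (d) and (e) under the additional hypothesis $\gamma=\gamma_{s}$, the implications (d)$\Rightarrow$(b) and (e)$\Rightarrow$(c) follow from the general fact stated before the theorem that local (quasi-) $(\|\cdot\|,\tau)$-equitightness is strictly stronger than local (quasi-) $\gamma$-equicontinuity. The converse implications (b)$\Rightarrow$(d) and (c)$\Rightarrow$(e) require the coincidence $\gamma=\gamma_{s}$ and are supplied by \cite[Proposition~3.16, p.~12--13]{kruse_schwenninger2022} together with \cite[Remark~2.6~(b), p.~5--6]{kruse_schwenninger2022}, which assert precisely this equivalence when the semigroup is already $\gamma$-strongly continuous. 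The only real obstacle is bookkeeping: matching the variant definitions of $\gamma$-strong continuity used in different sources, and checking that the C-sequential upgrade produces genuine (not merely sequential) $\gamma$-equicontinuity; once these points are clarified, the chain of implications closes without further work.
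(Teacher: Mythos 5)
Your proposal is correct and takes essentially the same route as the paper: the theorem is presented there without a separate proof, precisely as a summary of the results you assemble — the characterisation of $\tau$-bi-continuity from \cite{kruse_seifert2022b}, the C-sequential upgrade from sequential to genuine local $\gamma$-equicontinuity via \cite{kraaij2016} and \cite{wilansky1981}, the implication (a)$\Rightarrow$(c) from \cite[Theorem 3.17]{kruse_schwenninger2022}, and the equitightness equivalences under $\gamma=\gamma_{s}$ from \cite[Proposition 3.16, Remark 2.6 (b)]{kruse_schwenninger2022}. Your additional bookkeeping — the trivial implication (c)$\Rightarrow$(b) via boundedness of $\euler^{\omega t}$ on compact intervals and the reconciliation of the two definitions of $\gamma$-strong continuity through $T(t)\in\mathcal{L}(X,\gamma)$ — is sound and matches the remarks the paper makes before the theorem.
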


We close this section with some examples of Saks spaces and a convention we will use throughout 
the paper. We denote by $\mathcal{C}(\Omega)$ the space of $\K$-valued continuous functions on a 
Hausdorff space $\Omega$ and by $\tau_{\operatorname{co}}$ the \emph{compact-open topology} 
on $\mathcal{C}(\Omega)$, i.e.~the topology of uniform convergence on compact subsets of $\Omega$.

\begin{conv}\label{conv:pre_Saks_function_space}
Let $\Omega$ be a Hausdorff space and $(\F,\|\cdot\|)$ a normed space such that 
$\F\subset\mathcal{C}(\Omega)$. Then $\mathcal{C}(\Omega)$ induces the relative compact-open 
topology ${\tau_{\operatorname{co}}}_{\mid \F}$ on $\F$ and we get a Saks space 
$(\F,\|\cdot\|,{\tau_{\operatorname{co}}}_{\mid \F})$ if and only if 
\begin{enumerate}
\item[(i)] for every compact set $K\subset\Omega$ there is $C\geq 0$ such that 
\[
\sup_{x\in K}|f(x)|\leq C\|f\|,\quad f\in\F,
\]
which is equivalent to the inclusion 
$I\colon (\F,\|\cdot\|) \to (\mathcal{C}(\Omega),\tau_{\operatorname{co}})$, $I(f)\coloneqq f$, being continuous, and
\item[(ii)] there exists a directed system of continuous seminorms $\Gamma_{{\tau_{\operatorname{co}}}_{\mid \F}}$ that generates the topology ${\tau_{\operatorname{co}}}_{\mid \F}$ such that   
\[
\|f\|=\sup_{p\in\Gamma_{{\tau_{\operatorname{co}}}_{\mid \F}}} p(f), \quad f\in \F.
\]
\end{enumerate}
If this is fulfilled, we write that 
$(\F,\|\cdot\|,\tau_{\operatorname{co}})\coloneqq 
(\F,\|\cdot\|,{\tau_{\operatorname{co}}}_{\mid \F})$ is a Saks space.
\end{conv}

Further, we denote by $\mathcal{H}(\Omega)\coloneqq \mathcal{C}^{1}_{\C}(\Omega)$ 
the space of holomorphic functions on an open set 
$\Omega\subset\C$ and set $\D\coloneqq \{z\in\C\;|\;|z|< 1\}$. 
Moreover, we write $\mathcal{C}^{1}(\Omega)\coloneqq \mathcal{C}^{1}_{\R}(\Omega)$ 
for the space of continuously differentiable functions on an open set $\Omega\subset\R$. 
Due to \cite[3.5 Corollary, p.~9--10]{kruse2023c}, \cite[Theorem 9.8, p.~260]{zhu2007} and \cite[Theorem 4.25, p.~81]{zhu2007} 
we have the following result.

\begin{exa}\label{ex:hardy_bergman_dirichlet}
For the following spaces $(\mathcal{F}(\D),\|\cdot\|)$ the triples 
$(\mathcal{F}(\D),\|\cdot\|,\tau_{\operatorname{co}})$ are complete semi-Montel C-sequential 
Saks spaces such that $\gamma=\gamma_{s}$ and $\mathcal{F}(\D)\subset\mathcal{H}(\D)$:
\begin{enumerate}
\item The \emph{Hardy spaces} $(H^{p},\|\cdot\|_{p})$ given by 
\[
H^{p}\coloneqq\Bigl\{f\in\mathcal{H}(\D)\;|\;
\|f\|_{p}^{p}\coloneqq \sup_{0<r<1}\frac{1}{2\pi}\int_{0}^{2\pi}|f(r\euler^{\i\theta})|^{p}\d\theta<\infty\Bigr\}.
\]
\item The \emph{weighted Bergman spaces} $(A_{\alpha}^{p},\|\cdot\|_{\alpha,p})$ for $\alpha>-1$ and $1\leq p<\infty$ given by 
\[
A_{\alpha}^{p}\coloneqq\Bigl\{f\in\mathcal{H}(\D)\;|\;
\|f\|_{\alpha,p}^{p}\coloneqq \frac{\alpha+1}{\pi}\int_{\D}|f(z)|^{p}(1-|z|^2)^{\alpha}\d z<\infty\Bigr\}.
\]
\item The \emph{Dirichlet space} $(\mathcal{D},\|\cdot\|_{\mathcal{D}})$ given by 
\[
\mathcal{D}\coloneqq\Bigl\{f\in\mathcal{H}(\D)\;|\;
\|f\|_{\mathcal{D}}^2\coloneqq |f(0)|^2+\frac{1}{\pi}\int_{\D}|f'(z)|^2\d z<\infty\Bigr\}
\]
with the inner product 
\[
\langle f, g\rangle\coloneqq f(0)\overline{g(0)}+\frac{1}{\pi}\int_{\D}f'(z)\overline{g'(z)}\d z,\quad f,g\in \mathcal{D}.
\]
\end{enumerate}
Moreover, the spaces in (a) for $1<p<\infty$, in (b) for $\alpha>-1$ and $1<p<\infty$, and in (c) 
are reflexive.
\end{exa}

Our next example is the Bloch space w.r.t.~a weight $v$.

\begin{exa}[{\cite[4.10 Corollary, p.~19--20]{kruse2023c}}]\label{ex:bloch_saks}
For a continuous function $v\colon\D\to(0,\infty)$ we define the \emph{$v$-Bloch space} 
\[
\mathcal{B}v(\D)\coloneqq\{f\in\mathcal{H}(\D)\;|\;
\|f\|_{\mathcal{B}v(\D)}\coloneqq |f(0)|+\sup_{z\in\D}|f'(z)|v(z)<\infty\}.
\]
Then $(\mathcal{B}v(\D),\|\cdot\|_{\mathcal{B}v(\D)},\tau_{\operatorname{co}})$ is a 
complete semi-Montel C-sequential Saks space such that $\gamma=\gamma_{s}$. 
\end{exa}

For $\alpha>0$ we get with $v_{\alpha}(z)\coloneqq (1-|z|^2)^{\alpha}$, $z\in\D$, 
the usual Bloch type space $\mathcal{B}_{\alpha}\coloneqq\mathcal{B}v_{\alpha}(\D)$ (see \cite[p.~1144]{zhu1993}). 

The case $p=\infty$ in \prettyref{ex:hardy_bergman_dirichlet} (a) can also be covered, namely, 
we have the following result for weighted $H^{\infty}$-spaces.

\begin{exa}[{\cite[4.6 Corollary, p.~17--18]{kruse2023c}}]\label{ex:weighted_holom_saks}
For an open set $\Omega\subset\C$ and a continuous function $v\colon\Omega\to (0,\infty)$ we set 
\[
\mathcal{H}v(\Omega)\coloneqq \{f\in\mathcal{H}(\Omega)\;|\;\|f\|_{v}\coloneqq\sup_{z\in\Omega}|f(z)|v(z)<\infty\}.
\]
Then $(\mathcal{H}v(\Omega),\|\cdot\|_{v},\tau_{\operatorname{co}})$ is a 
complete semi-Montel C-sequential Saks space such that $\gamma=\gamma_{s}$. 
\end{exa}

If $v=\mathds{1}$, we set $H^{\infty}(\Omega)\coloneqq \mathcal{H}v(\Omega)$ 
and $H^{\infty}\coloneqq H^{\infty}(\D)$, which is the Hardy space of bounded holomorphic functions on $\D$ 
(see \cite[p.~253]{zhu2007}). Here, $\mathds{1}$ denotes the map 
$\mathds{1}\colon\Omega\to\K$, $\mathds{1}(x)\coloneqq 1$, on a set $\Omega\subset\K$.
In our following example we consider the space of bounded Dirichlet series which is a topological subspace of $H^{\infty}(\C_{+})$ where $\C_{+}\coloneqq\{z\in\C\;|\;\re z >0\}$ is the open right half-plane, see e.g.~\cite[p.~27]{contreras2023}.

\begin{exa}\label{ex:bDirichlet_series}
We define the \emph{space} $\mathscr{H}^{\infty}$ \emph{of bounded Dirichlet series} as the topological subspace of 
$H^{\infty}(\C_{+})$ consisting of bounded holomorphic functions on $\C_{+}$ that can be written as a Dirichlet series on 
some half-plane contained in $\C_{+}$. Further, we denote by $\|\cdot\|_{\mathscr{H}^{\infty}}$ the restriction of the norm
$\|\cdot\|_{\mathds{1}}$ of $H^{\infty}(\C_{+})$ to $\mathscr{H}^{\infty}$. 
Then $(\mathscr{H}^{\infty},\|\cdot\|_{\mathscr{H}^{\infty}},\tau_{\operatorname{co}})$ is a 
complete semi-Montel C-sequential Saks space such that $\gamma=\gamma_{s}$. 
\end{exa}
\begin{proof}
Due to \cite[Lemma 18, p.~227]{bayart2002} the unit ball $B_{\|\cdot\|_{\mathscr{H}^{\infty}}}$ of $\mathscr{H}^{\infty}$ is 
$\tau_{\operatorname{co}}$-compact. Therefore $(\mathscr{H}^{\infty},\|\cdot\|_{\mathscr{H}^{\infty}},\tau_{\operatorname{co}})$ 
is a complete semi-Montel Saks space by \cite[I.3.1 Lemma, p.~27]{cooper1978}, \cite[I.1.14 Proposition, p.~11]{cooper1978}
and \cite[I.1.13 Proposition, p.~11]{cooper1978}. Condition (ii) of \prettyref{rem:mixed=submixed} (b) yields that 
$\gamma=\gamma_{s}$. Further, the topology $\tau_{\operatorname{co}}$ is metrisable on $\mathscr{H}^{\infty}$ and 
thus $(\mathscr{H}^{\infty},\|\cdot\|_{\mathscr{H}^{\infty}},\tau_{\operatorname{co}})$ is C-sequential by 
\cite[Proposition 5.7, p.~2681--2682]{kruse_meichnser_seifert2018}.
\end{proof}

For our last example of this section we recall that a completely regular Hausdorff space 
$\Omega$ is called a $k_{\R}$-space if any map $f\colon\Omega\to\R$ whose restriction to each compact $K\subset\Omega$ 
is continuous, is already continuous on $\Omega$ (see \cite[p.~487]{michael1973}). 
In particular, Polish spaces and locally compact Hausdorff spaces are $k_{\R}$-spaces.
Due to \cite[4.5 Corollary, p.~15--16]{kruse2023c} and \cite[Remark 3.19 (a), p.~14]{kruse_schwenninger2022}
we have the following result.

\begin{exa}\label{ex:cont_saks}
For a completely regular Hausdorff space $\Omega$ and a continuous function $v\colon\Omega\to (0,\infty)$ we set 
\[
\mathcal{C}v(\Omega)\coloneqq \{f\in\mathcal{C}(\Omega)\;|\;\|f\|_{v}\coloneqq\sup_{x\in\Omega}|f(x)|v(x)<\infty\}.
\]
Then $(\mathcal{C}v(\Omega),\|\cdot\|_{v},\tau_{\operatorname{co}})$ is a Saks space such that $\gamma=\gamma_{s}$. 
This Saks space is complete if $\Omega$ is a $k_{\R}$-space. 
It is C-sequential if $\Omega$ is a hemicompact Hausdorff $k_{\R}$-space or a Polish space. 
\end{exa}

If $\Omega\subset\C$ is open, then $\mathcal{H}v(\Omega)$ from \prettyref{ex:weighted_holom_saks} is a subspace 
of $\mathcal{C}v(\Omega)$ and the mixed topologies are compatible, i.e.
\[
\gamma\left({\|\cdot\|_{v}}_{\mid \mathcal{H}v(\Omega)},{\tau_{\operatorname{co}}}_{\mid \mathcal{H}v(\Omega)}\right)
=\gamma\left(\|\cdot\|_{v},{\tau_{\operatorname{co}}}_{\mid \mathcal{C}v(\Omega)}\right)_{\mid \mathcal{H}v(\Omega)}
\]
by \cite[I.4.6 Lemma, p.~44]{cooper1978}. If $v=\mathds{1}$, then we get the space of bounded continuous functions 
$\mathcal{C}_{b}(\Omega)=\mathcal{C}v(\Omega)$ on $\Omega$ and $\|\cdot\|_{v}=\|\cdot\|_{\infty}$ is the supremum norm. 
Further examples of Saks spaces may be found, for instance, 
in \cite{cooper1978,kruse2023c,kruse_schwenninger2022,kruse_seifert2022b,kuehnemund2001}.

\section{Semiflows, semicocycles and semicoboundaries}
\label{sect:semiflows_cocycles}

In this section we recall the notions and properties of semiflows, associated semicoclyes and of semicoboundaries, 
which form a special class of semicocycles. 

\begin{defn}
Let $I$, $\Omega$ and $Y$ be Hausdorff spaces and $\varphi\coloneqq (\varphi_{t})_{t\in I}$ a family 
of functions $\varphi_{t}\colon\Omega\to Y$. 
\begin{enumerate}
\item We call $\varphi$ \emph{separately continuous} if 
$\varphi_{t}$ and $\varphi_{(\cdot)}(x)\colon I\to Y$ are continuous for all $t\in I$ and $x\in\Omega$. 
\item We call $\varphi$ \emph{jointly continuous} if the map $I\times\Omega\to Y$, $(t,x)\mapsto \varphi_{t}(x)$, is continuous 
where $I\times\Omega$ is equipped with the product topology. 
\item Let $I\coloneqq [0,\infty)$. We say that $\varphi_{(\cdot)}(x)\in\mathcal{C}^{1}[0,\infty)$ 
for $x\in\Omega$ if $\varphi_{(\cdot)}(x)$ is continuously differentiable on $[0,\infty)$ 
where differentiability in $t=0$ means right-differentiability in $t=0$. 
Further, we set $\dt{\varphi}_{t_{0}}(x)\coloneqq (\frac{\partial}{\partial t}\varphi_{(\cdot)}(x))(t_{0})$ 
for all $t_{0}\in [0,\infty)$. 
\item Let $\Omega\subset\K$ be open. If $\varphi_{t}\in\mathcal{C}_{\K}^{1}(\Omega)$ 
for $t\in I$, we set $\varphi_{t}'(x_{0})\coloneqq (\frac{\partial}{\partial x}\varphi_{t})(x_{0})$ for all $x_{0}\in\Omega$. 
\end{enumerate}
\end{defn}

Let us come to semiflows.

\begin{defn}\label{defn:semiflow}
Let $\Omega$ be a Hausdorff space. A family $\varphi\coloneqq (\varphi_{t})_{t\geq 0}$ 
of continuous functions $\varphi_{t}\colon\Omega\to\Omega$ is called a \emph{semiflow} if 
\begin{enumerate}
\item[(i)] $\varphi_{0}(x)=x$ for all $x\in\Omega$, and 
\item[(ii)] $\varphi_{t+s}(x)=(\varphi_{t}\circ\varphi_{s})(x)$ for all $t,s\geq 0$ and $x\in\Omega$.
\end{enumerate}
We call a semiflow $\varphi$ \emph{trivial} and write $\varphi=\id$ if $\varphi_{t}=\id$ for all $t\geq 0$. 
We call a semiflow $\varphi$ a $C_{0}$\emph{-semiflow} if $\lim_{t\to 0\rlim}\varphi_{t}(x)=x$ for all $x\in\Omega$. 
If $\Omega\subset\C$ is open, we call a semiflow $\varphi$ \emph{holomorphic} if $\varphi_{t}\in\mathcal{H}(\Omega)$ 
for all $t\geq 0$.
\end{defn}

A lot of examples of jointly continuous holomorphic semiflows and their whole classification are given in 
\cite[p.~4--5]{siskakis1998} for $\Omega=\D$, in \cite[Proposition 1.4.26, p.~98]{abate1989} for $\Omega=\C$, 
in \cite[Proposition 1.4.27, p.~98]{abate1989} for $\Omega=\C\setminus\{0\}$, 
in \cite[Proposition 1.4.29, p.~99]{abate1989} for $\Omega=\{z\in\C\;|\;r<|z|<1\}$, $0<r<1$, and 
in \cite[Proposition 1.4.30, p.~99]{abate1989} for $\Omega=\D\setminus\{0\}$. 
Further examples may be found in \cite[Chap.~8]{bracci2020} and also in the following sections of our paper. 

\begin{prop}\label{prop:jointly_cont_semiflow}
Let $\varphi$ be a semiflow on a locally compact Hausdorff space $\Omega$. 
\begin{enumerate}
\item Let $\Omega$ be $\sigma$-compact. Then $\varphi$ is jointly continuous if and only if $\varphi$ is $C_{0}$.
\item Let $\Omega$ be metrisable. Then $\varphi$ is jointly continuous if and only if 
$\varphi$ is separately continuous.
\end{enumerate}
\end{prop}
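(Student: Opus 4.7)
The forward implications in both (a) and (b) are immediate: joint continuity of $(t,x)\mapsto\varphi_{t}(x)$ gives continuity of each orbit map $t\mapsto\varphi_{t}(x)$, hence $\lim_{t\to 0^{+}}\varphi_{t}(x)=\varphi_{0}(x)=x$ (the $C_{0}$ property) and, together with continuity of each $\varphi_{t}$, separate continuity. So I focus on the converse implications.

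A common reduction in both parts is first to establish joint continuity at points of the form $(0,x_{0})$ and then to propagate it via the semigroup law. Given $(t_{n},x_{n})\to(t_{0},x_{0})$, continuity of $\varphi_{t_{0}}$ combined with joint continuity at $(0,\varphi_{t_{0}}(x_{0}))$ handles the case $t_{n}\geq t_{0}$ via the identity $\varphi_{t_{n}}(x_{n})=\varphi_{t_{n}-t_{0}}(\varphi_{t_{0}}(x_{n}))$. For $t_{n}\leq t_{0}$ one writes $\varphi_{t_{0}}(x_{n})=\varphi_{t_{0}-t_{n}}(\varphi_{t_{n}}(x_{n}))$; any accumulation point $u$ of $\{\varphi_{t_{n}}(x_{n})\}$ then forces $u=\varphi_{t_{0}}(x_{0})$ by joint continuity at $(0,u)$. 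The required relative compactness of $\{\varphi_{t_{n}}(x_{n})\}$ is extracted from a finite flow-box cover of the compact orbit $\{\varphi_{t}(x_{0}):0\leq t\leq t_{0}\}$ delivered by joint continuity at $(0,\cdot)$.

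For (a), the crux is joint continuity at $(0,x_{0})$ under the sole assumption $\lim_{t\to 0^{+}}\varphi_{t}(x)=x$ for each $x$. Right-continuity of $t\mapsto\varphi_{t}(x)$ at every $t$ follows from $C_{0}$ and the semigroup law via $\varphi_{t+s}(x)=\varphi_{s}(\varphi_{t}(x))\to\varphi_{t}(x)$ as $s\to 0^{+}$. Fix a compact neighbourhood $K$ of $x_{0}$ and a closed neighbourhood $V\subset K$ of $x_{0}$. Using this right-continuity and closedness of $V$, the sets
\[
F_{n}\coloneqq\{x\in V\;|\;\varphi_{t}(x)\in V\text{ for all }t\in\mathbb{Q}\cap[0,1/n]\},\quad n\in\N,
\]
are closed in the compact Hausdorff space $V$, increasing in $n$, and $\bigcup_{n}F_{n}$ contains the interior of $V$ by the $C_{0}$ condition. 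Baire's theorem then yields some $n_{0}$ and a non-empty open $W\subset V$ with $W\subset F_{n_{0}}$. A chasing argument exploiting the semigroup composition, together with $\sigma$-compactness of $\Omega$ and local compactness, transports this open set along orbits to produce an open neighbourhood of $x_{0}$ on which the flow stays in a prescribed neighbourhood for small times, i.e.~joint continuity at $(0,x_{0})$.

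For (b), metrisability allows a sequential Namioka-type argument. Local compactness and metrisability supply a compact metrisable (hence second countable) neighbourhood $K$ of $x_{0}$. If joint continuity at $(0,x_{0})$ failed, there would be $t_{n}\to 0^{+}$ and $x_{n}\to x_{0}$ in $K$ with $\varphi_{t_{n}}(x_{n})\not\to x_{0}$. Since $[0,1]\times K$ is a compact second-countable metric space, Namioka's joint continuity theorem applies to the separately continuous map $(t,x)\mapsto\varphi_{t}(x)$ and yields a dense $G_{\delta}$ set of points of joint continuity; selecting such a point near $(0,x_{0})$ and transporting joint continuity via the semigroup identity, as in the reduction above, produces a contradiction with the chosen bad sequence. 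The principal obstacle in both parts is the same: upgrading one-variable continuity ($C_{0}$ or separate continuity) to genuine joint continuity of a two-variable map — a Namioka-type phenomenon handled by Baire category plus $\sigma$-compactness in (a) and by Namioka's theorem plus metrisability in (b).
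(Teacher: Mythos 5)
Your forward implications and your propagation step (from joint continuity at all points $(0,x)$ to joint continuity at arbitrary $(t_{0},x_{0})$, including the accumulation-point argument for $t_{n}\leq t_{0}$) are sound, but both converse implications have a genuine gap at exactly the crux: establishing joint continuity at $t=0$. In (a), your Baire argument is correct as far as it goes: the sets $F_{n}$ are closed, their union covers the interior of $V$, and you obtain a nonempty open $W\subset F_{n_{0}}$ with $\varphi_{t}(W)\subset V$ for $0\leq t\leq 1/n_{0}$ (right-continuity of orbits lets you pass from rational to real times). But $W$ need not contain $x_{0}$, and the announced ``chasing argument'' that transports $W$ to a neighbourhood of $x_{0}$ does not exist for semiflows: the maps $\varphi_{t}$ need not be injective, open or surjective, the orbit of $x_{0}$ need not meet $W$, and forward images of open sets need not be neighbourhoods. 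This translation trick is precisely what works for flows/group actions via the homeomorphisms $\varphi_{-t}$ and is the hard part in the semiflow setting. Symptomatically, $\sigma$-compactness --- a genuine hypothesis, which the paper uses by quoting \cite[Theorems 2.2, 2.3]{dorroh1967} for exactly this implication --- is named in your sketch but never actually used; if your argument worked as written it would prove the statement for all locally compact Hausdorff spaces.

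In (b) the same wall appears. Namioka's theorem applied to $[0,1]\times K$ gives a residual set $A\subset[0,1]$ with joint continuity at every point of $A\times K$, but you cannot guarantee $0\in A$, and ``transporting joint continuity via the semigroup identity, as in the reduction above'' is circular: your reduction propagates joint continuity \emph{forward} from $t=0$, whereas here you must propagate \emph{backward} from a good time $s>0$ to $t=0$. Concretely, joint continuity at $(s,x_{0})$ only yields $\varphi_{s}(\varphi_{t_{n}}(x_{n}))=\varphi_{t_{n}+s}(x_{n})\to\varphi_{s}(x_{0})$, and without injectivity of $\varphi_{s}$ you cannot cancel it to conclude anything about $\varphi_{t_{n}}(x_{n})$ itself. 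Moreover, the relative compactness of the bad sequence $\{\varphi_{t_{n}}(x_{n})\}$, which your accumulation-point argument requires (any limit $u$ would satisfy $\varphi_{s}(u)=\varphi_{s}(x_{0})$ for all $s\in A$, whence $u=x_{0}$ by letting $s\to 0\rlim$ along $A$), is justified by a flow-box cover that presupposes joint continuity at $(0,\cdot)$ --- the very statement being proved; a priori nothing prevents $\varphi_{t_{n}}(x_{n})$ from escaping every compact set even though its $\varphi_{s}$-images converge. For comparison, the paper does not prove these implications at all: it deduces (a) from \cite{dorroh1967} and (b) from \cite{chernoff1975}, and the missing content in your sketch is precisely the nontrivial work those two references supply.
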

\begin{proof}
(a) The implication $\Rightarrow$ is obvious, the other implication follows directly from 
\cite[Theorems 2.2, 2.3, p.~692]{dorroh1967}.

(b) Again, the implication $\Rightarrow$ is obvious, the other implication follows directly from 
\cite[2., p.~318--319]{chernoff1975}.
\end{proof}

An important concept for semiflows is their generator.

\begin{defn}\label{defn:generator_semiflow}
Let $\varphi$ be a semiflow on a Hausdorff space $\Omega$ such that 
$\varphi_{(\cdot)}(x)\in\mathcal{C}^{1}[0,\infty)$ for all $x\in\Omega$. 
A continuous function $G\colon\Omega\to\Omega$ is called the \emph{generator} of $\varphi$ if 
$\dt{\varphi}_{t}(x)=G(\varphi_{t}(x))$ for all $t\geq 0$ and $x\in\Omega$.
\end{defn}

\begin{rem}\label{rem:generator_semiflow}
If existing, the generator of $\varphi$ is uniquely determined because we have $G(x)=G(\varphi_{0}(x))=\dt{\varphi}_{0}(x)$ 
for all $x\in\Omega$.
\end{rem}

The generator is also called the \emph{speed} of the semiflow (see \cite[p.~210]{batty1987} where the symbol 
$\lambda$ is used for $G$). For a separately continuous semiflow $\varphi$ the existence of the generator is equivalent 
to right-differentiability in $t=0$ and continuity of $\dt{\varphi}_{0}$.

\begin{prop}\label{prop:charac_generator}
Let $\Omega$ be a Hausdorff space and $\varphi$ a separately continuous semiflow on $\Omega$.
Then $\varphi_{(\cdot)}(x)\in\mathcal{C}^{1}[0,\infty)$ for all $x\in\Omega$ and $\dt{\varphi}_{t}\in\mathcal{C}(\Omega)$ 
for all $t\geq 0$ if and only if 
$\varphi_{(\cdot)}(x)$ is right-differentiable in $t=0$ for all $x\in\Omega$ and $\dt{\varphi}_{0}\in\mathcal{C}(\Omega)$. 
In this case $\dt{\varphi}_{t}(x)=\dt{\varphi}_{0}(\varphi_{t}(x))$ for all $t\geq 0$ and $x\in\Omega$, 
and $\dt{\varphi}_{0}$ is the generator of $\varphi$.
\end{prop}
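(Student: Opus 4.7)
The forward implication is immediate from the definitions: if $\varphi_{(\cdot)}(x) \in \mathcal{C}^{1}[0,\infty)$ for every $x \in \Omega$, then in particular $\varphi_{(\cdot)}(x)$ is right-differentiable at $t=0$, and $\dt{\varphi}_{0} \in \mathcal{C}(\Omega)$ is the case $t=0$ of the stated hypothesis. So the substance of the proposition lies in the converse.

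For the converse, my plan is to bootstrap pointwise right-differentiability at $0$ into $\mathcal{C}^{1}$-regularity on the whole half-line via the semigroup law (ii) in \prettyref{defn:semiflow}. Fixing $x \in \Omega$ and $t \geq 0$, the identity $\varphi_{t+h} = \varphi_{h} \circ \varphi_{t}$ gives
\[
\lim_{h \to 0\rlim} \frac{\varphi_{t+h}(x) - \varphi_{t}(x)}{h} = \lim_{h \to 0\rlim} \frac{\varphi_{h}(\varphi_{t}(x)) - \varphi_{0}(\varphi_{t}(x))}{h} = \dt{\varphi}_{0}(\varphi_{t}(x)),
\]
where the last equality applies the hypothesis of right-differentiability at $t=0$ to the point $y=\varphi_{t}(x)$. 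This identifies the right derivative of $\varphi_{(\cdot)}(x)$ at any $t \geq 0$ with $\dt{\varphi}_{0}(\varphi_{t}(x))$, and the latter depends continuously on $t$, since $t \mapsto \varphi_{t}(x)$ is continuous by separate continuity of $\varphi$ and $\dt{\varphi}_{0}$ is continuous on $\Omega$ by assumption.

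The main obstacle is then to promote everywhere-existing continuous right-differentiability to genuine $\mathcal{C}^{1}$-regularity on $[0,\infty)$. For this I would invoke the classical calculus lemma that a continuous map $f\colon[0,\infty) \to V$ into a normed space $V$ that admits a continuous right derivative $f_{+}'$ is automatically $\mathcal{C}^{1}$ with $f' = f_{+}'$; the standard proof compares $f$ with the primitive $t \mapsto f(0) + \int_{0}^{t} f_{+}'(s)\,\d s$ and shows that the difference has zero right derivative everywhere, hence is constant. Applied to $\varphi_{(\cdot)}(x)$, this yields $\varphi_{(\cdot)}(x) \in \mathcal{C}^{1}[0,\infty)$ together with the identity $\dt{\varphi}_{t}(x) = \dt{\varphi}_{0}(\varphi_{t}(x))$ for all $t \geq 0$ and $x \in \Omega$. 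The remaining claim $\dt{\varphi}_{t} \in \mathcal{C}(\Omega)$ for each fixed $t$ is then immediate, since $\dt{\varphi}_{t} = \dt{\varphi}_{0} \circ \varphi_{t}$ is a composition of continuous maps, and this same identity is the generator equation of \prettyref{defn:generator_semiflow} with $G = \dt{\varphi}_{0}$, so $\dt{\varphi}_{0}$ is the generator of $\varphi$ by \prettyref{rem:generator_semiflow}.
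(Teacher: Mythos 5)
Your proposal is correct and follows essentially the same route as the paper's proof: use the semiflow law $\varphi_{t+h}=\varphi_{h}\circ\varphi_{t}$ to identify the right derivative at every $t\geq 0$ with $\dt{\varphi}_{0}(\varphi_{t}(x))$, observe its continuity in $t$ from separate continuity of $\varphi$ and continuity of $\dt{\varphi}_{0}$, and then upgrade continuous right-differentiability to $\mathcal{C}^{1}$-regularity via the classical lemma (which the paper cites from Pazy, \cite[Chap.~2, Corollary 1.2, p.~43]{pazy1983}, rather than reproving as you sketch). Your closing observations that $\dt{\varphi}_{t}=\dt{\varphi}_{0}\circ\varphi_{t}\in\mathcal{C}(\Omega)$ and that this identity is precisely the generator equation match the paper's conclusion.
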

\begin{proof}
We only need to prove the implication $\Leftarrow$. Let $\varphi_{(\cdot)}(x)$ be right-differentiable in $t=0$ 
for all $x\in\Omega$ and $\dt{\varphi}_{0}\in\mathcal{C}(\Omega)$. For $x\in\Omega$ we claim that 
$\varphi_{(\cdot)}(x)$ is continuously right-differentiable on $[0,\infty)$ with right-derivative 
$\dt{\varphi}_{0}(\varphi_{t}(x))$ for all $t\geq 0$. Indeed, we have 
\[
\lim_{s\to 0\rlim}\frac{\varphi_{t+s}(x)-\varphi_{t}(x)}{s}
=\lim_{s\to 0\rlim}\frac{\varphi_{s}(\varphi_{t}(x))-\varphi_{t}(x)}{s}
=\dt{\varphi}_{0}(\varphi_{t}(x))
\]
for all $t\geq 0$. Thus $\varphi_{(\cdot)}(x)$ is right-differentiable on $[0,\infty)$ and the right-derivative is continuous 
(in $t$) because $\dt{\varphi}_{0}\in\mathcal{C}(\Omega)$ and $\varphi$ is separately continuous. 
It follows that the continuous function $\varphi_{(\cdot)}(x)$ is continuously differentiable on $[0,\infty)$ with 
$\dt{\varphi}_{t}(x)=\dt{\varphi}_{0}(\varphi_{t}(x))$ for all $t\geq 0$ 
(see e.g.~\cite[Chap.~2, Corollary 1.2, p.~43]{pazy1983}). 
\end{proof}

Further sufficient and necessary conditions for a given continuous function $G\colon\R\to\R$ to be the generator of 
a jointly continuous flow on $\R$ are contained in \cite[Lemma 2.2, p.~212]{batty1987}. 
The notion of a generator in the case of a jointly continuous holomorphic semiflow was introduced in \cite[p.~103]{berkson1978}. 
In this case the generator always exists and is not only continuous but even holomorphic.  

\begin{thm}[{\cite[(1.1) Theorem, p.~101--102]{berkson1978}}]\label{thm:generator_hol_semiflow}
Let $\Omega\subset\C$ be open and $\varphi$ a jointly continuous holomorphic semiflow on $\Omega$. 
Then $\varphi_{(\cdot)}(z)\in\mathcal{C}^{1}[0,\infty)$ for all $z\in\Omega$ and there is $G\in\mathcal{H}(\Omega)$ such that
$\dt{\varphi}_{t}(z)=G(\varphi_{t}(z))$ for all $t\geq 0$ and $z\in\Omega$.
\end{thm}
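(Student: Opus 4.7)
The plan is to recover the generator as the locally uniform $\tau_{\operatorname{co}}$-limit of the difference quotients $G_{t}(z):=(\varphi_{t}(z)-z)/t$, $t>0$, in $\mathcal{H}(\Omega)$. For the setup, fix $z_{0}\in\Omega$ and $r>0$ with $\overline{D(z_{0},3r)}\subset\Omega$. Joint continuity together with the compactness of $\overline{D(z_{0},2r)}\times[0,\delta]$ yields $\delta>0$ such that $\varphi_{t}(\overline{D(z_{0},2r)})\subset\overline{D(z_{0},3r)}$ for all $t\in[0,\delta]$. In particular $(\varphi_{t})_{t\in[0,\delta]}$ is locally uniformly bounded in $\mathcal{H}(\Omega)$, so Cauchy's inequalities give a constant $M=M(z_{0},r)$ with $|\varphi_{t}'(z)|\leq M$ for all $t\in[0,\delta]$ and $z\in\overline{D(z_{0},2r)}$, and a continuous modulus $\omega(t):=\|\varphi_{t}-\id\|_{\overline{D(z_{0},2r)}}\to 0$ as $t\to 0^{+}$.

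The main obstacle is to upgrade this to a locally uniform $O(t)$-bound on $\varphi_{t}-\id$, i.e.\ a locally uniform bound on the family $(G_{t})_{0<t\leq\delta}\subset\mathcal{H}(\Omega)$. The strategy I would follow combines the telescoping semigroup identity
\[
\varphi_{nt}(z)-z=\sum_{k=0}^{n-1}\bigl(\varphi_{t}(\varphi_{kt}(z))-\varphi_{kt}(z)\bigr)
\]
with the derivative bound above and Cauchy's integral representation for $\varphi_{t}-\id$ on $\partial D(z_{0},2r)$: choosing $n\asymp\delta/t$ so that all iterates $\varphi_{kt}(z)$ remain inside $\overline{D(z_{0},2r)}$, one compares the summand $\varphi_{t}(\varphi_{kt}(z))-\varphi_{kt}(z)$ with $\varphi_{t}(z)-z$ via the $M$-Lipschitz estimate supplied by Cauchy, converting the a priori bound $\|\varphi_{nt}-\id\|_{\overline{D(z_{0},r)}}\leq\omega(\delta)$ into an estimate $\|\varphi_{t}-\id\|_{\overline{D(z_{0},r)}}\leq Ct$. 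Once this bound is in hand, $(G_{t})_{0<t\leq\delta}$ is normal in $\mathcal{H}(\Omega)$ by Montel's theorem, so any sequence $t_{n}\downarrow 0$ admits a subsequence along which $G_{t_{n}}\to G$ locally uniformly for some $G\in\mathcal{H}(\Omega)$.

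To identify $G$ with the generator I would rewrite the semigroup identity as $(\varphi_{u+t_{n}}(z)-\varphi_{u}(z))/t_{n}=G_{t_{n}}(\varphi_{u}(z))$. Since $u\mapsto\varphi_{u}(z)$ takes values in a compact subset of $\Omega$ on $[0,s]$, the locally uniform convergence yields convergence uniform in $u\in[0,s]$; telescoping over $\lfloor s/t_{n}\rfloor$ steps of size $t_{n}$ and recognising the resulting sum as a Riemann sum, one obtains
\[
\varphi_{s}(z)-z=\int_{0}^{s}G(\varphi_{u}(z))\d u,\quad s\geq 0,\,z\in\Omega.
\]
The continuity of $u\mapsto G(\varphi_{u}(z))$ and the fundamental theorem of calculus then force $\varphi_{(\cdot)}(z)\in\mathcal{C}^{1}[0,\infty)$ with $\dt{\varphi}_{t}(z)=G(\varphi_{t}(z))$. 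Uniqueness of $G$ is automatic: any other subsequential limit $\widetilde{G}\in\mathcal{H}(\Omega)$ satisfies the same integral equation, whence differentiation at $s=0$ gives $\widetilde{G}(z)=\dt{\varphi}_{0}(z)=G(z)$ for all $z\in\Omega$. Therefore the full limit $G_{t}\to G$ exists in $(\mathcal{H}(\Omega),\tau_{\operatorname{co}})$ as $t\to 0^{+}$, $G$ is holomorphic as a locally uniform limit of holomorphic functions, and $G=\dt{\varphi}_{0}$ is the generator of $\varphi$ in the sense of \prettyref{defn:generator_semiflow}. The clear obstacle is the quantitative $O(t)$ estimate for $\varphi_{t}-\id$; everything else is a routine package of Montel's theorem, the semigroup identity and the fundamental theorem of calculus.
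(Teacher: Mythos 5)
There is a genuine gap, and it sits exactly at the step you yourself flag as the crux. (For calibration: the paper offers no proof of this theorem at all --- it is imported verbatim from Berkson--Porta --- so the relevant comparison is with the classical argument.) Your reduction of the whole theorem to a locally uniform $O(t)$ bound on $h_{t}\coloneqq\varphi_{t}-\id$ is correct, but the proposed telescoping mechanism cannot produce that bound: it is circular. Write $\omega_{\rho}(t)\coloneqq\sup_{\overline{D(z_{0},\rho)}}|h_{t}|$. The Lipschitz constant that Cauchy's inequalities supply for the \emph{comparison} step is the one for $h_{t}$ (the constant $M$ for $\varphi_{t}$ itself only gives $|h_{t}(w)-h_{t}(z)|\leq (M+1)|w-z|$, which after summing over $n\asymp\delta/t$ steps yields $\omega_{r}(t)\lesssim t+\delta\,\omega(t)/t$, worse than trivial), and that constant is of size $c\,\omega_{2r}(t)/r$ --- it is itself of the order of the quantity you are estimating. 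Telescoping then gives, since $|\varphi_{kt}(z)-z|\leq\omega_{2r}(kt)$,
\[
\omega_{r}(t)\;\leq\;\frac{\sup_{u\leq\delta}\omega_{2r}(u)}{\lfloor\delta/t\rfloor}\;+\;\frac{c}{r}\Bigl(\sup_{u\leq\delta}\omega_{2r}(u)\Bigr)\,\omega_{2r}(t),
\]
i.e.\ an inequality of the form $\omega_{r}(t)\leq Ct+\varepsilon\,\omega_{2r}(t)$ with a fixed $\varepsilon>0$ and, crucially, the \emph{larger} disc on the right-hand side. This cannot be closed: iterating forces a chain of strictly increasing discs whose Cauchy losses are not summable inside a fixed compact exhaustion, and every variant of the bookkeeping (e.g.\ choosing $n\asymp r/\omega(t)$ instead) returns a bound of order $\omega(t)$ or $\omega(t)^{2}/t$ --- you need $\omega(t)=O(t)$ as input to conclude $\omega(t)=O(t)$. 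Consequently the family $(G_{t})_{0<t\leq\delta}$ is never shown to be locally bounded, Montel's theorem does not apply, and the remainder of your argument (which is fine as far as it goes) has nothing to feed on. A minor additional slip: bounding $\varphi_{t}'$ on $\overline{D(z_{0},2r)}$ by Cauchy requires boundedness of $\varphi_{t}$ on a strictly larger disc, so the estimate should be stated on $\overline{D(z_{0},r)}$.

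The classical proof avoids any a priori quantitative rate by mollifying in time, and this is the idea your sketch is missing. Set $F_{s}(z)\coloneqq\frac{1}{s}\int_{0}^{s}\varphi_{u}(z)\,\d u$; joint continuity gives $F_{s}\to\id$ and $F_{s}'\to 1$ locally uniformly as $s\to 0\rlim$, so $F_{s}$ is locally biholomorphic for small $s$. The semigroup law yields the identity
\[
F_{s}(\varphi_{t}(z))-F_{s}(z)=\frac{1}{s}\Bigl(\int_{s}^{s+t}\varphi_{u}(z)\,\d u-\int_{0}^{t}\varphi_{u}(z)\,\d u\Bigr),
\]
whose right-hand side is $\mathcal{C}^{1}$ in $t$ by the fundamental theorem of calculus; composing with the local inverse of $F_{s}$ shows that $t\mapsto\varphi_{t}(z)$ is differentiable with $\dt{\varphi}_{0}(z)=(\varphi_{s}(z)-z)/(sF_{s}'(z))$, which is manifestly holomorphic in $z$, and the flow equation $\dt{\varphi}_{t}(z)=G(\varphi_{t}(z))$ together with $\varphi_{(\cdot)}(z)\in\mathcal{C}^{1}[0,\infty)$ then follows from the semigroup law exactly as in \prettyref{prop:charac_generator}. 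Note that once differentiability is obtained this way, your Riemann-sum identification and uniqueness paragraphs become superfluous; the averaging trick should replace, not supplement, the telescoping step.
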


We also have the following generalisation of \cite[Proposition 10.1.8 (1), p.~276--277]{bracci2020} where $\K=\C$ 
and $\Omega=\D$.

\begin{prop}\label{prop:diff_generator}
Let $\varphi$ be a semiflow on an open set $\Omega\subset\K$ such that $\varphi_{(\cdot)}(x)\in\mathcal{C}^{1}[0,\infty)$ 
for all $x\in\Omega$ and $\varphi_{t}\in\mathcal{C}^{1}_{\K}(\Omega)$ for all $t\geq 0$. 
Then 
\[
\dt{\varphi}_{0}(\varphi_{t}(x))=\varphi_{t}'(x)\dt{\varphi}_{0}(x),\quad t\geq 0,\,x\in\Omega.
\]
If in addition $\varphi$ has a generator $G$, then 
\[
\dt{\varphi}_{t}(x)=G(\varphi_{t}(x))=\varphi_{t}'(x)G(x),\quad t\geq 0,\,x\in\Omega.
\]
\end{prop}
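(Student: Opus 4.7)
The plan is to exploit both factorisations of the semigroup identity $\varphi_{t+s}=\varphi_{t}\circ\varphi_{s}=\varphi_{s}\circ\varphi_{t}$ and to differentiate in $s$ at $s=0\rlim$ in two different ways, producing two expressions for the same quantity $\dt{\varphi}_{t}(x)$ that, when equated, yield the first displayed identity.

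First I would fix $t\geq 0$ and $x\in\Omega$, set $y\coloneqq\varphi_{t}(x)\in\Omega$, and use the factorisation $\varphi_{t+s}(x)=\varphi_{s}(\varphi_{t}(x))=\varphi_{s}(y)$ to obtain
\[
\dt{\varphi}_{t}(x)=\lim_{s\to 0\rlim}\frac{\varphi_{t+s}(x)-\varphi_{t}(x)}{s}=\lim_{s\to 0\rlim}\frac{\varphi_{s}(y)-\varphi_{0}(y)}{s}=\dt{\varphi}_{0}(\varphi_{t}(x)),
\]
where the last equality is just the definition of $\dt{\varphi}_{0}$ at the point $y$, which is available thanks to the hypothesis $\varphi_{(\cdot)}(z)\in\mathcal{C}^{1}[0,\infty)$ for every $z\in\Omega$. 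Next I would exploit the other factorisation $\varphi_{t+s}(x)=\varphi_{t}(\varphi_{s}(x))$ and apply the one-variable chain rule in $s$: since $\varphi_{t}\in\mathcal{C}^{1}_{\K}(\Omega)$ is differentiable at $\varphi_{0}(x)=x$ and the curve $s\mapsto \varphi_{s}(x)$ is right-differentiable at $s=0$ with derivative $\dt{\varphi}_{0}(x)$, this gives
\[
\dt{\varphi}_{t}(x)=\left.\frac{\d}{\d s}\varphi_{t}(\varphi_{s}(x))\right|_{s=0\rlim}=\varphi_{t}'(\varphi_{0}(x))\,\dt{\varphi}_{0}(x)=\varphi_{t}'(x)\,\dt{\varphi}_{0}(x).
\]
Equating the two expressions for $\dt{\varphi}_{t}(x)$ establishes the first displayed identity.

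For the second statement the presence of a generator immediately yields $\dt{\varphi}_{t}(x)=G(\varphi_{t}(x))$ by \prettyref{defn:generator_semiflow}, and \prettyref{rem:generator_semiflow} further identifies $G=\dt{\varphi}_{0}$. Substituting into the identity just proven gives the full chain $G(\varphi_{t}(x))=\dt{\varphi}_{t}(x)=\varphi_{t}'(x)\,\dt{\varphi}_{0}(x)=\varphi_{t}'(x)\,G(x)$.

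The only delicate point is justifying the chain rule at $s=0\rlim$: in the case $\K=\R$ this is the standard one-sided chain rule for differentiable functions of a real variable, and in the case $\K=\C$ the map $\varphi_{t}$ is $\C$-differentiable on $\Omega$ while $s\mapsto\varphi_{s}(x)$ is a $\C$-valued curve on $[0,\infty)$ with right-derivative at $0$, so the composition rule applies verbatim. Nothing else in the argument poses a genuine obstacle.
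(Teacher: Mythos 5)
Your proof is correct and follows essentially the same route as the paper: both arguments differentiate the semigroup identity $\varphi_{s}(\varphi_{t}(x))=\varphi_{t+s}(x)=(\varphi_{t}\circ\varphi_{s})(x)$ with respect to $s$ via the chain rule and evaluate at $s=0$ (the paper differentiates at general $s$ first, you do it at $s=0\rlim$ directly, which is an inessential difference). Your explicit attention to the one-sided chain rule and the case $\K=\C$ is a welcome, if minor, addition of detail that the paper leaves implicit.
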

\begin{proof}
For all $s,t\geq 0$ and $x\in\Omega$ we have $\varphi_{s}(\varphi_{t}(x))=\varphi_{t+s}(x)=(\varphi_{t}\circ\varphi_{s})(x)$. 
By differentiating w.r.t.~$s$ we get 
\[
\dt{\varphi}_{s}(\varphi_{t}(x))=\varphi_{t}'(\varphi_{s}(x))\dt{\varphi}_{s}(x)
\]
for all $s,t\geq 0$ and $x\in\Omega$, which yields $\dt{\varphi}_{0}(\varphi_{t}(x))=\varphi_{t}'(x)\dt{\varphi}_{0}(x)$ 
for $s=0$. The rest of the statement follows from the definition of a generator and \prettyref{rem:generator_semiflow}.
\end{proof}

Next, let us recall the notion of a semicocycle for a semiflow.

\begin{defn}
Let $\varphi\coloneqq (\varphi_{t})_{t\geq 0}$ be a semiflow on a Hausdorff space $\Omega$. 
A family $m\coloneqq (m_{t})_{t\geq 0}$ of continuous functions $m_{t}\colon\Omega\to\K$ 
is called a multiplicative \emph{semicocycle} for $\varphi$ if 
\begin{enumerate}
\item[(i)] $m_{0}(x)=1$ for all $x\in\Omega$, and 
\item[(ii)] $m_{t+s}(x)=m_{t}(x)m_{s}(\varphi_{t}(x))$ for all $t,s\geq 0$ and $x\in\Omega$.
\end{enumerate}
We call a semicocycle $m$ \emph{trivial} and write $m=\mathds{1}$ if $m_{t}=\mathds{1}$ for all $t\geq 0$. 
We call a semicocycle $m$ a $C_{0}$\emph{-semicocycle} if $\lim_{t\to 0\rlim}m_{t}(x)=1$ for all $x\in\Omega$. 
If $\Omega\subset\C$ is open, we call a semicocycle $m$ \emph{holomorphic} if $m_{t}\in\mathcal{H}(\Omega)$ 
for all $t\geq 0$.
\end{defn}

If $\varphi$ is a holomorphic semiflow on an open set $\Omega\subset\C$, then a simple example of a holomorphic 
semicocycle $m$ for $\varphi$ is given by the complex derivatives $m_{t}\coloneqq \varphi_{t}'$ for $t\geq 0$ 
by the chain rule. There is an analogon of \prettyref{prop:loc_bounded_sg} for semicocycles 
due to K\"onig \cite{koenig1990} which will be important later on. 
The similarity to \prettyref{prop:loc_bounded_sg} is not a coincidence 
because they use the same ideas, which can be found in the proofs of 
\cite[VIII.1.4 Lemma, VIII.1.5 Corollary, p.~618--619]{dunford1958}. 

\begin{prop}\label{prop:semicocycle_bounded}
Let $\varphi$ be a semiflow on a Hausdorff space $\Omega$ and $m$ a semicocycle for $\varphi$. 
Then the following assertions are equivalent.
\begin{enumerate}
\item There exist $M\geq 1$ and $\omega\in\R$ such that 
$\|m_{t}\|_{\infty}\leq M\euler^{\omega t}$ for all $t\geq 0$.
\item It holds that $\sup_{t\in[0,t_{0}]}\|m_{t}\|_{\infty}<\infty$ 
for all $t_{0}\geq 0$.
\item There exists $t_{0}>0$ such that $\sup_{t\in[0,t_{0}]}\|m_{t}\|_{\infty}<\infty$. 
\item It holds that $\|m_{t}\|_{\infty}<\infty$ for all $t\geq 0$.
\item It holds that $\limsup_{t\to 0\rlim}\|m_{t}\|_{\infty}<\infty$. 
\end{enumerate}
\end{prop}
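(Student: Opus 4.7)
The proof proceeds by establishing a cycle of implications. Several links are direct: (a)$\Rightarrow$(b)$\Rightarrow$(c), (a)$\Rightarrow$(d), (a)$\Rightarrow$(e), and (c)$\Rightarrow$(e) are immediate from the definitions, and for (e)$\Rightarrow$(c) one unpacks the definition of $\limsup$ to obtain $\delta > 0$ and $L \geq 0$ with $\|m_t\|_\infty \leq L$ on $(0,\delta)$; together with $\|m_0\|_\infty = \|\mathds{1}\|_\infty = 1$ this gives (c) for any $t_0 \in (0,\delta)$.

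The central step is (c)$\Rightarrow$(a). Taking $\sup_{x\in\Omega}$ in the cocycle identity $m_{t+s}(x) = m_t(x)\, m_s(\varphi_t(x))$ yields the sub-multiplicative inequality $\|m_{t+s}\|_\infty \leq \|m_t\|_\infty\,\|m_s\|_\infty$. With $M_0 \coloneqq \sup_{t \in [0,t_0]}\|m_t\|_\infty \geq \|m_0\|_\infty = 1$, any $t \geq 0$ decomposes uniquely as $t = n t_0 + r$ with $n \in \N$, $r \in [0, t_0)$; iteration of sub-multiplicativity then gives $\|m_t\|_\infty \leq M_0^{n+1} \leq M_0 \cdot \exp\bigl(t\,\ln(M_0)/t_0\bigr)$, which is (a) with $M = M_0$ and $\omega = \ln(M_0)/t_0$ (take $\omega = 0$ if $M_0 = 1$). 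This mirrors the operator-theoretic step (c)$\Rightarrow$(a) of \prettyref{prop:loc_bounded_sg}.

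It remains to close the loop through (d); the natural target is (d)$\Rightarrow$(e). Here lies the main obstacle: sub-multiplicativity of $\sigma(t) \coloneqq \|m_t\|_\infty$ together with pointwise-in-$t$ finiteness does not on its own imply local boundedness of $\sigma$ near $t=0$, since for a pathological Cauchy-additive $c \colon [0,\infty) \to \R$ the constant semicocycle $m_t \equiv e^{c(t)}\mathds{1}$ satisfies (d) while violating (e). I would therefore follow the route indicated by the paper's pointer to \cite[VIII.1.4 Lemma, VIII.1.5 Corollary]{dunford1958}, exploiting the continuity of each $m_t$ on $\Omega$ in an essential way, likely through a Baire-category partition of the form $\Omega = \bigcup_{n\in\N}\{x \in \Omega : |m_t(x)| \leq n \text{ for all } t \in [0,t_0]\}$ to obtain a uniform bound on an open subset, and then propagating it to all of $\Omega$ via the cocycle identity. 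Alternatively, the cocycle may already enforce enough regularity of $\sigma$ (measurability, semicontinuity) that the classical fact ``sub-multiplicative and measurable implies locally bounded'' can be invoked directly; settling which of these routes the paper intends is the principal technical point to work out.
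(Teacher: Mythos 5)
Your handling of every link except the ones emanating from (d) is correct and complete: the trivial implications, the unpacking of (e)$\Rightarrow$(c) via $\|m_{0}\|_{\infty}=1$, and in particular (c)$\Rightarrow$(a), where taking suprema in the cocycle identity gives $\|m_{t+s}\|_{\infty}\leq\|m_{t}\|_{\infty}\|m_{s}\|_{\infty}$ and the decomposition $t=nt_{0}+r$ yields $M=M_{0}\geq\|m_{0}\|_{\infty}=1$ and $\omega=\ln(M_{0})/t_{0}$, exactly parallel to \prettyref{prop:loc_bounded_sg}. This is in fact more self-contained than the paper's own proof, which consists of the remark that (a)$\Rightarrow$(b)$\Rightarrow$(c) clearly holds together with a citation of the proof of \cite[Lemma 2.1 (a), p.~472]{koenig1990} for the cycle (d)$\Rightarrow$(a)$\Rightarrow$(e)$\Rightarrow$(c)$\Rightarrow$(d), accompanied only by the observation that holomorphy of the semicocycle and $\Omega=\D$ are irrelevant there.

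The genuine gap is the implication out of (d), which you leave open — and here your own counterexample settles the question of which of your two proposed routes works: neither can, at the stated level of generality. With $c\colon[0,\infty)\to\R$ additive and non-measurable, the family $m_{t}\coloneqq\euler^{c(t)}\mathds{1}$ is a bona fide semicocycle under this paper's definition for any semiflow (each $m_{t}$ is constant, hence continuous in $x$; the cocycle identity reduces to additivity of $c$; the definition imposes no regularity whatsoever in $t$), it satisfies (d) since $\|m_{t}\|_{\infty}=\euler^{c(t)}<\infty$, yet it violates (e) because $c$ is unbounded above on every interval. Consequently your Baire partition of $\Omega$ has empty members (every orbit $t\mapsto m_{t}(x)$ is unbounded on $[0,t_{0}]$, and a general Hausdorff $\Omega$ need not be a Baire space anyway), and no measurability or semicontinuity of $\sigma(t)=\|m_{t}\|_{\infty}$ can be extracted from the cocycle identity alone. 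What closes the gap in the paper is a hypothesis hidden in the citation: König's semicocycles are by his definition continuous in $t$ — the paper itself records this in its footnote on \cite{wu2021}, noting that \cite{koenig1990} has continuity ``incorporated in their definition of a semicocycle'' — and his proof of the step from (d) consumes exactly this; the paper's parenthetical waives only holomorphy and $\Omega=\D$, not the $t$-regularity. So to complete your proof you must import (separate) continuity of $t\mapsto m_{t}(x)$ as a standing assumption — then orbits are bounded on compact time intervals, $\sigma$ is lower semicontinuous, and König's argument applies — or read \prettyref{prop:semicocycle_bounded} with König's definition; as literally stated, the implication from (d) is not merely the ``principal technical point'' you called it, but a step that fails outright.
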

\begin{proof}
The implications (d)$\Rightarrow$(a)$\Rightarrow$(e)$\Rightarrow$(c)$\Rightarrow$(d) follow 
from the proof of \cite[Lem\-ma 2.1 (a), p.~472]{koenig1990} (we note that it is not relevant 
for the proof that $(h_{t})_{t\geq 0}\coloneqq m$ in the cited lemma is assumed to be holomorphic 
and $\Omega$ to be equal to $\D$). Moreover, the implications (a)$\Rightarrow$(b)$\Rightarrow$(c) clearly hold.
\end{proof}

\begin{defn}
Let $\varphi$ be a semiflow on a Hausdorff space $\Omega$ and $m$ a semicocycle for $\varphi$. 
We call the tuple $(m,\varphi)$ a \emph{co-semiflow} on $\Omega$. 
We call a co-semiflow $(m,\varphi)$ jointly continuous (separately continuous, $C_{0}$, holomorphic) if 
$\varphi$ and $m$ are both jointly continuous (separately continuous, $C_{0}$, holomorphic). 
\end{defn}

\begin{prop}\label{prop:jointly_cont_cocycle}
Let $(m,\varphi)$ be a co-semiflow on an open subset $\Omega$ of a metric space and $\varphi$ jointly continuous.  
Then $m$ is jointly continuous if and only if $m$ is $C_{0}$.
\end{prop}
\begin{proof}
The implication $\Rightarrow$ clearly holds. The other implication follows from \cite[Definition 3.1, p.~1203]{elin2019}, 
the proof of \cite[Theorem 3.1, p.~1204]{elin2019} with $\mathcal{D}\coloneqq \Omega$ and $\mathcal{A}\coloneqq\C$, 
and the observation that the assumption that $\mathcal{D}$ is an open connected subset of a Banach space 
(see \cite[p.~1200]{elin2019}) is not needed in the proof of \cite[Theorem 3.1, p.~1204]{elin2019}. 
\end{proof}

Analogously to \prettyref{prop:charac_generator} we have the following result for semicocycles.

\begin{prop}\label{prop:cont_diff_semicocycle}
Let $\Omega$ be a Hausdorff space and $(m,\varphi)$ a separately continuous co-semiflow on $\Omega$.
Then $m_{(\cdot)}(x)\in\mathcal{C}^{1}[0,\infty)$ for all $x\in\Omega$ and $\dt{m}_{t}\in\mathcal{C}(\Omega)$ 
for all $t\geq 0$ if and only if 
$m_{(\cdot)}(x)$ is right-differentiable in $t=0$ for all $x\in\Omega$ and $\dt{m}_{0}\in\mathcal{C}(\Omega)$. 
In this case $\dt{m}_{t}(x)=m_{t}(x)\dt{m}_{0}(\varphi_{t}(x))$ and $m_{t}(x)=\exp(\int_{0}^{t}\dt{m}_{0}(\varphi_{s}(x))\d s)$ 
for all $t\geq 0$ and $x\in\Omega$.
\end{prop}
\begin{proof}
We only need to prove the implication $\Leftarrow$. Let $m_{(\cdot)}(x)$ be right-differentiable in $t=0$ 
for all $x\in\Omega$ and $\dt{m}_{0}\in\mathcal{C}(\Omega)$. For $x\in\Omega$ we claim that 
$m_{(\cdot)}(x)$ is continuously right-differentiable on $[0,\infty)$ with right-derivative 
$m_{t}(x)\dt{m}_{0}(\varphi_{t}(x))$ for all $t\geq 0$. Indeed, we have 
\[
\lim_{s\to 0\rlim}\frac{m_{t+s}(x)-m_{t}(x)}{s}
=m_{t}(x)\lim_{s\to 0\rlim}\frac{m_{s}(\varphi_{t}(x))-1}{s}
=m_{t}(x)\dt{m}_{0}(\varphi_{t}(x))
\]
for all $t\geq 0$. Thus $m_{(\cdot)}(x)$ is right-differentiable on $[0,\infty)$ and the right-derivative is continuous (in $t$) 
because $\dt{m}_{0}\in\mathcal{C}(\Omega)$ and $(m,\varphi)$ is separately continuous. 
It follows from \cite[Chap.~2, Corollary 1.2, p.~43]{pazy1983} that the continuous function $m_{(\cdot)}(x)$ is continuously differentiable on $[0,\infty)$ with 
$\dt{m}_{t}(x)=m_{t}(x)\dt{m}_{0}(\varphi_{t}(x))$ for all $t\geq 0$.  
Thus for $x\in\Omega$ we know that the map $t\mapsto m_{t}(x)$ solves the initial value problem 
\[
\dt{w}(t)=w(t)g(\varphi_{t}(x)),\;t\geq 0,\;w(0)=1,
\]
with $g\coloneqq\dt{m}_{0}\in\mathcal{C}(\Omega)$.
Another solution of this initial value problem is given by the map $t\mapsto\exp(\int_{0}^{t}\dt{m}_{0}(\varphi_{s}(x))\d s)$. 
Since the solution of this initial value problem is unique (e.g.~by \cite[Chap.~1, Theorem 3, p.~7]{hurewicz1970}), 
we get that $m_{t}(x)=\exp(\int_{0}^{t}\dt{m}_{0}(\varphi_{s}(x))\d s)$ for all $t\geq 0$.
\end{proof}

We have the following construction of a semicocycle given a jointly continuous semiflow and a continuous function on a 
locally compact metric space.

\begin{prop}\label{prop:jointly_cont_int_cocycle}
Let $\varphi$ be a jointly continuous semiflow on a locally compact metric space $\Omega$ and $g\in\mathcal{C}(\Omega)$. 
Then the following assertions hold.
\begin{enumerate}
\item The family $m\coloneqq (m_{t})_{t\geq 0}$ given by 
$m_{t}(x)\coloneqq\exp(\int_{0}^{t}g(\varphi_{s}(x))\d s)$ for all $t\geq 0$ and $x\in\Omega$ 
is a jointly continuous semicocycle for $\varphi$. In particular, $m_{(\cdot)}(x)\in\mathcal{C}^{1}[0,\infty)$, 
$\dt{m}_{t}(x)=m_{t}(x)g(\varphi_{t}(x))$, $\dt{m}_{0}(x)=g(x)$ and $m_{t}(x)\neq 0$ for all $t\geq 0$ and $x\in\Omega$.
\item If $\Omega\subset\K$ is open and $\varphi_{t},g\in\mathcal{C}^{1}_{\K}(\Omega)$ for all $t\geq 0$, 
then $m_{t}\in\mathcal{C}^{1}_{\K}(\Omega)$ for all $t\geq 0$ with $m$ from part (a). 
\end{enumerate}
\end{prop}
\begin{proof}
(a) For $t\geq 0$ we note that $g\circ\varphi_{t}\in\mathcal{C}(\Omega)$, the map $g(\varphi_{(\cdot)}(x))$ is continuous 
and therefore integrable on $[0,t]$ and for every $x_{0}\in\Omega$ there is a compact neighbourhood $U\subset\Omega$ 
of $x_{0}$ such that $|g(\varphi_{(\cdot)}(x))|\leq \sup\{|g(\varphi_{s}(w))|\;|\;(s,w)\in[0,t]\times U\}<\infty$ on $[0,t]$ 
for all $x\in U$ because $\Omega$ is locally compact, $g\in\mathcal{C}(\Omega)$ and $\varphi$ jointly continuous. 
Setting $F_{t}\colon\Omega\to\K$, $F_{t}(x)\coloneqq\int_{0}^{t}g(\varphi_{s}(x))\d s$, we deduce that 
$F_{t}$ is continuous on the metric space $\Omega$ by \cite[5.6 Satz, p.~147]{elstrodt2005} and thus 
$m_{t}=\exp\circ F_{t}$ as well. From here it is easy to check that $m$ is a $C_{0}$-semicocycle for $\varphi$ 
and so jointly continuous by \prettyref{prop:jointly_cont_cocycle}. 
The rest of statement (a) follows from the integral form of $m_{t}(x)$ and \prettyref{prop:cont_diff_semicocycle}.

(b) In the case $\K=\R$ the statement follows from \cite[5.7 Satz, p.~147--148]{elstrodt2005} and in the case $\K=\C$ from 
\cite[5.8 Satz, p.~148]{elstrodt2005}.
\end{proof}

On connected proper subsets of $\C$ every jointly continuous holomorphic semicocycle of 
a jointly continuous holomorphic semiflow is actually of the integral form in \prettyref{prop:jointly_cont_int_cocycle} (a).

\begin{prop}\label{prop:cocycle_int_hol}
Let $\Omega\subset\C$ be open and connected, and $(m,\varphi)$ a jointly continuous holomorphic co-semiflow on $\Omega$. 
Then it holds $m_{(\cdot)}(z)\in\mathcal{C}^{1}[0,\infty)$, $\dt{m}_{t}\in\mathcal{H}(\Omega)$ and 
$m_{t}(z)=\exp(\int_{0}^{t}\dt{m}_{0}(\varphi_{s}(z))\d s)$ for all $t\geq 0$ and $z\in\Omega$.
\end{prop}
\begin{proof}
Due to \cite[Theorem 4, p.~3392]{jafari2005} we have $m_{(\cdot)}(z)\in\mathcal{C}^{1}[0,\infty)$ for all $z\in\Omega$ 
and $\dt{m}_{t}\in\mathcal{H}(\Omega)$ for all $t\geq 0$. Then it follows from \prettyref{prop:cont_diff_semicocycle} 
that $m_{t}(z)=\exp(\int_{0}^{t}\dt{m}_{0}(\varphi_{s}(z))\d s)$ for all $t\geq 0$ and $z\in\Omega$. 
\end{proof}

\prettyref{prop:cocycle_int_hol} improves \cite[Theorem 3, p.~3392]{jafari2005} with $g=\dt{m}_{0}\in\mathcal{H}(\Omega)$ from 
simply connected open $\Omega\subset\C$ to just connected open $\Omega\subset\C$. 
Moreover, \prettyref{prop:cocycle_int_hol} implies \cite[Lemma 2.1 (b), p.~472]{koenig1990}. 
There is another way to construct semi\-co\-cycles for a semiflow apart from the one in \prettyref{prop:jointly_cont_int_cocycle}, 
namely, so-called (semi)coboundaries, see e.g.~\cite[p.~240]{jafari1997}, \cite[p.~469--470]{koenig1990} 
and \cite[p.~513]{parry1972}. For that construction we need the notion of a fixed point of a semiflow. 

\begin{defn}
Let $\Omega$ be a Hausdorff space and $\varphi$ a semiflow on $\Omega$. We call $x\in\Omega$ a \emph{fixed point} of $\varphi$ 
if it is a common fixed point of all $\varphi_{t}$, i.e.~$\varphi_{t}(x)=x$ for all $t\geq 0$. We denote the set of all fixed 
points of $\varphi$ by $\operatorname{Fix}(\varphi)\coloneqq\{x\in\Omega\;|\;\forall\;t\geq 0:\;\varphi_{t}(x)=x\}$.
\end{defn}

Let $\Omega$ be a Hausdorff space and $\varphi$ a semiflow on 
$\Omega$. Let $\omega\in\mathcal{C}(\Omega)$, $\omega\neq 0$, such that its set of zeros 
$N_{\omega}\coloneqq\{x\in\Omega\;|\;\omega(x)=0\}$ fulfils that 
$N_{\omega}\subset\operatorname{Fix}(\varphi)$, and that $\Omega\setminus N_{\omega}$ is dense in $\Omega$. 
We set 
\[
m_{t}^{\omega}(x)
\coloneqq m_{t}^{\omega,\varphi}(x)
\coloneqq \frac{\omega(\varphi_{t}(x))}{\omega(x)},\quad t\geq 0,\, x\in\Omega\setminus N_{\omega},
\]
and note that $m_{t}^{\omega}\colon \Omega\setminus N_{\omega}\to \K$ is continuous for all $t\geq 0$. Moreover, 
\begin{equation}\label{eq:coboundary_outside_zeros}
m_{0}^{\omega}(x)=1\quad\text{and}\quad m_{t+s}^{\omega}(x)=m_{t}^{\omega}(x)m_{s}^{\omega}(\varphi_{t}(x)),\quad t,s\geq 0,\, 
x\in \Omega\setminus N_{\omega}.
\end{equation}
If $N_{\omega}\neq \varnothing$, suppose additionally that $m_{t}^{\omega}$ is continuously extendable on $\Omega$ 
for all $t\geq 0$ and denote the (unique) extension by $m_{t}^{\omega}$ as well. 
Then \eqref{eq:coboundary_outside_zeros} also holds for $x\in N_{\omega}$ by continuity and 
the density of $\Omega\setminus N_{\omega}$ in $\Omega$. Thus $m^{\omega}\coloneqq (m_{t}^{\omega})_{t\geq 0}$ is a 
semicocycle for $\varphi$ under this assumption. 

\begin{defn}
Let $\Omega$ be a Hausdorff space and $\varphi$ a semiflow on $\Omega$. A semicocycle $m$ for $\varphi$ is called 
a \emph{semicoboundary} for $\varphi$ if there is $\omega\in\mathcal{C}(\Omega)$, $\omega\neq 0$, 
such that $N_{\omega}\subset\operatorname{Fix}(\varphi)$, the set $\Omega\setminus N_{\omega}$ is dense in $\Omega$, and 
$m=m^{\omega}$.
\end{defn}

\begin{prop}\label{prop:fixed_points}
Let $\Omega$ be a Hausdorff space and $\varphi$ a semiflow on $\Omega$ such 
that $\varphi_{(\cdot)}(x)\in\mathcal{C}^{1}[0,\infty)$ for all $x\in\Omega$. 
Then the following assertions hold.
\begin{enumerate}
\item $x_{0}\in\operatorname{Fix}(\varphi)$ if and only if $\dt{\varphi}_{t}(x_{0})=0$ 
for all $t\geq 0$.
\item Suppose that $\varphi$ has a generator $G$. Then $\operatorname{Fix}(\varphi)\subset N_{G}$.
\end{enumerate}
\end{prop}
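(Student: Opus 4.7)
For part (a), the plan is a straightforward application of the fundamental theorem of calculus applied to the $\mathcal{C}^{1}$-map $t\mapsto \varphi_{t}(x_{0})$. The forward direction is immediate: if $x_{0}$ is a fixed point, then $\varphi_{t}(x_{0})=x_{0}$ is constant in $t$, hence its derivative $\dt{\varphi}_{t}(x_{0})$ vanishes for all $t\geq 0$ (including $t=0$ in the one-sided sense). For the converse, I would use that $\varphi_{(\cdot)}(x_{0})\in\mathcal{C}^{1}[0,\infty)$ combined with the assumption $\dt{\varphi}_{t}(x_{0})=0$ for every $t\geq 0$ to conclude that $t\mapsto\varphi_{t}(x_{0})$ is constant on $[0,\infty)$ (this is where we need genuine continuous differentiability, not merely right-differentiability at a single point). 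Evaluating at $t=0$ and using the semiflow axiom $\varphi_{0}(x_{0})=x_{0}$ from \prettyref{defn:semiflow}~(i) then gives $\varphi_{t}(x_{0})=x_{0}$ for all $t\geq 0$, i.e.\ $x_{0}\in\operatorname{Fix}(\varphi)$.

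For part (b), the plan is to combine part (a) with the characterisation of the generator in \prettyref{rem:generator_semiflow}. Specifically, if $x_{0}\in\operatorname{Fix}(\varphi)$, then part (a) applied at $t=0$ yields $\dt{\varphi}_{0}(x_{0})=0$. But \prettyref{rem:generator_semiflow} says that the generator is uniquely given by $G(x)=\dt{\varphi}_{0}(x)$ for all $x\in\Omega$, so $G(x_{0})=\dt{\varphi}_{0}(x_{0})=0$, which means $x_{0}\in N_{G}$.

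There is no real obstacle here; the only subtlety worth flagging is to be careful that the $\mathcal{C}^{1}$-hypothesis at $t=0$ is interpreted as right-differentiability, so that the statement $\dt{\varphi}_{0}(x_{0})=0$ makes sense and the constancy argument in part (a) goes through on the half-line $[0,\infty)$ rather than requiring a two-sided derivative.
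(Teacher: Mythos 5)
Your proposal is correct and takes essentially the same route as the paper: the forward direction of (a) by differentiating the identity $\varphi_{t}(x_{0})=x_{0}$, the converse by observing that a vanishing derivative on $[0,\infty)$ forces $\varphi_{(\cdot)}(x_{0})$ to be constant with value $\varphi_{0}(x_{0})=x_{0}$, and (b) via $G(x_{0})=\dt{\varphi}_{0}(x_{0})=0$ as in \prettyref{rem:generator_semiflow}. Your remark that the $\mathcal{C}^{1}$-hypothesis (with right-differentiability at $t=0$) is what legitimises the constancy argument is a fair point of care, though the paper leaves it implicit.
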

\begin{proof}
(a) $x_{0}\in\operatorname{Fix}(\varphi)$ if and only if $\varphi_{t}(x_{0})=x_{0}$ for all 
$t\geq 0$. By differentiating w.r.t.~$t$ we get $\dt{\varphi}_{t}(x_{0})=0$ 
for all $t\geq 0$. 

Conversely, suppose that $\dt{\varphi}_{t}(x_{0})=0$ for all $t\geq 0$. This implies that 
there is a constant $C(x_{0})$ w.r.t.~$t$ such that $\varphi_{t}(x_{0})=C(x_{0})$ for all 
$t\geq 0$. Since $x_{0}=\varphi_{0}(x_{0})=C(x_{0})$, we obtain that 
$x_{0}\in\operatorname{Fix}(\varphi)$.

(b) Since $G$ is a generator of $\varphi$, we have $G(x_{0})=\dt{\varphi}_{0}(x_{0})=0$ 
for any $x_{0}\in\operatorname{Fix}(\varphi)$ by (a), which implies $\operatorname{Fix}(\varphi)\subset N_{G}$.
\end{proof}

\begin{prop}\label{prop:fixed_points_holomorphic}
Let $\Omega\subset\C$ be open and $\varphi$ a jointly continuous holomorphic semiflow on $\Omega$ 
with generator $G$. Then the following assertions hold.
\begin{enumerate}
\item If $\Omega$ is connected, then $\operatorname{Fix}(\varphi)=N_{G}$ and $\varphi_{t}$ is 
injective for all $t\geq 0$.
\item If $\varphi$ is non-trivial, $\Omega$ simply connected and $\Omega\neq\C$, 
then $|\operatorname{Fix}(\varphi)|\leq 1$ where $|\operatorname{Fix}(\varphi)|$ denotes the cardinality of 
$\operatorname{Fix}(\varphi)$. 
\end{enumerate}
\end{prop}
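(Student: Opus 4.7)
My plan for part (a) splits naturally into the set equality $\operatorname{Fix}(\varphi) = N_G$ and the injectivity of each $\varphi_t$. One inclusion, $\operatorname{Fix}(\varphi) \subset N_G$, is immediate from \prettyref{prop:fixed_points}\,(b). For the reverse inclusion I would pick $x_0 \in N_G$, note that \prettyref{thm:generator_hol_semiflow} supplies $\varphi_{(\cdot)}(x_0) \in \mathcal{C}^{1}[0,\infty)$ and $\varphi_{t} \in \mathcal{H}(\Omega) = \mathcal{C}^{1}_{\C}(\Omega)$ so that \prettyref{prop:diff_generator} applies, and then read off $\dt{\varphi}_{t}(x_0) = \varphi_{t}'(x_0)\,G(x_0) = 0$ for every $t \geq 0$; \prettyref{prop:fixed_points}\,(a) then returns $x_0 \in \operatorname{Fix}(\varphi)$.

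To establish injectivity I would exploit backward-in-time uniqueness for the generator ODE. Fix $t > 0$ and suppose $\varphi_{t}(z_1) = \varphi_{t}(z_2)$; set $h_{i}(s) \coloneqq \varphi_{t-s}(z_i)$ for $s \in [0,t]$ and $i=1,2$. Using $\dt{\varphi}_{s}(z) = G(\varphi_{s}(z))$ from \prettyref{prop:diff_generator}, a direct computation gives $h_{i}'(s) = -G(h_{i}(s))$, so both curves solve the same autonomous ODE $h' = -G\circ h$ on $[0,t]$ with common initial datum $h(0)=\varphi_{t}(z_1)$, and both remain in $\Omega$ by construction. Since $G \in \mathcal{H}(\Omega)$ is locally Lipschitz, Picard--Lindelöf uniqueness combined with a standard continuation argument (on the supremum of the subinterval where the two trajectories coincide) forces $h_1 \equiv h_2$, whence $z_1 = h_1(t) = h_2(t) = z_2$.

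For part (b) my strategy is to transport the problem to $\D$ via the Riemann mapping theorem and then invoke Schwarz's lemma. Non-triviality furnishes some $t_0 > 0$ with $\varphi_{t_0} \neq \id$. Since $\Omega \subsetneq \C$ is simply connected, there exists a biholomorphism $\psi \colon \Omega \to \D$; conjugation $\tilde{\varphi}_{t} \coloneqq \psi \circ \varphi_{t} \circ \psi^{-1}$ produces a jointly continuous holomorphic semiflow on $\D$ with $\tilde{\varphi}_{t_0} \neq \id$ and $\operatorname{Fix}(\tilde{\varphi}) = \psi(\operatorname{Fix}(\varphi))$. Because $\operatorname{Fix}(\tilde{\varphi}) \subset \operatorname{Fix}(\tilde{\varphi}_{t_0})$, it suffices to show $|\operatorname{Fix}(\tilde{\varphi}_{t_0})| \leq 1$. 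Assuming two distinct fixed points $a \neq b$ in $\D$, I would precompose $\tilde{\varphi}_{t_0}$ with a disc automorphism sending $a$ to $0$ and apply Schwarz's lemma; the equality case at $b \neq 0$ then forces $\tilde{\varphi}_{t_0}$ to be a rotation, and the remaining fixed-point condition makes this rotation the identity, contradicting $\tilde{\varphi}_{t_0} \neq \id$.

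The delicate step I expect to require the most care is the backward-uniqueness argument in (a): a priori the identity $h_1 = h_2$ is guaranteed only on a neighbourhood of $s=0$, so one must verify that agreement propagates across all of $[0,t]$ without either trajectory leaving $\Omega$. The latter is built into the definition of $h_i$, and the former is the usual open-and-closed continuation; both are straightforward but deserve to be spelled out. Everything else is a direct application of the structural results from \prettyref{sect:semiflows_cocycles} on generators together with a textbook invocation of Schwarz's lemma.
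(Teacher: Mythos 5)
Your proposal is correct, but it takes a genuinely different route from the paper, which is essentially a citation proof: for (a) the paper invokes \cite[Proposition 1.4.13 (i), p.~89]{abate1989} for $\operatorname{Fix}(\varphi)=N_{G}$ and \cite[Proposition 1.4.6, p.~85]{abate1989} (via the observation that an open connected subset of $\C$ is a Riemann surface) for injectivity, and for (b) it performs the same Riemann-mapping conjugation as you do but then simply cites \cite[Remark 10.1.6, p.~275]{bracci2020} for $|\operatorname{Fix}(\psi)|\leq 1$. You instead unpack everything: the equality $\operatorname{Fix}(\varphi)=N_{G}$ from the paper's own structural results (\prettyref{prop:fixed_points} together with \prettyref{prop:diff_generator} and \prettyref{thm:generator_hol_semiflow}, with no circularity since all three precede the statement), injectivity from backward Picard--Lindel\"of uniqueness for $h'=-G\circ h$ (valid since holomorphic $G$ is locally Lipschitz, the trajectories $h_{i}(s)=\varphi_{t-s}(z_{i})$ stay in $\Omega$ by construction, and your open-and-closed continuation on $[0,t]$ is exactly the right way to globalise local uniqueness), and part (b) from the equality case of Schwarz's lemma after moving one fixed point to the origin --- which is precisely the classical argument underlying the cited remark in \cite{bracci2020}. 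What each approach buys: the paper's version is shorter and leans on established monographs, whereas yours is self-contained, avoids the Riemann-surface formalism entirely, and is in fact marginally stronger, since neither your ODE argument for injectivity nor your proof of $N_{G}\subset\operatorname{Fix}(\varphi)$ uses connectedness of $\Omega$ (connectedness enters the paper's route only through the Riemann-surface framework). The one step you flagged as delicate --- propagation of backward agreement across $[0,t]$ --- is handled correctly; just make sure when writing it up to note that at $s=0$ (respectively $s=t$) the relevant one-sided derivatives of $h_{i}$ exist because $\varphi_{(\cdot)}(z_{i})\in\mathcal{C}^{1}[0,\infty)$ by \prettyref{thm:generator_hol_semiflow}.
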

\begin{proof}
(a) The first part of (a) is just \cite[Proposition 1.4.13 (i), p.~89]{abate1989}. 
The second part is \cite[Proposition 1.4.6, p.~85]{abate1989} in combination 
with the fact that every open connected subset of $\C$ is a Riemann surface in the sense of 
\cite[p.~14]{abate1989}.

(b) We choose a biholomorphic map $h\colon\D\to\Omega$ by the Riemann mapping theorem and set 
$\psi_{t}\coloneqq h^{-1}\circ\varphi_{t}\circ h$ for every $t\geq 0$. 
Then $\psi\coloneqq (\psi_{t})_{t\geq 0}$ is a non-trivial 
jointly continuous holomorphic semiflow on $\D$ 
with $\operatorname{Fix}(\psi)=h^{-1}(\operatorname{Fix}(\varphi))$. 
Due to \cite[Remark 10.1.6, p.~275]{bracci2020} we know that $|\operatorname{Fix}(\psi)|\leq 1$, 
implying that statement (b) holds.
\end{proof}

\begin{exa}\label{ex:semicoboundary}
Let $\Omega\subset\C$ be open and connected, $\varphi$ a holomorpic semiflow on $\Omega$ and $\omega\in\mathcal{H}(\Omega)$, 
$\omega\neq 0$, such that $N_{\omega}\subset\operatorname{Fix}(\varphi)$. Then the semicoboundary $m^{\omega}$ 
for $\varphi$ satisfies 
\[
m_{t}^{\omega}(z)=
\begin{cases}
\frac{\omega(\varphi_{t}(z))}{\omega(z)} &,\; z\in\Omega\setminus N_{\omega},\\
\bigl(\varphi_{t}'(z)\bigr)^{\operatorname{ord}_{\omega}(z)} &,\; z\in N_{\omega},
\end{cases}
\]
and $m_{t}^{\omega}\in\mathcal{H}(\Omega)$ for all $t\geq 0$ where $\operatorname{ord}_{\omega}(z)\in\N$ 
is the order of the zero $z\in N_{\omega}$ of $\omega$. If $\varphi$ is additionally jointly continuous, 
then $m^{\omega}$ is jointly continuous.
\end{exa}
\begin{proof}
First, we note that $\Omega\setminus N_{\omega}$ is dense in $\Omega$ since $N_{\omega}$ is discrete in $\Omega$. 
For $b\in N_{\omega}$ there is $\psi\in\mathcal{H}(\Omega)$ such that $\psi(b)\neq 0$ and 
$\omega(z)=(z-b)^{n}\psi(z)$ for all $z\in\Omega$ with $n\coloneqq\operatorname{ord}_{\omega}(b)$.
Let $t\geq 0$. Then we have for all $z\in \Omega\setminus N_{\omega}$
\[
\frac{\omega(\varphi_{t}(z))}{\omega(z)}
=\Bigl(\frac{\varphi_{t}(z)-b}{z-b}\Bigr)^{n}\frac{\psi(\varphi_{t}(z))}{\psi(z)}
\underset{z\to b}{\to} (\varphi_{t}'(b)\bigr)^{n}\frac{\psi(\varphi_{t}(b))}{\psi(b)}
=(\varphi_{t}'(b)\bigr)^{n}
\]
because $N_{\omega}\subset\operatorname{Fix}(\varphi)$. 
Therefore $m_{t}^{\omega}$ is continuously extendable on $\Omega$ and this extension is holomorphic on $\Omega$ 
by Riemann's removable singularity theorem.

Now, if $\varphi$ is additionally jointly continuous, then we have $\lim_{t\to 0\rlim}m_{t}^{\omega}(z)=\omega(z)/\omega(z)=1$ 
for all $z\in \Omega\setminus N_{\omega}$. 
Furthermore, the map $[0,\infty)\to\C$, $t\mapsto \varphi_{t}'(z)$, is continuous for every $z\in\Omega$ by 
\cite[Lemma 2.1, p.~242]{jafari1997} with connected $G\coloneqq\Omega$. Hence we have $\lim_{t\to 0\rlim}m_{t}^{\omega}(z)
=(\varphi_{0}'(z))^{\operatorname{ord}_{\omega}(z)}=1^{\operatorname{ord}_{\omega}(z)}=1$ for all $z\in N_{\omega}$. 
We conclude that $m_{t}^{\omega}$ is $C_{0}$ and thus jointly continuous by 
\prettyref{prop:jointly_cont_cocycle}.
\end{proof}

For $\Omega=\D$ the previous example is already contained in \cite[p.~361--362]{siskakis1986}. 
We already observed that $(\varphi_{t}')_{t\geq 0}$ is a simple example of a semicocycle of a holomorphic semiflow $\varphi$ 
on an open set $\Omega\subset\C$. If $\Omega$ is also connected, then it is even a semicoboundary 
by \prettyref{thm:generator_hol_semiflow}, \prettyref{prop:diff_generator} and \prettyref{prop:fixed_points_holomorphic} (a), 
which is jointly continuous by the arguments in the example above.

\begin{exa}\label{ex:semicoboundary_gen}
Let $\Omega\subset\C$ be open and connected, and $\varphi$ a jointly continuous holomorphic semiflow on $\Omega$ with 
generator $G\neq 0$. 
Then $m_{t}^{G}(z)=\varphi_{t}'(z)$ for all $t\geq 0$ and $z\in\Omega$, and $m^{G}$ is jointly continuous. 
\end{exa}

For $\Omega=\D$ the previous example is also contained in \cite[Example 7.4, p.~247--248]{siskakis1998}. 
We have the following relation between semicoboundaries and the 
semicocycles from \prettyref{prop:jointly_cont_int_cocycle} of a 
jointly continuous holomorphic semiflow on simply connected proper subsets $\Omega\subset\C$, 
which generalises \cite[Lemma 2.2, p.~472]{koenig1990} where $\Omega=\D$.

Let $\Omega\subset\C$ be open and connected, $\varphi$ a jointly continuous holomorphic semiflow 
on $\Omega$ with generator $G$ (see \prettyref{thm:generator_hol_semiflow}), and $\omega\in\mathcal{H}(\Omega)$, $\omega\neq 0$, 
such that $N_{\omega}\subset\operatorname{Fix}(\varphi)$. 
The function $z\mapsto \frac{\omega'(z)}{\omega(z)}$ has a pole of order one
in $z_{0}\in N_{\omega}$. Due to \prettyref{prop:fixed_points_holomorphic} (a)
we have $N_{\omega}\subset\operatorname{Fix}(\varphi)=N_{G}$ and thus the holomorphic 
function $\Omega\setminus N_{\omega}\to\C$, 
$z\mapsto \frac{G(z)\omega'(z)}{\omega(z)}$, is continuously extendable in any $z_{0}\in N_{\omega}$. 
By Riemann's removable singularity theorem this extension is holomorphic on $\Omega$ 
and we denote it by $g_{\omega, G}$.

\begin{prop}\label{prop:semicobound_int}
Let $\Omega\subsetneq\C$ be open and simply connected, and $\varphi$ a jointly continuous holomorphic semiflow on $\Omega$ 
with generator $G$. Then the following assertions hold.
\begin{enumerate}
\item If $\omega\in\mathcal{H}(\Omega)$, $\omega\neq 0$, such that $N_{\omega}\subset\operatorname{Fix}(\varphi)$, then 
\begin{equation}\label{eq:semicobound_int}
m_{t}^{\omega}(z)=\exp\Bigl(\int_{0}^{t}g(\varphi_{s}(z))\d s\Bigr),\quad t\geq 0,\, z\in\Omega,
\end{equation}
with $g\coloneqq g_{\omega, G}\in\mathcal{H}(\Omega)$.
\item Let $G\neq 0$ and $g\in\mathcal{H}(\Omega)$. Then there is $\omega\in\mathcal{H}(\Omega)$ such that 
\eqref{eq:semicobound_int} holds if and only if $g(b)/G'(b)\in\N_{0}$ for every $b\in\operatorname{Fix}(\varphi)$. 
In this case $\operatorname{ord}_{\omega}(b)=g(b)/G'(b)$. 
\end{enumerate}
\end{prop}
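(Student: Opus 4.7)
For part (a), the key observation is that $m^{\omega}$ is a jointly continuous holomorphic semicocycle by \prettyref{ex:semicoboundary}, so \prettyref{prop:cocycle_int_hol}~(a) produces some $h\in\mathcal{H}(\Omega)$ with $m_{t}^{\omega}(z)=\exp(\int_{0}^{t}h(\varphi_{s}(z))\d s)$. Differentiating this identity at $t=0$ (or invoking \prettyref{prop:jointly_cont_int_cocyle}~(a) and \prettyref{prop:cocycle_int_hol}~(b)) yields $h=\dt{m}^{\omega}_{0}$ on $\Omega$. On the dense set $\Omega\setminus N_{\omega}$ one can differentiate $m_{t}^{\omega}(z)=\omega(\varphi_{t}(z))/\omega(z)$ directly, using $\dt{\varphi}_{0}=G$, to obtain
\[
\dt{m}^{\omega}_{0}(z)=\frac{\omega'(z)G(z)}{\omega(z)}=g_{\omega,G}(z),\quad z\in\Omega\setminus N_{\omega}.
\]
Since both $h$ and $g_{\omega,G}$ belong to $\mathcal{H}(\Omega)$ and $\Omega$ is connected, the identity theorem forces $h=g_{\omega,G}$ on all of $\Omega$.

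\textbf{Part (b), necessity.} If $\omega\in\mathcal{H}(\Omega)$ realises \eqref{eq:semicobound_int}, then part (a) together with the uniqueness of the integrand (obtained again by differentiating at $t=0$) gives $g=g_{\omega,G}$ on $\Omega$. For $b\in\operatorname{Fix}(\varphi)$ write $\omega(z)=(z-b)^{n}\psi(z)$ with $n\coloneqq\operatorname{ord}_{\omega}(b)\in\N_{0}$ and $\psi(b)\neq 0$, and $G(z)=(z-b)G_{1}(z)$ with $G_{1}(b)=G'(b)$ (which is nonzero by the implicit assumption that the quotient $g(b)/G'(b)$ is meaningful; this holds, e.g., whenever the generator has only simple zeros, as is the generic situation for non-trivial jointly continuous holomorphic semiflows on simply connected proper $\Omega\subsetneq\C$). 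A short calculation of the limit
\[
g(b)=g_{\omega,G}(b)=\lim_{z\to b}\frac{G(z)\omega'(z)}{\omega(z)}=nG_{1}(b)=nG'(b)
\]
shows $g(b)/G'(b)=n\in\N_{0}$ and $\operatorname{ord}_{\omega}(b)=g(b)/G'(b)$.

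\textbf{Part (b), sufficiency.} Because $G\neq 0$, $\varphi$ is non-trivial and \prettyref{prop:fixed_points_holomorphic}~(b) gives $|\operatorname{Fix}(\varphi)|\leq 1$, while \prettyref{prop:fixed_points_holomorphic}~(a) gives $\operatorname{Fix}(\varphi)=N_{G}$. If $\operatorname{Fix}(\varphi)=\varnothing$, then $g/G\in\mathcal{H}(\Omega)$ and simple connectedness of $\Omega$ yields $F\in\mathcal{H}(\Omega)$ with $F'=g/G$; then $\omega\coloneqq\exp(F)$ satisfies $g_{\omega,G}=g$, and part (a) finishes the proof. If $\operatorname{Fix}(\varphi)=\{b\}$, set $n\coloneqq g(b)/G'(b)\in\N_{0}$. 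The meromorphic function $g/G$ has at worst a simple pole at $b$ with residue $g(b)/G'(b)=n$, hence
\[
h(z)\coloneqq\frac{g(z)}{G(z)}-\frac{n}{z-b}
\]
extends by Riemann's removable singularity theorem to an element of $\mathcal{H}(\Omega)$. Pick a primitive $F\in\mathcal{H}(\Omega)$ of $h$ (again using simple connectedness) and define $\omega(z)\coloneqq(z-b)^{n}\exp(F(z))\in\mathcal{H}(\Omega)$. Then $\omega'/\omega=n/(z-b)+F'=g/G$ on $\Omega\setminus\{b\}$, so $g_{\omega,G}=g$ on $\Omega$ and part (a) again gives \eqref{eq:semicobound_int}.

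\textbf{Main obstacle.} The only non-routine step is the construction of $\omega$ in the sufficiency part of (b): one must simultaneously install the prescribed order of zero at the fixed point and match the logarithmic derivative away from it. The integer-residue hypothesis $g(b)/G'(b)\in\N_{0}$ is precisely what allows the pole of $g/G$ to be absorbed into the factor $(z-b)^{n}$, after which simple connectedness of $\Omega$ supplies the required primitive.
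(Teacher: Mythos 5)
Your proof is correct in substance, but it takes a genuinely different route from the paper's. The paper proves both parts by conjugation to the disc: it chooses a Riemann map $h\colon\D\to\Omega$, passes to $\psi_{t}=h^{-1}\circ\varphi_{t}\circ h$ with generator $G_{\psi}=(G\circ h)/h'$, and invokes K\"onig's disc-case result \cite[Lemma 2.2, p.~472]{koenig1990} for both the representation in (a) and the criterion in (b); the bulk of its work is transfer bookkeeping — verifying $g_{\omega\circ h,G_{\psi}}(\psi_{s}(h^{-1}(z)))=g_{\omega,G}(\varphi_{s}(z))$ (which needs the observation that orbits of non-fixed points avoid $N_{\omega}$, via injectivity of $\varphi_{s_{0}}$ from \prettyref{prop:fixed_points_holomorphic} (a)) and the identity $g(b)/G'(b)=(g\circ h)(\widetilde{b})/G_{\psi}'(\widetilde{b})$. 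You instead argue intrinsically on $\Omega$: in (a) you combine the integral representation of \prettyref{prop:cocycle_int_hol} (a) with differentiation at $t=0$ on the dense set $\Omega\setminus N_{\omega}$ and the identity theorem; in (b) you construct $\omega$ explicitly, absorbing the simple pole of $g/G$ at the fixed point into the factor $(z-b)^{n}$ and taking a primitive on the simply connected domain. Your route trades the paper's external crutch (K\"onig's lemma) for a different one (\cite{jafari2005}, via \prettyref{prop:cocycle_int_hol}) plus elementary residue calculus; in particular your sufficiency construction is more transparent than the paper's, which obtains $\widetilde{\omega}$ on $\D$ from K\"onig's lemma and pulls it back.

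One step needs firmer footing. In both directions of (b) you use that the zero of $G$ at the fixed point $b$ is simple — in necessity to divide by $G'(b)$, and crucially in sufficiency, where ``at worst a simple pole'' of $g/G$ fails if $G$ had a zero of higher order at $b$. You justify this as ``the generic situation'', which is the wrong framing: it is \emph{always} the case for a non-trivial jointly continuous holomorphic semiflow, not generically. A short argument: the semigroup property and $\varphi_{s}(b)=b$ give $\varphi_{t+s}'(b)=\varphi_{t}'(b)\varphi_{s}'(b)$, and applying your part (a) with $\omega\coloneqq G$ together with \prettyref{ex:semicoboundary_gen} yields $\varphi_{t}'(b)=\exp\bigl(\int_{0}^{t}G'(\varphi_{s}(b))\d s\bigr)=\euler^{tG'(b)}$; if $G'(b)=0$, then $\varphi_{t}'(b)=1$ for all $t\geq 0$, and after conjugating to $\D$ by the Riemann map the Schwarz lemma forces $\varphi_{t}=\id$, contradicting $G\neq 0$. (Alternatively one can quote the Berkson--Porta representation of generators, see e.g.~\cite[Chap.~10]{bracci2020}.) This fact is needed even for the statement $g(b)/G'(b)\in\N_{0}$ to be well-posed — the paper inherits it silently from K\"onig's lemma — so once it is supplied, your argument is complete.
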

\begin{proof}
We choose a biholomorphic map $h\colon\D\to\Omega$ by the Riemann mapping theorem and set 
$\psi_{t}\coloneqq h^{-1}\circ\varphi_{t}\circ h$ for every $t\geq 0$. 
Then $\psi\coloneqq (\psi_{t})_{t\geq 0}$ is a jointly continuous holomorphic semiflow on $\D$ 
with $\operatorname{Fix}(\psi)=h^{-1}(\operatorname{Fix}(\varphi))$. 
Let $G_{\psi}$ denote the generator of $\psi$ which exists by \prettyref{thm:generator_hol_semiflow}. We note that 
\begin{align}\label{eq:semicobound_int_gen}
  G_{\psi}(\psi_{t}(z))
&=\dt{\psi}_{t}(z)
 =(h^{-1})'\bigl((\varphi_{t}\circ h)(z)\bigr)\dt{\varphi}_{t}(h(z))
 =(h^{-1})'\bigl((h\circ\psi_{t})(z)\bigr)G(h(z))\nonumber\\
&=\frac{1}{h'(\psi_{t}(z))}G(h(z)),
\end{align}
implying $G_{\psi}(z)=G_{\psi}(\psi_{0}(z))=\frac{1}{h'(z)}G(h(z))$.

(a) If $\varphi$ is trivial, then $\varphi_{t}'(z)=1$ for all $t\geq 0$ and $z\in\Omega$, $G=0$ 
and $\operatorname{Fix}(\varphi)=\Omega$. 
This implies that $g_{\omega, G}(z)=0$ and $m_{t}^{\omega}(z)=1$ for all $z\in\Omega$ 
and so \eqref{eq:semicobound_int} holds. 

Now, suppose that $\varphi$ is non-trivial. Remarking that $\omega\circ h\in\mathcal{H}(\D)$, $\omega\circ h\neq 0$, with $N_{\omega\circ h}
=h^{-1}(N_{\omega})$, we have 
\[
m_{t}^{\omega\circ h,\psi}(z)=\exp\Bigl(\int_{0}^{t}g_{\omega\circ h, G_{\psi}}(\psi_{s}(z))\d s\Bigr)
\]
for every $t\geq 0$ and $z\in\D$ by \cite[Lemma 2.2 (a), p.~472]{koenig1990}. 
Moreover, for $z\in\Omega\setminus\operatorname{Fix}(\varphi)$ we observe that 
$\varphi_{s}(z)\in\Omega\setminus N_{\omega}$ for all $s\geq 0$. 
Indeed, assume that there is $s_{0}\geq 0$ such that $\varphi_{s_{0}}(z)\in N_{\omega}$. 
Then $u\coloneqq\varphi_{s_{0}}(z)\in\operatorname{Fix}(\varphi)$ and 
$\varphi_{s_{0}}(u)=u$. Due to the injectivity of $\varphi_{s_{0}}$ 
by \prettyref{prop:fixed_points_holomorphic} (a) this yields 
$z=u\in\operatorname{Fix}(\varphi)$, which is a contradiction. 
Hence we get for all $s\geq 0$ and $z\in\Omega\setminus\operatorname{Fix}(\varphi)$ 
\begin{align*}
  g_{\omega\circ h, G_{\psi}}(\psi_{s}(h^{-1}(z)))
&=\frac{G_{\psi}\bigl(\psi_{s}(h^{-1}(z))\bigr)(\omega\circ h)'\bigl(\psi_{s}(h^{-1}(z))\bigr)}{(\omega\circ h)\bigl(\psi_{s}(h^{-1}(z))\bigr)}\\
&\underset{\mathclap{\eqref{eq:semicobound_int_gen}}}{=}\frac{\frac{1}{h'(\psi_{s}(h^{-1}(z)))}G(\varphi_{s}(z))\omega'(\varphi_{s}(z))h'(\psi_{s}(h^{-1}(z)))}{\omega(\varphi_{s}(z))}\\
&=\frac{G(\varphi_{s}(z))\omega'(\varphi_{s}(z))}{\omega(\varphi_{s}(z))}
 =g_{\omega, G}(\varphi_{s}(z)),
\end{align*}
yielding $ g_{\omega\circ h, G_{\psi}}(\psi_{s}(h^{-1}(z)))=g_{\omega, G}(\varphi_{s}(z))$ for all 
$z\in\Omega$ by continuity because $|\operatorname{Fix}(\varphi)|\leq 1$ 
by \prettyref{prop:fixed_points_holomorphic} (b). 
It follows for all $t\geq 0$ and $z\in\Omega\setminus N_{\omega}$ that 
\begin{align*}
  \frac{\omega(\varphi_{t}(z))}{\omega(z)}
&=\frac{(\omega\circ h)\bigl(\psi_{t}(h^{-1}(z))\bigr)}{(\omega\circ h)(h^{-1}(z))}
 =m_{t}^{\omega\circ h,\psi}(h^{-1}(z))\\
&=\exp\Bigl(\int_{0}^{t}g_{\omega\circ h, G_{\psi}}(\psi_{s}(h^{-1}(z)))\d s\Bigr)
 =\exp\Bigl(\int_{0}^{t}g_{\omega, G}(\varphi_{s}(z))\d s\Bigr),
\end{align*}
which implies $m_{t}^{\omega}(z)=\exp(\int_{0}^{t}g_{\omega, G}(\varphi_{s}(z))\d s)$ 
for all $z\in\Omega$ by continuity.

(b) First, due to \cite[Lemma 2.2 (b), p.~472]{koenig1990} there is $\widetilde{w}\in\mathcal{H}(\D)$ 
such that 
\begin{equation}\label{eq:semicobound_int_circle}
m_{t}^{\widetilde{\omega},\psi}(z)=\exp\Bigl(\int_{0}^{t}(g\circ h)(\psi_{s}(z))\d s\Bigr)
\end{equation} 
for all $t\geq 0$ and $z\in\D$ if and only if 
$\frac{(g\circ h)(\widetilde{b})}{G_{\psi}'(\widetilde{b})}\in\N_{0}$ for all 
$\widetilde{b}\in\operatorname{Fix}(\psi)$. 
In this case $\operatorname{ord}_{\widetilde{\omega}}(\widetilde{b})=\frac{(g\circ h)(\widetilde{b})}{G_{\psi}'(\widetilde{b})}$.

Second, we observe for all $b\in\operatorname{Fix}(\varphi)=h(\operatorname{Fix}(\psi))$ 
that $G_{\psi}(h^{-1}(b))=0$ by \prettyref{prop:fixed_points} (b). Hence we 
have with $\widetilde{b}\coloneqq h^{-1}(b)\in\operatorname{Fix}(\psi)$ that
\begin{align*}
 \frac{g(b)}{G'(b)}
&=\frac{g(h(\widetilde{b}))}{G'(h(\widetilde{b}))}
 =\frac{(g\circ h)(\widetilde{b})}{(G\circ h)'(\widetilde{b})\frac{1}{h'(\widetilde{b})}}
 \underset{\eqref{eq:semicobound_int_gen}}{=}
 \frac{(g\circ h)(\widetilde{b})}{(h'\cdot G_{\psi})'(\widetilde{b})\frac{1}{h'(\widetilde{b})}}\\
&=\frac{(g\circ h)(\widetilde{b})}{\bigl(h''(\widetilde{b})G_{\psi}(\widetilde{b})
  +h'(\widetilde{b})G_{\psi}'(\widetilde{b})\bigr)\frac{1}{h'(\widetilde{b})}}
 =\frac{(g\circ h)(\widetilde{b})}{G_{\psi}'(\widetilde{b})}.
\end{align*}

Third, suppose there is $\omega\in\mathcal{H}(\Omega)$ such that \eqref{eq:semicobound_int} holds. 
Then we obtain with $\widetilde{\omega}\coloneqq \omega\circ h$ that 
for all $t\geq 0$ and $z\in\D\setminus N_{\widetilde{w}}$
\[
 \frac{\widetilde{\omega}(\psi_{t}(z))}{\widetilde{\omega}(z)}
=\frac{\omega(\varphi_{t}(h(z)))}{\omega(h(z))}
=\exp\Bigl(\int_{0}^{t}g(\varphi_{s}(h(z)))\d s\Bigr)
=\exp\Bigl(\int_{0}^{t}(g\circ h)(\psi_{s}(h(z)))\d s\Bigr),
\]
which extends to \eqref{eq:semicobound_int_circle} for all $z\in\D$ by continuity. 
Due to the first and the second part of (b) this means that $\frac{g(b)}{G'(b)}\in\N_{0}$ 
for all $b\in\operatorname{Fix}(\varphi)$. 

Fourth, suppose that $\frac{g(b)}{G'(b)}\in\N_{0}$ for all $b\in\operatorname{Fix}(\varphi)$. 
Then the first and the second part of (b) imply that there is 
$\widetilde{w}\in\mathcal{H}(\D)$ such that \eqref{eq:semicobound_int_circle} holds. 
Setting $\omega\coloneqq\widetilde{w}\circ h^{-1}$, we see that 
for all $t\geq 0$ and $z\in\Omega\setminus N_{w}$
\[
 \frac{\omega(\varphi_{t}(z))}{\omega(z)}
=\frac{\widetilde{\omega}(\psi_{t}(h^{-1}(z)))}{\widetilde{\omega}(h^{-1}(z))}
=\exp\Bigl(\int_{0}^{t}(g\circ h)(\psi_{s}(h^{-1}(z)))\d s\Bigr)
=\exp\Bigl(\int_{0}^{t}g(\varphi_{s}(z))\d s\Bigr),
\]
which extends to \eqref{eq:semicobound_int} for all $z\in\Omega$ by continuity. 
\end{proof}

If $\Omega\subsetneq\C$ is open and simply connected, and $\varphi$ a jointly continuous holomorphic semiflow on $\Omega$ 
with generator $G\neq 0$, then it follows from \prettyref{prop:semicobound_int} (a) 
and \prettyref{ex:semicoboundary_gen} that 
\begin{equation}\label{eq:space_deriv_semiflow_integral}
\varphi_{t}'(z)=m_{t}^{G}(z)=\exp\Bigl(\int_{0}^{t}G'(\varphi_{s}(z))\d s\Bigr),\quad t\geq 0,\, z\in\Omega,
\end{equation}
which generalises \cite[Proposition 10.1.8 (2), p.~276--277]{bracci2020} where $\Omega=\D$. 

\begin{cor}\label{cor:semicobound_int}
Let $\Omega\subsetneq\C$ be open and simply connected, and $(m,\varphi)$ a jointly continuous holomorphic co-semiflow 
on $\Omega$. Then the following assertions hold.
\begin{enumerate}
\item If $\operatorname{Fix}(\varphi)=\varnothing$, then $m$ is a semicoboundary.
\item Suppose that $\operatorname{Fix}(\varphi)\neq\varnothing$. 
Then there is $\omega\in\mathcal{H}(\Omega)$ with 
$N_{\omega}=\varnothing$ such that $m=m^{\omega}$ if and only if $m_{t}(b)=1$ for all $t\geq 0$ and 
$b\in\operatorname{Fix}(\varphi)$.
\end{enumerate}
\end{cor}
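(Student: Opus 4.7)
The plan is to apply \prettyref{prop:cocycle_int_hol} to put $m$ in integral form and then invoke \prettyref{prop:semicobound_int}~(b) to extract the desired $\omega$. Since $\Omega\subsetneq\C$ is open and simply connected and $(m,\varphi)$ is jointly continuous and holomorphic, \prettyref{prop:cocycle_int_hol}~(a) yields $g\in\mathcal{H}(\Omega)$ with $m_{t}(z)=\exp(\int_{0}^{t}g(\varphi_{s}(z))\,\d s)$ for all $t\geq 0$ and $z\in\Omega$. I will also repeatedly separate the trivial and non-trivial cases for $\varphi$: a trivial semiflow has $\operatorname{Fix}(\varphi)=\Omega$ and generator $G=0$, whereas for a non-trivial one \prettyref{thm:generator_hol_semiflow} and \prettyref{prop:fixed_points_holomorphic}~(a) guarantee $G\neq 0$.

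For part (a) the assumption $\operatorname{Fix}(\varphi)=\varnothing$ forces $\varphi$ to be non-trivial, hence $G\neq 0$. The arithmetic condition $g(b)/G'(b)\in\N_{0}$ for all $b\in\operatorname{Fix}(\varphi)$ appearing in \prettyref{prop:semicobound_int}~(b) is then vacuously satisfied, so one obtains $\omega\in\mathcal{H}(\Omega)$ such that $m=m^{\omega}$. As the definition of a semicoboundary requires $N_{\omega}\subset\operatorname{Fix}(\varphi)$, one automatically has $N_{\omega}=\varnothing$, so $m$ is indeed a semicoboundary in the sense defined earlier.

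For part (b) the implication $\Rightarrow$ is immediate from the defining formula: if $m=m^{\omega}$ with $N_{\omega}=\varnothing$, then for any $b\in\operatorname{Fix}(\varphi)$ and $t\geq 0$ one simply computes $m_{t}(b)=\omega(\varphi_{t}(b))/\omega(b)=\omega(b)/\omega(b)=1$. For the converse I split into two cases. If $\varphi$ is trivial, then $\operatorname{Fix}(\varphi)=\Omega$, the assumption gives $m_{t}=\mathds{1}$, and $\omega\coloneqq\mathds{1}$ trivially satisfies $m=m^{\omega}$ with $N_{\omega}=\varnothing$. If $\varphi$ is non-trivial, then $G\neq 0$; for every $b\in\operatorname{Fix}(\varphi)$ the integral form collapses to $1=m_{t}(b)=\exp(tg(b))$ for all $t\geq 0$, which by differentiating in $t=0$ (or by continuity of the integer $tg(b)/(2\pi i)$ in $t$) forces $g(b)=0$ and therefore $g(b)/G'(b)=0\in\N_{0}$. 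A second application of \prettyref{prop:semicobound_int}~(b) then yields $\omega\in\mathcal{H}(\Omega)$ with $m=m^{\omega}$ and $\operatorname{ord}_{\omega}(b)=0$ for every $b\in\operatorname{Fix}(\varphi)$, which combined with the inclusion $N_{\omega}\subset\operatorname{Fix}(\varphi)$ built into the notion of a semicoboundary yields $N_{\omega}=\varnothing$.

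The main subtlety I expect is simply making sure that \prettyref{prop:semicobound_int}~(b) really applies in each case; in particular, in the non-trivial case of part (b) the ratio $g(b)/G'(b)$ has to make sense at the (by \prettyref{prop:fixed_points_holomorphic}~(b)) at most one interior fixed point of $\varphi$, which is part of the content of \prettyref{prop:semicobound_int}~(b). Once that is granted, the proof reduces to the vacuous-hypothesis argument in (a) and the straightforward case distinction in (b) sketched above, with no further computations required.
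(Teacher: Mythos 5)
Your proof is correct and follows essentially the same route as the paper: integral form via \prettyref{prop:cocycle_int_hol}~(a), the trivial/non-trivial case split using \prettyref{prop:fixed_points_holomorphic}, and \prettyref{prop:semicobound_int}~(b) with the vacuous condition in (a) and $g(b)/G'(b)=0$ in (b). Your only deviation is the forward direction of (b), where you compute $m_{t}(b)=\omega(\varphi_{t}(b))/\omega(b)=1$ directly instead of passing through $\operatorname{ord}_{\omega}(b)=g(b)/G'(b)=0$ as the paper does; this is a harmless (indeed slightly cleaner) shortcut.
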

\begin{proof}
By \prettyref{thm:generator_hol_semiflow} the generator $G$ of $\varphi$ exists. 
Due to \prettyref{prop:cocycle_int_hol} there is $g\in\mathcal{H}(\Omega)$ 
such that $m_{t}(z)=\exp(\int_{0}^{t}g(\varphi_{s}(z))\d s)$ for all $t\geq 0$ and $z\in\Omega$. 

(a) We have $G\neq 0$ by \prettyref{prop:fixed_points_holomorphic} (a). 
Since $\operatorname{Fix}(\varphi)=\varnothing$, it follows from \prettyref{prop:semicobound_int} (b) 
that $m$ is a semicoboundary. 

(b) First, suppose that $\varphi$ is trivial. Then $\operatorname{Fix}(\varphi)=\Omega$ and $m^{\omega}=\mathds{1}$ for any 
$\omega\in\mathcal{H}(\Omega)$ with $N_{\omega}=\varnothing$. Thus $m=m^{\omega}$ if and only if $m=\mathds{1}$.

Second, let us consider the case that $\varphi$ is non-trivial. 
Due to \prettyref{prop:fixed_points_holomorphic} (b) we know that $|\operatorname{Fix}(\varphi)|=1$.
Suppose there is $\omega\in\mathcal{H}(\Omega)$ with $N_{\omega}=\varnothing$ such that $m=m^{\omega}$. 
By \prettyref{prop:semicobound_int} (b) we get that $g(b)/G'(b)\in\N_{0}$ for $b\in\operatorname{Fix}(\varphi)$  
and $g(b)/G'(b)=\operatorname{ord}_{\omega}(b)=0$. Hence $g(b)=0$ and this implies that 
\[
m_{t}(b)=\exp\Bigl(\int_{0}^{t}g(\varphi_{s}(b))\d s\Bigr)=\exp\Bigl(\int_{0}^{t}g(b)\d s\Bigr)=\exp(0)=1
\] 
for all $t\geq 0$.

Conversely, suppose that $m_{t}(b)=1$ for all $t\geq 0$. By differentiating w.r.t.~$t$ we obtain $\dt{m}_{t}(b)=0$ for all 
$t\geq 0$. From \prettyref{prop:jointly_cont_int_cocycle} (a) we deduce that 
$g(b)=\dt{m}_{0}(b)=0$. We conclude that $g(b)/G'(b)=0\in\N_{0}$ and so there is 
$\omega\in\mathcal{H}(\Omega)$ with $N_{\omega}=\varnothing$ such that $m=m^{\omega}$ by \prettyref{prop:semicobound_int} (b).
\end{proof}

\prettyref{cor:semicobound_int} is already known due \cite[Theorem 5, p.~3393]{jafari2005} (here $\Omega=\C$ is also allowed). 
However, the proof is different. 

\section{Semigroups of weighted composition operators}
\label{sect:sg_weighted_comp}

Before introducing weighted composition semigroups induced by a co-semiflow $(m,\varphi)$, 
we start this section with weighted composition families induced by a tuple 
$(m,\varphi)$ which need not be a co-semiflow. First, we generalise a part of \cite[Proposition 1, p.~307]{hornor2003}.

\begin{prop}\label{prop:mixed_equicont}
Let $\Omega$ be a Hausdorff space and $(\F,\|\cdot\|,\tau_{\operatorname{co}})$ a Saks space such that 
$\F\subset\mathcal{C}(\Omega)$. Let $I$ be a set, $\varphi\coloneqq (\varphi_{t})_{t\in I}$ and $m\coloneqq (m_{t})_{t\in I}$ 
be families of functions $\varphi_{t}\colon\Omega\to\Omega$ and $m_{t}\colon\Omega\to\K$ such that 
\begin{enumerate}
\item[(i)] $C_{m,\varphi}(t)f\coloneqq m_{t}\cdot(f\circ \varphi_{t})\in\F$ for all $t\in I$ and $f\in\F$, and 
\item[(ii)] $\varphi_{I}(K)\coloneqq \bigcup_{t\in I}\varphi_{t}(K)$ is relatively compact in $\Omega$ 
and $m_{I}(K)\coloneqq\bigcup_{t\in I}m_{t}(K)$ is bounded in $\K$ for all compact $K\subset\Omega$.
\end{enumerate}
Then $(C_{m,\varphi}(t))_{t\in I}$ is $\tau_{\operatorname{co}}$-equicontinuous. 
If in addition $\sup_{t\in I}\|C_{m,\varphi}(t)\|_{\mathcal{L}(\F)}<\infty$, then $(C_{m,\varphi}(t))_{t\in I}$ 
is $\gamma$-equicontinuous. 
\end{prop}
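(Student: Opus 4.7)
The plan is to treat the two assertions separately: the first is a direct seminorm estimate using the explicit generating system of $\tau_{\operatorname{co}}$, while the second follows from the first by appealing to the standard description of $\gamma$-equicontinuous families on a pre-Saks space.

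For the $\tau_{\operatorname{co}}$-equicontinuity statement I would argue as follows. The relative compact-open topology on $\F$ is generated by the directed family of seminorms $p_{K}(f)\coloneqq\sup_{x\in K}|f(x)|$, indexed by compact sets $K\subset\Omega$. Fix such a $K$. Hypothesis~(ii) provides the compact set $L\coloneqq\overline{\varphi_{I}(K)}\subset\Omega$ together with the finite constant $M_{K}\coloneqq\sup\{|m_{t}(x)|\mid t\in I,\,x\in K\}$. A direct estimate
\[
p_{K}(C_{m,\varphi}(t)f)=\sup_{x\in K}|m_{t}(x)|\,|f(\varphi_{t}(x))|\leq M_{K}\,p_{L}(f),
\]
valid for every $t\in I$ and $f\in\F$, is exactly the condition required by \prettyref{defn:strong_cont}~(b).

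For the $\gamma$-equicontinuity statement I would invoke the general fact that a family of linear operators on a pre-Saks space $(X,\|\cdot\|,\tau)$ is $\gamma$-equicontinuous if and only if it is simultaneously uniformly $\|\cdot\|$-bounded and $\tau$-equicontinuous. This is a consequence of $\gamma$ being the finest linear topology coinciding with $\tau$ on $\|\cdot\|$-bounded sets, and can be extracted from \cite[I.1.10 Proposition, p.~9]{cooper1978} or \cite[Theorem 2.2.1, 2.2.2, p.~51]{wiweger1961}. Both prerequisites are at our disposal: uniform $\|\cdot\|$-boundedness is the additional hypothesis, and $\tau_{\operatorname{co}}$-equicontinuity was already shown.

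The only real obstacle is to quote the right form of the equivalence characterising $\gamma$-equicontinuity (or, if one prefers, to give a short self-contained verification via the generating seminorms of $\gamma$, possibly passing through the submixed topology $\gamma_{s}$ from \prettyref{defn:submixed_top}); the remainder of the argument is a routine seminorm computation.
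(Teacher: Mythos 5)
Your first assertion is proved exactly as in the paper: the estimate
\[
\sup_{x\in K}|m_{t}(x)f(\varphi_{t}(x))|
\leq \sup_{z\in m_{I}(K)}|z|\,\sup_{y\in\overline{\varphi_{I}(K)}}|f(y)|,
\]
combined with hypotheses (i) and (ii), is verbatim the paper's argument for $\tau_{\operatorname{co}}$-equicontinuity, and your second step follows the paper's strategy as well, namely feeding the $\tau_{\operatorname{co}}$-equicontinuity and the uniform operator-norm bound into an abstract principle about the mixed topology. The gap is that the principle you invoke is misstated and missourced. As a biconditional it is false: a norm-bounded, $\gamma$-equicontinuous family need \emph{not} be $\tau_{\operatorname{co}}$-equicontinuous. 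Indeed, the left translation semigroup $(T(t))_{t\geq 0}$ on $\mathcal{C}_{b}(\R)$ is norm-contractive and quasi-$\gamma$-equicontinuous but not quasi-$\tau_{\operatorname{co}}$-equicontinuous (\cite[Example 3.2, p.~549]{kunze2009}; see the discussion after \prettyref{thm:sg_weighted_comp_str_cont_equicont}), so for suitable $\omega\geq 0$ the family $(\euler^{-\omega t}T(t))_{t\geq 0}$ is norm-bounded and $\gamma$-equicontinuous yet not $\tau_{\operatorname{co}}$-equicontinuous. Only the implication you actually need — uniform norm-boundedness plus $\tau_{\operatorname{co}}$-equicontinuity implies $\gamma$-equicontinuity — is true, and that implication is not contained in the references you give: \cite[I.1.10 Proposition, p.~9]{cooper1978} and \cite[Lemmas 2.2.1, 2.2.2, p.~51]{wiweger1961} concern $\gamma$-convergent sequences and the equivalence of the two definitions of $\gamma$, and equicontinuity of an operator family is not a sequential statement that can be "extracted" from them.

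Your fallback route is also blocked: the submixed topology $\gamma_{s}$ of \prettyref{defn:submixed_top} is only defined on Saks spaces, since it requires a generating system with $\|x\|=\sup_{p\in\Gamma_{\tau}}p(x)$, whereas the proposition assumes only a pre-Saks space; moreover $\gamma_{s}\subset\gamma$ can be strict, so $\gamma_{s}$-estimates would not yield $\gamma$-equicontinuity anyway. The correct repair is the one the paper makes: quote \cite[3.16 Proposition, (f)$\Leftrightarrow$(g), p.~12--13]{kruse_schwenninger2022}, which holds on pre-Saks spaces and needs only the weaker hypothesis of $\gamma$-$\tau_{\operatorname{co}}$-equicontinuity (which follows from $\tau_{\operatorname{co}}$-equicontinuity since $\tau_{\operatorname{co}}\subset\gamma$). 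Alternatively, a short self-contained argument is possible: for an absolutely convex $\gamma$-neighbourhood $U$ of $0$, $M\coloneqq\sup_{t\in I}\|C_{m,\varphi}(t)\|_{\mathcal{L}(\F)}$ and $r>0$, choose a $\tau_{\operatorname{co}}$-neighbourhood $V$ of $0$ with $V\cap rMB_{\|\cdot\|}\subset U$ (possible since $\gamma$ and $\tau_{\operatorname{co}}$ agree on norm-bounded sets) and then, by $\tau_{\operatorname{co}}$-equicontinuity, a $\tau_{\operatorname{co}}$-neighbourhood $V'$ with $C_{m,\varphi}(t)(V')\subset V$ for all $t\in I$; then $C_{m,\varphi}(t)(V'\cap rB_{\|\cdot\|})\subset V\cap rMB_{\|\cdot\|}\subset U$ for all $t\in I$, so the absolutely convex set $\bigcap_{t\in I}C_{m,\varphi}(t)^{-1}(U)$ contains the trace of a $\tau_{\operatorname{co}}$-neighbourhood on every ball $rB_{\|\cdot\|}$; such sets form a base of $0$-neighbourhoods of a linear topology between $\tau_{\operatorname{co}}$ and $\tau_{\|\cdot\|}$ coinciding with $\tau_{\operatorname{co}}$ on norm-bounded sets, which by maximality of $\gamma$ is coarser than $\gamma$, so this set is a $\gamma$-neighbourhood of $0$. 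With either repair your proof is complete and coincides with the paper's.
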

\begin{proof}
First, for compact $K\subset\Omega$ we note that 
\[
 \sup_{x\in K}|C_{m,\varphi}(t)f(x)|
=\sup_{x\in K}|m_{t}(x)f(\varphi_{t}(x))|
\leq\sup_{z\in m_{I}(K)}|z|\sup_{x\in \overline{\varphi_{I}(K)}}|f(x)|
\]
for all $t\in I$ and $f\in\F$, implying that the family $(C_{m,\varphi}(t))_{t\in I}$ of linear maps $\F\to\F$ by condition (i) is 
$\tau_{\operatorname{co}}$-equicontinuous on the whole space $\F$ 
by condition (ii) and the continuity of the functions in $\F$. 

Now, suppose that $\sup_{t\in I}\|C_{m,\varphi}(t)\|_{\mathcal{L}(\F)}<\infty$. 
Since $\tau_{\operatorname{co}}$ is a coarser topology than $\gamma$, we obtain from the first part that 
the family $(C_{m,\varphi}(t))_{t\in I}$ is $\gamma$-$\tau_{\operatorname{co}}$-equicontinuous. 
It follows from \cite[3.16 Proposition, (f)$\Leftrightarrow$(g), p.~12--13]{kruse_schwenninger2022} that 
$(C_{m,\varphi}(t))_{t\in I}$ is even $\gamma$-equicontinuous.
\end{proof}

\begin{rem}\label{rem:mixed_equicont}
Let $\Omega$ be a Hausdorff space, $I$ a compact Hausdorff space, $\varphi\coloneqq (\varphi_{t})_{t\in I}$ 
and $m\coloneqq (m_{t})_{t\in I}$ families of functions $\varphi_{t}\colon\Omega\to\Omega$ and $m_{t}\colon\Omega\to\K$. 
If $\varphi$ and $m$ are both jointly continuous, then condition (ii) of \prettyref{prop:mixed_equicont} is fulfilled 
since $\varphi_{I}(K)$ and $m_{I}(K)$ are compact for all compact $K\subset\Omega$.
\end{rem}

Our next goal is to derive necessary and sufficient conditions for the weighted composition family 
$(C_{m,\varphi}(t))_{t\in I}$ to be $\gamma$-strongly continuous. For the necessary condition we need the following definition. 

\begin{defn}
Let $\Omega$ and $I$ be Hausdorff spaces, $\F\subset\mathcal{C}(\Omega)$ a linear space, 
and $\varphi\coloneqq (\varphi_{t})_{t\in I}$ a family of functions 
$\varphi_{t}\colon\Omega\to\Omega$. We say that the topology of $\Omega$ is 
\emph{initial-like w.r.t.~}$(\varphi,\F)$ if for every compact set $K\subset\Omega$ 
the continuity of the map $I\times K\to \K$, $(t,x)\mapsto f(\varphi_{t}(x))$, for all $f\in\F$ 
implies the continuity of the map $I\times K\to \Omega$, $(t,x)\mapsto\varphi_{t}(x)$.
\end{defn}

\begin{rem}\fakephantomsection\label{rem:initial_like}
\begin{enumerate}
\item Obviously, if $\Omega$ carries the initial topology induced by $\F$, then the topology of 
$\Omega$ is initial-like w.r.t.~$(\varphi,\F)$ for any family of functions 
$\varphi\coloneqq (\varphi_{t})_{t\in I}$ with $\varphi_{t}\colon\Omega\to\Omega$. 
For instance, a completely regular Hausdorff space $\Omega$ carries the initial topology induced 
by the space $\mathcal{C}_{b}(\Omega)$ of bounded continuous functions on $\Omega$ 
(see \cite[2.55 Theorem, p.~49]{aliprantis2006} and \cite[2.56 Corollary, p.~50]{aliprantis2006}). 
\item If $\Omega\subset\K$ and the identity $\operatorname{id}\colon\Omega\to\Omega$, $x\mapsto x$, 
belongs to $\F$, then the topology of $\Omega$ is initial-like w.r.t.~$(\varphi,\F)$ 
for any family of functions $\varphi\coloneqq (\varphi_{t})_{t\in I}$ with 
$\varphi_{t}\colon\Omega\to\Omega$.
\end{enumerate}
\end{rem}

Now, we use the ideas of the proofs of \cite[Proposition 2.10, p.~5]{farkas2020} (see also \cite[Theorem 4.5, p.~51--52]{sentilles1970}) and 
\cite[Corollary 4.3 (a), (b), p.~20]{kruse_schwenninger2022} where $\F=\mathcal{C}_{b}(\Omega)$ is the space of 
bounded continuous functions on a completely regular Hausdorff space $\Omega$, $I=[0,\infty)$ and $m=\mathds{1}$.

\begin{prop}\label{prop:strongly_mixed_cont}
Let $\Omega$ be a Hausdorff space and $(\F,\|\cdot\|,\tau_{\operatorname{co}})$ a Saks space such that 
$\F\subset\mathcal{C}(\Omega)$. Let $I$ be a metric space, $\varphi\coloneqq (\varphi_{t})_{t\in I}$ and 
$m\coloneqq (m_{t})_{t\in I}$ be families of continuous functions $\varphi_{t}\colon\Omega\to\Omega$ 
and $m_{t}\colon\Omega\to\K$ such that $C_{m,\varphi}(t)f\coloneqq m_{t}\cdot(f\circ \varphi_{t})\in\F$ for all $t\in I$ 
and $f\in\F$. 
Further, suppose that $\sup_{t\in I}\|C_{m,\varphi}(t)\|_{\mathcal{L}(\F)}<\infty$. Then the following assertions hold.
\begin{enumerate}
\item If $\varphi$ and $m$ are jointly continuous, then $(C_{m,\varphi}(t))_{t\in I}$ is 
$\gamma$-strongly continuous. 
\item If $\Omega$ is a $k_{\R}$-space 
whose topology is initial-like w.r.t.~$(\varphi,\F)$, $\mathds{1}\in\F$, $I$ is locally compact, 
$m_{t}(x)\neq 0$ for all $(t,x)\in I\times\Omega$ 
and $(C_{m,\varphi}(t))_{t\in I}$ is $\gamma$-strongly continuous, 
then $\varphi$ and $m$ are jointly continuous.
\end{enumerate}
\end{prop}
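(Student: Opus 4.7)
The plan is to handle the two directions separately, in both cases exploiting that the mixed topology $\gamma$ coincides with $\tau_{\operatorname{co}}$ on every $\|\cdot\|$-bounded subset of $\F$ by \prettyref{defn:mixed_top_Saks}, together with a standard exponential-law argument on compact products.

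For part (a) I would first check that every individual operator $C_{m,\varphi}(t)$ lies in $\mathcal{L}(\F,\gamma)$. Applying \prettyref{prop:mixed_equicont} to the singleton index set $\{t\}$, the compactness of $\varphi_{t}(K)$ and $m_{t}(K)$ for each compact $K\subset\Omega$ (coming from continuity of $\varphi_{t}$ and $m_{t}$) gives condition (ii), and combined with $\|C_{m,\varphi}(t)\|_{\mathcal{L}(\F)}\leq M\coloneqq \sup_{s\in I}\|C_{m,\varphi}(s)\|_{\mathcal{L}(\F)}<\infty$ this yields $\gamma$-continuity of $C_{m,\varphi}(t)$. For pointwise continuity of $t\mapsto C_{m,\varphi}(t)f$ into $(\F,\gamma)$ at $t_{0}\in I$, I would reason sequentially, which is legitimate since $I$ is metric: for $t_{n}\to t_{0}$ the elements $C_{m,\varphi}(t_{n})f$ all sit in the norm-bounded set $\{g\in\F\;|\;\|g\|\leq M\|f\|\}$, so $\gamma$-convergence reduces to $\tau_{\operatorname{co}}$-convergence. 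Setting $J\coloneqq\{t_{n}\;|\;n\in\N\}\cup\{t_{0}\}$, which is compact in $I$, the joint continuity of $(m,\varphi)$ makes $(t,x)\mapsto m_{t}(x)f(\varphi_{t}(x))$ continuous on the compact Hausdorff product $J\times K$ for every compact $K\subset\Omega$, and the exponential law then delivers the uniform convergence $\sup_{x\in K}|m_{t_{n}}(x)f(\varphi_{t_{n}}(x))-m_{t_{0}}(x)f(\varphi_{t_{0}}(x))|\to 0$ required for $\tau_{\operatorname{co}}$- (hence $\gamma$-) convergence.

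For part (b) I would reverse the argument. Specialising $\gamma$-strong continuity to $f=\mathds{1}\in\F$ gives that $t\mapsto m_{t}=C_{m,\varphi}(t)\mathds{1}$ is continuous into $(\F,\gamma)$ and a fortiori into $(\mathcal{C}(\Omega),\tau_{\operatorname{co}})$, so $(t,x)\mapsto m_{t}(x)$ is jointly continuous on every compact $J\times K\subset I\times\Omega$. Because $I$ is locally compact Hausdorff and $\Omega$ is a $k_{\R}$-space, $I\times\Omega$ itself is a $k_{\R}$-space and every one of its compact subsets lies in a product of compacta; this suffices to promote the fibrewise joint continuity to joint continuity of $m$ on all of $I\times\Omega$. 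Applying the same argument to arbitrary $f\in\F$ yields joint continuity of $(t,x)\mapsto m_{t}(x)f(\varphi_{t}(x))$, and dividing by $m_{t}(x)\neq 0$ (using the just-established joint continuity of $m$) isolates joint continuity of $(t,x)\mapsto f(\varphi_{t}(x))$ for every $f\in\F$. The initial-like condition on the topology of $\Omega$, as illustrated in \prettyref{rem:initial_like}, is then exactly what allows me to promote $f$-wise joint continuity of $f\circ\varphi$ to joint continuity of $\varphi$ itself.

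I expect the main obstacle to be this final promotion step, i.e., interpreting the initial-like hypothesis correctly so that it lifts joint continuity of the scalar maps $f\circ\Psi$ on $I\times\Omega$ to joint continuity of the $\Omega$-valued map $\Psi(t,x)\coloneqq\varphi_{t}(x)$, rather than merely continuity into the coarser initial topology generated by $\F$. A smaller technical point in step~1 of part (b) is the $k_{\R}$-lifting itself, which should follow from standard product results for locally compact factors but deserves care.
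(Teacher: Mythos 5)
Your proposal is correct and takes essentially the same route as the paper's proof: part (a) rests on the same two ingredients (that $\gamma$ agrees with $\tau_{\operatorname{co}}$ on $\|\cdot\|$-bounded sets, applied along sequences since $I$ is metric, plus the exponential law on $I\times K$ for compact $K\subset\Omega$), and part (b) uses the identical decomposition of testing against $\mathds{1}\in\F$, dividing by the nonvanishing $m_{t}(x)$, invoking the initial-like hypothesis, and exploiting that $I\times\Omega$ is a $k_{\R}$-space as a product of a locally compact space with a $k_{\R}$-space. The final promotion step you flag as the main obstacle is treated no more explicitly in the paper, which likewise passes in one stroke from joint continuity of all $f\circ\varphi_{t}$ to joint continuity of $\varphi$ via the initial-like condition, so your proposal matches the paper's argument including its level of detail at that point.
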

\begin{proof}
First, we observe that $C_{m,\varphi}(t)$ is linear and $\gamma$-continuous, thus $C_{m,\varphi}(t)\in\mathcal{L}(\F,\gamma)$, 
for every $t\in I$ due to \prettyref{prop:mixed_equicont} and \prettyref{rem:mixed_equicont} applied to the singleton 
$I_{t}\coloneqq\{t\}$ and the continuity of $\varphi_{t}$ and $m_{t}$. 

Since $I$ is a metric space and $\F\subset\mathcal{C}(\Omega)$, the family $(C_{m,\varphi}(t))_{t\in I}$ is 
$\gamma$-strongly continuous if and only if the map
\[
I\to \mathcal{C}(K),\;t\mapsto C_{m,\varphi}(t)f_{\mid K},
\]
is continuous for every compact $K\subset\Omega$ and $f\in\F$ by \cite[I.1.10 Proposition, p.~9]{cooper1978} 
and the assumption $\sup_{t\in I}\|C_{m,\varphi}(t)\|_{\mathcal{L}(\F)}<\infty$. 
It follows from \cite[Lemma 4.16, p.~56]{eisner2015} that this is equivalent to the continuity of the map 
\begin{equation}\label{eq:strongly_mixed_cont_1}
I\times K\to \K,\;(t,x)\mapsto  m_{t}(x)f(\varphi_{t}(x)),
\end{equation}
for every compact $K\subset\Omega$ and $f\in\F$.

(a) If $\varphi$ and $m$ are jointly continuous, then the map \eqref{eq:strongly_mixed_cont_1} is clearly continuous 
for every compact $K\subset\Omega$ and $f\in\F$. 

(b) Since $\mathds{1}\in\F$, the continuity of the map \eqref{eq:strongly_mixed_cont_1} 
implies the continuity of the map 
\begin{equation}\label{eq:strongly_mixed_cont_2}
I\times K\to \K,\;(t,x)\mapsto  m_{t}(x),
\end{equation}
for every compact $K\subset\Omega$. The continuity of the maps 
\eqref{eq:strongly_mixed_cont_1} and \eqref{eq:strongly_mixed_cont_2}, that $m_{t}(x)\neq 0$ 
for all $(t,x)\in I\times\Omega$ and that the topology of $\Omega$ is initial-like 
w.r.t.~$(\varphi,\F)$ yield the continuity of the map 
\begin{equation}\label{eq:strongly_mixed_cont_3}
I\times K\to \K,\;(t,x)\mapsto  \varphi_{t}(x),
\end{equation}
for every compact $K\subset\Omega$. Conversely, the continuity of the maps \eqref{eq:strongly_mixed_cont_2} 
and \eqref{eq:strongly_mixed_cont_3} clearly implies the continuity of the map \eqref{eq:strongly_mixed_cont_1} 
for every compact $K\subset\Omega$ and $f\in\F$. 
Hence the continuity of the map \eqref{eq:strongly_mixed_cont_1} for every compact $K\subset\Omega$ and $f\in\F$ is equivalent 
to the continuity of the maps \eqref{eq:strongly_mixed_cont_2} and \eqref{eq:strongly_mixed_cont_3} 
for every compact $K\subset\Omega$. Now, if $I$ is locally compact and $\Omega$ a $k_{\R}$-space, then 
$I\times\Omega$ is also a $k_{\R}$-space by a comment after the proof of \cite[Th\'eor\`{e}me (2.1), p.~54--55]{buchwalter1972}. 
Thus the $\gamma$-strong continuity of $(C_{m,\varphi}(t))_{t\in I}$ 
implies the continuity of the maps \eqref{eq:strongly_mixed_cont_2} and \eqref{eq:strongly_mixed_cont_3} 
for every compact $K\subset\Omega$, which then implies the joint continuity of $\varphi$ and $m$ because 
$I\times\Omega$ is a $k_{\R}$-space.  
\end{proof}

\begin{rem}
Looking at the proof, we see that we can drop the condition that $\mathds{1}\in\F$ in \prettyref{prop:strongly_mixed_cont} (b) if 
$m_{t}=\mathds{1}$ for all $t\in I$. 
\end{rem}

From now on we restrict to the case that $(m,\varphi)$ is a co-semiflow on a Hausdorff space $\Omega$. 
If $\F\subset\mathcal{C}(\Omega)$ is a linear space and 
$C_{m,\varphi}(t)f\coloneqq m_{t}\cdot(f\circ \varphi_{t})\in\F$ for all $t\geq 0$ and $f\in\F$, 
then a simple computation shows that 
$(C_{m,\varphi}(t))_{t\geq 0}$ is a semigroup of linear operators on $\F$, 
i.e.~$C_{m,\varphi}(t)\colon\F\to\F$ is linear and $C_{m,\varphi}(t+s)=C_{m,\varphi}(t)C_{m,\varphi}(s)$ 
for all $t,s\geq 0$.

\begin{defn}
Let $(m,\varphi)$ be a co-semiflow on a Hausdorff space $\Omega$ and $\F\subset\mathcal{C}(\Omega)$ a linear space. 
The tuple $(m,\varphi)$ is called a \emph{co-semiflow for} $\F$ if 
$C_{m,\varphi}(t)f\coloneqq m_{t}\cdot(f\circ \varphi_{t})\in\F$ for all $t\geq 0$ and $f\in\F$. 
In this case $(C_{m,\varphi}(t))_{t\geq 0}$ is called the \emph{weighted composition semigroup} 
on $\F$ w.r.t.~the co-semiflow $(m,\varphi)$.
\end{defn}

\begin{rem}\label{rem:co_semiflow_for_space}
Let $\Omega$ be a Hausdorff space, $\F\subset\mathcal{C}(\Omega)$ a linear space and $(m,\varphi)$ a co-semiflow for $\F$.
\begin{enumerate}
\item If $\mathds{1}\in\F$, then $m_{t}=C_{m,\varphi}(t)\mathds{1}\in\F$ for all $t\geq 0$. 
\item If $\id\in\F$, then $m_{t}\varphi_{t}=C_{m,\varphi}(t)\id\in\F$ for all $t\geq 0$. 
\end{enumerate} 
\end{rem}

If the semicocycle is actually a semicoboundary, then the weighted composition semigroup may have a quite simple structure.

\begin{rem}\label{rem:semicoboundary_similar}
Let $\Omega$ be a Hausdorff space, $\F\subset\mathcal{C}(\Omega)$ a linear space and $(m,\varphi)$ a co-semiflow for $\F$ 
such that there is $\omega\in\mathcal{C}(\Omega)$ with $N_{\omega}=\varnothing$ and $m=m^{\omega}$. 
Then a direct calculation shows that 
\[
C_{m,\varphi}(t)=M_{\frac{1}{\omega}}C_{\mathds{1},\varphi}(t)M_{\omega}
\]
for all $t\geq 0$ where $M_{\omega}f\coloneqq \omega f$ and $M_{\frac{1}{\omega}}f\coloneqq \frac{1}{\omega}f$ for all $f\in\F$. 
This means that $C_{m,\varphi}(t)$ and $C_{\mathds{1},\varphi}(t)$ are similar as linear operators 
on $\F$ for all $t\geq 0$ if $(\mathds{1},\varphi)$ is also a co-semiflow for $\F$ 
(cf.~\cite[p.~67]{gutierrez2023} in the case $\Omega=\D$ and $\mathcal{F}(\D)$ being a space of holomorphic functions). 
\end{rem}

Sufficient and necessary conditions for the existence of $\omega$ in \prettyref{rem:semicoboundary_similar} are given 
in \prettyref{cor:semicobound_int} in the case that $(m,\varphi)$ is a jointly continuous holomorphic co-semiflow 
on a proper open simply connected subset $\Omega$ of $\C$.

The question which tuples $(m,\varphi)$ are co-semiflows for a given space $\F$ is difficult on its own. 
We recall the following results deduced from characterisations of weighted composition operators.

\begin{rem}\label{rem:co_semiflows_for_example_spaces}
Let $(m,\varphi)$ be a holomorphic co-semiflow on $\D$. 
\begin{enumerate}
\item For $1\leq p<\infty$ there is a necessary and sufficient condition 
in terms of Carleson measures so that $(m,\varphi)$ is a co-semiflow for $H^{p}$ and $C_{m,\varphi}(t)\in\mathcal{L}(H^{p})$ 
for all $t\geq 0$ by \cite[Theorem 2.2, p.~227]{contreras2001}. 
\item For $\alpha>-1$ and $1\leq p<\infty$ there is a 
necessary and sufficient condition in terms of weighted Berezin transforms so that $(m,\varphi)$ is a co-semiflow for 
$A_{\alpha}^{p}$ and $C_{m,\varphi}(t)\in\mathcal{L}(A_{\alpha}^{p})$ for all $t\geq 0$ 
by \cite[Proposition 2, p.~504]{cuckovic2004}.
\item For the Dirichlet space we have that $(m,\varphi)$ is a co-semiflow for $\mathcal{D}$ and 
$C_{m,\varphi}(t)\in\mathcal{L}(\mathcal{D})$ for all $t\geq 0$ if and only if $m_{t}\in\mathcal{D}$, $(m\varphi',\varphi)$ is a 
co-semiflow for $H^{2}$ and $C_{m\varphi',\varphi}(t)\in\mathcal{L}(H^{2})$ for all $t\geq 0$ by 
\cite[Theorem 2.1, p.~175]{contreras2004} with $p=q=2$.
\item For $\alpha>0$ there is a neccessary and sufficient condition such that $(m,\varphi)$ is a co-semiflow 
for $\mathcal{B}_{\alpha}$ and $C_{m,\varphi}(t)\in\mathcal{L}(\mathcal{B}_{\alpha})$ for all $t\geq 0$ given 
in \cite[Theorem 2.1, p.~193]{ohno2003}.
\item Let $v_{0}\colon [0,1]\to[0,\infty)$ be a continuous non-increasing function such that $v_{0}(x)\neq 0$ 
for all $x\in [0,1)$, $v_{0}(1)=0$ and set $v\colon \overline{\D}\to [0,\infty)$, 
$v(z)\coloneqq v_{0}(|z|)$ (see \cite[p.~873]{montes2000}). Then there is a necessary and sufficient condition such that 
$(m,\varphi)$ is a co-semiflow for $\mathcal{H}v(\D)$ and $C_{m,\varphi}(t)\in\mathcal{L}(\mathcal{H}v(\D))$ for all $t\geq 0$ 
given in \cite[Theorem 2.1, p.~875]{montes2000}. 
\end{enumerate}
\end{rem}

Using \prettyref{prop:mixed_equicont} and \prettyref{prop:strongly_mixed_cont} (a), we obtain the following generalisation 
of \cite[Proposition 2.10, p.~5]{farkas2020} and \cite[Corollary 4.3, p.~20]{kruse_schwenninger2022}. 

\begin{thm}\label{thm:sg_weighted_comp_str_cont_equicont}
Let $\Omega$ be a Hausdorff space, $(\F,\|\cdot\|,\tau_{\operatorname{co}})$ a Saks space such that 
$\F\subset\mathcal{C}(\Omega)$ and $(C_{m,\varphi}(t))_{t\geq 0}$ a locally bounded weighted composition semigroup on 
$\F$ w.r.t.~a jointly continuous co-semiflow $(m,\varphi)$. Then the following assertions hold.
\begin{enumerate}
\item $(C_{m,\varphi}(t))_{t\geq 0}$ is $\gamma$-strongly continuous, 
locally $\tau_{\operatorname{co}}$-equicontinuous and locally $\gamma$-equicontinuous.
\item If $(\F,\|\cdot\|,\tau_{\operatorname{co}})$ is a sequentially complete Saks space, 
then $(C_{m,\varphi}(t))_{t\geq 0}$ is a $\tau_{\operatorname{co}}$-bi-continuous semigroup on $\F$. 
\item If $(\F,\|\cdot\|,\tau_{\operatorname{co}})$ is a sequentially complete C-sequential Saks space, 
then $(C_{m,\varphi}(t))_{t\geq 0}$ is quasi-$\gamma$-equicontinuous.
\item If $(\F,\|\cdot\|,\tau_{\operatorname{co}})$ is a sequentially complete C-sequential Saks space 
and $\gamma=\gamma_{s}$, then $(C_{m,\varphi}(t))_{t\geq 0}$ is quasi-$(\|\cdot\|,\tau_{\operatorname{co}})$-equitight.
\end{enumerate}
\end{thm}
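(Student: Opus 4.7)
The plan is to chain three results already proved in the preceding sections: the equicontinuity criterion \prettyref{prop:mixed_equicont}, the strong-continuity criterion \prettyref{prop:strongly_mixed_cont} (a), and the bi-continuity characterisation \prettyref{thm:mixed_equi_sg_bi_cont}. The first two give (a); (b) follows immediately from (a) under sequential completeness; and (c)--(d) then drop out of (b) via the equivalences in \prettyref{thm:mixed_equi_sg_bi_cont}.

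For part (a), I would fix $t_{0}\geq 0$, set $I\coloneqq [0,t_{0}]$, and observe that joint continuity of $(m,\varphi)$ makes $\varphi_{I}(K)$ and $m_{I}(K)$ compact (hence bounded) subsets of $\Omega$ respectively $\K$ for every compact $K\subset\Omega$, as continuous images of the compact set $I\times K$; this is exactly \prettyref{rem:mixed_equicont}. So condition (ii) of \prettyref{prop:mixed_equicont} is met, and its first half yields local $\tau_{\operatorname{co}}$-equicontinuity. Local boundedness of the semigroup supplies $\sup_{t\in I}\|C_{m,\varphi}(t)\|_{\mathcal{L}(\F)}<\infty$, so the second half of \prettyref{prop:mixed_equicont} upgrades this to local $\gamma$-equicontinuity, and the same bound is the only hypothesis \prettyref{prop:strongly_mixed_cont} (a) needs on top of joint continuity. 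Applying it on each $I=[0,t_{0}]$ gives $\gamma$-continuity of the orbit map $t\mapsto C_{m,\varphi}(t)f$ on $[0,t_{0}]$ for every $f\in\F$, and since this is a local property, on all of $[0,\infty)$.

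For (b), sequential completeness puts the outcome of (a) in the setting recalled in the paragraph following \prettyref{rem:mixed=submixed}, which asserts that a $\gamma$-strongly continuous, locally $\gamma$-equicontinuous semigroup on a sequentially complete Saks space is $\tau_{\operatorname{co}}$-bi-continuous. Under the further C-sequentiality assumption, \prettyref{thm:mixed_equi_sg_bi_cont} applies, and the equivalence (a)$\Leftrightarrow$(c) of that theorem turns (b) into the quasi-$\gamma$-equicontinuity claimed in (c), while under the still further assumption $\gamma=\gamma_{s}$ the equivalence with statement (e) upgrades this to the quasi-$(\|\cdot\|,\tau_{\operatorname{co}})$-equitightness claimed in (d). The argument is essentially bookkeeping; the only mildly delicate point is that \prettyref{prop:strongly_mixed_cont} (a) must be applied on each $I=[0,t_{0}]$ separately, with the resulting local $\gamma$-continuity on each interval then assembling into $\gamma$-strong continuity on $[0,\infty)$.
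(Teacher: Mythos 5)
Your proposal is correct and takes essentially the same route as the paper's own proof: part (a) via \prettyref{prop:mixed_equicont}, \prettyref{rem:mixed_equicont} and \prettyref{prop:strongly_mixed_cont} (a) applied with $I=[0,t_{0}]$ for every $t_{0}\geq 0$, and parts (b)--(d) via \prettyref{thm:mixed_equi_sg_bi_cont} together with the comments preceding it (which is the same material you locate after \prettyref{rem:mixed=submixed}). The only difference is that you spell out the bookkeeping --- compactness of $\varphi_{I}(K)$ and $m_{I}(K)$, and assembling $\gamma$-continuity of the orbit maps from the intervals $[0,t_{0}]$ --- which the paper leaves implicit.
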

\begin{proof}
(a) The $\gamma$-strong continuity follows from \prettyref{prop:strongly_mixed_cont} (a) and the local boundedness 
with $I\coloneqq [0,t_{0}]$ for every $t_{0}\geq 0$.
The local $\tau_{\operatorname{co}}$-equicontinuity and local $\gamma$-equicontinuity
are a consequence of \prettyref{prop:mixed_equicont}, \prettyref{rem:mixed_equicont} and the local boundedness 
with $I\coloneqq [0,t_{0}]$ for every $t_{0}\geq 0$. 

(b)+(c)+(d) The remaining parts follow from \prettyref{thm:mixed_equi_sg_bi_cont} and the comments before it.
\end{proof}

We note that $(\mathcal{C}_{b}(\R),\|\cdot\|_{\infty},\tau_{\operatorname{co}})$ 
is a sequentially complete C-sequential Saks space, $\gamma=\gamma_{s}$ and the left translation semigroup 
on $\mathcal{C}_{b}(\R)$, 
i.e.~the (un)weighted composition semigroup $(C_{\mathds{1},\varphi}(t))_{t\geq 0}$ w.r.t.~the jointly continuous 
co-semiflow $(\mathds{1},\varphi)$ where $\varphi_{t}(x)\coloneqq t+x$ 
for all $t\geq 0$, $x\in\R$, is exponentially bounded, locally $\tau_{\operatorname{co}}$-equicontinuous, 
quasi-$\gamma$-equicontinuous and quasi-$(\|\cdot\|,\tau_{\operatorname{co}})$-equitight by (the proof of) 
\cite[Theorem 4.1, p.~19]{kruse_schwenninger2022} and \cite[Example 4.2 (a), p.~19]{kruse_schwenninger2022} 
(or by \prettyref{thm:sg_weighted_comp_str_cont_equicont}), 
but not quasi-$\tau_{\operatorname{co}}$-equicontinuous by \cite[Example 3.2, p.~549]{kunze2009}. 
This shows that \prettyref{thm:sg_weighted_comp_str_cont_equicont} is sharp in the sense 
that we cannot expect quasi-$\tau_{\operatorname{co}}$-equicontinuity of weighted composition semigroups in general.

Looking at \prettyref{thm:sg_weighted_comp_str_cont_equicont}, we see that the local boundedness of 
$(C_{m,\varphi}(t))_{t\geq 0}$ is a crucial ingredient. 
The rest of this section is dedicated to deriving sufficient conditions on $(m,\varphi)$ and $\F$ 
such that $(C_{m,\varphi}(t))_{t\geq 0}$ becomes locally bounded. Our strategy can be described as follows. 
Let $\Omega$ be a Hausdorff space, $(\F,\|\cdot\|,\tau_{\operatorname{co}})$ a Saks space such that 
$\F\subset\mathcal{C}(\Omega)$ and $(m,\varphi)$ a co-semiflow on $\Omega$. 
We decompose $C_{m,\varphi}=C_{m,\id}C_{\mathds{1},\varphi}$ and see that $(m,\varphi)$ is a co-semiflow for $\F$ 
if $(m,\id)$ and $(\mathds{1},\varphi)$ are co-semiflows for $\F$. In this case we have the estimate
\[
    \|C_{m,\varphi}(t)\|_{\mathcal{L}(\F)}
\leq\|C_{m,\id}(t)\|_{\mathcal{L}(\F)}\|C_{\mathds{1},\varphi}(t)\|_{\mathcal{L}(\F)}
\]
for all $t\geq 0$. Therefore the semigroup $(C_{m,\varphi}(t))_{t\geq 0}$ is locally bounded 
if $(m,\id)$ and $(\mathds{1},\varphi)$ are co-semiflows for $\F$, and the \emph{multiplication semigroup} $(C_{m,\id}(t))_{t\geq 0}$ as well as 
the \emph{unweighted composition semigroup} $(C_{\mathds{1},\varphi}(t))_{t\geq 0}$ are locally bounded. 
This strategy might not be optimal but gives rather simple, more applicable, sufficient conditions 
that guarantee the local boundedness of $(C_{m,\varphi}(t))_{t\geq 0}$.

\begin{rem}\fakephantomsection\label{rem:unweighted_comp_co_semiflows_for_example_spaces}
\begin{enumerate}
\item If $\varphi$ is a holomorphic semiflow on $\D$ and $1\leq p<\infty$, then $(\mathds{1},\varphi)$ is a co-semiflow for 
$H^{p}$ and $C_{\mathds{1},\varphi}(t)\in\mathcal{L}(H^{p})$ for all $t\geq 0$ by \cite[Corollary, p.~29]{duren1970}. 
\item If $\varphi$ is a holomorphic semiflow on $\D$, $\alpha>-1$ and $1\leq p<\infty$, then $(\mathds{1},\varphi)$ 
is a co-semiflow for $A_{\alpha}^{p}$ and $C_{\mathds{1},\varphi}(t)\in\mathcal{L}(A_{\alpha}^{p})$ for all $t\geq 0$ 
by \cite[3.4 Proposition, p.~884]{maccluer1986}.  
\item If $\varphi$ is a holomorphic semiflow on $\D$, then $(\mathds{1},\varphi)$ is a co-semiflow for $\mathcal{D}$ and 
$C_{\mathds{1},\varphi}(t)\in\mathcal{L}(\mathcal{D})$ for all $t\geq 0$ by \cite[p.~166]{siskakis1996}. 
\item Let $\varphi$ be a holomorphic semiflow on $\D$ and $v$ defined as in \prettyref{rem:co_semiflows_for_example_spaces} (e). 
Then there is a necessary and sufficient condition such that $(\mathds{1},\varphi)$ is a co-semiflow for $\mathcal{B}v(\D)$ 
and $C_{\mathds{1},\varphi}(t)\in\mathcal{L}(\mathcal{B}v(\D))$ for all $t\geq 0$ given 
in \cite[Theorem 2.3, p.~876]{montes2000}.
\item Let $\Omega$ be a completely regular Hausdorff space, $v\colon\Omega\to (0,\infty)$ continuous and $\varphi$ 
a semiflow on $\Omega$. Suppose that for every $x\in\Omega$ there is $f\in\mathcal{C}(\Omega)$ such that 
$vf\in\mathcal{C}_{0}(\Omega)$ and $f(x)\neq 0$. This condition is for instance fulfilled if $\Omega$ is locally compact. 
Then $(\mathds{1},\varphi)$ is a co-semiflow for $\mathcal{C}v(\Omega)$ and 
$C_{\mathds{1},\varphi}(t)\in\mathcal{L}(\mathcal{C}v(\Omega))$ for all $t\geq 0$ if and only if for every $t\geq 0$ there is 
$K_{t}\geq 0$ such that $v(x)\leq K_{t}v(\varphi_{t}(x))$ for all $x\in\Omega$ by \cite[2.2 Theorem, p.~307]{singh1988}. 
\item Let $\Omega\subset\C$ be open, $v\colon\Omega\to (0,\infty)$ continuous and and $\varphi$ 
a holomorphic semiflow on $\Omega$. Then $(\mathds{1},\varphi)$ is a co-semiflow for $\mathcal{H}v(\Omega)$ and 
$C_{\mathds{1},\varphi}(t)\in\mathcal{L}(\mathcal{H}v(\Omega))$ for all $t\geq 0$ if for every $t\geq 0$ there is 
$K_{t}\geq 0$ such that $v(z)\leq K_{t}v(\varphi_{t}(z))$ for all $z\in\Omega$ by \cite[2.2 Theorem, p.~307]{singh1988}.
\end{enumerate}
\end{rem}

Let $\Omega$ be a Hausdorff space and $\F\subset\mathcal{C}(\Omega)$ a linear space. 
We define the \emph{multiplier space} $\mathcal{M}(\F)$ of continuous multipliers of $\F$ by 
$\mathcal{M}(\F)\coloneqq\{g\in\mathcal{C}(\Omega)\;|\;\forall\;f\in\F:\;gf\in\F\}$.\footnote{The multiplier space is defined as 
$\mathcal{M}_{0}(\F)\coloneqq\{g\colon\Omega\to\K\;|\;\forall\;f\in\F:\;gf\in\F\}$ in \cite[p.~272]{brown1984}. 
Thus we have $\mathcal{M}(\F)\subset\mathcal{M}_{0}(\F)$. If $\mathds{1}\in\F$, then we even have 
$\mathcal{M}(\F)=\mathcal{M}_{0}(\F)$ because $\F\subset\mathcal{C}(\Omega)$. We incorporated continuity in the definition 
of our multiplier space since our semicocycles are by definition continuous.}

\begin{prop}\label{prop:multiplier_space}
Let $\Omega$ be a Hausdorff space, $(\F,\|\cdot\|,\tau_{\operatorname{co}})$ a sequentially complete Saks space 
such that $\F\subset\mathcal{C}(\Omega)$ and suppose that for every $x\in\Omega$ there is $f\in\F$ such that $f(x)\neq 0$. 
Then we have $\mathcal{M}(\F)\subset \mathcal{C}_{b}(\Omega)$. Further, $C_{m,\id}(t)\in\mathcal{L}(\F)$ for all $t\geq 0$ 
if $(m,\id)$ is a co-semiflow on $\Omega$ and $m_{t}\in\mathcal{M}(\F)$ for all $t\geq 0$. 
\end{prop}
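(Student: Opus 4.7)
The plan is to prove the two claims together via a closed-graph-plus-spectral-radius argument. The key observation is that sequential completeness of the pre-Saks space forces $(\F,\|\cdot\|)$ to be a Banach space by \prettyref{rem:seq_complete_mixed_Banach}, and that for each $x\in\Omega$ the point evaluation $\delta_{x}\colon \F\to\K$, $\delta_{x}(f)\coloneqq f(x)$, is $\tau_{\operatorname{co}}$-continuous, hence $\|\cdot\|$-continuous since $\tau_{\operatorname{co}}\subset\tau_{\|\cdot\|}$. Writing $C_{x}\coloneqq \|\delta_{x}\|_{\mathcal{L}((\F,\|\cdot\|),\K)}$, one has $|f(x)|\leq C_{x}\|f\|$ for all $f\in\F$; note $C_{x}>0$ by the nondegeneracy hypothesis.

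Given $g\in\mathcal{M}(\F)$, the multiplication map $M_{g}\colon\F\to\F$, $M_{g}f\coloneqq gf$, is well-defined and linear. I would verify closedness as follows: if $f_{n}\to f$ and $gf_{n}\to h$ in $\|\cdot\|$, applying each $\delta_{x}$ gives $f_{n}(x)\to f(x)$ and $g(x)f_{n}(x)\to h(x)$, so $h(x)=g(x)f(x)$ on all of $\Omega$, i.e.~$M_{g}f=h$. The closed graph theorem then yields $M_{g}\in\mathcal{L}(\F)$.

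To upgrade norm-boundedness of $M_{g}$ to sup-boundedness of $g$, I would use the Gelfand-type trick: fix $x\in\Omega$ and pick $f\in\F$ with $f(x)\neq 0$, so in particular $\|f\|>0$. Since $M_{g}^{n}f=g^{n}f$,
\[
|g(x)|^{n}|f(x)|=|\delta_{x}(M_{g}^{n}f)|\leq C_{x}\|M_{g}\|_{\mathcal{L}(\F)}^{n}\|f\|,
\]
and taking $n$-th roots yields $|g(x)|\cdot|f(x)|^{1/n}\leq C_{x}^{1/n}\|M_{g}\|_{\mathcal{L}(\F)}\|f\|^{1/n}$; letting $n\to\infty$ gives $|g(x)|\leq\|M_{g}\|_{\mathcal{L}(\F)}$. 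As $x$ was arbitrary, $g\in\mathcal{C}_{b}(\Omega)$ with $\|g\|_{\infty}\leq\|M_{g}\|_{\mathcal{L}(\F)}$.

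For the second assertion, under the stated hypotheses $C_{m,\id}(t)f=m_{t}\cdot(f\circ\id)=m_{t}f=M_{m_{t}}f$, and since $m_{t}\in\mathcal{M}(\F)$ the preceding argument gives $C_{m,\id}(t)=M_{m_{t}}\in\mathcal{L}(\F)$. The only mildly delicate point is the spectral-radius passage, but the chosen $f$ makes all three quantities $|f(x)|^{1/n}$, $C_{x}^{1/n}$, $\|f\|^{1/n}$ converge to $1$, so no obstacle remains.
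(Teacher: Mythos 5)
Your proof is correct and follows essentially the same route as the paper: the paper verifies that $(\F,\|\cdot\|)$ is a functional Banach space (via \prettyref{rem:seq_complete_mixed_Banach} and \prettyref{conv:pre_Saks_function_space}) and then cites \cite[Lemma 11, p.~57]{duren1969}, whose proof is exactly your closed-graph-plus-spectral-radius argument with the estimate $\|g\|_{\infty}\leq\|M_{g}\|_{\mathcal{L}(\F)}$. In effect you have written out self-contained what the paper delegates to the cited lemma, with all the delicate points (norm-continuity of $\delta_{x}$, $C_{x}>0$, $\|f\|>0$, and the $n$-th root limits) handled correctly.
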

\begin{proof}
Due to our assumption, \prettyref{rem:seq_complete_mixed_Banach} and \prettyref{conv:pre_Saks_function_space} $(\F,\|\cdot\|)$ is 
a functional Banach space in the sense of \cite[p.~57]{duren1969} and thus our statement follows from 
\cite[Lemma 11, p.~57]{duren1969}. 
\end{proof}

\begin{prop}\label{prop:multiplier_space_exa}
\begin{enumerate}
\item $\mathcal{M}(H^{p})=\mathcal{M}(A_{\alpha}^{p})=H^{\infty}$ for $\alpha>-1$ and $1\leq p<\infty$.
\item $\mathcal{M}(\mathcal{B}_{\alpha})=H^{\infty}$ for $\alpha>1$, $\mathcal{M}(\mathcal{B}_{\alpha})=\mathcal{B}_{\alpha}$ for 
$0<\alpha<1$ and 
$\mathcal{M}(\mathcal{B}_{1})=\{f\in H^{\infty}\;|\;\sup_{z\in\D}|f'(z)|(1-|z|^2)\ln((1-|z|^{2})^{-1})<\infty\}$.
\item $\mathcal{M}(\mathcal{C}v(\Omega))=\mathcal{C}_{b}(\Omega)$ for all locally compact Hausdorff spaces $\Omega$ 
and continuous $v\colon\Omega\to (0,\infty)$.
\item $\mathcal{M}(\mathcal{H}v(\Omega))=H^{\infty}(\Omega)$ for all open sets $\Omega\subset\C$ and continuous 
$v\colon\Omega\to (0,\infty)$ if $\mathds{1}\in\mathcal{H}v(\Omega)$.
\end{enumerate}
\end{prop}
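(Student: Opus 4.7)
The plan is to handle the four parts by two different strategies: parts (a), (c) and (d) follow uniformly from \prettyref{prop:multiplier_space} combined with a trivial Hölder-type product estimate, whereas part (b) splits into three sub-cases of very different depth.

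For the ``$\supset$'' inclusions in (a), (c) and (d) I would just observe that if $g$ lies in the conjectured multiplier space and $f$ in the underlying space $\F$, then $gf$ has the required regularity (continuous, or holomorphic when the ambient space is) and its norm is bounded by $\|g\|_{\infty}\|f\|$. For the reverse inclusions I would invoke \prettyref{prop:multiplier_space}. Its hypotheses are met: sequential completeness of $(\F,\|\cdot\|,\tau_{\operatorname{co}})$ is provided by \prettyref{ex:hardy_bergman_dirichlet}, \prettyref{ex:weighted_holom_saks}, or \prettyref{ex:cont_saks} respectively (using that a locally compact Hausdorff space is a $k_{\R}$-space in the last case), and point-separation comes from the constant function $\mathds{1}$ in (a) and (d) (the assumption $\mathds{1}\in\mathcal{H}v(\Omega)$ in (d) additionally forces $v$ to be bounded, so that $H^{\infty}(\Omega)\subset\mathcal{H}v(\Omega)$), or from a compactly supported Urysohn bump in $\mathcal{C}v(\Omega)$ in (c). This gives $\mathcal{M}(\F)\subset\mathcal{C}_{b}(\Omega)$, and in the holomorphic cases (a) and (d) a multiplier $g$ satisfies $g=g\cdot\mathds{1}\in\F\subset\mathcal{H}(\Omega)$, hence is in fact holomorphic and bounded, so $g\in H^{\infty}$ resp.\ $H^{\infty}(\Omega)$.

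Part (b) requires more care and I would treat the three sub-cases separately. For $0<\alpha<1$ I would use that integration of $|f'|\leq\|f\|_{\mathcal{B}_{\alpha}}(1-|\cdot|^{2})^{-\alpha}$ along radii embeds $\mathcal{B}_{\alpha}$ continuously into $H^{\infty}$ and, via the Leibniz rule, turns $\mathcal{B}_{\alpha}$ into a Banach algebra under pointwise multiplication, giving $\mathcal{B}_{\alpha}\subset\mathcal{M}(\mathcal{B}_{\alpha})$; the reverse inclusion is trivial from $g=g\cdot\mathds{1}\in\mathcal{B}_{\alpha}$. For $\alpha>1$, the inclusion $H^{\infty}\subset\mathcal{M}(\mathcal{B}_{\alpha})$ follows from the Leibniz rule together with the Schwarz--Pick estimate $|g'(z)|(1-|z|^{2})\leq\|g\|_{\infty}$ for $g\in H^{\infty}$ and the growth bound $|f(z)|\lesssim\|f\|_{\mathcal{B}_{\alpha}}(1-|z|^{2})^{1-\alpha}$ (again by radial integration), while the reverse inclusion is obtained as in (a) from \prettyref{prop:multiplier_space}, using $\mathds{1}\in\mathcal{B}_{\alpha}$ to force holomorphicity of multipliers.

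The main obstacle is the critical case $\alpha=1$ of (b), i.e.\ the classical Bloch space, where the multipliers are characterised by the extra logarithmic growth condition on the derivative. This is the deep theorem of Brown--Shields; I would not attempt to reprove it but merely reduce via \prettyref{prop:multiplier_space} to $\mathcal{M}(\mathcal{B}_{1})\subset H^{\infty}$, leaving only the supplementary $\sup_{z}|f'(z)|(1-|z|^{2})\log((1-|z|^{2})^{-1})<\infty$ condition to be imported as a citation from the literature.
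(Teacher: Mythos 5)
Your proposal is correct, and on parts (a), (c) and (d) it is essentially the paper's own argument: the easy inclusions are the trivial norm estimates $\|gf\|\leq\|g\|_{\infty}\|f\|$, the reverse inclusions come from \prettyref{prop:multiplier_space} (sequential completeness supplied by \prettyref{ex:hardy_bergman_dirichlet}, \prettyref{ex:cont_saks} and \prettyref{ex:weighted_holom_saks}, point-nonvanishing from $\mathds{1}$ in (a) and (d) and from a compactly supported bump in (c)), and holomorphy of a multiplier $g$ from $g=g\cdot\mathds{1}\in\F$. Incidentally, your sorting of the point-separation hypotheses is cleaner than the paper's own phrasing, which attaches the local-compactness argument to (d) rather than (c), where it is actually needed. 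The genuine divergence is part (b): the paper disposes of all three regimes at once by citing \cite[Theorem 27, p.~1170]{zhu1993}, whereas you prove the cases $\alpha>1$ and $0<\alpha<1$ directly and import only the critical case $\alpha=1$ (the Brown--Shields characterisation) from the literature. Your self-contained arguments are sound, and in fact the ingredients you use --- the radial-integration growth bounds giving $|f(z)|\lesssim\|f\|_{\mathcal{B}_{\alpha}}(1-|z|^{2})^{1-\alpha}$ for $\alpha>1$ and $\mathcal{B}_{\alpha}\subset H^{\infty}$ for $0<\alpha<1$, together with the Schwarz--Pick estimate $|g'(z)|(1-|z|^{2})\leq\|g\|_{\infty}$ --- are precisely the estimates \eqref{eq:Bloch_alpha>1}, \eqref{eq:Bloch_alpha<1} and \eqref{eq:Bloch_1_H_infty} that the paper itself derives later in the proof of \prettyref{thm:loc_bound_sg_bloch}, so your route buys a proof of (b) for $\alpha\neq 1$ from the paper's own toolkit at the cost of a longer write-up (with \prettyref{ex:bloch_saks} supplying the completeness needed to invoke \prettyref{prop:multiplier_space} for $\alpha>1$; for $0<\alpha<1$ the inclusion $\mathcal{M}(\mathcal{B}_{\alpha})\subset\mathcal{B}_{\alpha}$ already follows from $g=g\cdot\mathds{1}$, as you note). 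One cosmetic remark: the parenthetical in (d) that $\mathds{1}\in\mathcal{H}v(\Omega)$ forces $v$ to be bounded, hence $H^{\infty}(\Omega)\subset\mathcal{H}v(\Omega)$, is true but superfluous, since the easy inclusion only uses $\|gf\|_{v}\leq\|g\|_{\infty}\|f\|_{v}$; the hypothesis $\mathds{1}\in\mathcal{H}v(\Omega)$ is needed only for point-nonvanishing and for the holomorphy of multipliers via $g=g\cdot\mathds{1}$.
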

\begin{proof}
In (a) we have $H^{\infty}\subset \mathcal{M}(H^{p})$ and $H^{\infty}\subset \mathcal{M}(A_{\alpha}^{p})$, in (c) we have 
$\mathcal{C}_{b}(\Omega)\subset\mathcal{M}(\mathcal{C}v(\Omega))$, and in (d) we have 
$H^{\infty}(\Omega)\subset\mathcal{M}(\mathcal{H}v(\Omega))$. Therefore the statements in part (a), (c) and (d) follow 
from \prettyref{prop:multiplier_space}, \prettyref{ex:hardy_bergman_dirichlet}, \prettyref{ex:cont_saks} and 
\prettyref{ex:weighted_holom_saks} since in (a) $\mathds{1}\in H^{p}, A_{\alpha}^{p}$ and since in (c) 
for every $x\in\Omega$ there is $f\in\F$ such that $f(x)\neq 0$ because $\Omega$ is a locally compact Hausdorff space. 
Part (b) is \cite[Theorem 27, p.~1170]{zhu1993}.
\end{proof}

The multiplier space $\mathcal{M}(\mathcal{D})$ of the Dirichlet space is more complicated and its elements can be described 
in terms of the Carleson measure by \cite[Theorems 1.1 (c), 2.3, 2.7, p.~115, 122, 125]{stegenga1980} with $\alpha=\frac{1}{2}$. 
From \prettyref{prop:multiplier_space} and $\mathds{1}\in\mathcal{D}$ it follows that 
$\mathcal{M}(\mathcal{D})\subset(\mathcal{D}\cap H^{\infty})$.

\begin{thm}\label{thm:loc_bound_sg_hardy_bergman_dirichlet}
Let $(m,\varphi)$ be a holomorphic co-semiflow on $\D$ such that $\varphi$ is jointly continuous. 
Then $(m,\varphi)$ is a co-semiflow for $\mathcal{F}(\D)$ 
and the weighted composition semigroup $(C_{m,\varphi}(t))_{t\geq 0}$ on $\mathcal{F}(\D)$ is locally bounded 
in each of the following cases if
\begin{enumerate}
\item $\mathcal{F}(\D)=H^{p}$ for $1\leq p<\infty$ and $\limsup_{t\to 0\rlim}\|m_{t}\|_{\infty}<\infty$,
\item $\mathcal{F}(\D)=A_{\alpha}^{p}$ for $\alpha>-1$ and $1\leq p<\infty$ and $\limsup_{t\to 0\rlim}\|m_{t}\|_{\infty}<\infty$,
\item $\mathcal{F}(\D)=\mathcal{D}$ and $m_{t}\in\mathcal{M}(\mathcal{D})$ for all $t\geq 0$.
\end{enumerate}
\end{thm}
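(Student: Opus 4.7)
The plan is to follow the decomposition strategy outlined by the author just before the statement. Writing $C_{m,\varphi}(t)=M_{m_t}\circ C_{\varphi_t}$, where $M_{m_t}=C_{m,\id}(t)$ is multiplication by $m_t$ and $C_{\varphi_t}=C_{\mathds{1},\varphi}(t)$ is the unweighted composition operator, it suffices in each case (a)--(c) to establish that both $(m,\id)$ and $(\mathds{1},\varphi)$ are co-semiflows for $\mathcal{F}(\D)$ and that each factor is locally bounded. The submultiplicative estimate $\|C_{m,\varphi}(t)\|_{\mathcal{L}(\mathcal{F}(\D))}\leq \|M_{m_t}\|_{\mathcal{L}(\mathcal{F}(\D))}\|C_{\varphi_t}\|_{\mathcal{L}(\mathcal{F}(\D))}$ then yields local boundedness of the full semigroup.

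For the multiplication factor in cases (a) and (b), the hypothesis $\limsup_{t\to 0\rlim}\|m_t\|_\infty<\infty$ together with \prettyref{prop:semicocycle_bounded} upgrades to $\|m_t\|_\infty\leq M\euler^{\omega t}$, so by \prettyref{prop:multiplier_space_exa}(a) we get $m_t\in H^\infty=\mathcal{M}(H^p)=\mathcal{M}(A_\alpha^p)$, and since the multiplier norm on these spaces coincides with the sup norm, $\|M_{m_t}\|\leq M\euler^{\omega t}$. In case (c), the assumption $m_t\in\mathcal{M}(\mathcal{D})$ places $(m,\id)$ among the co-semiflows for $\mathcal{D}$ immediately. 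For the composition factor, \prettyref{rem:unweighted_comp_co_semiflows_for_example_spaces}(a)--(c) supplies that $(\mathds{1},\varphi)$ is a co-semiflow for each space. Local boundedness of $\|C_{\varphi_t}\|$ follows in (a) and (b) from the Littlewood-subordination estimates $\|C_{\varphi_t}\|_{\mathcal{L}(H^p)}\leq\bigl((1+|\varphi_t(0)|)/(1-|\varphi_t(0)|)\bigr)^{1/p}$ and the analogous Bergman bound with exponent $(2+\alpha)/p$; in (c) the univalence of $\varphi_t$ (from \prettyref{prop:fixed_points_holomorphic}(a), applied on the connected set $\D$) gives via change of variables $\int_{\D}|(f\circ\varphi_t)'|^2\leq\int_{\D}|f'|^2$, and the point-evaluation $|f(\varphi_t(0))|$ is controlled by the continuous reproducing-kernel bound on $\mathcal{D}$. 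Since $\varphi$ is jointly continuous with $\varphi_{0}=\id$, the quantity $|\varphi_t(0)|$ stays bounded away from $1$ on every compact interval $[0,t_0]$, yielding the desired local bounds.

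The main obstacle is the multiplication factor in case (c): the family $(M_{m_t})_{t\geq 0}$ is \emph{not} a semigroup because $m_{t+s}=m_t\cdot(m_s\circ\varphi_t)\neq m_t m_s$ in general, so \prettyref{prop:loc_bounded_sg} cannot be applied to $(M_{m_t})$, and from $m_t\in\mathcal{M}(\mathcal{D})$ I only obtain pointwise finiteness of $\|M_{m_t}\|_{\mathcal{L}(\mathcal{D})}$ with no a priori uniform control. My fallback plan is to bypass the factorised estimate and apply \prettyref{prop:loc_bounded_sg}(d) directly to the genuine semigroup $(C_{m,\varphi}(t))_{t\geq 0}$ on the Banach space $\mathcal{D}$: knowing that every $C_{m,\varphi}(t)$ is bounded as a composition of a bounded multiplication by a bounded composition, it suffices to verify the orbitwise bound $\sup_{t\in[0,t_0]}\|C_{m,\varphi}(t)f\|_{\mathcal{D}}<\infty$ for each $f\in\mathcal{D}$ and some $t_0>0$, after which uniform boundedness on the Banach space $\mathcal{D}$ delivers the operator-norm bound. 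This orbitwise estimate is the technical heart, and the hardest point will be to control $\|m_t(f\circ\varphi_t)\|_{\mathcal{D}}$ as $t\to 0\rlim$, which I expect to handle via the Stegenga Carleson-measure characterisation of $\mathcal{M}(\mathcal{D})$ combined with the semicocycle identity together with the already-established local boundedness of $\|C_{\varphi_t}\|_{\mathcal{L}(\mathcal{D})}$.
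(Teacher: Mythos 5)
For parts (a) and (b), and for the composition factor in (c), your route is essentially the paper's own: the factorisation $C_{m,\varphi}(t)=C_{m,\id}(t)C_{\mathds{1},\varphi}(t)$ with the submultiplicative norm estimate, Littlewood subordination for $H^{p}$, the analogous Bergman bound (the paper uses \cite[Lemma 1, p.~399]{siskakis1987}, whose constant involves $\|\varphi_{t}\|_{\infty}$ and forces an extra step via \cite{bracci2020}; your cleaner exponent-$\nicefrac{(\alpha+2)}{p}$ version is equally valid), the trivial bounds $\|C_{m,\id}(t)\|_{\mathcal{L}(H^{p})},\|C_{m,\id}(t)\|_{\mathcal{L}(A_{\alpha}^{p})}\leq\|m_{t}\|_{\infty}$ combined with \prettyref{prop:semicocycle_bounded}, and, for $\mathcal{D}$, univalence of $\varphi_{t}$ (\prettyref{prop:fixed_points_holomorphic} (a)) plus change of variables and a point-evaluation bound, with $\sup_{t\in[0,t_{0}]}|\varphi_{t}(0)|<1$ from joint continuity. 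All of that is sound and matches the paper's proof in substance.

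The genuine gap is the multiplication factor in case (c), and your diagnosis of the obstacle is only half right. You claim that from $m_{t}\in\mathcal{M}(\mathcal{D})$ one gets "no a priori uniform control" because $(C_{m,\id}(t))_{t\geq 0}$ is not a semigroup; but the paper's \prettyref{prop:semicocycle_bounded} is a statement about the scalar cocycle, not about the operator family: since $\mathcal{M}(\mathcal{D})\subset H^{\infty}$ (via \prettyref{prop:multiplier_space} and $\mathds{1}\in\mathcal{D}$) one has $\|m_{t}\|_{\infty}<\infty$ for all $t$, and the cocycle identity gives $\|m_{t+s}\|_{\infty}\leq\|m_{t}\|_{\infty}\|m_{s}\circ\varphi_{t}\|_{\infty}\leq\|m_{t}\|_{\infty}\|m_{s}\|_{\infty}$, so assertion (d)$\Rightarrow$(a) of \prettyref{prop:semicocycle_bounded} already yields $\|m_{t}\|_{\infty}\leq M\euler^{\omega t}$ uniformly. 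What is then still needed — and what your proposal never supplies — is the conversion of this sup-norm bound into an operator-norm bound on $\mathcal{D}$; the paper does this by the explicit computation $\|C_{m,\id}(t)\|_{\mathcal{L}(\mathcal{D})}^{2}\leq 2(1+M)\|m_{t}\|_{\infty}^{2}$ using $\|m_{t}\|_{\mathcal{B}_{1}}\leq\|m_{t}\|_{\infty}$ (see \eqref{eq:Bloch_1_H_infty}) and the integral estimate \eqref{eq:dirichlet_estim}. Your fallback via \prettyref{prop:loc_bounded_sg} (d) is circular as sketched: to verify $\sup_{t\in[0,t_{0}]}\|m_{t}\cdot(f\circ\varphi_{t})\|_{\mathcal{D}}<\infty$ for every $f\in\mathcal{D}$ you must control $\int_{\D}|m_{t}'(z)|^{2}|f(\varphi_{t}(z))|^{2}\d z$ uniformly in $t$, i.e.\ you need a uniform-in-$t$ bound on the Carleson constants of the measures $|m_{t}'|^{2}\d z$, which is exactly the uniform multiplier-norm control you declared unavailable; invoking "Stegenga plus the semicocycle identity" names the tools but is not an argument. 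Be warned, moreover, that this conversion step is genuinely delicate: the multiplier norm of $\mathcal{D}$ is \emph{not} dominated by the sup norm, and indeed \eqref{eq:dirichlet_estim} as stated fails already for $f=\mathds{1}$ (the left-hand integral diverges), so even the paper's own closure of this step requires repair; any correct completion must produce an actual uniform bound on the Carleson constants of $|m_{t}'|^{2}\d z$ for small $t$, and your proposal contains no mechanism for that.
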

\begin{proof}
The condition $\limsup_{t\to 0\rlim}\|m_{t}\|_{\infty}<\infty$ in (a) and (b) yields that $m_{t}\in H^{\infty}$ for all $t\geq 0$ 
by \prettyref{prop:semicocycle_bounded}, which is also true in (c) because $\mathcal{M}(\mathcal{D})\subset H^{\infty}$. 
Hence in all the cases $(\mathds{1},\varphi)$, $(m,\id)$ and $(m,\varphi)$ are co-semiflows for $\mathcal{F}(\D)$ by 
\prettyref{rem:unweighted_comp_co_semiflows_for_example_spaces} and \prettyref{prop:multiplier_space_exa}. 

(a) We have 
\[
\|f\circ\varphi_{t}\|_{p}^{p}\leq \frac{1+|\varphi_{t}(0)|}{1-|\varphi_{t}(0)|}\|f\|_{p}^{p}
\] 
for all $t\geq 0$ and $f\in H^{p}$ by \cite[Corollary, p.~29]{duren1970}, yielding 
\[
\|C_{\mathds{1},\varphi}(t)\|_{\mathcal{L}(H^{p})}\leq \left(\frac{1+|\varphi_{t}(0)|}{1-|\varphi_{t}(0)|}\right)^{\frac{1}{p}}
\]
for all $t\geq 0$. Further, we note that $\|C_{m,\id}(t)\|_{\mathcal{L}(H^{p})}\leq\|m_{t}\|_{\infty}$ for all $t\geq 0$.

(b) Due to \cite[Theorem 8.1.15, p.~211]{bracci2020} and \cite[Proposition 10.1.7 (1)$\Leftrightarrow$(2), p.~275]{bracci2020}  
there is $t_{0}>0$ such that $\|\varphi_{t}-\id\|_{\infty}\leq 1$ for all $t\in[0,t_{0}]$ since $\varphi$ is jointly continuous, 
which implies that $\|\varphi_{t}\|_{\infty}\leq 2$ for all $t\in[0,t_{0}]$. Therefore we obtain
\[
    \|C_{\mathds{1},\varphi}(t)\|_{\mathcal{L}(A_{\alpha}^{p})}
\leq K(\varphi_{t})\left(\frac{\|\varphi_{t}\|_{\infty}+|\varphi_{t}(0)|}{\|\varphi_{t}\|_{\infty}-|\varphi_{t}(0)|}\right)^{\frac{\alpha+2}{p}}
\]
for all $t\in [0,t_{0}]$ by \cite[Lemma 1, p.~399]{siskakis1987} where $K(\varphi_{t})\coloneqq 1$ if $\alpha\geq 0$, and
$K(\varphi_{t})\coloneqq (\|\varphi_{t}\|_{\infty}+|\varphi_{t}(0)|)^{\alpha/p}(\|\varphi_{t}\|_{\infty}+3|\varphi_{t}(0)|)^{-\alpha/p}$ if $-1<\alpha<0$. 
Furthermore, we note that $\|C_{m,\id}(t)\|_{\mathcal{L}(A_{\alpha}^{p})}\leq\|m_{t}\|_{\infty}$ 
for all $t\geq 0$.

(c) Due to \cite[Theorem 2 (a), p.~26]{martin2005} and \prettyref{prop:fixed_points_holomorphic} (a) we have 
\[
\|C_{\mathds{1},\varphi}(t)\|_{\mathcal{L}(\mathcal{D})}^{2}
\leq 1+\frac{1}{2}\Bigl(L(\varphi_{t})+\bigl(L(\varphi_{t})(4+L(\varphi_{t}))\bigr)^{\frac{1}{2}}\Bigr)
\]
for all $t\geq 0$ where $L(\varphi_{t})\coloneqq -\ln(1-|\varphi_{t}(0)|^{2})$. 
By \cite[Proposition 5.1, p.~101]{zhu2007} we have $H^{\infty}\subset\mathcal{B}_{1}$ and 
\begin{equation}\label{eq:Bloch_1_H_infty}
\|g\|_{\mathcal{B}_{1}}\leq\|g\|_{\infty}
\end{equation}
for all $g\in H^{\infty}$. By \cite[p.~114]{stegenga1980} with $\alpha=\frac{1}{2}$ there is $M>0$ such that 
\begin{equation}\label{eq:dirichlet_estim}
\frac{1}{\pi}\int_{\D}|f(z)|^{2}(1-|z|^{2})^{-2}\d z\leq M\|f\|_{\mathcal{D}}^{2}
\end{equation}
for all $f\in\mathcal{D}$. We deduce that
\begin{align*}
  \|C_{m,\id}(t)f\|_{\mathcal{D}}^{2}
&=|m_{t}(0)f(0)|^{2}+\frac{1}{\pi}\int_{\D}|(m_{t}f)'(z)|^{2}\d z\\
&\leq \|m_{t}\|_{\infty}^{2}\|f\|_{\mathcal{D}}^{2}+\frac{2}{\pi}\int_{\D}|m_{t}'(z)f(z)|^{2}+|m_{t}(z)f'(z)|^{2}\d z\\
&\leq 2\|m_{t}\|_{\infty}^{2}\|f\|_{\mathcal{D}}^{2}
 +\frac{2}{\pi}\int_{\D}|m_{t}'(z)|^{2}(1-|z|^{2})^{2}|f(z)|^{2}(1-|z|^{2})^{-2}\d z\\
&\leq 2\|m_{t}\|_{\infty}^{2}\|f\|_{\mathcal{D}}^{2}
 +\frac{2}{\pi}\|m_{t}\|_{\mathcal{B}_{1}}^{2}\int_{\D}|f(z)|^{2}(1-|z|^{2})^{-2}\d z\\
&\underset{\mathclap{\eqref{eq:Bloch_1_H_infty},\,\eqref{eq:dirichlet_estim}}}{\leq}\quad
 2(1+M)\|m_{t}\|_{\infty}^{2}\|f\|_{\mathcal{D}}^{2}
\end{align*}
for all $t\geq 0$ and $f\in\mathcal{D}$, which implies 
$\|C_{m,\id}(t)\|_{\mathcal{L}(\mathcal{D})}^{2}\leq 2(1+M)\|m_{t}\|_{\infty}^{2}$ for all $t\geq 0$.

Therefore we derive in (a), (b) and (c) from the joint continuity of $\varphi$ that $(C_{\mathds{1},\varphi}(t))_{t\geq 0}$ 
is locally bounded. \prettyref{prop:semicocycle_bounded} yields that $(C_{m,\id}(t))_{t\geq 0}$ is locally bounded and thus 
$(C_{m,\varphi}(t))_{t\geq 0}$
is locally bounded, too.
\end{proof}

Any semiflow $\varphi$ on $\D$ given in \cite[p.~4--5]{siskakis1998} is holomorphic and $C_{0}$, thus jointly continuous by 
\prettyref{prop:jointly_cont_semiflow} (a). For any such $\varphi$ take the semicocycle $m$ given by 
$m_{t}\colon\D\to\C$, $m_{t}(z)\coloneqq \exp(\int_{0}^{t}g(\varphi_{s}(z))\d s)$, for all $t\geq 0$ 
for some $g\in\mathcal{H}(\D)$ (see \prettyref{prop:jointly_cont_int_cocycle} (a)). 
If $M\coloneqq\sup_{z\in\D}\re(g(z))<\infty$, then 
$
\|m_{t}\|_{\infty}\leq \euler^{tM}
$
for all $t\geq 0$ and so $\limsup_{t\to 0\rlim}\|m_{t}\|_{\infty}\leq 1$ (cf.~\cite[p.~474]{koenig1990}). 
Hence $(C_{m,\varphi}(t))_{t\geq 0}$ is a locally bounded semigroup on $\mathcal{F}(\D)$ in case (a) and (b) 
of \prettyref{thm:loc_bound_sg_hardy_bergman_dirichlet}. 

An example in case (c) of the Dirichlet space $\mathcal{D}$ is the jointly continuous holomorphic co-semiflow 
$(\varphi',\varphi)$ on $\D$ given by $\varphi_{t}\colon\D\to\D$, $\varphi_{t}(z)\coloneqq \euler^{-ct}z$, for all $t\geq 0$ 
for some $c\in\C$ with $\re(c)\geq 0$ since $\varphi_{t}'(z)=\euler^{-ct}$ for all $t\geq 0$ and $z\in\D$, 
which implies $m_{t}\coloneqq\varphi_{t}'\in\mathcal{D}$ for all $t\geq 0$. 
The same is true if we choose $\varphi_{t}(z)\coloneqq \euler^{-t}z+1-\euler^{-t}$ for all $t\geq 0$ and $z\in\D$. 
Thus $(C_{\varphi',\varphi}(t))_{t\geq 0}$ is a locally bounded semigroup on $\mathcal{D}$ by 
\prettyref{thm:loc_bound_sg_hardy_bergman_dirichlet} (c) in both cases.

\begin{thm}\label{thm:loc_bound_sg_bloch}
Let $(m,\varphi)$ be a holomorphic co-semiflow on $\D$ and $\alpha>0$. Then $(m,\varphi)$ is a co-semiflow for 
$\mathcal{B}_{\alpha}$ and the weighted composition semigroup $(C_{m,\varphi}(t))_{t\geq 0}$ on $\mathcal{B}_{\alpha}$ 
is locally bounded if 
\[
K_{\alpha}(\varphi_{t})\coloneqq\sup_{z\in\D}|\varphi_{t}'(z)|(1-|\varphi_{t}(z)|^{2})^{-\alpha}(1-|z|^{2})^{\alpha}<\infty
\]
for all $t\geq 0$, there exists $t_{0}>0$ such that $\sup_{t\in [0,t_{0}]}K_{\alpha}(\varphi_{t})<\infty$ and 
\begin{enumerate}
\item for $\alpha>1$ if $\limsup_{t\to 0\rlim}\|m_{t}\|_{\infty}<\infty$,
\item for $\alpha=1$ if $\limsup_{t\to 0\rlim}\|m_{t}\|_{\infty}<\infty$ and 
\[
\sup_{t\in [0,t_{0}]}\sup_{z\in\D}|m_{t}'(z)|(1-|z|^{2})\ln((1-|z|^{2})^{-1})<\infty,
\]
\item for $0<\alpha<1$ if $m_{t}\in\mathcal{B}_{\alpha}$ for all $t\geq 0$ and 
$
\sup_{t\in [0,t_{0}]}\|m_{t}\|_{\mathcal{B}_{\alpha}}<\infty.
$
\end{enumerate}
\end{thm}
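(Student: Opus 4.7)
The plan is to apply the decomposition strategy from the discussion preceding Theorem~\ref{thm:loc_bound_sg_hardy_bergman_dirichlet}: write $C_{m,\varphi}(t)=C_{m,\id}(t)C_{\mathds{1},\varphi}(t)$ and show separately that $(\mathds{1},\varphi)$ is a co-semiflow for $\mathcal{B}_\alpha$ with $\sup_{t\in[0,t_0]}\|C_{\mathds{1},\varphi}(t)\|_{\mathcal{L}(\mathcal{B}_\alpha)}<\infty$ and that $m_t\in\mathcal{M}(\mathcal{B}_\alpha)$ for all $t\geq 0$ with $\sup_{t\in[0,t_0]}\|C_{m,\id}(t)\|_{\mathcal{L}(\mathcal{B}_\alpha)}<\infty$. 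Taking the product then gives $(m,\varphi)$ as a co-semiflow for $\mathcal{B}_\alpha$ and the local bound
\[
\|C_{m,\varphi}(t)\|_{\mathcal{L}(\mathcal{B}_\alpha)}\leq\|C_{m,\id}(t)\|_{\mathcal{L}(\mathcal{B}_\alpha)}\|C_{\mathds{1},\varphi}(t)\|_{\mathcal{L}(\mathcal{B}_\alpha)}.
\]

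For the composition factor I compute directly
\[
\|C_{\mathds{1},\varphi}(t)f\|_{\mathcal{B}_\alpha}=|f(\varphi_t(0))|+\sup_{z\in\D}|f'(\varphi_t(z))|\,|\varphi_t'(z)|(1-|z|^2)^\alpha\leq|f(\varphi_t(0))|+K_\alpha(\varphi_t)\|f\|_{\mathcal{B}_\alpha},
\]
and bound the point evaluation by integrating $f'$ along the segment from $0$ to $\varphi_t(0)$, which, via $|f'(w)|\leq\|f\|_{\mathcal{B}_\alpha}(1-|w|^2)^{-\alpha}$, yields an estimate $|f(\varphi_t(0))|\leq|f(0)|+c_\alpha(|\varphi_t(0)|)\|f\|_{\mathcal{B}_\alpha}$ with $c_\alpha(r)$ continuous and finite on $[0,1)$. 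Since $\varphi_0=\id$ and $t\mapsto\varphi_t(0)$ is continuous, $|\varphi_t(0)|$ stays bounded away from $1$ on a suitable $[0,t_0]$, which combined with $\sup_{t\in[0,t_0]}K_\alpha(\varphi_t)<\infty$ delivers the required uniform bound on $C_{\mathds{1},\varphi}$.

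For the multiplication factor, Proposition~\ref{prop:multiplier_space_exa}(b) identifies $\mathcal{M}(\mathcal{B}_\alpha)$ with $H^\infty$ for $\alpha>1$, with the explicit $\log$-condition for $\alpha=1$, and with $\mathcal{B}_\alpha$ itself for $0<\alpha<1$, so the hypotheses in (a)--(c) are chosen precisely to force $m_t\in\mathcal{M}(\mathcal{B}_\alpha)$; the local sup-bound on $\|m_t\|_\infty$ needed in (a) and (b) follows from the $\limsup$ assumption via Proposition~\ref{prop:semicocycle_bounded}. Using $(m_tf)'=m_t'f+m_tf'$, the trivial summand contributes at most $\|m_t\|_\infty\|f\|_{\mathcal{B}_\alpha}$ and $|m_t(0)f(0)|\leq\|m_t\|_\infty\|f\|_{\mathcal{B}_\alpha}$, so the real work is to bound $\sup_{z\in\D}|m_t'(z)f(z)|(1-|z|^2)^\alpha$ case by case: in (a), factor $(1-|z|^2)^\alpha=(1-|z|^2)\cdot(1-|z|^2)^{\alpha-1}$ and combine the Bloch-type growth estimate $|f(z)|\lesssim|f(0)|+\|f\|_{\mathcal{B}_\alpha}(1-|z|^2)^{1-\alpha}$ with $|m_t'(z)|(1-|z|^2)\leq\|m_t\|_{\mathcal{B}_1}\leq\|m_t\|_\infty$ from \eqref{eq:Bloch_1_H_infty}; in (b), pair the logarithmic growth estimate $|f(z)|\lesssim|f(0)|+\|f\|_{\mathcal{B}_1}\log((1-|z|^2)^{-1})$ with the hypothesized supremum on $|m_t'(z)|(1-|z|^2)\log((1-|z|^2)^{-1})$; in (c), use the embedding $\mathcal{B}_\alpha\hookrightarrow H^\infty$ (valid for $0<\alpha<1$) to bound $|f(z)|$ by $C_\alpha\|f\|_{\mathcal{B}_\alpha}$ and dominate $|m_t'(z)|(1-|z|^2)^\alpha\leq\|m_t\|_{\mathcal{B}_\alpha}$.

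The main obstacle is the borderline case $\alpha=1$: unlike (a) and (c), neither a straight $H^\infty$-bound nor the Bloch embedding is available for $f$, so one must set up the correct pairing between the log-weighted derivative of $m_t$ and the log-growth of $|f|$, which is exactly the content of the multiplier condition appearing in Proposition~\ref{prop:multiplier_space_exa}(b) for $\mathcal{M}(\mathcal{B}_1)$ and in hypothesis (b) of the theorem. Once this case analysis is assembled, multiplying the two operator-norm bounds and taking the supremum over $t\in[0,t_0]$ concludes the proof.
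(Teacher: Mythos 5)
Your proposal follows the paper's proof in all essentials: the same factorisation $C_{m,\varphi}(t)=C_{m,\id}(t)C_{\mathds{1},\varphi}(t)$ with the submultiplicative operator-norm estimate, \prettyref{prop:semicocycle_bounded} to upgrade the $\limsup$ hypothesis to $\sup_{t\in[0,t_{1}]}\|m_{t}\|_{\infty}<\infty$, \prettyref{prop:multiplier_space_exa}~(b) to identify the multiplier spaces, and exactly the same three-way case analysis for the multiplication factor, pairing the growth estimates $|f(z)-f(0)|\lesssim\|f\|_{\mathcal{B}_{\alpha}}(1-|z|^{2})^{-(\alpha-1)}$ (for $\alpha>1$), $\lesssim\|f\|_{\mathcal{B}_{1}}\ln((1-|z|^{2})^{-1})$ (for $\alpha=1$) and the embedding $\mathcal{B}_{\alpha}\subset H^{\infty}$ (for $0<\alpha<1$) with the corresponding weighted bounds on $m_{t}'$, including the inequality $\|m_{t}\|_{\mathcal{B}_{1}}\leq\|m_{t}\|_{\infty}$ from \eqref{eq:Bloch_1_H_infty}. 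This is precisely how the paper argues, with the growth estimates drawn from Zhu's Proposition~7.

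The one place you diverge is the composition factor, and there your write-up has a gap. The paper disposes of $C_{\mathds{1},\varphi}(t)$ in one stroke by citing the proof of Xiao's theorem for the bound $\|C_{\mathds{1},\varphi}(t)f\|\leq K_{\alpha}(\varphi_{t})\|f\|_{\mathcal{B}_{\alpha}}$, so that $\sup_{t\in[0,t_{0}]}K_{\alpha}(\varphi_{t})<\infty$ immediately yields local boundedness. You instead reconstruct the estimate, which makes the point-evaluation term $|f(\varphi_{t}(0))|$ explicit, and you control it by asserting that $t\mapsto\varphi_{t}(0)$ is continuous, so that $|\varphi_{t}(0)|$ stays bounded away from $1$ on some $[0,t_{0}]$. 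But this theorem --- unlike \prettyref{thm:loc_bound_sg_hardy_bergman_dirichlet}, where joint continuity of $\varphi$ is an explicit hypothesis --- assumes only that $(m,\varphi)$ is a \emph{holomorphic} co-semiflow: each $\varphi_{t}$ is continuous in $z$, but no continuity in $t$ (not even the $C_{0}$ property) is available, so the premise you invoke is not among the hypotheses. For $0<\alpha<1$ your argument survives anyway, since $\int_{0}^{1}(1-s^{2})^{-\alpha}\,\d s<\infty$ makes your constant $c_{\alpha}(|\varphi_{t}(0)|)$ uniformly bounded in $t$; but for $\alpha\geq 1$ the bound $\sup_{t\in[0,t_{0}]}|\varphi_{t}(0)|<1$ does not follow in any evident way from $\sup_{t\in[0,t_{0}]}K_{\alpha}(\varphi_{t})<\infty$ alone, and you would need either to derive it from the stated hypotheses or to fall back on the cited estimate from Xiao's proof, as the paper does. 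This is a localized repair, not a flaw in the overall approach.
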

\begin{proof}
By the proof of \cite[Theorem 2.2 (i), p.~115]{xiao2001} we have 
\[
    \|C_{\mathds{1},\varphi}(t)f\|
\leq\|f\|_{\mathcal{B}_{\alpha}}\sup_{z\in\D}|\varphi_{t}'(z)|(1-|\varphi_{t}(z)|^{2})^{-\alpha}(1-|z|^{2})^{\alpha}
= K_{\alpha}(\varphi_{t})\|f\|_{\mathcal{B}_{\alpha}}
\]
for all $t\geq 0$ and $f\in\mathcal{B}_{\alpha}$, yielding $f\circ\varphi_{t}\in\mathcal{B}_{\alpha}$ and 
that $(\mathds{1},\varphi)$ is a co-semiflow for $\mathcal{B}_{\alpha}$. In addition, 
the existence of $t_{0}>0$ such that $\sup_{t\in [0,t_{0}]}K_{\alpha}(\varphi_{t})<\infty$ gives that 
$(C_{\mathds{1},\varphi}(t))_{t\geq 0}$ is a locally bounded semigroup.

Moreover, the conditions in (a), (b) and (c) guarantee that $m_{t}\in\mathcal{M}(\mathcal{B}_{\alpha})$ for all $t\geq 0$ 
by \prettyref{prop:semicocycle_bounded} and \prettyref{prop:multiplier_space_exa} (b). 
Hence in all the cases $(m,\id)$ and so $(m,\varphi)$ are co-semiflows for $\mathcal{B}_{\alpha}$.

(a) For $\alpha>1$ we have by the proof of \cite[Proposition 7, p.~1147]{zhu1993} that there is $L_{\alpha}>0$ such that
\begin{equation}\label{eq:Bloch_alpha>1}
|f(z)-f(0)|\leq L_{\alpha}\|f\|_{\mathcal{B}_{\alpha}}(1-|z|^{2})^{-(\alpha-1)}
\end{equation}
for all $z\in\D$ and $f\in\mathcal{B}_{\alpha}$. It follows that 
\begin{align*}
    \|m_{t}f\|_{\mathcal{B}_{\alpha}}
&\leq |m_{t}(0)f(0)|+\sup_{z\in\D}|m_{t}'(z)f(z)|(1-|z|^{2})^{\alpha}+\sup_{z\in\D}|m_{t}(z)f'(z)|(1-|z|^{2})^{\alpha}\\
&\leq \sup_{z\in\D}|m_{t}'(z)|(1-|z|^{2})\sup_{z\in\D}|f(z)|(1-|z|^{2})^{\alpha-1}
 +2\|m_{t}\|_{\infty}\|f\|_{\mathcal{B}_{\alpha}}\\
&\leq\|m_{t}\|_{\mathcal{B}_{1}}\sup_{z\in\D}|f(z)|(1-|z|^{2})^{\alpha-1}
 +2\|m_{t}\|_{\infty}\|f\|_{\mathcal{B}_{\alpha}}\\
&\underset{\mathclap{\eqref{eq:Bloch_alpha>1}}}{\leq}\|m_{t}\|_{\mathcal{B}_{1}}
\sup_{z\in\D}\bigl(L_{\alpha}\|f\|_{\mathcal{B}_{\alpha}}+|f(0)|(1-|z|^{2})^{\alpha-1}\bigr)
 +2\|m_{t}\|_{\infty}\|f\|_{\mathcal{B}_{\alpha}}\\
&\underset{\mathclap{\eqref{eq:Bloch_1_H_infty}}}{\leq}\|m_{t}\|_{\infty}
 \bigl(L_{\alpha}\|f\|_{\mathcal{B}_{\alpha}}+|f(0)|\bigr)
 +2\|m_{t}\|_{\infty}\|f\|_{\mathcal{B}_{\alpha}}\\ 
&\leq (3+L_{\alpha})\|m_{t}\|_{\infty}\|f\|_{\mathcal{B}_{\alpha}}
\end{align*}
for all $t\geq 0$ and $f\in\mathcal{B}_{\alpha}$.

(b) For $\alpha=1$ we get as in the proof of \cite[Proposition 7, p.~1147]{zhu1993}, 
using \cite[Proposition 7 3), p.~1146]{zhu1993} with $t=\alpha-1=0$ and $s=0$, that there is $L_{1}>0$ such that
\begin{equation}\label{eq:Bloch_alpha=1}
|f(z)-f(0)|\leq L_{1}\|f\|_{\mathcal{B}_{1}}\ln((1-|z|^{2})^{-1})
\end{equation}
for all $z\in\D$ and $f\in\mathcal{B}_{1}$. It follows that 
\begin{align*}
 \|m_{t}f\|_{\mathcal{B}_{1}}
&\leq \sup_{z\in\D}|m_{t}'(z)|(1-|z|^{2})(|f(z)-f(0)|+|f(0)|)+2\|m_{t}\|_{\infty}\|f\|_{\mathcal{B}_{1}}\\
&\underset{\mathclap{\eqref{eq:Bloch_alpha=1}}}{\leq} L_{1}\|f\|_{\mathcal{B}_{1}}\sup_{z\in\D}|m_{t}'(z)|(1-|z|^{2})\ln((1-|z|^{2})^{-1})+|f(0)|\sup_{z\in\D}|m_{t}'(z)|(1-|z|^{2})\\
&\phantom{\leq} +2\|m_{t}\|_{\infty}\|f\|_{\mathcal{B}_{1}}\\
&\underset{\mathclap{\eqref{eq:Bloch_1_H_infty}}}{\leq} \bigl(3\|m_{t}\|_{\infty}
+L_{1}\sup_{z\in\D}|m_{t}'(z)|(1-|z|^{2})\ln((1-|z|^{2})^{-1})\bigr)\|f\|_{\mathcal{B}_{1}}
\end{align*}
for all $t\geq 0$ and $f\in\mathcal{B}_{1}$.

(c) Again, for $0<\alpha<1$ we get as in the proof of \cite[Proposition 7, p.~1147]{zhu1993}, 
using \cite[Proposition 7 1), p.~1146]{zhu1993} with $t=\alpha-1<0$ and $s=0$, that there is $L_{\alpha}>0$ such that
\begin{equation}\label{eq:Bloch_alpha<1}
|f(z)-f(0)|\leq L_{\alpha}\|f\|_{\mathcal{B}_{\alpha}}
\end{equation}
for all $z\in\D$ and $f\in\mathcal{B}_{\alpha}$. This implies $\mathcal{B}_{\alpha}\subset H^{\infty}$ 
and $\|g\|_{\infty}\leq(1+L_{\alpha})\|g\|_{\mathcal{B}_{\alpha}}$ for all $g\in\mathcal{B}_{\alpha}$ and $0<\alpha<1$. 
It follows that 
\begin{align*}
 \|m_{t}f\|_{\mathcal{B}_{\alpha}}
&\leq \sup_{z\in\D}|m_{t}'(z)|(1-|z|^{2})^{\alpha}(|f(z)-f(0)|+|f(0)|)+2\|m_{t}\|_{\infty}\|f\|_{\mathcal{B}_{\alpha}}\\
&\underset{\mathclap{\eqref{eq:Bloch_alpha<1}}}{\leq} (1+L_{\alpha})\|f\|_{\mathcal{B}_{\alpha}}
 \sup_{z\in\D}|m_{t}'(z)|(1-|z|^{2})^{\alpha}+2\|m_{t}\|_{\infty}\|f\|_{\mathcal{B}_{\alpha}}\\
&\leq \bigl(2\|m_{t}\|_{\infty}+(1+L_{\alpha})\|m_{t}\|_{\mathcal{B}_{\alpha}}\bigr)\|f\|_{\mathcal{B}_{\alpha}}
 \leq 3(1+L_{\alpha})\|m_{t}\|_{\mathcal{B}_{\alpha}}\|f\|_{\mathcal{B}_{\alpha}}
\end{align*}
for all $t\geq 0$ and $f\in\mathcal{B}_{\alpha}$.

Hence our conditions in (a) and (b) combined with \prettyref{prop:semicocycle_bounded}, and in (c) guarantee that 
$(C_{m,\id}(t))_{t\geq 0}$ is a locally bounded semigroup and thus $(C_{m,\varphi}(t))_{t\geq 0}$ as well.
\end{proof}

The jointly continuous holomorphic co-semiflow 
$(\varphi',\varphi)$ on $\D$ given by $\varphi_{t}\colon\D\to\D$, $\varphi_{t}(z)\coloneqq \euler^{-ct}z$, 
for all $t\geq 0$ for some $c\in\C$ with $\re(c)\geq 0$, 
fulfils $m_{t}(z)\coloneqq\varphi_{t}'(z)=\euler^{-ct}$, 
\[
 K_{\alpha}(\varphi_{t})
=\sup_{z\in\D}\euler^{-\re(c)t}(1-\euler^{-2\re(c)t}|z|^{2})^{-\alpha}(1-|z|^{2})^{\alpha}
\leq \euler^{-\re(c)t}
\]
and $\|\varphi_{t}'\|_{\infty}=\euler^{-\re(c)t}$ as well as $m_{t}'(z)=\varphi_{t}''(z)=0$ and 
$\|m_{t}\|_{\mathcal{B}_{\alpha}}=|\varphi_{t}'(0)|=\euler^{-\re(c)t}$ for all $t\geq 0$ and $z\in\D$. 
Thus $(C_{\varphi',\varphi}(t))_{t\geq 0}$ is a locally bounded semigroup on $\mathcal{B}_{\alpha}$ for all $\alpha>0$ 
by \prettyref{thm:loc_bound_sg_bloch}.

Let us turn to the space of bounded Dirichlet series. We say that a holomorphic function $\varphi\colon\C_{+}\to\C_{+}$ 
belongs to the class $\mathscr{G}_{\infty}$ if there exist $c_{\varphi}\in\N_{0}$ and a Dirichlet 
series $\rho_{\varphi}$ which converges on some open half-plane and extends holomorphically to $\C_{+}$ 
such that $\varphi(z)=c_{\varphi}z+\rho_{\varphi}(z)$ for all $z\in\C_{+}$ (see \cite[Definition 2.6, p.~9]{contreras2023}).

\begin{thm}\label{thm:loc_bound_sg_bDirichlet_series}
Let $(m,\varphi)$ be a holomorphic co-semiflow on $\C_{+}$.  
Then $(m,\varphi)$ is a co-semiflow for $\mathscr{H}^{\infty}$ and 
the weighted composition semigroup $(C_{m,\varphi}(t))_{t\geq 0}$ on $\mathscr{H}^{\infty}$ is locally bounded 
if $\varphi_{t}\in\mathscr{G}_{\infty}$ and $m_{t}\in\mathscr{H}^{\infty}$ for all $t\geq 0$. 
The converse is true if $\inf_{z\in\C_{+}}|m_{t}(z)|>0$ 
for all $t\geq 0$. 
\end{thm}
\begin{proof}
By \cite[Proposition 2, p.~219]{bayart2021} $(\mathds{1},\varphi)$ is a co-semiflow for $\mathscr{H}^{\infty}$ 
if and only $\varphi_{t}\in\mathscr{G}_{\infty}$ for all $t\geq 0$. 
Further, we observe that $\|C_{\mathds{1},\varphi}(t)f\|_{\mathscr{H}^{\infty}}\leq \|f\|_{\mathscr{H}^{\infty}}$ 
for all $f\in\mathscr{H}^{\infty}$ if $\varphi_{t}\in\mathscr{G}_{\infty}$ for $t\geq 0$.
Due to \cite[Theorem 7 a), p.~9--10]{vidal2022} we have $\mathcal{M}(\mathscr{H}^{\infty})=\mathscr{H}^{\infty}$, 
which implies that $(m,\id)$ is a co-semiflow for $\mathscr{H}^{\infty}$ if and only if 
$m_{t}\in\mathscr{H}^{\infty}$ for all $t\geq 0$. 
We note that $\|C_{m,\id}(t)f\|_{\mathscr{H}^{\infty}}\leq\|m_{t}\|_{\infty}\|f\|_{\mathscr{H}^{\infty}}$ 
for all $f\in\mathscr{H}^{\infty}$ if $m_{t}\in\mathscr{H}^{\infty}$ for $t\geq 0$.

Now, the first implication follows from our considerations above and \prettyref{prop:semicocycle_bounded} using 
that $\mathscr{H}^{\infty}\subset H^{\infty}(\C_{+})$. 
Let us consider the converse implication. The property $m_{t}\in\mathscr{H}^{\infty}$ for all $t\geq 0$ 
follows from the definition of the weighted composition 
semigroup and the observation that $\mathds{1}\in\mathscr{H}^{\infty}$ since 
$m_{t}=C_{m,\varphi}(t)\mathds{1}$ for all $t\geq 0$. The property $\varphi_{t}\in\mathscr{G}_{\infty}$ for all $t\geq 0$ follows from our first 
observation of the proof, writing $C_{\mathds{1},\varphi}(t)=(1/m_{t})C_{m,\varphi}(t)$ and noting that $1/m_{t}$ 
belongs to the Banach algebra $\mathscr{H}^{\infty}$ by \cite[Theorem 6.2.1, p.~147]{queffelec2020} 
if and only if $\inf_{z\in\C_{+}}|m_{t}(z)|>0$.
\end{proof}

The jointly continuous holomorphic co-semiflow 
$(\mathds{1},\varphi)$ on $\C_{+}$ given by $\varphi_{t}\colon\C_{+}\to\C_{+}$, $\varphi_{t}(z)\coloneqq z+t$, 
for all $t\geq 0$ fulfils with $m_{t}(z)\coloneqq 1$ for all $t\geq 0$ and $z\in\C_{+}$ 
that $\varphi_{t}\in\mathscr{G}_{\infty}$ and $m_{t}\in\mathscr{H}^{\infty}$ for all $t\geq 0$. 
Thus $(C_{\mathds{1},\varphi}(t))_{t\geq 0}$ is a locally bounded semigroup on $\mathscr{H}^{\infty}$ 
by \prettyref{thm:loc_bound_sg_bDirichlet_series}.

\begin{thm}\label{thm:loc_bound_sg_Hv}
Let $\Omega\subset\C$ be open, $v\colon\Omega\to (0,\infty)$ continuous, $\mathds{1}\in\mathcal{H}v(\Omega)$ 
and $(m,\varphi)$ a holomorphic co-semiflow on $\Omega$. 
If $\limsup_{t\to 0\rlim}\|m_{t}\|_{\infty}<\infty$,  
\[
K(\varphi_{t})\coloneqq\sup_{z\in\Omega}\frac{v(z)}{v(\varphi_{t}(z))}<\infty
\]
for all $t\geq 0$ and there exists $t_{0}>0$ such that $\sup_{t\in [0,t_{0}]}K(\varphi_{t})<\infty$, 
then $(m,\varphi)$ is a co-semiflow for $\mathcal{H}v(\Omega)$ and 
the weighted composition semigroup $(C_{m,\varphi}(t))_{t\geq 0}$ on $\mathcal{H}v(\Omega)$ is locally bounded. 
If $v=\mathds{1}$, then the converse holds as well. 
\end{thm}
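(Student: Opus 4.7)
The plan is to split the operator $C_{m,\varphi}(t)$ into the product $C_{m,\id}(t)\,C_{\mathds{1},\varphi}(t)$ and estimate each factor separately. First I would verify that $(\mathds{1},\varphi)$ is a co-semiflow for $\mathcal{H}v(\Omega)$: the hypothesis $K(\varphi_{t})<\infty$ is exactly the condition $v(z)\leq K(\varphi_{t})\,v(\varphi_{t}(z))$ for all $z\in\Omega$, so \prettyref{rem:unweighted_comp_co_semiflows_for_example_spaces} (f) applies. Moreover, for every $f\in\mathcal{H}v(\Omega)$,
\[
\|f\circ\varphi_{t}\|_{v}
=\sup_{z\in\Omega}|f(\varphi_{t}(z))|\,v(\varphi_{t}(z))\,\frac{v(z)}{v(\varphi_{t}(z))}
\leq K(\varphi_{t})\,\|f\|_{v},
\]
so $\|C_{\mathds{1},\varphi}(t)\|_{\mathcal{L}(\mathcal{H}v(\Omega))}\leq K(\varphi_{t})$, which is locally bounded by assumption (using the argument from \prettyref{prop:loc_bounded_sg} style to promote boundedness on $[0,t_{0}]$ to boundedness on every compact interval via the semigroup property $K(\varphi_{t+s})\leq K(\varphi_{t})K(\varphi_{s})$, since $\varphi_{t+s}=\varphi_{t}\circ\varphi_{s}$).

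Next I would check that $(m,\id)$ is a co-semiflow for $\mathcal{H}v(\Omega)$ and estimate its norm. By \prettyref{prop:semicocycle_bounded}, the hypothesis $\limsup_{t\to 0\rlim}\|m_{t}\|_{\infty}<\infty$ already gives $\sup_{t\in[0,t_{0}]}\|m_{t}\|_{\infty}<\infty$ for every $t_{0}\geq 0$, in particular $m_{t}\in H^{\infty}(\Omega)$ for all $t\geq 0$. Because $\mathds{1}\in\mathcal{H}v(\Omega)$, \prettyref{prop:multiplier_space_exa} (d) yields $\mathcal{M}(\mathcal{H}v(\Omega))=H^{\infty}(\Omega)$, so indeed $m_{t}f\in\mathcal{H}v(\Omega)$ for all $f\in\mathcal{H}v(\Omega)$, with the obvious bound $\|C_{m,\id}(t)\|_{\mathcal{L}(\mathcal{H}v(\Omega))}\leq\|m_{t}\|_{\infty}$. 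Combining both factors,
\[
\|C_{m,\varphi}(t)\|_{\mathcal{L}(\mathcal{H}v(\Omega))}
\leq \|m_{t}\|_{\infty}\,K(\varphi_{t}),
\]
and since the right-hand side is locally bounded in $t$ by the two pieces just established, so is the weighted composition semigroup. This also proves that $(m,\varphi)$ is a co-semiflow for $\mathcal{H}v(\Omega)$.

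For the converse under $v=\mathds{1}$, the two conditions $K(\varphi_{t})<\infty$ and $\sup_{t\in[0,t_{0}]}K(\varphi_{t})<\infty$ are trivial as $K(\varphi_{t})=1$. So only the assertion $\limsup_{t\to 0\rlim}\|m_{t}\|_{\infty}<\infty$ has to be recovered. Because $\mathds{1}\in\mathcal{H}v(\Omega)=H^{\infty}(\Omega)$ and $\|\mathds{1}\|_{\infty}=1$, \prettyref{rem:co_semiflow_for_space} gives $m_{t}=C_{m,\varphi}(t)\mathds{1}\in H^{\infty}(\Omega)$ and
\[
\|m_{t}\|_{\infty}=\|C_{m,\varphi}(t)\mathds{1}\|_{\infty}\leq \|C_{m,\varphi}(t)\|_{\mathcal{L}(H^{\infty}(\Omega))},
\]
so local boundedness of the semigroup on any interval $[0,t_{0}]$ immediately yields $\limsup_{t\to 0\rlim}\|m_{t}\|_{\infty}<\infty$. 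No step is really an obstacle here; the main point is simply recognising that the product decomposition of $C_{m,\varphi}(t)$ together with \prettyref{prop:semicocycle_bounded} and the multiplier identity \prettyref{prop:multiplier_space_exa} (d) reduces everything to the two independent estimates above.
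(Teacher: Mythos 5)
Your proposal is correct and follows essentially the same route as the paper's proof: the factorisation $C_{m,\varphi}(t)=C_{m,\id}(t)C_{\mathds{1},\varphi}(t)$ with the estimates $\|C_{\mathds{1},\varphi}(t)\|_{\mathcal{L}(\mathcal{H}v(\Omega))}\leq K(\varphi_{t})$ and $\|C_{m,\id}(t)\|_{\mathcal{L}(\mathcal{H}v(\Omega))}\leq\|m_{t}\|_{\infty}$, invoking \prettyref{rem:unweighted_comp_co_semiflows_for_example_spaces} (f), \prettyref{prop:multiplier_space_exa} (d) and \prettyref{prop:semicocycle_bounded}, and the converse via $\|m_{t}\|_{\infty}=\|C_{m,\varphi}(t)\mathds{1}\|_{v}$ when $v=\mathds{1}$. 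Your explicit submultiplicativity argument $K(\varphi_{t+s})\leq K(\varphi_{t})K(\varphi_{s})$ is just a spelled-out version of the step the paper handles by appealing to the semigroup structure as in \prettyref{prop:loc_bounded_sg}.
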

\begin{proof}
We deduce from \prettyref{rem:unweighted_comp_co_semiflows_for_example_spaces} (f) and \prettyref{prop:multiplier_space_exa} (d) 
that $(m,\id)$, $(\mathds{1},\varphi)$ and so $(m,\varphi)$ are co-semiflows for $\mathcal{H}v(\Omega)$. 
We observe that
\[
 \|C_{\mathds{1},\varphi}(t)f\|_{v}
=\sup_{z\in\Omega}|f(\varphi_{t}(z))|v(\varphi_{t}(z))\frac{v(z)}{v(\varphi_{t}(z))}
\leq K(\varphi_{t})\|f\|_{v}
\]
for all $t\geq 0$ and $f\in\mathcal{H}v(\Omega)$, yielding 
$\|C_{\mathds{1},\varphi}(t)\|_{\mathcal{L}(\mathcal{H}v(\Omega))}\leq K(\varphi_{t})$. 

Moreover, we note that 
\[
 \|C_{m,\id}(t)f\|_{v}
=\sup_{z\in\Omega}|m_{t}(z)f(z)|v(z)
\leq \|m_{t}\|_{\infty}\|f\|_{v}
\]
for all $t\geq 0$ and $f\in\mathcal{H}v(\Omega)$, yielding 
$\|C_{m,\id}(t)\|_{\mathcal{L}(\mathcal{H}v(\Omega))}\leq\|m_{t}\|_{\infty}$. 
Therefore $(C_{m,\id}(t))_{t\geq 0}$ is locally bounded by \prettyref{prop:semicocycle_bounded}. 
The same is true for the semigroup $(C_{\mathds{1},\varphi}(t))_{t\geq 0}$ by the existence of $t_{0}>0$ and so 
$(C_{m,\varphi}(t))_{t\geq 0}$ is locally bounded as well. 

If $v=\mathds{1}$, then the converse holds as well by \prettyref{prop:semicocycle_bounded} 
since then $K(\varphi_{t})=1$ and $\|m_{t}\|_{\infty}=\|m_{t}\|_{v}=\|C_{m,\varphi}(t)\mathds{1}\|_{v}$ for all $t\geq 0$.
\end{proof}

Analogously we obtain the corresponding result for $\mathcal{C}v(\Omega)$ by using 
\prettyref{rem:unweighted_comp_co_semiflows_for_example_spaces} (e) and \prettyref{prop:multiplier_space_exa} (c).

\begin{thm}\label{thm:loc_bound_sg_Cv}
Let $\Omega$ be a locally compact Hausdorff space, $v\colon\Omega\to (0,\infty)$ continuous and $(m,\varphi)$ a co-semiflow 
on $\Omega$. If $\limsup_{t\to 0\rlim}\|m_{t}\|_{\infty}<\infty$,  
\[
K(\varphi_{t})\coloneqq\sup_{x\in\Omega}\frac{v(x)}{v(\varphi_{t}(x))}<\infty
\]
for all $t\geq 0$ and there exists $t_{0}>0$ such that $\sup_{t\in [0,t_{0}]}K(\varphi_{t})<\infty$, 
then $(m,\varphi)$ is a co-semiflow for $\mathcal{C}v(\Omega)$ and 
the weighted composition semigroup $(C_{m,\varphi}(t))_{t\geq 0}$ on $\mathcal{C}v(\Omega)$ is locally bounded. 
If $\mathds{1}\in\mathcal{C}v(\Omega)$ and $v=\mathds{1}$, then the converse holds as well. 
\end{thm}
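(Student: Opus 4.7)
The plan is to mirror almost verbatim the proof of \prettyref{thm:loc_bound_sg_Hv}, replacing the use of \prettyref{rem:unweighted_comp_co_semiflows_for_example_spaces} (f) by (e) and \prettyref{prop:multiplier_space_exa} (d) by (c), and then using the factorisation $C_{m,\varphi}(t)=C_{m,\id}(t)\,C_{\mathds{1},\varphi}(t)$ together with submultiplicativity of the operator norm.

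First I would check that $(m,\varphi)$ is a co-semiflow for $\mathcal{C}v(\Omega)$. Since $\Omega$ is a locally compact Hausdorff space, the separating hypothesis in \prettyref{rem:unweighted_comp_co_semiflows_for_example_spaces} (e) is automatic (compactly supported continuous functions exist by Urysohn), so $K(\varphi_t)<\infty$ for every $t\geq 0$ guarantees that $(\mathds{1},\varphi)$ is a co-semiflow for $\mathcal{C}v(\Omega)$ with $C_{\mathds{1},\varphi}(t)\in\mathcal{L}(\mathcal{C}v(\Omega))$. From $\limsup_{t\to 0\rlim}\|m_t\|_\infty<\infty$ and \prettyref{prop:semicocycle_bounded} one obtains $\|m_t\|_\infty<\infty$ for all $t\geq 0$, so by \prettyref{prop:multiplier_space_exa} (c) we get $m_t\in\mathcal{C}_b(\Omega)=\mathcal{M}(\mathcal{C}v(\Omega))$, whence $(m,\id)$ and therefore $(m,\varphi)$ are co-semiflows for $\mathcal{C}v(\Omega)$.

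Then I would derive the two operator-norm estimates
\[
\|C_{\mathds{1},\varphi}(t)f\|_{v}=\sup_{x\in\Omega}|f(\varphi_t(x))|v(\varphi_t(x))\,\frac{v(x)}{v(\varphi_t(x))}\leq K(\varphi_t)\|f\|_v
\]
and $\|C_{m,\id}(t)f\|_v\leq\|m_t\|_\infty\|f\|_v$ for $f\in\mathcal{C}v(\Omega)$, giving $\|C_{\mathds{1},\varphi}(t)\|_{\mathcal{L}(\mathcal{C}v(\Omega))}\leq K(\varphi_t)$ and $\|C_{m,\id}(t)\|_{\mathcal{L}(\mathcal{C}v(\Omega))}\leq\|m_t\|_\infty$. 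By \prettyref{prop:semicocycle_bounded} the assumption $\limsup_{t\to 0\rlim}\|m_t\|_\infty<\infty$ makes $(C_{m,\id}(t))_{t\geq 0}$ locally bounded, and the assumption $\sup_{t\in[0,t_0]}K(\varphi_t)<\infty$ combined with \prettyref{prop:loc_bounded_sg} applied to $(C_{\mathds{1},\varphi}(t))_{t\geq 0}$ makes that semigroup locally bounded as well. Submultiplicativity then yields local boundedness of $(C_{m,\varphi}(t))_{t\geq 0}$.

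For the converse, assume $\mathds{1}\in\mathcal{C}v(\Omega)$, $v=\mathds{1}$, and that $(C_{m,\varphi}(t))_{t\geq 0}$ is locally bounded. The condition $K(\varphi_t)=1<\infty$ is then trivial, and \prettyref{rem:co_semiflow_for_space} (a) gives $m_t=C_{m,\varphi}(t)\mathds{1}$, so
\[
\|m_t\|_\infty=\|C_{m,\varphi}(t)\mathds{1}\|_v\leq\|C_{m,\varphi}(t)\|_{\mathcal{L}(\mathcal{C}v(\Omega))}\|\mathds{1}\|_v,
\]
and local boundedness of the semigroup combined with \prettyref{prop:semicocycle_bounded} yields $\limsup_{t\to 0\rlim}\|m_t\|_\infty<\infty$. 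I expect no substantial obstacle: the only delicate point is the verification that the separating-function hypothesis in \prettyref{rem:unweighted_comp_co_semiflows_for_example_spaces} (e) is automatic for locally compact $\Omega$, which is standard.
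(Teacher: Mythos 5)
Your proposal is correct and is exactly the paper's own argument: the paper proves \prettyref{thm:loc_bound_sg_Cv} by explicitly stating that one repeats the proof of \prettyref{thm:loc_bound_sg_Hv} with \prettyref{rem:unweighted_comp_co_semiflows_for_example_spaces} (e) in place of (f) and \prettyref{prop:multiplier_space_exa} (c) in place of (d), which is precisely what you do. Your factorisation $C_{m,\varphi}(t)=C_{m,\id}(t)C_{\mathds{1},\varphi}(t)$, the two operator-norm estimates combined with \prettyref{prop:semicocycle_bounded} and \prettyref{prop:loc_bounded_sg}, and the converse via $K(\varphi_{t})=1$ and $\|m_{t}\|_{\infty}=\|C_{m,\varphi}(t)\mathds{1}\|_{v}$ all match the paper's proof, including the observation (already noted in \prettyref{rem:unweighted_comp_co_semiflows_for_example_spaces} (e)) that the separating-function hypothesis is automatic for locally compact $\Omega$.
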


Apart from the weight $v=\mathds{1}$ there are other weights that fulfil the conditions of \prettyref{thm:loc_bound_sg_Hv} 
and \prettyref{thm:loc_bound_sg_Cv}. For instance, take the left-translation semiflow $\varphi$ given by 
$\varphi_{t}\colon\K\to\K$, $\varphi_{t}(x)\coloneq x+t$, for $t\geq 0$ and set $v\colon\K\to (0,\infty)$, 
$v(x)\coloneqq \euler^{-|x|}$. Then $K(\varphi_{t})\leq \euler^{t}$ 
for all $t\geq 0$ and we can choose any $t_{0}>0$ (cf.~\cite[(i), p.~310]{singh1988}). 
Taking now any (holomorphic) semicocycle $m$ for $\varphi$ such that $\limsup_{t\to 0\rlim}\|m_{t}\|_{\infty}<\infty$, 
we get a locally bounded semigroup $(C_{m,\varphi}(t))_{t\geq 0}$ on $\mathcal{H}v(\C)$ (if $\K=\C$ and 
$\mathds{1}\in\mathcal{H}v(\C)$) resp.~$\mathcal{C}v(\K)$ by \prettyref{thm:loc_bound_sg_Hv} 
resp.~\prettyref{thm:loc_bound_sg_Cv}.

\section{Generators of weighted composition semigroups}
\label{sect:generators}

In this section we give several characterisations of the generator of a $\gamma$-strongly continuous weighted 
composition semigroup on a Saks space.
Let $(X,\|\cdot\|,\tau)$ be a Saks space and $(T(t))_{t\geq 0}$ a $\gamma$-strongly continuous semigroup on $X$. 
We define the \emph{generator} $(A,D(A))$ of $(T(t))_{t\geq 0}$ according to \cite[p.~260]{komura1968} by 
\[
D(A)\coloneqq \Bigl\{x\in X\;|\;\gamma\text{-}\lim_{t\to 0\rlim}\frac{T(t)x-x}{t}\;\text{exists in }X\Bigr\}
\] 
and 
\[
Ax\coloneqq \gamma\text{-}\lim_{t\to 0\rlim}\frac{T(t)x-x}{t},\quad x\in D(A).
\]
If $(X,\|\cdot\|,\tau)$ is sequentially complete, then $D(A)$ is $\gamma$-dense in $X$ 
by \cite[Proposition 1.3, p.~261]{komura1968}.
The \emph{bi-generator} $(A_{\|\cdot\|,\tau},D(A_{\|\cdot\|,\tau}))$ of $(T(t))_{t\geq 0}$ is given by 
\[
D(A_{\|\cdot\|,\tau})\coloneqq \biggl\{x\in X\;|\;\tau\text{-}\lim_{t\to 0\rlim}\frac{T(t)x-x}{t}\;\text{exists in }X,\; 
\sup_{0<t\leq 1}\frac{\|T(t)x-x\|}{t}<\infty\biggr\}
\] 
and 
\[
A_{\|\cdot\|,\tau}x\coloneqq \tau\text{-}\lim_{t\to 0\rlim}\frac{T(t)x-x}{t},\quad x\in D(A_{\|\cdot\|,\tau}).
\]
In the context of $\tau$-bi-continuous semigroups their generators are actually defined as bi-generators 
(see \cite[Definition 1.2.6, p.~7]{farkas2003}). The notion of the bi-generator was originally introduced in \cite{kuehnemund2001,kuehnemund2003} (and corrected in \cite{farkas2003}).

\begin{prop}\label{prop:bi_generator}
Let $(X,\|\cdot\|,\tau)$ be a Saks space and $(T(t))_{t\geq 0}$ a $\gamma$-strongly continuous, 
locally bounded semigroup on $X$. Then we have 
\[
D(A)=D(A_{\|\cdot\|,\tau})\quad\text{and}\quad A=A_{\|\cdot\|,\tau} .
\]
\end{prop}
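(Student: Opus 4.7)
The plan is to prove the two inclusions $D(A)\subset D(A_{\|\cdot\|,\tau})$ and $D(A_{\|\cdot\|,\tau})\subset D(A)$ separately, using only two standard properties of the mixed topology from \prettyref{defn:mixed_top_Saks}: (i) on $\|\cdot\|$-bounded sets, $\gamma$ coincides with $\tau$, and (ii) the $\gamma$-bounded subsets of $X$ are exactly the $\|\cdot\|$-bounded ones (see \cite{cooper1978,wiweger1961}).

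For the inclusion $D(A)\subset D(A_{\|\cdot\|,\tau})$, let $x\in D(A)$. Since $\tau\subset\gamma$, the $\gamma$-convergence of $(T(t)x-x)/t$ to $Ax$ as $t\to 0\rlim$ immediately yields $\tau$-convergence to the same limit, so it only remains to verify the uniform bound on $\|T(t)x-x\|/t$ for $0<t\leq 1$. I would split $(0,1]$ into a small interval $(0,\delta]$ and a bounded-away-from-zero interval $[\delta,1]$. On $(0,\delta]$ the $\gamma$-convergent net $\{(T(t)x-x)/t:0<t\leq\delta\}$ is $\gamma$-bounded (its tail lies in any $\gamma$-neighbourhood of $Ax$), and property (ii) turns this into $\|\cdot\|$-boundedness. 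On $[\delta,1]$ the estimate
\[
\frac{\|T(t)x-x\|}{t}\leq \frac{1}{\delta}\bigl(\|T(t)\|_{\mathcal{L}(X)}+1\bigr)\|x\|
\]
combined with the local boundedness hypothesis $\sup_{t\in[0,1]}\|T(t)\|_{\mathcal{L}(X)}<\infty$ gives the required bound. Putting both pieces together yields $x\in D(A_{\|\cdot\|,\tau})$ with $A_{\|\cdot\|,\tau}x=Ax$.

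For the reverse inclusion $D(A_{\|\cdot\|,\tau})\subset D(A)$, let $x\in D(A_{\|\cdot\|,\tau})$. The very definition of $D(A_{\|\cdot\|,\tau})$ tells us that the set $B\coloneqq\{(T(t)x-x)/t:0<t\leq 1\}$ is $\|\cdot\|$-bounded. By property (i) the topologies $\gamma$ and $\tau$ agree on $B\cup\{A_{\|\cdot\|,\tau}x\}$ (after enlarging by the point $A_{\|\cdot\|,\tau}x$, which lies in some $\|\cdot\|$-ball), hence the given $\tau$-convergence of $(T(t)x-x)/t$ to $A_{\|\cdot\|,\tau}x$ along $t\to 0\rlim$ upgrades to $\gamma$-convergence. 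Therefore $x\in D(A)$ with $Ax=A_{\|\cdot\|,\tau}x$, completing the proof.

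I do not expect any genuine obstacle here; the only subtlety is remembering that local boundedness of the semigroup is precisely what allows one to control $\|T(t)x-x\|/t$ away from zero in the first direction, and that one needs both defining features (i) and (ii) of the mixed topology to interchange $\tau$- and $\gamma$-convergence on norm-bounded sets.
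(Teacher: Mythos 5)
Your second inclusion, $D(A_{\|\cdot\|,\tau})\subset D(A)$, is correct and is essentially the paper's argument: the paper cites \cite[I.1.10 Proposition, p.~9]{cooper1978} (a sequence is $\gamma$-convergent if and only if it is $\tau$-convergent and $\|\cdot\|$-bounded), while you argue directly from the defining property of $\gamma$ that it agrees with $\tau$ on $\|\cdot\|$-bounded sets; since the difference quotients together with the limit $A_{\|\cdot\|,\tau}x$ form a $\|\cdot\|$-bounded set, relative $\tau$-convergence upgrades to relative and hence ambient $\gamma$-convergence. That half is fine.

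The first inclusion, however, contains a genuine gap: the step ``the $\gamma$-convergent net $\{(T(t)x-x)/t : 0<t\leq\delta\}$ is $\gamma$-bounded (its tail lies in any $\gamma$-neighbourhood of $Ax$)'' is false as a general principle. In a topological vector space a convergent \emph{sequence} is bounded because, after the tail is absorbed, only finitely many terms remain; for a net indexed by the continuum $(0,\delta]$ the complement of each tail is the infinite set $\{(T(t)x-x)/t : \delta'\leq t\leq\delta\}$, which convergence as $t\to 0\rlim$ does not control at all. Already in $\R$ the net $f(t)\coloneqq |t-\delta/2|^{-1}$ for $t\neq\delta/2$, $f(\delta/2)\coloneqq 0$, converges as $t\to 0\rlim$ but is unbounded on $(0,\delta]$, so the tail argument alone cannot yield boundedness. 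In your situation the interior blow-up is in fact excluded by local boundedness of the semigroup, but your proof invokes local boundedness only on $[\delta,1]$ for one fixed $\delta$, not inside $(0,\delta]$, which is exactly where the claim is used. The repair is either to make the split neighbourhood-dependent (for each $\gamma$-neighbourhood $V$ choose $\delta_{V}$ so that the tail lies in $Ax+V$, and absorb the norm-bounded remainder on $[\delta_{V},\delta]$ via local boundedness), or, more simply, to argue as the paper does: if $\sup_{t\in(0,1]}\|T(t)x-x\|/t=\infty$, then local boundedness forces the blow-up along a sequence $t_{n}\to 0\rlim$; but $\bigl((T(t_{n})x-x)/t_{n}\bigr)_{n\in\N}$ is $\gamma$-convergent, and $\gamma$-convergent \emph{sequences} are $\|\cdot\|$-bounded by \cite[I.1.10 Proposition, p.~9]{cooper1978} --- a contradiction. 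Reducing to sequences in this way is precisely what makes the boundedness transfer legitimate.
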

\begin{proof}
The inclusion $D(A_{\|\cdot\|,\tau})\subset D(A)$ follows from \cite[I.1.10 Proposition, p.~9]{cooper1978}, which 
says that a sequence in $X$ is $\gamma$-convergent if and only if it is $\tau$-convergent and $\|\cdot\|$-bounded. 
Further, $Af=A_{\|\cdot\|,\tau}f$ for $f\in D(A)$ because $\tau$ is coarser than $\gamma$. 

Conversely, suppose that there is $x\in D(A)$ such that $\sup_{t\in(0,1]}\tfrac{\|T(t)x-x\|}{t}=\infty$. 
Due to the local boundedness of $(T(t))_{t\geq 0}$ this implies that there is a sequence 
$(t_{n})_{n\in\N}$ in $(0,1]$ such that $t_{n}\to 0\rlim$ and $\sup_{n\in\N}\tfrac{\|T(t_{n})x-x\|}{t_{n}}=\infty$. 
Since $\gamma\text{-}\lim_{n\to\infty}\frac{T(t_{n})x-x}{t_{n}}$ exists in $X$, this is a contradiction 
because $\gamma$-convergent sequences are $\|\cdot\|$-bounded by \cite[I.1.10 Proposition, p.~9]{cooper1978}.
\end{proof}

Due to \prettyref{thm:sg_weighted_comp_str_cont_equicont} (a) and (b) we directly get the following corollary 
of \prettyref{prop:bi_generator}.

\begin{cor}\label{cor:bi_generator}
Let $\Omega$ be a Hausdorff space, $(\F,\|\cdot\|,\tau_{\operatorname{co}})$ a Saks space such that 
$\F\subset\mathcal{C}(\Omega)$ and $(A,D(A))$ the generator of a locally bounded weighted composition semigroup 
$(C_{m,\varphi}(t))_{t\geq 0}$ on $\F$ w.r.t.~a jointly continuous co-semiflow $(m,\varphi)$. Then we have 
\[
D(A)=D(A_{\|\cdot\|,\tau})\quad\text{and}\quad A=A_{\|\cdot\|,\tau}.
\]
\end{cor}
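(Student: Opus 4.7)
The plan is to simply invoke the preceding \prettyref{prop:bi_generator} after verifying that its hypotheses are met in the current setting. The ambient space $(\F,\|\cdot\|,\tau_{\operatorname{co}})$ is a pre-Saks space by assumption, and local boundedness of $(C_{m,\varphi}(t))_{t\geq 0}$ is given. The only remaining hypothesis of \prettyref{prop:bi_generator} is the $\gamma$-strong continuity of $(C_{m,\varphi}(t))_{t\geq 0}$, and this is exactly what \prettyref{thm:sg_weighted_comp_str_cont_equicont} (a) delivers, since we have assumed local boundedness of the semigroup and joint continuity of the co-semiflow $(m,\varphi)$.

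Therefore the proof reduces to two sentences: first observe that by \prettyref{thm:sg_weighted_comp_str_cont_equicont} (a) the weighted composition semigroup $(C_{m,\varphi}(t))_{t\geq 0}$ is $\gamma$-strongly continuous; then apply \prettyref{prop:bi_generator} to conclude that $D(A)=D(A_{\|\cdot\|,\tau})$ and $A=A_{\|\cdot\|,\tau}$.

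There is no real obstacle here, as the corollary is a direct specialisation: \prettyref{prop:bi_generator} already does the substantive work of showing that $\gamma$-convergent sequences are $\tau$-convergent and $\|\cdot\|$-bounded (via \cite[I.1.10 Proposition, p.~9]{cooper1978}), and that local boundedness of the semigroup upgrades mere $\tau$-convergence of the difference quotients to the boundedness required in the definition of the bi-generator. The contribution of this corollary is merely to package the combination of \prettyref{thm:sg_weighted_comp_str_cont_equicont} (a) and \prettyref{prop:bi_generator} for the concrete class of weighted composition semigroups that is the focus of the paper.
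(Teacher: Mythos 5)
Your proposal is correct and follows exactly the paper's own route: the paper derives the corollary by noting that \prettyref{thm:sg_weighted_comp_str_cont_equicont} (a) gives $\gamma$-strong continuity of $(C_{m,\varphi}(t))_{t\geq 0}$ and then applying \prettyref{prop:bi_generator}. Your observation that local boundedness is the only additional input needed, and that the corollary is merely a packaging of these two results, matches the paper's (one-line) argument precisely.
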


For our next observation we recall the definition of the generator of a norm-strongly continuous semigroup on a Banach space. 
Let $(X,\|\cdot\|)$ be a Banach space and $(T(t))_{t\geq 0}$ a $\|\cdot\|$-strongly continuous semigroup on $X$. 
We define the \emph{norm-generator} $(A_{\|\cdot\|},D(A_{\|\cdot\|}))$ of $(T(t))_{t\geq 0}$ 
according to \cite[Chap.~2, 1.2 Definition, p.~49]{engel_nagel2000} by 
\[
D(A_{\|\cdot\|})\coloneqq \Bigl\{x\in X\;|\;\|\cdot\|\text{-}\lim_{t\to 0\rlim}\frac{T(t)x-x}{t}\;\text{exists in }X\Bigr\}
\] 
and 
\[
A_{\|\cdot\|}x\coloneqq \|\cdot\|\text{-}\lim_{t\to 0\rlim}\frac{T(t)x-x}{t},\quad x\in D(A_{\|\cdot\|}).
\]

\begin{prop}\label{prop:norm_generator}
Let $\Omega$ be a Hausdorff space, $(\F,\|\cdot\|,\tau_{\operatorname{co}})$ a sequentially complete Saks space 
such that $\F\subset\mathcal{C}(\Omega)$ and $(A,D(A))$ the generator of the weighted composition semigroup 
$(C_{m,\varphi}(t))_{t\geq 0}$ on $\F$ w.r.t.~a jointly continuous co-semiflow $(m,\varphi)$. 
Then the following assertions hold.
\begin{enumerate}
\item If $(C_{m,\varphi}(t))_{t\geq 0}$ is $\|\cdot\|$-strongly continuous, then  
\[
D(A)=D(A_{\|\cdot\|})\quad\text{and}\quad A=A_{\|\cdot\|}.
\]
\item If $(\F,\|\cdot\|)$ is reflexive and $(C_{m,\varphi}(t))_{t\geq 0}$ locally bounded, 
then the semigroup $(C_{m,\varphi}(t))_{t\geq 0}$ is $\|\cdot\|$-strongly continuous.
\item Let $\bigl[(m,\varphi),\F\bigr]$ denote the space of $\|\cdot\|$-strong continuity of $(C_{m,\varphi}(t))_{t\geq 0}$, i.e.
\[
\bigl[(m,\varphi),\F\bigr]\coloneqq\{f\in\F\;|\;\|\cdot\|\text{-}\lim_{t\to 0\rlim}C_{m,\varphi}(t)f=f\}.
\] 
If $(C_{m,\varphi}(t))_{t\geq 0}$ is locally bounded, then we have
\[
\bigl[(m,\varphi),\F\bigr]=\overline{D(A)}^{\|\cdot\|}
\]
where $\overline{D(A)}^{\|\cdot\|}$ denotes the closure of $D(A)$ w.r.t.~the $\|\cdot\|$-topology.
\end{enumerate}
\end{prop}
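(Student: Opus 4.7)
The overall strategy is to reduce $A$ to the bi-generator via \prettyref{cor:bi_generator} and to use \prettyref{thm:sg_weighted_comp_str_cont_equicont}(b), which makes $(C_{m,\varphi}(t))_{t\geq 0}$ a $\tau_{\operatorname{co}}$-bi-continuous semigroup on $\F$.

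For (a), the inclusion $D(A_{\|\cdot\|})\subset D(A)$ together with equality of values is immediate since $\gamma$ is coarser than $\tau_{\|\cdot\|}$. For the reverse, a $\|\cdot\|$-strongly continuous semigroup on a Banach space is locally bounded (e.g.\ by \prettyref{prop:loc_bounded_sg}), so \prettyref{cor:bi_generator} yields $D(A)=D(A_{\|\cdot\|,\tau_{\operatorname{co}}})$ and $A=A_{\|\cdot\|,\tau_{\operatorname{co}}}$. For $f\in D(A)$ the standard identity $C_{m,\varphi}(t)f-f=\int_{0}^{t}C_{m,\varphi}(s)Af\,\d s$, valid a priori as a $\tau_{\operatorname{co}}$-Riemann integral in the bi-continuous framework and upgraded to a norm Bochner integral by the $\|\cdot\|$-continuity of $s\mapsto C_{m,\varphi}(s)Af$, then forces $(C_{m,\varphi}(t)f-f)/t\to Af$ in norm as $t\to 0\rlim$.

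For (b), \prettyref{thm:sg_weighted_comp_str_cont_equicont}(b) provides a locally bounded $\tau_{\operatorname{co}}$-bi-continuous $(C_{m,\varphi}(t))_{t\geq 0}$. The first step is to show that each orbit $t\mapsto C_{m,\varphi}(t)f$ is weakly continuous: for any sequence $t_{n}\to t$ the norm-bounded sequence $(C_{m,\varphi}(t_{n})f)_{n\in\N}$ is weakly relatively compact by reflexivity, and any weak cluster point $g$ satisfies $\phi(g)=\lim_{n}\phi(C_{m,\varphi}(t_{n})f)=\phi(C_{m,\varphi}(t)f)$ for every $\phi\in(\F,\gamma)'\subset(\F,\|\cdot\|)'$ by $\gamma$-strong continuity, so $g=C_{m,\varphi}(t)f$ since $(\F,\gamma)'$ separates points; hence $C_{m,\varphi}(t_{n})f\rightharpoonup C_{m,\varphi}(t)f$. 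The conclusion then follows from the classical theorem of Phillips that a weakly (strongly) continuous, locally bounded semigroup on a Banach space is $\|\cdot\|$-strongly continuous. The main obstacle is precisely this last upgrade from weak to norm convergence, where reflexivity enters through Phillips' theorem.

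For (c), the inclusion $\overline{D(A)}^{\|\cdot\|}\subset[(m,\varphi),\F]$ follows because $D(A)=D(A_{\|\cdot\|,\tau_{\operatorname{co}}})\subset[(m,\varphi),\F]$ directly from the definition of the bi-generator (its boundedness condition forces $\|C_{m,\varphi}(t)f-f\|=O(t)$ as $t\to 0\rlim$), combined with the $\|\cdot\|$-closedness of $[(m,\varphi),\F]$ in $\F$ which a routine three-epsilon estimate using local boundedness delivers. For the reverse, fix $f\in[(m,\varphi),\F]$; the semigroup law together with local boundedness upgrades norm continuity at $t=0$ to norm continuity of $t\mapsto C_{m,\varphi}(t)f$ on $[0,\infty)$, so the averages $f_{h}\coloneqq\tfrac{1}{h}\int_{0}^{h}C_{m,\varphi}(s)f\,\d s$ exist as norm Riemann integrals and converge to $f$ in norm as $h\to 0\rlim$. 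A standard semigroup computation shows $(C_{m,\varphi}(r)f_{h}-f_{h})/r\to (C_{m,\varphi}(h)f-f)/h$ in norm, hence in $\gamma$, as $r\to 0\rlim$, placing $f_{h}\in D(A)$ and yielding $f\in\overline{D(A)}^{\|\cdot\|}$.
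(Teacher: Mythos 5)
Your proposal is correct, and while it shares the paper's skeleton, it fills in what the paper outsources. The paper's own proof is essentially three citations: after observing that $\|\cdot\|$-strong continuity forces local boundedness and that \prettyref{thm:sg_weighted_comp_str_cont_equicont} (b) places $(C_{m,\varphi}(t))_{t\geq 0}$ in the $\tau_{\operatorname{co}}$-bi-continuous framework, it settles (a) by \cite[Lemma 5.15, p.~2684]{kruse_meichnser_seifert2018} (bi-generator $=$ norm-generator for norm-strongly continuous bi-continuous semigroups), (b) by \cite[Corollary 1.25, p.~26]{kuehnemund2001} (bi-continuous semigroups on reflexive spaces are strongly continuous), and (c) by \cite[Theorem 5.6, p.~340]{budde2019} (the space of strong continuity is $\overline{D(A_{\|\cdot\|,\tau})}^{\|\cdot\|}$). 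You instead reprove these three imported facts directly, and each of your arguments is sound: in (a) the identity $C_{m,\varphi}(t)f-f=\int_{0}^{t}C_{m,\varphi}(s)Af\,\d s$ upgrades from a $\tau_{\operatorname{co}}$- to a norm-convergent integral because the norm-continuous integrand makes the Riemann sums norm-convergent and the norm limit must agree with the coarser $\tau_{\operatorname{co}}$-limit; in (b) the chain local boundedness, Eberlein--\v{S}mulian, separation of points by $(\F,\gamma)'\subset(\F,\|\cdot\|)'$, and the classical theorem that weakly continuous semigroups of bounded operators are $\|\cdot\|$-strongly continuous (Phillips; cf.~\cite{engel_nagel2000}) is in substance the standard proof of K\"uhnemund's corollary; in (c) the $O(t)$ bound built into $D(A_{\|\cdot\|,\tau_{\operatorname{co}}})$ plus a three-epsilon closedness argument give one inclusion, and the Ces\`aro averages $f_{h}=\tfrac{1}{h}\int_{0}^{h}C_{m,\varphi}(s)f\,\d s$ — which exist as norm Riemann integrals since $(\F,\|\cdot\|)$ is complete by \prettyref{rem:seq_complete_mixed_Banach}, and whose norm-differentiability at $t=0$ implies $\gamma$-differentiability — give the other. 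So your route buys self-containedness (the paper never displays these arguments) at the cost of length, and it matches the mathematics of the cited references. One point worth making explicit in (a): $(A,D(A))$ is a priori the $\gamma$-generator, so before comparing it with $A_{\|\cdot\|}$ you should record that $\|\cdot\|$-strong continuity yields local boundedness and hence, by \prettyref{thm:sg_weighted_comp_str_cont_equicont} (a), $\gamma$-strong continuity, so that $A$ is defined at all; this is implicit in your appeal to \prettyref{cor:bi_generator} but deserves a sentence.
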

\begin{proof}
Due to \cite[Chap.~I, 5.5 Proposition, p.~39]{engel_nagel2000} a $\|\cdot\|$-strongly continuous semigroup 
is exponentially bounded and thus locally bounded. 
Hence part (a) follows from \prettyref{thm:sg_weighted_comp_str_cont_equicont} (b) and 
\cite[Lemma 5.15, p.~2684]{kruse_meichnser_seifert2018}. 
Parts (b) and (c) are a consequence of \prettyref{thm:sg_weighted_comp_str_cont_equicont} (b) and 
\cite[Corollary 1.25, p.~26]{kuehnemund2001} resp.~\cite[Theorem 5.6, p.~340]{budde2019}.
\end{proof}

Let $\Omega$ be a Hausdorff space, $\F\subset\mathcal{C}(\Omega)$ a linear space and $(m,\varphi)$ a co-semiflow for $\F$. 
Then the \emph{Lie generator} of the co-semiflow $(m,\varphi)$ is given by
\begin{align*}
D(A_{m,\varphi})\coloneqq \Bigl\{f\in\F\;|\;&\forall\;x\in\Omega:\;
g(x)\coloneqq\lim_{t\to 0\rlim}\frac{m_{t}(x)f(\varphi_{t}(x))-f(x)}{t}\;\text{exists in }\K\\
&\text{ and }g\in\F \Bigr\}
\end{align*}
and 
\[
A_{m,\varphi}f(x)\coloneqq\lim_{t\to 0\rlim}\frac{m_{t}(x)f(\varphi_{t}(x))-f(x)}{t},\quad f\in D(A_{m,\varphi}),\,x\in\Omega.
\]
In other words, the Lie generator is the generator of the weighted composition semigroup $(C_{m,\varphi}(t))_{t\geq 0}$ 
w.r.t.~the topology of pointwise convergence. The Lie generator was introduced in \cite[p.~115]{dorroh1996} 
for the space $\F=\mathcal{C}_{b}(\Omega)$ of bounded continuous functions on a Polish space $\Omega$, 
equipped with the supremum norm $\|\cdot\|=\|\cdot\|_{\infty}$, and a jointly continuous co-semiflow $(\mathds{1},\varphi)$.
The following proposition generalises \cite[Proposition 2.12, p.~6]{farkas2020} and 
\cite[Proposition 2.4, p.~118]{dorroh1996} where $\F=\mathcal{C}_{b}(\Omega)$ and $\Omega$ a completely regular Hausdorff 
$k$-space resp.~Polish space.

\begin{prop}\label{prop:lie_generator}
Let $\Omega$ be a Hausdorff space, $(\F,\|\cdot\|,\tau_{\operatorname{co}})$ a sequentially complete Saks space 
such that $\F\subset\mathcal{C}(\Omega)$ and $(A,D(A))$ the generator of a locally bounded weighted composition semigroup 
$(C_{m,\varphi}(t))_{t\geq 0}$ on $\F$ w.r.t.~a jointly continuous co-semiflow $(m,\varphi)$. Then we have 
\[
D(A)=D(A_{m,\varphi})\quad\text{and}\quad A=A_{m,\varphi}.
\]
\end{prop}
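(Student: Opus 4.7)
The easy inclusion $D(A)\subset D(A_{m,\varphi})$ with $Af = A_{m,\varphi}f$ on $D(A)$ follows because every point evaluation $\delta_{x}$, $x\in\Omega$, is $\tau_{\operatorname{co}}$-continuous on $\F$ and $\tau_{\operatorname{co}}\subset\gamma$: if $f\in D(A)$ and $g\coloneqq Af$, then $\gamma$-convergence of $\tfrac{1}{t}(C_{m,\varphi}(t)f - f)$ to $g$ forces pointwise convergence, so $f\in D(A_{m,\varphi})$ with $A_{m,\varphi}f = g$.

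For the converse I would fix $f\in D(A_{m,\varphi})$ and set $g\coloneqq A_{m,\varphi}f\in\F$. By \prettyref{thm:sg_weighted_comp_str_cont_equicont} the weighted composition semigroup is $\gamma$-strongly continuous, so in particular $s\mapsto C_{m,\varphi}(s)h(x)$ is continuous for every $h\in\F$ and $x\in\Omega$. The cocycle identity gives, for $s\geq 0$, $t>0$ and $x\in\Omega$,
\[
\frac{C_{m,\varphi}(s+t)f(x) - C_{m,\varphi}(s)f(x)}{t}
= m_{s}(x)\cdot\frac{m_{t}(\varphi_{s}(x))f(\varphi_{t}(\varphi_{s}(x))) - f(\varphi_{s}(x))}{t}
\underset{t\to 0\rlim}{\longrightarrow} m_{s}(x)g(\varphi_{s}(x)) = (C_{m,\varphi}(s)g)(x),
\]
so $s\mapsto C_{m,\varphi}(s)f(x)$ is continuous and right-differentiable at every $s\geq 0$ with right-derivative $(C_{m,\varphi}(s)g)(x)$, which is itself continuous in $s$. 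The standard upgrade of right-differentiability (as in \cite[Chap.~2, Corollary 1.2, p.~43]{pazy1983}, already used elsewhere in the paper) then yields $C^{1}$-regularity and the scalar formula
\[
C_{m,\varphi}(s)f(x) - f(x) = \int_{0}^{s}(C_{m,\varphi}(u)g)(x)\d u,\quad s\geq 0,\,x\in\Omega.
\]

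Next I would promote this pointwise identity to an $\F$-valued one. The map $[0,s]\to(\F,\gamma)$, $u\mapsto C_{m,\varphi}(u)g$, is $\gamma$-continuous on a compact interval, hence uniformly continuous for each continuous seminorm of $\gamma$; so its Riemann sums along partitions with vanishing mesh form a $\gamma$-Cauchy sequence and converge in $\F$ by sequential completeness of $(\F,\|\cdot\|,\tau_{\operatorname{co}})$. Since point evaluations are $\gamma$-continuous, the resulting $\gamma$-Riemann integral $\int_{0}^{s}C_{m,\varphi}(u)g\d u\in\F$ agrees with the scalar integral above at every $x$, yielding
\[
\frac{1}{s}\bigl(C_{m,\varphi}(s)f - f\bigr) = \frac{1}{s}\int_{0}^{s}C_{m,\varphi}(u)g\d u \quad\text{in } \F.
\]
The right-hand side tends to $g$ in $\gamma$ as $s\to 0\rlim$ by $\gamma$-continuity of $u\mapsto C_{m,\varphi}(u)g$ at $u=0$, so $f\in D(A)$ and $Af = g = A_{m,\varphi}f$.

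The main technical hurdle is the construction of the $\F$-valued $\gamma$-Riemann integral and its identification with the scalar integral; once this is in place, the cocycle identity reduces everything to a familiar semigroup-theoretic averaging argument. Local boundedness of $(C_{m,\varphi}(t))_{t\geq 0}$ enters only through \prettyref{thm:sg_weighted_comp_str_cont_equicont}, which supplies the $\gamma$-strong continuity exploited in the right-derivative computation and in the final averaging step.
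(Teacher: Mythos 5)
Your proof is correct and takes essentially the same route as the paper's: the cocycle identity yields pointwise right-differentiability with continuous right-derivative, the upgrade via \cite[Chap.~2, Corollary 1.2, p.~43]{pazy1983} and the fundamental theorem of calculus give the scalar integral identity, and sequential completeness together with $\gamma$-continuous point evaluations promote it to the $\F$-valued identity $C_{m,\varphi}(t)f-f=\int_{0}^{t}C_{m,\varphi}(s)g\d s$. The only difference is cosmetic: where you construct the $\gamma$-Riemann integral by hand and conclude with a direct averaging estimate, the paper simply cites \cite[Proposition 1.1, p.~232]{komatsu1964} for the existence of the integral and \cite[Proposition 1.2 (2), p.~260]{komura1968} for the final step.
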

\begin{proof}
Since $\gamma$ is stronger than $\tau_{\operatorname{co}}$ and thus stronger than the topology of pointwise convergence, we 
only need to prove the inclusion $D(A_{m,\varphi})\subset D(A)$. Let $f\in D(A_{m,\varphi})$ and set $g\coloneqq A_{m,\varphi}f$. 
Since $g\in\F$ and $(C_{m,\varphi}(t))_{t\geq 0}$ is $\gamma$-strongly continuous 
by \prettyref{thm:sg_weighted_comp_str_cont_equicont} (a), the $\gamma$-Riemann integral $\int_{0}^{t}C_{m,\varphi}(s)g\d s$ 
exists in $\F$ for every $t\geq 0$ by \cite[Proposition 1.1, p.~232]{komatsu1964} because $(\F,\gamma)$ is sequentially complete. 
Now, since $f\in D(A_{m,\varphi})$, the function $f_{x}\colon[0,\infty)\to \K$, $s\mapsto m_{s}(x)f(\varphi_{s}(x))$, is 
right-differentiable with right-derivative $g_{x}\colon[0,\infty)\to \K$, $s\mapsto m_{s}(x)g(\varphi_{s}(x))$, because 
\begin{align*}
&\phantom{=}\lim_{t\to 0\rlim}\frac{m_{s+t}(x)f(\varphi_{s+t}(x))-m_{s}(x)f(\varphi_{s}(x))}{t}\\
&=m_{s}(x)\lim_{t\to 0\rlim}\frac{m_{t}(\varphi_{s}(x))f(\varphi_{t}(\varphi_{s}(x)))-f(\varphi_{s}(x))}{t}
=m_{s}(x)g(\varphi_{s}(x))=g_{x}(s)
\end{align*}
for every $s\geq 0$ and $x\in\Omega$. The right-derivative $g_{x}$ is continuous for every $x\in\Omega$ 
as $g\in\F$ and $(m,\varphi)$ is jointly continuous. Hence $f_{x}\in\mathcal{C}^{1}[0,\infty)$ with derivative $g_{x}$ 
by \cite[Chap.~2, Corollary 1.2, p.~43]{pazy1983} and 
\begin{align*}
  C_{m,\varphi}(t)f(x)-f(x)
&=m_{t}(x)f(\varphi_{t}(x))-f(x)
 =\int_{0}^{t}g_{x}(s)\d s
 = \int_{0}^{t}m_{s}(x)g(\varphi_{s}(x))\d s\\
&= \int_{0}^{t}C_{m,\varphi}(s)g(x)\d s
\end{align*}
for every $t\geq 0$ and $x\in\Omega$ by the fundamental theorem of calculus. 
In combination with the existence of the $\gamma$-Riemann integral $\int_{0}^{t}C_{m,\varphi}(s)g\d s$ in $\F$ 
for every $t\geq 0$ this yields 
\[
C_{m,\varphi}(t)f-f=\int_{0}^{t}C_{m,\varphi}(s)g\d s
\]
for every $t\geq 0$, implying our statement by \cite[Proposition 1.2 (2), p.~260]{komura1968}.
\end{proof}

\begin{rem}
Let $\Omega$ be a Hausdorff space, $(\F,\|\cdot\|,\tau_{\operatorname{co}})$ a sequentially complete Saks space 
such that $\F\subset\mathcal{C}(\Omega)$ and $(A,D(A))$ the generator of a locally bounded weighted composition semigroup 
$(C_{m,\varphi}(t))_{t\geq 0}$ on $\F$ w.r.t.~a jointly continuous co-semiflow $(m,\varphi)$. 
We may also define the $\tau_{\operatorname{co}}$\emph{-generator} 
$(A_{\tau_{\operatorname{co}}},D(A_{\tau_{\operatorname{co}}}))$ by 
\[
D(A_{\tau_{\operatorname{co}}})\coloneqq \Bigl\{f\in\F\;|\;
\tau_{\operatorname{co}}\text{-}\lim_{t\to 0\rlim}\frac{C_{m,\varphi}(t)f-f}{t}\;\text{exists in }\F\Bigr\}
\] 
and 
\[
A_{\tau_{\operatorname{co}}}f\coloneqq \tau_{\operatorname{co}}\text{-}\lim_{t\to 0\rlim}\frac{C_{m,\varphi}(t)f-f}{t},
\quad f\in D(A_{\tau_{\operatorname{co}}}).
\]
Then it follows from $\tau_{\operatorname{co}}$ being coarser than $\gamma$ and \prettyref{prop:lie_generator} that 
\[
D(A)=D(A_{\tau_{\operatorname{co}}})\quad\text{and}\quad A=A_{\tau_{\operatorname{co}}}.
\]
\end{rem}

If we have more information on the co-semiflow than just joint continuity, then we may give a simpler characterisation 
of the generator of a weighted composition semigroup.

\begin{prop}\label{prop:generator_mult_seg}
Let $\Omega$ be a Hausdorff space, $(\F,\|\cdot\|,\tau_{\operatorname{co}})$ a sequentially complete Saks space 
such that $\F\subset\mathcal{C}(\Omega)$ and $(A,D(A))$ the generator of a locally bounded weighted composition semigroup 
$(C_{m,\id}(t))_{t\geq 0}$ on $\F$ w.r.t.~a jointly continuous co-semiflow $(m,\id)$. 
If $m_{(\cdot)}(x)$ is right-differentiable in $t=0$ for all $x\in\Omega$, then
\[
D(A)=\{f\in\F\;|\;\dt{m}_{0}f\in\F\}\quad\text{and}\quad Af=\dt{m}_{0}f,\quad f\in D(A).
\]
\end{prop}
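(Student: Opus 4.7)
My plan is to invoke Proposition \ref{prop:lie_generator}, which, under precisely the hypotheses of the current statement (sequentially complete pre-Saks space, jointly continuous co-semiflow, locally bounded weighted composition semigroup), identifies the semigroup generator $(A,D(A))$ with the Lie generator $(A_{m,\id},D(A_{m,\id}))$. After this identification, the entire proof reduces to a transparent pointwise computation of $A_{m,\id}$ using the extra hypothesis that $m_{(\cdot)}(x)$ is right-differentiable at $t=0$ for every $x\in\Omega$.

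To carry out the computation, I would fix $x\in\Omega$ and exploit $\varphi_{t}(x)=x$ together with $m_{0}(x)=1$ to rewrite
\[
\frac{m_{t}(x)f(\varphi_{t}(x))-f(x)}{t}=f(x)\cdot\frac{m_{t}(x)-m_{0}(x)}{t}
\]
for every $f\in\F$, $t>0$ and $x\in\Omega$. Passing to the limit $t\to 0\rlim$ and using the assumed right-differentiability of $m_{(\cdot)}(x)$ at $t=0$, the pointwise limit exists for every $x\in\Omega$ and equals $f(x)\dt{m}_{0}(x)$. Thus the pointwise-existence part of the definition of $D(A_{m,\id})$ is automatic, and membership reduces solely to the requirement that the pointwise product $\dt{m}_{0}\cdot f$, a priori just a $\K$-valued function on $\Omega$, actually lies in $\F$. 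Combining this with Proposition \ref{prop:lie_generator} yields the claimed formulas for $D(A)$ and $A$.

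I do not anticipate any real obstacle: the argument is a direct specialisation of the Lie-generator description. The only thing to be careful about is that no continuity of $\dt{m}_{0}$ on $\Omega$ is assumed beforehand; however, this is not needed, since the condition $\dt{m}_{0}f\in\F\subset\mathcal{C}(\Omega)$ is imposed as a membership criterion rather than being derived, and the pointwise construction of $A_{m,\id}$ only relies on right-differentiability at $t=0$ which is given.
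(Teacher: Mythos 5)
Your proposal is correct and matches the paper's proof exactly: the paper likewise computes $\lim_{t\to 0\rlim}\frac{m_{t}(x)f(x)-f(x)}{t}=\dt{m}_{0}(x)f(x)$ pointwise using $m_{0}(x)=1$ and then invokes \prettyref{prop:lie_generator} to identify the generator with the Lie generator. Your remark that no a priori continuity of $\dt{m}_{0}$ is needed, since $\dt{m}_{0}f\in\F$ is imposed as the membership criterion, is also exactly the right reading of the Lie-generator definition.
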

\begin{proof}
For $f\in\F$ we have
\[
 \lim_{t\to 0\rlim}\frac{m_{t}(x)f(x)-f(x)}{t}
=\lim_{t\to 0\rlim}\frac{m_{t}(x)-1}{t}f(x)
=\dt{m}_{0}(x)f(x)
\]
for all $x\in\Omega$, yielding our statement by \prettyref{prop:lie_generator}.
\end{proof}

\begin{prop}\label{prop:mixed_gen_max_dom}
Let $\Omega\subset\K$ be open, $(\F,\|\cdot\|,\tau_{\operatorname{co}})$ a sequentially complete Saks space 
such that $\F\subset\mathcal{C}(\Omega)$ and $(A,D(A))$ the generator of a locally bounded weighted composition semigroup 
$(C_{m,\varphi}(t))_{t\geq 0}$ on $\F$ w.r.t.~a jointly continuous co-semiflow $(m,\varphi)$.
\begin{enumerate}
\item  If 
\begin{enumerate}
\item $m_{(\cdot)}(x)\in\mathcal{C}^{1}[0,\infty)$ and $\varphi_{(\cdot)}(x)\in\mathcal{C}^{1}[0,\infty)$ for all $x\in\Omega$,
\end{enumerate}
then 
\[
D_{0}\coloneqq\{f\in\mathcal{C}^{1}_{\K}(\Omega)\cap\F\;|\;\dt{\varphi}_{0}f'+\dt{m}_{0}f\in\F\}\subset D(A)
\]
and $Af=\dt{\varphi}_{0}f'+\dt{m}_{0}f$ for all $f\in D_{0}$.
\item Let $\omega\subset\Omega$ be open. If condition (i) is fulfilled and
\begin{enumerate}
\item[(ii)] $\varphi$ has a generator $G$, i.e.~there is a function $G\in\mathcal{C}(\Omega)$ such that 
$\dt{\varphi}_{t}(x)=(G\circ\varphi_{t})(x)$ for all $t\geq 0$ and $x\in\Omega$, and  
\item[(iii)] $t_{x}\coloneqq\inf\{t>0\;|\;\exists\; y\in\Omega\setminus\omega,\,y\neq x:\; \varphi_{t}(x)=y\}>0$ for all 
$x\in\Omega\setminus\omega$, 
\end{enumerate}
then 
\[
D_{1}\coloneqq\{f\in\mathcal{C}^{1}_{\K}(\omega)\cap\F\;|\;\dt{\varphi}_{0}f'+\dt{m}_{0}f\in\F\}\subset D(A)
\]
and $Af=\dt{\varphi}_{0}f'+\dt{m}_{0}f$ for all $f\in D_{1}$, where 
$\dt{\varphi}_{0}f'+\dt{m}_{0}f\in\F$ in the definition of $D_{1}$ means that there is an extension $g\in\F$ 
of the map $\dt{\varphi}_{0}f'+\dt{m}_{0}f\colon\omega\to\K$.
\end{enumerate}
\end{prop}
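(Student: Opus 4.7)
The strategy for both parts is to invoke Proposition~\ref{prop:lie_generator}, which under the standing assumptions identifies the generator $(A,D(A))$ with the Lie (pointwise) generator $(A_{m,\varphi},D(A_{m,\varphi}))$. Hence it suffices to verify, for the candidate $f$ and every $x\in\Omega$, that the difference quotient $(m_{t}(x)f(\varphi_{t}(x))-f(x))/t$ converges as $t\to 0\rlim$ to the value prescribed by the formula, and that the resulting pointwise limit defines an element of $\F$.

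For part (a), fix $f\in D_{0}$ and $x\in\Omega$, and decompose
\[
\frac{m_{t}(x)f(\varphi_{t}(x))-f(x)}{t}=m_{t}(x)\,\frac{f(\varphi_{t}(x))-f(x)}{t}+\frac{m_{t}(x)-1}{t}\,f(x).
\]
I pass to the limit termwise. Joint continuity of $m$ yields $m_{t}(x)\to 1$; hypothesis (i) yields $(m_{t}(x)-1)/t\to \dt{m}_{0}(x)$; and the classical chain rule applied to $s\mapsto f(\varphi_{s}(x))$, valid because $f\in\mathcal{C}^{1}_{\K}(\Omega)$ and $\varphi_{(\cdot)}(x)\in\mathcal{C}^{1}[0,\infty)$, yields $(f(\varphi_{t}(x))-f(x))/t\to f'(x)\dt{\varphi}_{0}(x)$. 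Summing the two limits and invoking the membership condition defining $D_{0}$ produces $A_{m,\varphi}f=\dt{\varphi}_{0}f'+\dt{m}_{0}f\in\F$, and Proposition~\ref{prop:lie_generator} concludes.

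For part (b), fix $f\in D_{1}$ and its extension $g\in\F$. At $x\in\omega$ the computation of part (a) applies locally, since openness of $\omega$ and joint continuity of $\varphi$ ensure $\varphi_{s}(x)\in\omega$ for small $s\geq 0$; the pointwise limit there equals $g(x)$. The main obstacle is the analysis at $x\in\Omega\setminus\omega$, where $f'(x)$ may be undefined. I invoke condition (iii) together with an orbit dichotomy: the set $S_{x}\coloneqq\{s\geq 0\mid \varphi_{s}(x)=x\}$ is a closed sub-semigroup of $([0,\infty),+)$ containing $0$, so a standard density argument (any closed sub-semigroup meeting every right-neighbourhood of $0$ equals $[0,\infty)$) forces $S_{x}\in\{\{0\},[0,\infty)\}$. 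If $S_{x}=[0,\infty)$ then $x$ is a fixed point, $G(x)=0$ by Proposition~\ref{prop:fixed_points}, and the difference quotient reduces to $((m_{t}(x)-1)/t)f(x)\to \dt{m}_{0}(x)f(x)$; this must coincide with $g(x)$ through a continuity argument relating the extension $g$ to the vanishing of $G$ at $x$.

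Otherwise $S_{x}=\{0\}$, and condition (iii) forces $\varphi_{s}(x)\in\omega$ for every $s\in(0,t_{x})$. Setting $F(s)\coloneqq m_{s}(x)f(\varphi_{s}(x))$ on $[0,t_{x})$, the chain and product rules together with the derivative formula for $m$ from Proposition~\ref{prop:cont_diff_semicocyle} give $F\in\mathcal{C}^{1}((0,t_{x}))$ with
\[
F'(s)=m_{s}(x)\bigl(Gf'+\dt{m}_{0}f\bigr)(\varphi_{s}(x))=m_{s}(x)\,g(\varphi_{s}(x)).
\]
Continuity of $g$ on $\Omega$ and joint continuity of $(m,\varphi)$ extend $F'$ continuously to $[0,t_{x})$ with value $g(x)$ at $s=0$, and a standard regularity argument (exactly as in the proof of Proposition~\ref{prop:cont_diff_semicocyle}) upgrades $F$ to $\mathcal{C}^{1}[0,t_{x})$ with right derivative $g(x)$ at $0$. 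Thus the pointwise limit at $x$ equals $g(x)$ in this subcase as well. Assembling the three subcases exhibits $A_{m,\varphi}f$ as an element of $\F$ extending $(\dt{\varphi}_{0}f'+\dt{m}_{0}f)\vert_{\omega}$, whence Proposition~\ref{prop:lie_generator} yields $f\in D(A)$ with the stated formula; the main delicate point is the fixed-point subcase, where the required identification of $\dt{m}_{0}(x)f(x)$ with $g(x)$ rests essentially on the continuity of the extension $g$ together with condition (iii).
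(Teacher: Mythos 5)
Your part (a) is correct and takes a slightly more elementary route than the paper: you compute the right derivative of $t\mapsto m_{t}(x)f(\varphi_{t}(x))$ at $t=0$ termwise, via the splitting $m_{t}(x)\tfrac{f(\varphi_{t}(x))-f(x)}{t}+\tfrac{m_{t}(x)-1}{t}f(x)$ and the chain rule, whereas the paper represents the difference quotient as the average $\tfrac{1}{t}\int_{0}^{t}h_{x}(s)\,\d s$ of the continuous integrand $h_{x}(s)=m_{s}(x)\dt{\varphi}_{s}(x)f'(\varphi_{s}(x))+\dt{m}_{s}(x)f(\varphi_{s}(x))$; both then conclude via \prettyref{prop:lie_generator}, and the paper's integral form is chosen mainly because it is recycled in part (b). Your part (b) follows the same three-case skeleton as the paper ($x\in\omega$; fixed points of $\Omega\setminus\omega$; remaining points, whose orbits immediately enter $\omega$), and in the last case your identity $F'(s)=m_{s}(x)g(\varphi_{s}(x))$ with the $\mathcal{C}^{1}$-upgrade at $s=0$ matches the paper's treatment of $h_{x}$. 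One small caveat there: \prettyref{prop:cont_diff_semicocyle} formally presupposes $\dt{m}_{0}\in\mathcal{C}(\Omega)$, which is not among the hypotheses; but the pointwise identity $\dt{m}_{s}(x)=m_{s}(x)\dt{m}_{0}(\varphi_{s}(x))$ follows directly from the cocycle relation and (i), so this is harmless.

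There are, however, two genuine problems. The first is your dichotomy $S_{x}\in\{\{0\},[0,\infty)\}$, which is false: a closed subsemigroup of $([0,\infty),+)$ containing $0$ can be, e.g., $p\N_{0}$ for a periodic point of minimal period $p>0$. The density argument only yields: if $p(x)\coloneqq\inf\{t>0\;|\;\varphi_{t}(x)=x\}=0$, then $x$ is fixed. In the periodic case your claim that (iii) forces $\varphi_{s}(x)\in\omega$ for all $s\in(0,t_{x})$ fails at multiples of $p(x)$, where $\varphi_{s}(x)=x\notin\omega$ — a return to $x$ itself, which (iii) does not exclude. The paper repairs exactly this by working on $(0,t(x))$ with $t(x)\coloneqq\min\{p(x),t_{x}\}$, and the same fix rescues your argument, since only small $s$ matter for the right derivative at $0$.

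The second problem is the step you defer, and it cannot be closed in the way you suggest. At a fixed point $x\in\Omega\setminus\omega$ the set in (iii) is empty, so $t_{x}=\inf\varnothing=\infty$ and (iii) carries no information, and the forward orbit of $x$ is $\{x\}$, so continuity of $g$ along the orbit says nothing either. The identification $\dt{m}_{0}(x)f(x)=g(x)$ is nevertheless indispensable, because \prettyref{prop:lie_generator} requires the pointwise limit function itself to lie in the space: your case analysis shows it equals $g$ on $\omega$ and at the non-fixed points of $\Omega\setminus\omega$, but equals $\dt{m}_{0}f$ at the fixed points of $\Omega\setminus\omega$, and nothing in (i)--(iii) ties these values together. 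Indeed, take $\Omega=\R$, $\mathcal{F}(\Omega)=\mathcal{C}_{b}(\R)$, $m=\mathds{1}$, $\omega=(-\infty,0)$ and the jointly continuous semiflow $\varphi_{t}(x)=-\bigl(\max\{\sqrt{-x}-\tfrac{t}{2},0\}\bigr)^{2}$ for $x\leq 0$, $\varphi_{t}(x)=x$ for $x\geq 0$; it has the continuous generator $G(u)=\sqrt{\max\{-u,0\}}$, fixes $[0,\infty)$, and induces a contractive, hence locally bounded, composition semigroup, so all hypotheses hold. Choosing $f\in\mathcal{C}_{b}(\R)\cap\mathcal{C}^{1}(\omega)$ with $f(u)=-2\sqrt{-u}$ on $(-\tfrac{1}{2},0)$, $f=0$ on $[0,\infty)$ and $f$ constant on $(-\infty,-1]$ (interpolated $\mathcal{C}^{1}$-smoothly), one gets $Gf'=\mathds{1}$ on $(-\tfrac{1}{2},0)$, so $f\in D_{1}$; yet $f(\varphi_{t}(x))=-2\sqrt{-x}+t$ for small $t$ and $x\in(-\tfrac{1}{2},0)$, so the pointwise limit equals $1$ there while it equals $0$ at the fixed point $0$ — a discontinuous function, not in $\mathcal{C}_{b}(\R)$, whence $f\notin D(A)$. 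So the fixed-point identification is a real obstruction, not a routine continuity check; to be fair to you, the paper's own proof passes over precisely this point in silence (it computes the limit $\dt{m}_{0}(x)f(x)$ in the case $p(x)=0$ and never reconciles it with $g(x)$), and an additional hypothesis — for instance that the extension $g$ satisfies $g=\dt{m}_{0}f$ on the fixed points of $\Omega\setminus\omega$ — is needed to complete either proof.
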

\begin{proof}
(a) Let $f\in\mathcal{C}^{1}_{\K}(\Omega)\cap\F$ and $x\in\Omega$. Fix $\widetilde{t}>0$. 
We note that the map $h_{x}\colon[0,\widetilde{t}\,]\to\K$, 
$h_{x}(s)\coloneqq m_{s}(x)\dt{\varphi}_{s}(x)f'(\varphi_{s}(x))+\dt{m}_{s}(x)f(\varphi_{s}(x))$, 
is continuous by condition (i), the continuity of $f'$ and the joint continuity of $(m,\varphi)$. 
Then we have
\begin{align*}
 \frac{m_{t}(x)f(\varphi_{t}(x))-f(x)}{t}
&=\frac{1}{t}\int_{0}^{t}m_{s}(x)\dt{\varphi}_{s}(x)f'(\varphi_{s}(x))+\dt{m}_{s}(x)f(\varphi_{s}(x))\d s\nonumber\\
&=\frac{1}{t}\int_{0}^{t}h_{x}(s)\d s
\end{align*}
for every $0<t\leq\widetilde{t}$ by (i) and the fundamental theorem of calculus. 
This implies 
\begin{align}\label{eq:mixed_gen_max_dom_1}
&\phantom{=}\frac{m_{t}(x)f(\varphi_{t}(x))-f(x)}{t}-(\dt{\varphi}_{0}(x)f'(x)+\dt{m}_{0}(x)f(x))\nonumber\\
&=\frac{1}{t}\int_{0}^{t}h_{x}(s)
  -\dt{\varphi}_{0}(x)f'(x)-\dt{m}_{0}(x)f(x)\d s
\end{align}
for every $0<t\leq\widetilde{t}$. Using that $h_{x}$ is continuous on the compact interval 
$[0,\widetilde{t}\,]$, thus uniformly continuous by the Heine--Cantor theorem, and that 
\[
 \lim_{s\to 0\rlim}h_{x}(s)
=m_{0}(x)\dt{\varphi}_{0}(x)f'(\varphi_{0}(x))+\dt{m}_{0}(x)f(\varphi_{0}(x))
=\dt{\varphi}_{0}(x)f'(x)+\dt{m}_{0}(x)f(x),
\]
for every $\varepsilon>0$ there is $0<\delta\leq \widetilde{t}$ such that for all $s\geq 0$ with $|s|=|s-0|<\delta$ we have 
\[
 \Bigl|\frac{m_{t}(x)f(\varphi_{t}(x))-f(x)}{t}-(\dt{\varphi}_{0}(x)f'(x)+\dt{m}_{0}(x)f(x))\Bigr|
\underset{\eqref{eq:mixed_gen_max_dom_1}}{<}\frac{1}{t}\int_{0}^{t}\varepsilon \d s=\varepsilon
\]
for all $0<t<\delta$. We deduce that
\begin{equation}\label{eq:mixed_gen_max_dom_2}
\lim_{t\to 0\rlim}\frac{m_{t}(x)f(\varphi_{t}(x))-f(x)}{t}=\dt{\varphi}_{0}(x)f'(x)+\dt{m}_{0}(x)f(x).
\end{equation}
The rest of the statement follows from \eqref{eq:mixed_gen_max_dom_2} and \prettyref{prop:lie_generator}.
 
(b) Let $f\in\mathcal{C}^{1}_{\K}(\omega)\cap\F$ and $x\in\Omega$. 
First, we consider the case that $x\in\omega$. 
Since $\omega$ is open, $\varphi_{0}(x)=x\in\omega$, and $\varphi_{(\cdot)}(x)$ is continuous, there is 
$\delta_{x}>0$ such that $\varphi_{t}(x)\in\omega$ for all $t\in[0,\delta_{x}]$. 
It follows that the map $h_{x}\colon[0,\widetilde{t}\,]\to\K$ from part (a) is still a well-defined 
continuous function for the choice $\widetilde{t}\coloneqq \delta_{x}$ and the rest of the proof carries over. 

Let us turn to the case $x\in\Omega\setminus\omega$. 
Now, we need the restriction that $\dt{\varphi}_{0}f'+\dt{m}_{0}f\in\F$. 
We set $p(x)\coloneqq \inf\{t>0\;|\;\varphi_{t}(x)=x\}$. 
If $p(x)=0$, then $x$ is a fixed point of $\varphi$, and thus 
\[
\lim_{t\to 0\rlim}\frac{m_{t}(x)f(\varphi_{t}(x))-f(x)}{t}=\lim_{t\to 0\rlim}\frac{m_{t}(x)-1}{t}f(x)=\dt{m}_{0}(x)f(x)
\]
Suppose that $p(x)>0$. Setting $t(x)\coloneqq\min\{p(x),t_{x}\}$, we observe that $t(x)>0$ by condition (iii). 
Hence the map $h_{x}\colon (0,\widetilde{t}\,]\to\K$ 
from part (a) is still a well-defined continuous function for the choice $\widetilde{t}\coloneqq t(x)$. 
Next, we show that $h_{x}$ is continuously extendable in $s=0$. We denote by $g\in\F$ the extension 
of $\dt{\varphi}_{0}f'+\dt{m}_{0}f$ and note that $g$ as an element of $\F$ is continuous on $\Omega$. 
Then $\widetilde{g}\coloneqq g-\dt{m}_{0}f$ is a continuous extension 
of $\dt{\varphi}_{0}f'$ on $\Omega$. For $0<s<t(x)$ we have by condition (ii) and \prettyref{rem:generator_semiflow}
that 
\[
\dt{\varphi}_{s}(x)=G(\varphi_{s}(x))=\dt{\varphi}_{0}(\varphi_{s}(x))
\]
and thus
\begin{align*}
&\phantom{=}m_{s}(x)\dt{\varphi}_{s}(x)f'(\varphi_{s}(x))-\widetilde{g}(x)\\
&=m_{s}(x)\dt{\varphi}_{s}(x)f'(\varphi_{s}(x))-\dt{\varphi}_{0}(\varphi_{s}(x))f'(\varphi_{s}(x))
  +\dt{\varphi}_{0}(\varphi_{s}(x))f'(\varphi_{s}(x))-\widetilde{g}(x)\\
&=(m_{s}(x)-1)\dt{\varphi}_{0}(\varphi_{s}(x))f'(\varphi_{s}(x))
  +\dt{\varphi}_{0}(\varphi_{s}(x))f'(\varphi_{s}(x))-\widetilde{g}(x)\\
&=(m_{s}(x)-1)\widetilde{g}(\varphi_{s}(x))+\widetilde{g}(\varphi_{s}(x))-\widetilde{g}(x).
\end{align*}
We derive that 
\[
\lim_{s\to 0\rlim} m_{s}(x)\dt{\varphi}_{s}(x)f'(\varphi_{s}(x))=\widetilde{g}(x)
\]
since $\widetilde{g}$ is continuous in $x$ and $(m,\varphi)$ is a $C_{0}$-co-semiflow. 
Hence $h_{x}$ is continuously extendable in $s=0$ by setting $h_{x}(0)\coloneqq \widetilde{g}(x)+\dt{m}_{0}(x)f(x)=g(x)$. 
From here the rest of the proof of part (a) carries over with $\dt{\varphi}_{0}(x)f'(x)+\dt{m}_{0}(x)f(x)$ 
replaced by $g(x)$.
\end{proof}

The expression $p(x)=\inf\{t>0\;|\;\varphi_{t}(x)=x\}$ in the proof of part (b) is also called the \emph{period} of $x\in\Omega$ 
w.r.t.~$\varphi$ (see \cite[p.~660]{robinson1986}). 
For the proof of the converse inclusion in \prettyref{prop:mixed_gen_max_dom} in the case that $\F$ is not a subspace of 
$\mathcal{C}^{1}_{\K}(\Omega)$ we need to know what happens with $\varphi_{t}$ and $m_{t}$ to the left of $t=0$, meaning 
we consider flows and cocycles instead of just semiflows and semicocycles.

\begin{defn}
Let $\Omega$ be a Hausdorff space. A family $\varphi\coloneqq (\varphi_{t})_{t\in\R}$ 
of continuous functions $\varphi_{t}\colon\Omega\to\Omega$ is called a \emph{flow} if 
\begin{enumerate}
\item[(i)] $\varphi_{0}(x)=x$ for all $x\in\Omega$, and 
\item[(ii)] $\varphi_{t+s}(x)=(\varphi_{t}\circ\varphi_{s})(x)$ for all $t,s\in\R$ and $x\in\Omega$.
\end{enumerate}
We call a flow \emph{trivial} and write $\varphi=\id$ if $\varphi_{t}=\id$ for all $t\in\R$.
We call a flow $\varphi$ a $C_{0}$\emph{-flow} if $\lim_{t\to 0}\varphi_{t}(x)=x$ for all $x\in\Omega$.
A family $m\coloneqq (m_{t})_{t\in\R}$ of continuous functions $m_{t}\colon\Omega\to\K$ 
is called a multiplicative \emph{cocycle} for a flow $\varphi$ if 
\begin{enumerate}
\item[(i)] $m_{0}(x)=1$ for all $x\in\Omega$, and 
\item[(ii)] $m_{t+s}(x)=m_{t}(x)m_{s}(\varphi_{t}(x))$ for all $t,s\in\R$ and $x\in\Omega$.
\end{enumerate}
We call a cocycle $m$ \emph{trivial} and write $m=\mathds{1}$ if $m_{t}=\mathds{1}$ for all $t\in\R$. 
We call a cocycle $m$ a $C_{0}$\emph{-cocycle} if $\lim_{t\to 0}m_{t}(x)=1$ for all $x\in\Omega$.
We call the tuple $(m,\varphi)$ a \emph{co-flow} on $\Omega$. 
We call a co-flow $(m,\varphi)$ jointly continuous (separately continuous, $C_{0}$) if 
$\varphi$ and $m$ are both jointly continuous (separately continuous, $C_{0}$). 
\end{defn}
   
We have the following characterisation of joint continuity of flows and cocycles on certain Hausdorff spaces $\Omega$. 
   
\begin{prop}\label{prop:jointly_cont_flow_cocycle}
Let $(m,\varphi)$ be a co-flow on a Hausdorff space $\Omega$. 
\begin{enumerate}
\item Let $\Omega$ be locally compact and $\sigma$-compact. 
Then $\varphi$ is jointly continuous if and only if $\varphi$ is $C_{0}$.
\item Let $\Omega$ be an open subset of a metric space and $\varphi$ jointly continuous. 
Then $m$ is jointly continuous if and only if $m$ is $C_{0}$.
\end{enumerate}
\end{prop}
\begin{proof}
(a) We only need to prove the implication $\Leftarrow$. 
We define $\psi\coloneqq (\psi_{t})_{t\geq 0}$ by $\psi_{t}(x)\coloneqq \varphi_{-t}(x)$ for all $t\geq 0$ 
and $x\in\Omega$. Then it is easily checked that $\psi$ is a semiflow. Further, we have
\[
 \lim_{t\to 0\rlim}\psi_{t}(x)
=\lim_{t\to 0\rlim}\varphi_{-t}(x)
=\lim_{t\to 0\llim}\varphi_{t}(x)
=x
\]
for all $x\in\Omega$. It follows from \prettyref{prop:jointly_cont_semiflow} that $\psi$ is jointly continuous 
and $(\varphi_{t})_{t\geq 0}$ as well. Since $\psi_{0}(x)=x=\varphi_{0}(x)$ for all $x\in\Omega$, 
we get that $\varphi$ is jointly continuous. The proof of part (b) is analogous and we only need 
to use \prettyref{prop:jointly_cont_cocycle} instead of \prettyref{prop:jointly_cont_semiflow}.
\end{proof}

\begin{thm}\label{thm:mixed_gen_max_dom_real}
Let $\Omega\subset\R$ be open, $(\F,\|\cdot\|,\tau_{\operatorname{co}})$ a sequentially complete Saks space 
such that $\F\subset\mathcal{C}(\Omega)$ and $(A,D(A))$ the generator of a locally bounded weighted composition semigroup 
$(C_{m,\varphi}(t))_{t\geq 0}$ on $\F$ w.r.t.~a $C_{0}$-co-flow $(m,\varphi)$ such that 
$m_{(\cdot)}(x)\in\mathcal{C}^{1}(\R)$ and $\varphi_{(\cdot)}(x)\in\mathcal{C}^{1}(\R)$ for all $x\in\Omega$,
and $m_{t}(x)\neq 0$ for all $(t,x)\in\R\times\Omega$.
\begin{enumerate}
\item If the map $V_{\varphi}\to\Omega$, $(t,x)\mapsto \varphi_{t}(x)$, is surjective, where 
$V_{\varphi}\coloneqq\{(t,x)\in\R\times\Omega\;|\;\dt{\varphi}_{t}(x)\neq 0\}$, 
then
\[
D(A)=\{f\in\mathcal{C}^{1}(\Omega)\cap\F\;|\;\dt{\varphi}_{0}f'+\dt{m}_{0}f\in\F\}
\]
and $Af=\dt{\varphi}_{0}f'+\dt{m}_{0}f$ for all $f\in D(A)$.
\item If 
\begin{enumerate}
\item[(i)] $(\varphi_{t})_{t\geq 0}$ has a generator $G$, and
\item[(ii)] $t_{x}\coloneqq\inf\{t>0\;|\;\exists\; y\in N_{G},\,y\neq x:\; \varphi_{t}(x)=y\}>0$ for all $x\in N_{G}$, 
where $N_{G}=\{z\in\Omega\;|\;G(z)=0\}$,
\end{enumerate}
then
\[
D(A)=\{f\in\mathcal{C}^{1}(\Omega\setminus N_{G})\cap\F\;|\;\dt{\varphi}_{0}f'+\dt{m}_{0}f\in\F\}
\]
and $Af=\dt{\varphi}_{0}f'+\dt{m}_{0}f$ for all $f\in D(A)$.
\end{enumerate}
\end{thm}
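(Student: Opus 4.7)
The overall strategy is to split into the two inclusions. For the $\supset$-direction in both (a) and (b), I apply \prettyref{prop:mixed_gen_max_dom}. To do so I first upgrade the $C_{0}$-co-flow $(m,\varphi)$ to a jointly continuous co-semiflow: since $\Omega\subset\R$ is a locally compact $\sigma$-compact open subset of a metric space, \prettyref{prop:jointly_cont_flow_cocyle} does exactly this. Part (a) then follows directly from \prettyref{prop:mixed_gen_max_dom} (a). Part (b) is obtained from \prettyref{prop:mixed_gen_max_dom} (b) with the open set $\omega\coloneqq\Omega\setminus N_{G}$ (open because $G\in\mathcal{C}(\Omega)$): its condition (ii) is hypothesis (i) here, and its condition (iii), read with $\Omega\setminus\omega=N_{G}$, is verbatim hypothesis (ii).

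For the $\subset$-direction, fix $f\in D(A)$ and set $g\coloneqq Af\in\F$. By \prettyref{prop:lie_generator}, $f\in D(A_{m,\varphi})$, and the proof of that proposition produces the pointwise Duhamel identity
\[
m_{t}(x)f(\varphi_{t}(x))-f(x)=\int_{0}^{t}m_{s}(x)g(\varphi_{s}(x))\,\d s,\qquad t\geq 0,\;x\in\Omega.
\]
The plan is to extend this identity to all $t\in\R$, then invoke the inverse function theorem to represent $f$ locally as a $\mathcal{C}^{1}$-function of $x$ and read off the ODE by differentiation.

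Differentiating the flow relation $\varphi_{t+s}=\varphi_{s}\circ\varphi_{t}$ in $s$ at $s=0$ yields $\dt{\varphi}_{t}(y)=\dt{\varphi}_{0}(\varphi_{t}(y))$ for every $t\in\R$ and $y\in\Omega$. Consequently the surjectivity in (a) is equivalent to $\dt{\varphi}_{0}(x)\neq 0$ everywhere on $\Omega$, while in (b) we have $\dt{\varphi}_{0}=G$ by \prettyref{prop:charac_generator}, so the non-vanishing holds precisely on $\Omega\setminus N_{G}$. Fix $x_{0}$ in the pertinent set. Since $\varphi_{(\cdot)}(x_{0})\in\mathcal{C}^{1}(\R)$ with $\dt{\varphi}_{0}(x_{0})\neq 0$, the inverse function theorem furnishes a $\mathcal{C}^{1}$-diffeomorphism $\psi\colon U\to(-\varepsilon,\varepsilon)$ with $\psi(x_{0})=0$, $\varphi_{\psi(x)}(x_{0})=x$ on $U$, and $\psi'(x_{0})=1/\dt{\varphi}_{0}(x_{0})$. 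Because $m_{\psi(x)}(x_{0})\neq 0$ by hypothesis, the extended Duhamel identity applied at the fixed point $x_{0}$ rearranges to
\[
f(x)=\frac{1}{m_{\psi(x)}(x_{0})}\Bigl[f(x_{0})+\int_{0}^{\psi(x)}m_{s}(x_{0})g(\varphi_{s}(x_{0}))\,\d s\Bigr],\qquad x\in U,
\]
whose right-hand side is manifestly $\mathcal{C}^{1}$ in $x$; thus $f\in\mathcal{C}^{1}(U)$. Differentiating at $x_{0}$ (where $\psi(x_{0})=0$ collapses every $m_{\psi(x_{0})}(x_{0})$, $\dt{m}_{\psi(x_{0})}(x_{0})$ to its value at $t=0$) yields $\dt{\varphi}_{0}(x_{0})f'(x_{0})+\dt{m}_{0}(x_{0})f(x_{0})=g(x_{0})=Af(x_{0})$. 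As $x_{0}$ ranges over the pertinent set, this gives in (a) that $f\in\mathcal{C}^{1}(\Omega)$ with $\dt{\varphi}_{0}f'+\dt{m}_{0}f=Af\in\F$, and in (b) that $f\in\mathcal{C}^{1}(\Omega\setminus N_{G})$ with $Af\in\F$ serving as the required $\F$-extension of $\dt{\varphi}_{0}f'+\dt{m}_{0}f$.

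The main obstacle is that $\psi$ takes values on both sides of $0$, so the semiflow-level Duhamel identity delivered by \prettyref{prop:lie_generator} is insufficient and must be promoted to all $t\in\R$. This is where the full $C_{0}$-co-flow hypothesis enters: applying the $t\geq 0$ identity at $y\coloneqq\varphi_{-u}(x)$ for $u>0$ and substituting $\sigma=s-u$, then exploiting $\varphi_{u}\circ\varphi_{-u}=\id$ together with the cocycle inversion $m_{u}(\varphi_{-u}(x))m_{-u}(x)=1$ (a special case of $m_{a+b}(x)=m_{a}(x)m_{b}(\varphi_{a}(x))$), one produces the same identity with negative upper limit. Everything else is then routine application of the inverse function theorem and differentiation under the integral sign.
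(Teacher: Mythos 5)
Your proof is correct, and it follows the paper's overall skeleton --- the inclusion $\supset$ via \prettyref{prop:mixed_gen_max_dom} after upgrading the $C_{0}$-co-flow to a jointly continuous one by \prettyref{prop:jointly_cont_flow_cocyle}, and the inclusion $\subset$ via \prettyref{prop:lie_generator} plus the inverse function theorem applied to $t\mapsto\varphi_{t}(x_{0})$ --- but you implement the hard inclusion differently in two respects. First, you reduce the surjectivity hypothesis in (a) to the pointwise statement $\dt{\varphi}_{0}(x)\neq 0$ for all $x\in\Omega$ via the identity $\dt{\varphi}_{t}=\dt{\varphi}_{0}\circ\varphi_{t}$ (valid for all $t\in\R$ by differentiating the flow law), which lets you apply the inverse function theorem at $t=0$ with $x_{0}=x$; the paper instead works at a general preimage $(t_{0},x_{0})\in V_{\varphi}$ with $x=\varphi_{t_{0}}(x_{0})$ and composes $f=\bigl(f\circ\varphi_{(\cdot)}(x_{0})\bigr)\circ[\varphi_{(\cdot)}(x_{0})_{\mid U}]^{-1}$, using your $t=0$ observation only in part (b). Your reduction is valid and pleasantly uniformizes (a) and (b). Second, to see that $f$ is $\mathcal{C}^{1}$ near $x_{0}$, the paper shows abstractly that $f_{x_{0}}(s)=m_{s}(x_{0})f(\varphi_{s}(x_{0}))$ is $\mathcal{C}^{1}(\R)$ --- right-differentiable on all of $\R$ by the Lie-generator computation together with the cocycle identity for real indices, with continuous right-derivative, hence continuously differentiable by the Pazy-type corollary --- and then divides by the nonvanishing $m_{(\cdot)}(x_{0})$; you instead extend the Duhamel identity from $t\geq 0$ to all $t\in\R$ by the cocycle-inversion trick $m_{-u}(x)\,m_{u}(\varphi_{-u}(x))=1$ (your substitution computation checks out) and read off smoothness from the explicit variation-of-constants representation of $f$ on $U$ via $\psi$. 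The two devices are equivalent in content --- both exploit the flow and cocycle structure at negative times, which is precisely where the co-flow hypothesis enters --- but your integral form makes the final differentiation at $x_{0}$ and the identification $Af=\dt{\varphi}_{0}f'+\dt{m}_{0}f$ completely explicit; note that only the fundamental theorem of calculus is needed there, not differentiation under the integral sign as you say at the end. Since every point of the pertinent set serves as a chart centre, the pointwise identities you obtain do yield the full statements in (a) and (b), with $Af\in\F$ furnishing the required extension in (b), exactly as in the paper.
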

\begin{proof}
(a) Due to \prettyref{prop:mixed_gen_max_dom} (a) we only need to show that $D(A)\subset D_{0}$. 
Let $f\in D(A)$ and $x\in\Omega$. By assumption there is $(t_{0},x_{0})\in V_{\varphi}$ 
such that $x=\varphi_{t_{0}}(x_{0})$. The arguments in the proof of \prettyref{prop:lie_generator} 
in combination with \prettyref{prop:jointly_cont_flow_cocycle} applied to the 
$C_{0}$-co-flow $(m,\varphi)$ show that $f_{x_{0}}\colon\R\to \R$, $s\mapsto m_{s}(x_{0})f(\varphi_{s}(x_{0}))$, 
is continuously differentiable. We deduce that $f\circ\varphi_{(\cdot)}(x_{0})$ is continuously differentiable on $\R$ since 
\[
f(\varphi_{t}(x_{0}))=\frac{f_{x_{0}}(t)}{m_{t}(x_{0})}
\] 
for all $t\in\R$, $m_{(\cdot)}(x_{0})\in\mathcal{C}^{1}(\R)$ and $f_{x_{0}}\in\mathcal{C}^{1}(\R)$. 
By assumption we know that $\varphi_{(\cdot)}(x_{0})\in\mathcal{C}^{1}(\R)$ with $\dt{\varphi}_{t_{0}}(x_{0})\neq 0$. 
By the inverse function theorem there is an open neighbourhood $U\coloneqq U(t_{0})\subset\R$ of $t_{0}$
such that $\varphi_{(\cdot)}(x_{0})$ is invertible on $U$ and the inverse is continuously differentiable 
on the open neighbourhood $W\coloneqq \varphi_{U}(x_{0})\subset\Omega$ of $x=\varphi_{t_{0}}(x_{0})$. Noting that 
\[
f(y)=\Bigl(\bigl(f\circ\varphi_{(\cdot)}(x_{0})\bigr)\circ [\varphi_{(\cdot)}(x_{0})_{\mid U}]^{-1}\Bigr)(y)
\]
for all $y\in W$, we conclude that $f$ is continuously differentiable in $x=\varphi_{t_{0}}(x_{0})\in W$, 
yielding $f\in\mathcal{C}^{1}(\Omega)\cap\F$. 
Using \eqref{eq:mixed_gen_max_dom_2} and \prettyref{prop:lie_generator}, finishes the proof of part (a).

(b) Due to \prettyref{prop:mixed_gen_max_dom} (b) with $\omega\coloneqq \Omega\setminus N_{G}$ we only need to show 
that $D(A)\subset D_{1}$. 
Let $f\in D(A)$. Since $\varphi_{0}(x)=x$ and
\[
\dt{\varphi}_{0}(x)=G(\varphi_{0}(x))=G(x)\neq 0
\]
for every $x\in\Omega\setminus N_{G}$, we obtain that 
the map $V_{\varphi}\to\Omega\setminus N_{G}$, $(t,x)\mapsto \varphi_{t}(x)$, is surjective. 
Hence the proof of part (a) shows that $f$ is continuously differentiable in every $x\in\Omega\setminus N_{G}$. 
The first part of the proof of \prettyref{prop:mixed_gen_max_dom} (b) yields that 
\[
g(x)\coloneqq \lim_{t\to 0\rlim}\frac{m_{t}(x)f(\varphi_{t}(x))-f(x)}{t}=\dt{\varphi}_{0}(x)f'(x)+\dt{m}_{0}(x)f(x)
\]
holds for all $x\in\Omega\setminus N_{G}$. The left-hand side $g$ of this equation belongs to $\F$ as $f\in D(A)$,
yielding $f\in D_{1}$.
\end{proof}

Looking at the proof of \prettyref{thm:mixed_gen_max_dom_real} (b), we see that part (a) is a special case of (b) if there 
is a generator $G$ such that $N_{G}=\varnothing$. 
If $(\varphi_{t})_{t\geq 0}$ is the restriction of a jointly continuous holomorphic semiflow $\psi$ 
on an open set $\widetilde{\Omega}\subset\C$, 
i.e.~$\varphi_{t}=\psi_{t}$ on $\Omega\coloneqq\widetilde{\Omega}\cap\R$ for all $t\geq 0$, 
then the generator $G$ of $(\varphi_{t})_{t\geq 0}$ exists by \prettyref{thm:generator_hol_semiflow}, namely, 
it is the restriction of the generator of $\psi$ to $\Omega$. So condition (i) of 
\prettyref{thm:mixed_gen_max_dom_real} (b) is fulfilled in this case. If in addition $\psi$ is non-trivial, 
$\widetilde{\Omega}$ simply connected and $\widetilde{\Omega}\neq\C$, then $|N_{G}|\leq|\operatorname{Fix}(\psi)|\leq 1$ 
by \prettyref{prop:fixed_points_holomorphic} and thus $t_{x}=\inf\varnothing=\infty$ for all $x\in N_{G}$, 
yielding that condition (ii) of \prettyref{thm:mixed_gen_max_dom_real} (b) is also fulfilled.

Now, we will see that the proof of the converse inclusion in \prettyref{prop:mixed_gen_max_dom} is much simpler 
if $\F$ is a subspace of $\mathcal{C}^{1}_{\K}(\Omega)$. 

\begin{thm}\label{thm:mixed_gen_max_dom_diff_everywhere}
Let $\Omega\subset\K$ be open, $(\F,\|\cdot\|,\tau_{\operatorname{co}})$ a sequentially complete Saks space 
such that $\F\subset\mathcal{C}^{1}_{\K}(\Omega)$ and $(A,D(A))$ the generator of a locally bounded weighted 
composition semigroup $(C_{m,\varphi}(t))_{t\geq 0}$ on $\F$ w.r.t.~a jointly continuous co-semiflow $(m,\varphi)$.
If $m_{(\cdot)}(x)\in\mathcal{C}^{1}[0,\infty)$ and $\varphi_{(\cdot)}(x)\in\mathcal{C}^{1}[0,\infty)$ for all $x\in\Omega$, 
then 
\[
D(A)=\{f\in\F\;|\;\dt{\varphi}_{0}f'+\dt{m}_{0}f\in\F\}
\]
and $Af=\dt{\varphi}_{0}f'+\dt{m}_{0}f$ for all $f\in D(A)$.
\end{thm}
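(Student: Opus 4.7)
The plan is to split the proof into the two inclusions, noting that the hypothesis $\F\subset\mathcal{C}^1_\K(\Omega)$ drastically simplifies the situation compared to \prettyref{thm:mixed_gen_max_dom_real}, because the differentiability of elements of $\F$ is now built into the space rather than being something to be deduced.

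For the inclusion ``$\supset$'', I would simply invoke \prettyref{prop:mixed_gen_max_dom}~(a). Indeed, under the hypothesis $\F\subset\mathcal{C}^1_\K(\Omega)$ we have $\mathcal{C}^1_\K(\Omega)\cap\F=\F$, so the set $D_0$ appearing in that proposition coincides with $\{f\in\F\;|\;\dt{\varphi}_0 f'+\dt{m}_0 f\in\F\}$, and the proposition provides both the inclusion into $D(A)$ and the identity $Af=\dt{\varphi}_0 f'+\dt{m}_0 f$ on this set.

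For the reverse inclusion ``$\subset$'', let $f\in D(A)$ and set $g\coloneqq Af\in\F$. By \prettyref{prop:lie_generator} (applicable because $(\F,\|\cdot\|,\tau_{\operatorname{co}})$ is sequentially complete, the semigroup is locally bounded, and $(m,\varphi)$ is jointly continuous), $A$ agrees with the Lie generator $A_{m,\varphi}$, so that for every $x\in\Omega$
\[
g(x)=\lim_{t\to 0\rlim}\frac{m_t(x)f(\varphi_t(x))-f(x)}{t}.
\]
Now fix $x\in\Omega$. Since $f\in\F\subset\mathcal{C}^1_\K(\Omega)$ and both $m_{(\cdot)}(x), \varphi_{(\cdot)}(x)\in\mathcal{C}^1[0,\infty)$, the product–chain rule applied to the scalar-valued map $t\mapsto m_t(x)f(\varphi_t(x))$ shows that it is right-differentiable at $t=0$ with right-derivative
\[
\dt{m}_0(x)f(\varphi_0(x))+m_0(x)f'(\varphi_0(x))\dt{\varphi}_0(x)=\dt{\varphi}_0(x)f'(x)+\dt{m}_0(x)f(x),
\]
using $m_0(x)=1$ and $\varphi_0(x)=x$. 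By uniqueness of limits, $g(x)=\dt{\varphi}_0(x)f'(x)+\dt{m}_0(x)f(x)$ for all $x\in\Omega$. Hence $\dt{\varphi}_0 f'+\dt{m}_0 f=g\in\F$, which places $f$ in the right-hand side of the claimed equation and finishes the argument.

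There is no genuine obstacle here: the real work was already done in \prettyref{prop:mixed_gen_max_dom}~(a) (which handled the pointwise limit via integration against $h_x$) and in \prettyref{prop:lie_generator} (which identified $D(A)$ with the Lie generator). The whole point of this theorem is that once $\F$ sits inside $\mathcal{C}^1_\K(\Omega)$, one no longer needs the flow-invertibility and fixed-point machinery used in \prettyref{thm:mixed_gen_max_dom_real} to force elements of $D(A)$ to be differentiable, so both inclusions become formal.
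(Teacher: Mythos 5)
Your proposal is correct and follows essentially the same route as the paper: the inclusion ``$\supset$'' via \prettyref{prop:mixed_gen_max_dom}~(a) with $D_0$ reducing to the claimed set, and ``$\subset$'' by identifying $A$ with the Lie generator (\prettyref{prop:lie_generator}) and computing the pointwise limit. The only cosmetic difference is that you re-derive that pointwise limit directly from the product and chain rules at $t=0$, whereas the paper simply cites the identity \eqref{eq:mixed_gen_max_dom_2} already established (via the fundamental theorem of calculus) in the proof of \prettyref{prop:mixed_gen_max_dom}~(a); both computations are valid and yield the same conclusion.
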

\begin{proof}
By \prettyref{prop:mixed_gen_max_dom} (a) and the assumption $\F\subset\mathcal{C}^{1}_{\K}(\Omega)$, we have
\[
D_{0}=\{f\in\F\;|\;\dt{\varphi}_{0}f'+\dt{m}_{0}f\in\F\}\subset D(A)
\]
and $Af=\dt{\varphi}_{0}f'+\dt{m}_{0}f$ for all $f\in D_{0}$. The converse inclusion holds by 
$\F\subset\mathcal{C}^{1}_{\K}(\Omega)$, \eqref{eq:mixed_gen_max_dom_2} and \prettyref{prop:lie_generator}.
\end{proof}

Now, we may use the theory on weighted composition semigroups developed so far to show 
(in combination with \prettyref{prop:strongly_mixed_cont}) that 
the condition that $m_{(\cdot)}(x)\in\mathcal{C}^{1}[0,\infty)$ for all $x\in\Omega$ is quite often a necessary condition 
for $\gamma$-strong continuity of the induced weighted composition semigroup. 
The underlying idea of the proof comes from the proof of \cite[Theorem 1, p.~470]{koenig1990}.

\begin{prop}\label{prop:cont_diff_semi_cocycle_diff_everywhere}
Let $\Omega\subset\K$ be open, $(\F,\|\cdot\|,\tau_{\operatorname{co}})$ a sequentially complete Saks space 
such that $\F\subset\mathcal{C}^{1}_{\K}(\Omega)$ and 
\begin{equation}\label{eq:exist_func_not_vanish}
\forall\;x\in\Omega\;\exists\;F\in\F:\;F(x)\neq 0.
\end{equation}
If $(m,\varphi)$ is a jointly continuous co-semiflow for $\F$, $(C_{m,\varphi}(t))_{t\geq 0}$ locally bounded 
and $\varphi$ has a generator $G$, 
then $m_{(\cdot)}(x)\in\mathcal{C}^{1}[0,\infty)$, $\dt{m}_{0}\in\mathcal{C}(\Omega)$ 
and $m_{t}(x)=\exp(\int_{0}^{t}\dt{m}_{0}(\varphi_{s}(x))\d s)$ for all $t\geq 0$ and $x\in\Omega$. 
If in addition $\K=\C$ and $G\in\mathcal{H}(\Omega)$, 
then $\dt{m}_{0}\in\mathcal{H}(\Omega)$.
\end{prop}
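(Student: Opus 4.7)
The overall plan is to exploit the $\gamma$-strong continuity of $(C_{m,\varphi}(t))_{t\geq 0}$, which is guaranteed by \prettyref{thm:sg_weighted_comp_str_cont_equicont}\,(a), and work pointwise. The generator $(A,D(A))$ of this semigroup has the Lie-generator description from \prettyref{prop:lie_generator}, and $D(A)$ is $\gamma$-dense in $\F$ by the sequential completeness assumption (\cite[Proposition 1.3]{komura1968}). Since $\gamma$ is finer than $\tau_{\operatorname{co}}$, $\gamma$-density entails pointwise density, so the non-vanishing hypothesis \eqref{eq:exist_func_not_vanish} lifts: for every $x_{0}\in\Omega$ there exists $F\in D(A)$ with $F(x_{0})\neq 0$.

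The existence of $\dt{m}_{0}(x_{0})$ then follows from a quotient-rule argument. Fixing such an $F\in D(A)$ with $F(x_{0})\neq 0$, continuity gives $t_{0}>0$ so that $F(\varphi_{t}(x_{0}))\neq 0$ on $[0,t_{0}]$, and I would write
\[
m_{t}(x_{0})=\frac{m_{t}(x_{0})F(\varphi_{t}(x_{0}))}{F(\varphi_{t}(x_{0}))}=\frac{(C_{m,\varphi}(t)F)(x_{0})}{F(\varphi_{t}(x_{0}))}.
\]
The numerator is right-differentiable at $t=0$ with derivative $(AF)(x_{0})$ by \prettyref{prop:lie_generator}. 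The denominator is genuinely $\mathcal{C}^{1}$ on $[0,t_{0}]$ because $F\in\mathcal{C}^{1}_{\K}(\Omega)$ and $\varphi_{(\cdot)}(x_{0})\in\mathcal{C}^{1}[0,\infty)$ with $\dt{\varphi}_{0}(x_{0})=G(x_{0})$ (recall \prettyref{defn:generator_semiflow}). The quotient rule at $t=0$ yields
\[
\dt{m}_{0}(x_{0})=\frac{(AF)(x_{0})-F'(x_{0})G(x_{0})}{F(x_{0})},
\]
where I note the bookkeeping uses $N(0)=D(0)=F(x_{0})$ so no awkward $1/0$ cancellation appears.

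The key observation for continuity is that the same $F$ works uniformly on a neighbourhood: by continuity $F\neq 0$ on some open $U\ni x_{0}$, and the identical derivation gives the same formula for $\dt{m}_{0}(x)$ on all of $U$. Since $AF\in\F\subset\mathcal{C}(\Omega)$, $F,F'\in\mathcal{C}(\Omega)$, and $G\in\mathcal{C}(\Omega)$, we obtain $\dt{m}_{0}\in\mathcal{C}(U)$, and $x_{0}$ being arbitrary gives $\dt{m}_{0}\in\mathcal{C}(\Omega)$. With this in hand, \prettyref{prop:cont_diff_semicocyle} upgrades pointwise right-differentiability at $t=0$ to $m_{(\cdot)}(x)\in\mathcal{C}^{1}[0,\infty)$ and delivers the integral representation $m_{t}(x)=\exp(\int_{0}^{t}\dt{m}_{0}(\varphi_{s}(x))\,\d s)$ for free.

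For the holomorphic addendum, if $\K=\C$ and $G\in\mathcal{H}(\Omega)$, then $\F\subset\mathcal{H}(\Omega)$, so $F,F',AF$ are all holomorphic on $U$; the quotient formula shows $\dt{m}_{0}$ is holomorphic on $U$ and hence on $\Omega$. The only step I expect real care with is the density-plus-pointwise-selection argument justifying the existence of $F\in D(A)$ with $F(x_{0})\neq 0$: one must use that $\gamma$ is finer than $\tau_{\operatorname{co}}$ so that any $\gamma$-approximating net of a given $F_{0}\in\F$ with $F_{0}(x_{0})\neq 0$ converges pointwise at $x_{0}$, placing some member close enough to $F_{0}(x_{0})$ to be non-zero there. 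Everything else is a direct quotient-rule computation, controlled entirely by the Lie-generator characterisation and the fact that $\F\subset\mathcal{C}^{1}_{\K}(\Omega)$.
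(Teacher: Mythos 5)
Your proposal is correct and takes essentially the same route as the paper's own proof: both use the $\gamma$-strong continuity from \prettyref{thm:sg_weighted_comp_str_cont_equicont} (a) together with the $\gamma$-density of $D(A)$ (K\={o}mura) to select $f\in D(A)$ non-vanishing near the given point, derive the local formula $\dt{m}_{0}=\frac{1}{f}\left(Af-f'G\right)$ --- your quotient-rule packaging of $m_{t}(x)=(C_{m,\varphi}(t)f)(x)/f(\varphi_{t}(x))$ is the same computation as the paper's split of the difference quotient $\frac{m_{s}(\zeta)-1}{s}$ --- and then conclude via \prettyref{prop:cont_diff_semicocyle}. The only cosmetic difference is that the paper arranges $f\neq 0$ on a whole compact neighbourhood by uniform ($\tau_{\operatorname{co}}$) approximation, whereas you take $f$ non-zero at the point and use its continuity; both are valid.
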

\begin{proof}
Let $(A,D(A))$ be the generator of the $\gamma$-strongly continuous weighted composition semigroup 
$(C_{m,\varphi}(t))_{t\geq 0}$ on $\F$. We fix $x\in\Omega$. By \eqref{eq:exist_func_not_vanish} there is 
$F\in\F$ such that $F(x)\neq 0$. Since $F$ is continuous on $\Omega$, there is a compact neighbourhood $U\subset\Omega$ of $x$ 
such that $F(z)\neq 0$ for all $z\in U$. 
Due to \prettyref{thm:sg_weighted_comp_str_cont_equicont} (a) $(C_{m,\varphi}(t))_{t\geq 0}$ is $\gamma$-strongly continuous 
on the sequentially complete space $(X,\gamma)$, impying that $D(A)$ is $\gamma$-dense in $\F$ 
by \cite[Proposition 1.3, p.~261]{komura1968}. 
Thus there is $f\in D(A)$ such that $f(z)\neq 0$ for all $z\in U$ since $\gamma$ is stronger than the topology 
$\tau_{\operatorname{co}}$ and $U$ compact. Using that $\varphi_{0}(x)=x$ and the joint continuity of $\varphi$, 
we deduce that there are $t_{0}>0$ and a neighbourhood $U_{0}$ of $x$ such that 
$\varphi_{s}(\zeta)\in U$ for all $s\in[0,t_{0}]$ and $\zeta\in U_{0}$. 
In particular, $f(\varphi_{s}(\zeta))\neq 0$ for all $s\in[0,t_{0}]$ and $\zeta\in U_{0}$.
Further, we have 
\begin{align*}
 \frac{m_{s}(\zeta)-1}{s}
&=\frac{1}{f(\varphi_{s}(\zeta))}\frac{m_{s}(\zeta)f(\varphi_{s}(\zeta))-f(\varphi_{s}(\zeta))}{s}\\
&=\frac{1}{f(\varphi_{s}(\zeta))}\frac{C_{m,\varphi}(s)f(\zeta)-f(\varphi_{s}(\zeta))}{s}
\end{align*}
for all $0<s\leq t_{0}$ and 
\begin{align*}
  \lim_{s\to 0\rlim}\frac{m_{s}(\zeta)-1}{s}
&=\lim_{s\to 0\rlim}\frac{1}{f(\varphi_{s}(\zeta))}\Biggl(\frac{C_{m,\varphi}(s)f(\zeta)-f(\zeta)}{s}
  -\frac{f(\varphi_{s}(\zeta))-f(\zeta)}{s}\Biggr)\\
&=\frac{1}{f(\zeta)}\bigl(Af(\zeta)-f'(\zeta)\dt{\varphi}_{0}(\zeta)\bigr)
 =\frac{1}{f(\zeta)}\bigl(Af(\zeta)-f'(\zeta)G(\zeta)\bigr)
\end{align*}
for all $\zeta\in U_{0}$. Therefore $\dt{m}_{0}(\zeta)=\frac{1}{f(\zeta)}(Af(\zeta)-f'(\zeta)G(\zeta))$ for all 
$\zeta\in U_0$, yielding that $\dt{m}_{0}$ is continuous on $\Omega$ because $x$ is arbitrary. 
If in addition $\K=\C$ and $G\in\mathcal{H}(\Omega)$, then we also get $\dt{m}_{0}\in\mathcal{H}(\Omega)$. 
The continuous differentiability of $m_{(\cdot)}(x)$ on $[0,\infty)$ and 
$m_{t}(x)=\exp(\int_{0}^{t}\dt{m}_{0}(\varphi_{s}(x))\d s)$ for all $t\geq 0$ and $x\in\Omega$ 
follow from \prettyref{prop:cont_diff_semicocycle}.
\end{proof}

We note that we may replace the condition $\F\subset\mathcal{C}^{1}_{\K}(\Omega)$ by the condition 
$D(A)\subset\mathcal{C}^{1}_{\K}(\Omega)$ because we only need it for the function $f\in D(A)$ in the proof of 
\prettyref{prop:cont_diff_semi_cocycle_diff_everywhere}. Condition \eqref{eq:exist_func_not_vanish} is for example 
fulfilled if $\mathds{1}\in\F$.

\begin{cor}\label{cor:cont_diff_semi_cocycle_diff_everywhere}
Let $\Omega\subset\C$ be open, $(\F,\|\cdot\|,\tau_{\operatorname{co}})$ a sequentially complete Saks space 
such that $\F\subset\mathcal{H}(\Omega)$ and $\mathds{1}\in\F$.
If $(m,\varphi)$ is a jointly continuous holomorphic co-semiflow for $\F$ and $(C_{m,\varphi}(t))_{t\geq 0}$ locally bounded, 
then $m_{(\cdot)}(z)\in\mathcal{C}^{1}[0,\infty)$, $\dt{m}_{0}\in\mathcal{H}(\Omega)$ and
$m_{t}(z)=\exp(\int_{0}^{t}\dt{m}_{0}(\varphi_{s}(z))\d s)$ for all $t\geq 0$ and $z\in\Omega$. 
\end{cor}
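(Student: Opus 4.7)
The plan is to reduce the corollary to a direct application of \prettyref{prop:cont_diff_semi_cocycle_diff_everywhere} with $\K=\C$. All the hypotheses of that proposition are already present in the corollary's assumptions, so the work consists almost entirely of checking each one off.

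First, I would observe that by the convention $\mathcal{H}(\Omega)=\mathcal{C}^{1}_{\C}(\Omega)$ fixed after \prettyref{conv:pre_Saks_function_space}, the inclusion $\F\subset\mathcal{H}(\Omega)$ is exactly the assumption $\F\subset\mathcal{C}^{1}_{\K}(\Omega)$ required in the proposition. Next, the condition \eqref{eq:exist_func_not_vanish} is immediate from $\mathds{1}\in\F$, since $\mathds{1}(z)=1\neq 0$ for every $z\in\Omega$.

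The main (and only slightly nontrivial) step is to produce a generator $G$ for the semiflow $\varphi$ with $G\in\mathcal{H}(\Omega)$. For this I would invoke \prettyref{thm:generator_hol_semiflow}: since $\varphi$ is part of a jointly continuous holomorphic co-semiflow, it is itself a jointly continuous holomorphic semiflow on the open set $\Omega\subset\C$, hence by that theorem $\varphi_{(\cdot)}(z)\in\mathcal{C}^{1}[0,\infty)$ for every $z\in\Omega$ and there exists $G\in\mathcal{H}(\Omega)$ with $\dt{\varphi}_{t}(z)=G(\varphi_{t}(z))$, i.e.\ $G$ is the generator of $\varphi$ in the sense of \prettyref{defn:generator_semiflow}.

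With all hypotheses verified, I would apply \prettyref{prop:cont_diff_semi_cocycle_diff_everywhere} to conclude that $m_{(\cdot)}(z)\in\mathcal{C}^{1}[0,\infty)$ for every $z\in\Omega$, that $\dt{m}_{0}\in\mathcal{C}(\Omega)$, and in fact $\dt{m}_{0}\in\mathcal{H}(\Omega)$ (this uses precisely the ``in addition'' clause of the proposition with $\K=\C$ and $G\in\mathcal{H}(\Omega)$), together with the integral representation $m_{t}(z)=\exp\bigl(\int_{0}^{t}\dt{m}_{0}(\varphi_{s}(z))\d s\bigr)$. I do not expect any real obstacle here; the content of the corollary is just that the holomorphic framework supplies the generator $G$ of $\varphi$ automatically, which is the one ingredient that \prettyref{prop:cont_diff_semi_cocycle_diff_everywhere} asks for as a hypothesis.
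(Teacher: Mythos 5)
Your proposal is correct and follows exactly the paper's own proof: verify \eqref{eq:exist_func_not_vanish} via $\mathds{1}\in\F$, obtain the generator $G\in\mathcal{H}(\Omega)$ from \prettyref{thm:generator_hol_semiflow}, and then apply \prettyref{prop:cont_diff_semi_cocycle_diff_everywhere} with $\K=\C$ (including its ``in addition'' clause for holomorphy of $\dt{m}_{0}$).
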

\begin{proof}
Our statement follows from \prettyref{prop:cont_diff_semi_cocycle_diff_everywhere} since
\eqref{eq:exist_func_not_vanish} is fulfilled as $\mathds{1}\in\F$ and $\varphi$ has a generator $G\in\mathcal{H}(\Omega)$ 
by \prettyref{thm:generator_hol_semiflow}. 
\end{proof}

As a special case of \prettyref{cor:cont_diff_semi_cocycle_diff_everywhere} we get, 
in combination with \cite[Chap.~I, 5.5 Proposition, p.~39]{engel_nagel2000}, \cite[Theorem 1, p.~470]{koenig1990} back 
where $\Omega=\D$ and $\mathcal{F}(\D)=H^{p}$, $1\leq p<\infty$, the Hardy space on $\D$. 
Moreover, in contrast to \prettyref{prop:cocycle_int_hol}, where $\Omega\subset\C$ is connected, 
\prettyref{cor:cont_diff_semi_cocycle_diff_everywhere} allows arbitrary open sets $\Omega$ but for the cost 
of more assumptions on $(m,\varphi)$. For example the condition $\mathds{1}\in\F$ 
in \prettyref{cor:cont_diff_semi_cocycle_diff_everywhere} implies that $m_{t}\in\F$ for all $t\geq 0$ 
(see \prettyref{rem:co_semiflow_for_space}).

\section{Converse of the holomorphic generation theorem}
\label{sect:converse_generators}

Let $\Omega\subset\C$ be open, $(\F,\|\cdot\|,\tau_{\operatorname{co}})$ a sequentially complete Saks space such that 
$\F\subset\mathcal{H}(\Omega)$. Suppose that $\Omega$ is connected or that $\mathds{1}\in\F$. 
Due to \prettyref{thm:generator_hol_semiflow}, \prettyref{thm:mixed_gen_max_dom_diff_everywhere}, 
and \prettyref{prop:cocycle_int_hol} or \prettyref{cor:cont_diff_semi_cocycle_diff_everywhere} 
we know that the generator $(A,D(A))$ of a locally bounded weighted composition semigroup 
$(C_{m,\varphi}(t))_{t\geq 0}$ on $\F$ w.r.t.~a jointly continuous holomorphic 
co-semiflow $(m,\varphi)$ is given by 
\begin{equation}\label{eq:converse_gen}
D(A)=\{f\in\F\;|\;Gf'+gf\in\F\},\quad 
Af=Gf'+gf,\,f\in D(A),
\end{equation}
where $G\in\mathcal{H}(\Omega)$ is the generator of $\varphi$ and 
$g\coloneqq\dt{m}_{0}\in\mathcal{H}(\Omega)$. 

In this section we want to prove the converse statement, namely, if we know that 
the domain of a $\gamma$-strongly continuous semigroup $(T(t))_{t\geq 0}$ on $\F$ 
is given by \eqref{eq:converse_gen}, we want to show, under suitable conditions, that this semigroup is a weighted composition 
semigroup whose semiflow has $G$ as a generator and whose semicocycle $m$ is given by 
$m_{t}(z)\coloneqq\exp(\int_{0}^{t}g(\varphi_{s}(z))\d s)$ for all $t\geq 0$ and $z\in\Omega$. 

Our first result in this direction is an analogon of \cite[Main theorem, p.~490]{gutierrez2019} ($g=0$) and 
\cite[Theorem 3.1, p.~69]{gutierrez2023} where $\Omega=\D$ and $(T(t))_{t\geq 0}$ is a $\|\cdot\|$-strongly continuous semigroup. 
We define the space of holomorphic germs near the closed unit disc $\overline{\D}$ by the inductive limit
\[
\mathcal{H}(\overline{\D})\coloneqq\lim\limits_{\substack{\longrightarrow \\ \omega\subset\C\;\text{open},\;\overline{\D}\subset\omega\;}}
(\mathcal{H}(\omega),\tau_{\operatorname{co}})
\]
equipped with its inductive limit topology (see e.g.~\cite[p.~81--82]{berenstein_gay1995}). 
For $n\in\N_{0}$ set $e_{n}\colon \C\to\C$, $e_{n}(z)\coloneqq z^{n}$. Then $e_{n}\in\mathcal{H}(\overline{\D})$ 
for all $n\in\N_{0}$. Hence the assumption that $\mathcal{H}(\overline{\D})\hookrightarrow (\mathcal{F}(\D),\|\cdot\|)$ 
embeds continuously implies that $e_{n}\in\mathcal{F}(\D)$ for all $n\in\N_{0}$.

\begin{thm}\label{thm:converse_gen}
Let $(\mathcal{F}(\D),\|\cdot\|,\tau_{\operatorname{co}})$ be a sequentially complete Saks space 
such that $\mathcal{F}(\D)\subset\mathcal{H}(\D)$, and $\mathcal{H}(\overline{\D})\hookrightarrow (\mathcal{F}(\D),\|\cdot\|)$ 
embeds continuously. 

If $\mathbf{T}\coloneqq (T(t))_{t\geq 0}$ is a $\gamma$-strongly continuous semigroup on $\mathcal{F}(\D)$ 
with generator $(A,D(A))$ of the form 
\[
D(A)=\{f\in\mathcal{F}(\D)\;|\;Gf'+gf\in\mathcal{F}(\D)\},\quad Af=Gf'+gf,\,f\in D(A),
\]
for some $G,g\in\mathcal{H}(\D)$, then there is a jointly continuous holomorphic co-semiflow $(m,\varphi)$ 
for $\mathcal{F}(\D)$ such that $G$ is the generator of $\varphi$, 
$m_{t}(z)=\exp(\int_{0}^{t}g(\varphi_{s}(z))\d s)$ for all $t\geq 0$, $z\in\D$, and 
$\mathbf{T}=(C_{m,\varphi}(t))_{t\geq 0}$.
\end{thm}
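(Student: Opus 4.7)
The plan is to construct the candidate pair $(m,\varphi)$ directly from the action of $\mathbf{T}$ on the simplest elements $\mathds{1}$ and $\id$ of $\mathcal{F}(\D)$, and then to identify $\mathbf{T}$ with the induced weighted composition semigroup. The continuous embedding $\mathcal{H}(\overline{\D})\hookrightarrow(\mathcal{F}(\D),\|\cdot\|)$ plays a double role: it places $\mathds{1}$, $\id$ and in fact every polynomial inside $\mathcal{F}(\D)$, and it provides a rich supply of test functions on which the equality $\mathbf{T}=(C_{m,\varphi}(t))_{t\geq 0}$ can eventually be checked. First I would set
\[
m_{t}:=T(t)\mathds{1}\in\mathcal{F}(\D)\subset\mathcal{H}(\D),\qquad v_{t}:=T(t)\id\in\mathcal{F}(\D)\subset\mathcal{H}(\D).
\]
The point-evaluation functionals $\delta_{z}$ are $\tau_{\operatorname{co}}$-continuous, hence $\gamma$-continuous, on $\mathcal{F}(\D)$, so the $\gamma$-strong continuity of $\mathbf{T}$ yields joint continuity of $(t,z)\mapsto m_{t}(z)$ and $(t,z)\mapsto v_{t}(z)$ on $[0,\infty)\times\D$, with $m_{0}\equiv 1$ and $v_{0}=\id$. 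The candidate semiflow will be $\varphi_{t}:=v_{t}/m_{t}$.

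The main obstacle is to prove that $m_{t}$ has no zeros in $\D$ and that $\varphi_{t}(\D)\subset\D$. My strategy is to fix $z_{0}\in\D$ and to extract, from the generator identity $Af=Gf'+gf$, the scalar characteristic problems
\[
\dt{\varphi}_{t}(z_{0})=G(\varphi_{t}(z_{0})),\quad\varphi_{0}(z_{0})=z_{0},\qquad \dt{m}_{t}(z_{0})=g(\varphi_{t}(z_{0}))\,m_{t}(z_{0}),\quad m_{0}(z_{0})=1,
\]
valid on any interval on which $\varphi_{t}(z_{0})$ stays in $\D$. This is obtained by differentiating orbits $t\mapsto(T(t)f)(z_{0})$ for $f$ in the $\gamma$-dense domain $D(A)$ (which is $\gamma$-dense by sequential completeness of $(\mathcal{F}(\D),\gamma)$), by comparing with the ansatz $T(t)f(z)=m_{t}(z)f(\varphi_{t}(z))$ evaluated at $t=0$, and by matching coefficients of $f(z_{0})$ and $f'(z_{0})$. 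The inclusion $\varphi_{t}(z_{0})\in\D$ for all $t\geq 0$ is forced by the mere existence of $\mathbf{T}$ on $\mathcal{F}(\D)$: were $\varphi_{t}(z_{0})$ to reach $\partial\D$ at some $t_{\ast}<\infty$, then the orbit under $\mathbf{T}$ of a suitable $f\in\mathcal{F}(\D)$ with prescribed behaviour near that boundary point would have to exit $\mathcal{H}(\D)$ at $t_{\ast}$, contradicting $T(t_{\ast})f\in\mathcal{F}(\D)\subset\mathcal{H}(\D)$. The non-vanishing of $m_{t}$ and the integral formula for $m_{t}$ then follow from solving the scalar ODE along the trajectory, which simultaneously identifies $G$ as the generator of $\varphi$ in the sense of \prettyref{defn:generator_semiflow}.

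Once $\varphi$ and $m$ are constructed, joint continuity and holomorphy of $(m,\varphi)$ are inherited from those of $(t,z)\mapsto m_{t}(z),v_{t}(z)$, while the semiflow and semicocycle identities follow from the semigroup law $T(t+s)=T(t)T(s)$ applied to $\mathds{1}$ and $\id$. To conclude $\mathbf{T}=(C_{m,\varphi}(t))_{t\geq 0}$ on all of $\mathcal{F}(\D)$, one first verifies the pointwise identity $T(t)f(z)=m_{t}(z)f(\varphi_{t}(z))$ for every $f\in D(A)$ by the same characteristic-ODE argument, and then extends it to arbitrary $f\in\mathcal{F}(\D)$: given a $\gamma$-approximating sequence $f_{n}\to f$ with $f_{n}\in D(A)$, the $\gamma$-continuity of $T(t)$ gives $(T(t)f_{n})(z)\to(T(t)f)(z)$ pointwise, while $m_{t}(z)f_{n}(\varphi_{t}(z))\to m_{t}(z)f(\varphi_{t}(z))$ pointwise because $\gamma$-convergence implies pointwise convergence on $\D$ through the evaluations $\delta_{z}$. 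This last step simultaneously shows that $(m,\varphi)$ is a co-semiflow for $\mathcal{F}(\D)$, completing the proof.
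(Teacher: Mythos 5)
Your overall skeleton matches the route the paper takes (the paper adapts the proof of \cite[Theorem 3.1]{gutierrez2021}, and indeed $m_{t}=T(t)\mathds{1}$ and $\varphi_{t}=m_{t}^{-1}T(t)\id$ come out a posteriori), but your central step is circular as written. You propose to obtain the characteristic problems $\dt{\varphi}_{t}(z_{0})=G(\varphi_{t}(z_{0}))$ and $\dt{m}_{t}(z_{0})=g(\varphi_{t}(z_{0}))m_{t}(z_{0})$ for your candidates $m_{t}=T(t)\mathds{1}$, $\varphi_{t}=v_{t}/m_{t}$ ``by comparing with the ansatz $T(t)f(z)=m_{t}(z)f(\varphi_{t}(z))$'' --- but that ansatz is exactly the identity to be proven, and no ``matching of coefficients of $f(z_{0})$ and $f'(z_{0})$'' is available from the single scalar relation $\tfrac{\d}{\d t}\big|_{t=0}(T(t)f)(z_{0})=G(z_{0})f'(z_{0})+g(z_{0})f(z_{0})$. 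Moreover, differentiating the orbits $t\mapsto(T(t)\mathds{1})(z_{0})$ and $t\mapsto(T(t)\id)(z_{0})$ presupposes $\mathds{1},\id\in D(A)$, i.e.\ $g\in\mathcal{F}(\D)$ and $G+g\id\in\mathcal{F}(\D)$, which is not part of the hypotheses. The non-circular order, as in \cite{gutierrez2021}, is: construct the \emph{local} semiflow of $G$ by Picard--Lindel\"of independently of $\mathbf{T}$; show that for $f\in D(A)$ the orbit $u(t,z)=(T(t)f)(z)$ solves $\partial_{t}u=G\partial_{z}u+gu$ (this is where the paper's first adaptation enters: $\tfrac{\d}{\d t}T(t)f=AT(t)f$ in $\gamma$ by \cite[Proposition 1.2 (1)]{komura1968}, plus the fact that $\gamma$-convergence implies $\tau_{\operatorname{co}}$-convergence and hence local uniform convergence of derivatives); integrate along characteristics to get $T(t)f=m_{t}\cdot(f\circ\varphi_{t})$ for $t$ below the escape time, with $m_{t}$ \emph{defined} by the exponential formula (so zero-freeness is automatic, rather than a consequence of an ODE you have not yet justified); and extend to all $f\in\mathcal{F}(\D)$ by $\gamma$-density of $D(A)$ and $\gamma$-continuity of $T(t)$ --- here the paper is careful to use a \emph{net}, since $\gamma$-density does not give sequential density in general, whereas you invoke an approximating sequence.

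The second genuine gap is globality, which is precisely where the embedding hypothesis $\mathcal{H}(\overline{\D})\hookrightarrow(\mathcal{F}(\D),\|\cdot\|)$ (equivalently $\limsup_{n}\|e_{n}\|^{1/n}\leq 1$) does its work, and your sketch does not engage with it. Your claim that a trajectory reaching $\partial\D$ at a finite time $t_{\ast}$ would force some orbit ``to exit $\mathcal{H}(\D)$'' is not an argument: by hypothesis $T(t_{\ast})f\in\mathcal{F}(\D)$ for every $f$, and the only test functions you are guaranteed to have, namely those in $\mathcal{H}(\overline{\D})$, extend continuously to $\overline{\D}$, so the convergence of $m_{t}(z_{0})f(\varphi_{t}(z_{0}))$ as $t\to t_{\ast}\llim$ yields no contradiction from them; nothing in the hypotheses supplies a function with ``prescribed bad behaviour'' at a given boundary point. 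Ruling out finite escape time --- including the degenerate cases where $m_{t}(z_{0})\to 0$ or the trajectory limit has modulus one --- is the technical heart of the proof of \cite[Theorem 3.1]{gutierrez2021}, which the paper deliberately imports wholesale rather than re-derives; its only changes are the two adaptations above together with the observation that joint continuity of $\varphi$ and $m$ follows from \prettyref{prop:jointly_cont_semiflow} (b) and \prettyref{prop:jointly_cont_int_cocyle} (a). So your proposal identifies the right objects and the right density/extension mechanism, but as a proof it has a circular core and omits the one step that genuinely uses the embedding assumption.
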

\begin{proof}
The proof of \cite[Theorem 3.1, p.~69]{gutierrez2023} carries over to our setting. We only have to adjust the proof in three instances. First, we have to use that $Af=\frac{\d}{\d t}T(t)f$ for all $t\geq 0$ and $f \in 
D(A)$ by \cite[Proposition 1.2 (1), p.~260]{komura1968} in the proof of \cite[Claim 1, p.~70]{gutierrez2023}. 
Second, $D(A)$ is $\gamma$-dense in $\mathcal{F}(\D)$ 
by \cite[Proposition 1.3, p.~261]{komura1968}. Thus for every $f\in\mathcal{F}(\D)$ there is a net $(f_{\iota})_{\iota\in I}$, 
$I$ a directed set, which is $\gamma$-convergent to $f$. Since $T(t)$ is $\gamma$-continuous for every $t\geq 0$, 
this implies that $(T(t)f_{\iota})_{\iota\in I}$ is $\gamma$-convergent to $T(t)f$ for every $t\geq 0$. 
This proves the validity of \cite[Eq.~(3.6), p.~71]{gutierrez2023} because $\gamma$ is finer than 
$\tau_{\operatorname{co}}$. 
Third, we note that $\delta_{z}\in (\mathcal{F}(\D),\gamma)'$ and $\delta_{z}\in (\mathcal{F}(\D),\|\cdot\|)'$ 
for all $z\in\D$, where $\delta_{z}(f)\coloneqq f(z)$ for all $f\in\mathcal{F}(\D)$, because $\delta_{z}\in (\mathcal{F}(\D),\tau_{\operatorname{co}})'$ 
and $\gamma$ and the $\|\cdot\|$-topology are finer than $\tau_{\operatorname{co}}$. 
We now get by \cite[Eq.~(3.10), p.~72]{gutierrez2023} that 
\[
\varphi_{t}(z)^{n}=\frac{1}{m_{t}(z)}T(t)(e_{n})(z)
\]
for all $z\in\D$, $n\in\N$ and $0\leq t<t_{0}$ with $t_{0}>0$ from \cite[p.~69]{gutierrez2023}. 
Let $\Gamma_{\gamma}$ denote a directed system of continuous seminorms that generates $\gamma$. 
Since $\delta_{z}\in (\mathcal{F}(\D),\gamma)'$, $T(t)\in\mathcal{L}(\mathcal{F}(\D),\gamma)$ 
and the $\|\cdot\|$-topology is finer than $\gamma$, 
there are $p=p_{z,t}\in \Gamma_{\gamma}$, $K_{1}=K_{1}(z,t)\geq 0$ and $K_{2}=K_{2}(p)\geq 0$ such that 
\[
     |\varphi_{t}(z)^{n}|
= \Bigl|\frac{1}{m_{t}(z)}T(t)(e_{n})(z)\Bigr|     
\leq K_{1}\frac{1}{m_{t}(z)}p(e_{n})
\leq K_{1}K_{2}\frac{1}{m_{t}(z)}\|e_{n}\|
\]
for all $z\in\D$, $n\in\N$ and $0\leq t<t_{0}$. This implies that there is $0<t_{1}\leq t_{0}$ 
such that $|\varphi_{t}(z)|<1$ for all $z\in\D$ and $0\leq t<t_{1}$ like in the proof of 
\cite[Claim 2, p.~492]{gutierrez2019}. 
This is all we have to change in the proof of 
\cite[Theorem 3.1, p.~69]{gutierrez2023}. 
Moreover, we note that the joint continuity of $\varphi$ follows from \cite[p.~65]{gutierrez2023} 
and \prettyref{prop:jointly_cont_semiflow} (b), and the joint continuity of $m$ 
from \prettyref{prop:jointly_cont_int_cocycle} (a).
\end{proof}

We note that \prettyref{thm:converse_gen} implies \cite[Theorem 3.1, p.~69]{gutierrez2023} if 
$(T(t))_{t\geq 0}$ is $\|\cdot\|$-strongly continuous and $T(t)\in\mathcal{L}(X,\gamma)$ for all $t\geq 0$ 
because then $(T(t))_{t\geq 0}$ is also $\gamma$-strongly continuous. 
The semicocycle $m$ in \prettyref{thm:converse_gen} is given by $m_{t}=T(t)\mathds{1}=T(t)e_{0}$ and the semiflow $\varphi$ 
by $\varphi_{t}=\frac{1}{m_{t}}T(t)\operatorname{id}=\frac{1}{m_{t}}T(t)e_{1}$ for all $t\geq 0$. 
Further, it is shown in \cite[p.~176--177]{bernard2022} that the assumption that 
$\mathcal{H}(\overline{\D})\hookrightarrow (\mathcal{F}(\D),\|\cdot\|)$ 
embeds continuously is equivalent to $\limsup_{n\to\infty}\|e_{n}\|^{\frac{1}{n}}\leq 1$.

\begin{exa}\label{ex:cont_embed_induc_lim}
For the following spaces $(\mathcal{F}(\D),\|\cdot\|)$ the embedding 
$\mathcal{H}(\overline{\D})\hookrightarrow (\mathcal{F}(\D),\|\cdot\|)$ is continuous:
\begin{enumerate}
\item The Hardy spaces $(H^{p},\|\cdot\|_{p})$ for $1\leq p\leq\infty$ since $\|e_{n}\|_{p}=1$ for all $n\in\N$.
\item The Bergman spaces $(A_{\alpha}^{p},\|\cdot\|_{\alpha,p})$ for $\alpha>-1$ and $1\leq p<\infty$ since 
\begin{align*}
 \|e_{n}\|_{\alpha,p}^{p}
&=\frac{\alpha+1}{\pi}\int_{\D}|z|^{np}(1-|z|^{2})^{\alpha}\d z 
 =\frac{\alpha+1}{\pi}\int_{0}^{2\pi}\int_{0}^{1}r^{np+1}(1-r^{2})^{\alpha}\d r \d\theta\\
&\leq 2(\alpha+1)\int_{0}^{1}r^{p+1}(1-r^{2})^{\alpha}\d r
 = (\alpha+1)B\left(\frac{p}{2}+1,\alpha+1\right)
\end{align*}
for all $n\in\N$ by \cite[Eq.~3251-1, p.~327]{gradshteyn2014} with $\mu\coloneqq p+2$, $\lambda\coloneqq 2$ 
and $\nu\coloneqq \alpha+1$, where $B$ denotes the Beta function, and thus 
$\limsup_{n\to\infty}\|e_{n}\|_{\alpha,p}^{\frac{1}{n}}\leq1$.
\item The Dirichlet space $(\mathcal{D},\|\cdot\|_{\mathcal{D}})$ since 
\[
 \|e_{n}\|_{\mathcal{D}}^{2}
=0+\frac{1}{\pi}\int_{\D}n^{2}|z|^{2n-2}\d z
=\frac{n^{2}}{\pi}\int_{0}^{2\pi}\int_{0}^{1}r^{2n-1}\d r \d\theta
=\frac{2n^{2}}{2n}
=n
\]
for all $n\in\N$ and thus $\lim_{n\to\infty}\|e_{n}\|_{\mathcal{D}}^{\frac{1}{n}}=1$.
\item The $v$-Bloch spaces $(\mathcal{B}v(\D),\|\cdot\|_{\mathcal{B}v(\D)})$ for bounded continuous $v\colon\D\to (0,\infty)$ 
since 
\[
\|e_{n}\|_{\mathcal{B}v(\D)}=0+\sup_{z\in\D}n|z|^{n-1}v(z)\leq n\|v\|_{\infty}
\]
for all $n\in\N$ and thus $\limsup_{n\to\infty}\|e_{n}\|_{\mathcal{B}v(\D)}^{\frac{1}{n}}\leq 1$.
\item The spaces $(\mathcal{H}v(\D),\|\cdot\|_{v})$ of weighted holomorphic functions on $\D$ 
for boun\-ded continuous $v\colon\D\to (0,\infty)$ since 
\[
\|e_{n}\|_{v}=\sup_{z\in\D}|z|^{n}v(z)\leq \|v\|_{\infty}
\] 
for all $n\in\N$ and thus $\limsup_{n\to\infty}\|e_{n}\|_{v}^{\frac{1}{n}}\leq 1$.
\end{enumerate}
\end{exa}

In particular, the embedding $\mathcal{H}(\overline{\D})\hookrightarrow 
(\mathcal{B}_{\alpha},\|\cdot\|_{\mathcal{B}_{\alpha}})$ for $\alpha>0$ is continuous by 
\prettyref{ex:cont_embed_induc_lim} (d) because $v_{\alpha}(z)=(1-|z|^2)^{\alpha}\leq 1$ 
for all $z\in\D$. Further, we note that \prettyref{thm:loc_bound_sg_hardy_bergman_dirichlet}, 
\prettyref{thm:loc_bound_sg_bloch} and \prettyref{thm:loc_bound_sg_Hv} in combination with 
\prettyref{ex:cont_embed_induc_lim} answer the question in 
\cite[Remark 3.2, p.~72]{gutierrez2023} for several spaces, namely, to give a sufficient condition 
such that $C_{m,\varphi}(t)\in\mathcal{F}(\D)$ for all $t\geq 0$ where $(\mathcal{F}(\D),\|\cdot\|)$ is a space 
whose embedding $\mathcal{H}(\overline{\D})\hookrightarrow (\mathcal{F}(\D),\|\cdot\|)$ is continuous.

\begin{rem}\label{rem:converse_gen}
Let $(\mathcal{F}(\D),\|\cdot\|,\tau_{\operatorname{co}})$ be a sequentially complete Saks space 
such that $\mathcal{F}(\D)\subset\mathcal{H}(\D)$, and
$\mathcal{H}(\overline{\D})\hookrightarrow (\mathcal{F}(\D),\|\cdot\|)$ embeds continuously. 
The assumption that $(\mathcal{F}(\D),\|\cdot\|,\tau_{\operatorname{co}})$ is a sequentially complete Saks space and
$\mathbf{T}$ a $\gamma$-strongly continuous semigroup on $\mathcal{F}(\D)$ may also be replaced by the assumption that  
$(\mathcal{F}(\D),\|\cdot\|,\tau_{\operatorname{co}})$ is a sequentially complete Saks space and 
$\mathbf{T}$ a $\tau_{\operatorname{co}}$-bi-continuous semigroup on $\mathcal{F}(\D)$ 
with generator $(A_{\|\cdot\|,\tau},D(A_{\|\cdot\|,\tau}))$. Indeed, we have $A_{\|\cdot\|,\tau}f=\frac{\d}{\d t}T(t)f$ 
for all $t\geq 0$ and $f \in D(A_{\|\cdot\|,\tau})$ by \cite[Proposition 11 (a), p.~214--215]{kuehnemund2003}. 
Further, for every $f\in\mathcal{F}(\D)$ there is a $\|\cdot\|$-bounded sequence $(f_{n})_{n\in\N}$ 
in $D(A_{\|\cdot\|,\tau})$ which is $\tau_{\operatorname{co}}$-convergent to $f$ 
by \cite[Corollary 13, p.~215]{kuehnemund2003}. Thus $(T(t)f_{n})_{n\in\N}$ is $\tau_{\operatorname{co}}$-convergent 
to $T(t)f$ for every $t\geq 0$ since $\mathbf{T}$ is locally $\tau_{\operatorname{co}}$-bi-continuous. 
This implies the validity of \cite[Eq.~(3.6), p.~71]{gutierrez2023} as well. Further, 
\[
     |\varphi_{t}(z)^{n}|
= \Bigl|\frac{1}{m_{t}(z)}T(t)(e_{n})(z)\Bigr|     
\leq \|\delta_{z}\|_{(\mathcal{F}(\D),\|\cdot\|)'}\|T(t)\|_{\mathcal{L}(\mathcal{F}(\D))}\frac{1}{m_{t}(z)}\|e_{n}\|
\]
for all $z\in\D$, $n\in\N$ and $0\leq t<t_{0}$ since 
$\delta_{z}\in (\mathcal{F}(\D),\|\cdot\|)'$ and $T(t)\in\mathcal{L}(\mathcal{F}(\D))$ by 
\cite[Definition 3, p.~207]{kuehnemund2003}.
\end{rem}

We remark that \prettyref{thm:converse_gen} is not restricted to the case $\Omega=\D$ (cf.~\cite[p.~177--178]{arendt2019} ($g=0$) 
and \cite[Remark 3.3, p.~72--73]{gutierrez2023} in the case of $\|\cdot\|$-strongly continuous semigroups). 

\begin{rem}
Let $\Omega\subsetneq\C$ be open and simply connected, and $(\F,\|\cdot\|,\tau_{\operatorname{co}})$ 
a sequentially complete Saks space such that $\F\subset\mathcal{H}(\Omega)$. 
Suppose that $(S(t))_{t\geq 0}$ is a $\gamma$-strongly continuous semigroup on $\F$ 
with generator $(A,D(A))$ of the form 
\[
D(A)=\{f\in\F\;|\;Gf'+gf\in\F\},\quad Af=Gf'+gf,\,f\in D(A),
\]
for some $G,g\in\mathcal{H}(\Omega)$. Choose a biholomorphic map $h\colon\D\to\Omega$ by the Riemann mapping theorem 
and define the space $\mathcal{F}_{h}(\D)\coloneqq \{f\circ h\;|\; f\in\F\}$, which becomes a Banach space when equipped with the 
norm $\|f\circ h\|_{\mathcal{F}_{h}(\D)}\coloneqq \|f\|$ for $f\in\F$. It is easy to check that the composition operator 
$C_{h}\colon\F\to\mathcal{F}_{h}(\D)$, $C_{h}(f)\coloneqq f\circ h$, is an isometric ismorphism w.r.t.~the norms, 
an isomorphism $(\F,\tau_{\operatorname{co}}) \to(\mathcal{F}_{h}(\D),\tau_{\operatorname{co}})$, and 
$C_{h}^{-1}=C_{h^{-1}}$. Then it follows as in the proof of \prettyref{prop:mixed_equicont} that 
$C_{h}\colon (\F,\gamma(\|\cdot\|,\tau_{\operatorname{co}})) 
\to(\mathcal{F}_{h}(\D),\gamma(\|\cdot\|_{\mathcal{F}_{h}(\D)},\tau_{\operatorname{co}}))$ is an isomorphism as well. 
Therefore $(\mathcal{F}_{h}(\D),\|\cdot\|_{\mathcal{F}_{h}(\D)},\tau_{\operatorname{co}})$ is a sequentially complete 
Saks space such that $\mathcal{F}_{h}(\D)\subset\mathcal{H}(\D)$ and 
the semigroup $(T(t))_{t\geq 0}$ on $\mathcal{F}_{h}(\D)$ given by 
\[
T(t)\coloneqq C_{h}\circ S(t)\circ C_{h^{-1}},\quad t\geq 0,
\]
is $\gamma(\|\cdot\|_{\mathcal{F}_{h}(\D)},\tau_{\operatorname{co}})$-strongly continuous. 
Assume that $\mathcal{H}(\overline{\D})\hookrightarrow (\mathcal{F}_{h}(\D),\|\cdot\|_{\mathcal{F}_{h}(\D)})$ 
embeds continuously. Like in \cite[Remark 3.3, p.~73]{gutierrez2023} it follows that 
the generator $(B,D(B))$ of $(T(t))_{t\geq 0}$ fulfils 
\[
D(B)=\{f\in\mathcal{F}_{h}(\D)\;|\;G_{1}f'+g_{1}f\in\mathcal{F}_{h}(\D)\},\quad Bf= G_{1}f'+g_{1}f,\,f\in D(B),
\]
where $G_{1}(z)\coloneqq\frac{1}{h'(z)}G(h(z))$ and $g_{1}(z)\coloneqq g(h(z))$ for all $z\in\D$. 
Hence we may apply \prettyref{thm:converse_gen} and obtain that there is a 
jointly continuous holomorphic co-semiflow $(m,\varphi)$ 
for $\mathcal{F}_{h}(\D)$ such that $G_{1}$ is the generator of $\varphi$, 
$m_{t}(z)=\exp(\int_{0}^{t}g_{1}(\varphi_{s}(z))\d s)$ for all $t\geq 0$, $z\in\D$, and 
$T(t)=C_{m,\varphi}(t)$ for all $t\geq 0$. Like in \cite[Remark 3.3, p.~73]{gutierrez2023} we get 
that $S(t)=C_{\mu,\psi}(t)$ for all $t\geq 0$ with the jointly continuous holomorphic semiflow $\psi$ on $\Omega$ given by 
$\psi_{t}\coloneqq h\circ \varphi_{t}\circ h^{-1}$ and its jointly continuous holomorphic 
semicocycle $\mu$ given by $\mu_{t}(z)\coloneqq (m_{t}\circ h^{-1})(z)=\exp(\int_{0}^{t}g(\psi_{s}(z))\d s)$ for all $t\geq 0$ 
and $z\in\Omega$.
\end{rem}

Our second result transfers \cite[Theorem 3.2, p.~168--169]{arendt2019} ($g=0$) and \cite[Theorem 2.11, p.~174--175]{bernard2022}
from $\|\cdot\|$-strongly continuous semigroups to $\gamma$-strongly continuous semigroups. 
We recall the following from \cite[p.~167]{arendt2019}. Let $\Omega\subset\C$ be open, $G\in\mathcal{H}(\Omega)$ and consider 
the initial value problem 
\begin{equation}\label{eq:ivp_generator}
u'(t)=G(u(t)),\quad u(0)=z,
\end{equation}
for each $z\in\Omega$. By $\varphi(\cdot,z)\colon [0,\tau(z)]\to\Omega$ we denote the maximal (w.r.t.~$\tau(z)$) 
unique solution of \eqref{eq:ivp_generator}. 
We have $\tau(z)>0$ and call $(\varphi_{t})_{0\leq t<\tau(z)}$ the \emph{local semiflow 
generated by} $G$ for $z\in\Omega$ where $\varphi_{t}(z)\coloneqq \varphi(t,z)$ for $0\leq t<\tau(z)$. 
It holds that
\[
\varphi_{t+s}(z)=\varphi_{t}(\varphi_{s}(z))
\]
where $t,s\geq 0$, $t+s<\tau(z)$ and $t<\tau(\varphi_{s}(z))$. If $\tau(z)=\infty$ for all $z\in\Omega$, then 
$\varphi\coloneqq (\varphi_{t})_{t\geq 0}$ is a semiflow with generator $G$ in the sense of 
\prettyref{defn:semiflow} and \prettyref{defn:generator_semiflow}, and also called a \emph{global} semiflow. 
Moreover, we need to recall the \emph{evaluation condition} \cite[p.~168]{arendt2019} for a Banach space $(\F,\|\cdot\|)$ 
consisting of holomorphic functions on $\Omega$:
\begin{align}
\begin{split}\label{eq:eval_cond}
&\text{If } (z_{n})_{n\in\N} \text{ is a sequence in } \Omega \text{ that converges to some } z\in\overline{\Omega}\cup\{\infty\} 
\text{ and} \\ 
&\lim_{n\to\infty}f(z_{n}) \text{ exists in } \C \text{ for all } f\in\F, \text{ then } z\in\Omega.
\end{split}\tag{E}
\end{align}
Here the one-point compactification $\C\cup\{\infty\}$ of $\C$ is used, which is only needed if $\Omega$ is an unbounded set. 
For instance, condition \eqref{eq:eval_cond} is fulfilled for 
the Hardy spaces $H^{p}$, $1\leq p<\infty$, and further examples may be found in \cite[Example 3.9, p.~173]{arendt2019}.
In the following theorem we denote by $(X,\upsilon)'$ the topological linear dual space of 
a Hausdorff locally convex space $(X,\upsilon)$. 

\begin{thm}\label{thm:converse_gen_eval}
Let $\Omega\subset\C$ be open, $(\F,\|\cdot\|,\tau_{\operatorname{co}})$ 
a sequentially complete Saks space such that $\F\subset\mathcal{H}(\Omega)$ and suppose that $\F$ 
fulfils the evaluation condition \eqref{eq:eval_cond}. 
Let $G,g\in\mathcal{H}(\Omega)$ where $G$ generates the local semiflow $(\varphi_{t})_{0\leq t<\tau(z)}$ for each $z\in\Omega$.
 
If $\mathbf{T}\coloneqq (T(t))_{t\geq 0}$ is a $\gamma$-strongly continuous semigroup on $\F$ with 
generator $(A,D(A))$ such that $Af=Gf'+gf$ for all $f\in D(A)$, 
then the semiflow $\varphi$ is global and there is a semicocycle $m$ for $\varphi$ 
such that $(m,\varphi)$ is a jointly continuous holomorphic co-semiflow 
for $\F$ such that $G$ is the generator of $\varphi$, $m_{t}(z)=\exp(\int_{0}^{t}g(\varphi_{s}(z))\d s)$ for all $t\geq 0$, 
$z\in\Omega$, and $\mathbf{T}=(C_{m,\varphi}(t))_{t\geq 0}$. Furthermore, we have
\[
D(A)=\{f\in\F\;|\;Gf'+gf\in\F\}.
\]
\end{thm}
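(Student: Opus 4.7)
The plan is to extract a scalar PDE from the abstract semigroup equation via pointwise evaluation and then integrate it along the characteristic curves furnished by the local semiflow of $G$.

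First I would fix $z\in\Omega$ and exploit that the evaluation functional $\delta_{z}\colon f\mapsto f(z)$ is $\tau_{\operatorname{co}}$-continuous, hence $\gamma$-continuous on $\F$. For $f\in D(A)$ the orbit $T(\cdot)f$ stays in $D(A)$ and $\tfrac{\d}{\d t}T(t)f=AT(t)f$ in $(\F,\gamma)$ by \cite[Proposition 1.2, p.~260]{komura1968}. Applying $\delta_{z}$ together with the hypothesis $Af=Gf'+gf$ yields the scalar Cauchy problem
\[
\partial_{t}(T(t)f)(z)=G(z)(T(t)f)'(z)+g(z)(T(t)f)(z),\qquad (T(0)f)(z)=f(z).
\]

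Next I would integrate this along characteristics. For fixed $z\in\Omega$ and $0\leq s\leq t<\tau(z)$, set
\[
v(s)\coloneqq(T(t-s)f)(\varphi_{s}(z))\exp\Bigl(\int_{0}^{s}g(\varphi_{r}(z))\d r\Bigr).
\]
Differentiating in $s$ and using the PDE above together with $\dt{\varphi}_{s}(z)=G(\varphi_{s}(z))$ gives $v'(s)=0$, so $v(0)=v(t)$, which reads
\[
(T(t)f)(z)=m_{t}(z)f(\varphi_{t}(z)),\quad 0\leq t<\tau(z),\ f\in D(A),
\]
with $m_{t}(z)\coloneqq\exp(\int_{0}^{t}g(\varphi_{s}(z))\d s)\neq 0$. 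Since $D(A)$ is $\gamma$-dense in $\F$ by \cite[Proposition 1.3, p.~261]{komura1968} and $\delta_{z}$ is $\gamma$-continuous, this identity extends to every $f\in\F$.

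The main obstacle is showing that the local semiflow is global. Assume $\tau(z)<\infty$ for some $z\in\Omega$ and pick $t_{n}\uparrow\tau(z)$. By maximality of the $G$-orbit in the open set $\Omega$ the trajectory $(\varphi_{t_{n}}(z))_{n}$ cannot eventually lie in any fixed compact subset of $\Omega$, so after passing to a subsequence it converges in $\C\cup\{\infty\}$ to some $z^{\star}\in\overline{\Omega}\cup\{\infty\}$. The $\gamma$-strong continuity of $\mathbf{T}$ together with the identity from the previous step forces
\[
\lim_{n\to\infty}f(\varphi_{t_{n}}(z))=\frac{(T(\tau(z))f)(z)}{m_{\tau(z)}(z)}\in\C,\qquad f\in\F,
\]
and the evaluation condition~\eqref{eq:eval_cond} then puts $z^{\star}\in\Omega$, contradicting maximality. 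Thus $\varphi$ is a global semiflow with generator $G$; it is jointly continuous by \prettyref{prop:jointly_cont_semiflow}(b), and $m$ is a jointly continuous holomorphic semicocycle for $\varphi$ by \prettyref{prop:jointly_cont_int_cocyle}.

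Finally, the identity $T(t)=C_{m,\varphi}(t)$ on $\F$ shows that $(m,\varphi)$ is a co-semiflow for $\F$. The Banach space $(\F,\|\cdot\|)$ is complete by \prettyref{rem:seq_complete_mixed_Banach}; each $T(t)$ lies in $\mathcal{L}(\F)$ by the closed graph theorem (using its $\gamma$-continuity together with $\tau_{\operatorname{co}}\subset\gamma\subset\tau_{\|\cdot\|}$), and $\{T(t)f:t\in[0,t_{0}]\}$ is $\|\cdot\|$-bounded for each $f\in\F$ because $\gamma$-convergent sequences are norm-bounded by \cite[I.1.10 Proposition, p.~9]{cooper1978}; the uniform boundedness principle then yields local boundedness of $\mathbf{T}$. \prettyref{thm:mixed_gen_max_dom_diff_everywhere} now delivers $D(A)=\{f\in\F\;|\;Gf'+gf\in\F\}$.
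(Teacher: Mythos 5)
Your overall strategy---deriving the pointwise identity $(T(t)f)(z)=m_{t}(z)f(\varphi_{t}(z))$ for $0\leq t<\tau(z)$ by integrating along the characteristics of $G$ and then invoking the evaluation condition \eqref{eq:eval_cond} to force globality---is exactly the mechanism of the proof the paper imports from \cite[Theorem 2.11, p.~9]{bernard2022}, and your closing steps are sound (the closed graph theorem plus uniform boundedness to get local boundedness of $\mathbf{T}$, then \prettyref{thm:mixed_gen_max_dom_diff_everywhere} for the domain identity; this is even more explicit than the paper, which does not verify local boundedness separately). The genuine gap is in the globality step. You write $\lim_{n\to\infty}f(\varphi_{t_{n}}(z))=(T(\tau(z))f)(z)/m_{\tau(z)}(z)$, but $m_{\tau(z)}(z)$ is undefined a priori: when $\tau(z)<\infty$ the trajectory $\varphi_{t}(z)$ leaves every compact subset of $\Omega$ as $t\to\tau(z)\llim$, and $g$ is only holomorphic on $\Omega$, so nothing guarantees that $\int_{0}^{t}g(\varphi_{s}(z))\d s$ converges as $t\to\tau(z)\llim$; the modulus $|m_{t}(z)|$ may tend to $0$, to $\infty$, or oscillate. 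Two scenarios can be salvaged by an intermediate-value argument on the continuous function $t\mapsto|m_{t}(z)|$: if $|m_{t_{n}}(z)|\to\infty$ along a subsequence, then $f(\varphi_{t_{n}}(z))\to 0$ for every $f\in\F$, and if $|m_{t_{n}}(z)|$ is bounded and bounded away from $0$, a further subsequence gives $m_{t_{n}}(z)\to c\neq 0$ and hence $f(\varphi_{t_{n}}(z))\to (T(\tau(z))f)(z)/c$; in both cases \eqref{eq:eval_cond} applies and contradicts the escape lemma. But the remaining case $|m_{t}(z)|\to 0$ resists this elementary argument: there $f(\varphi_{t_{n}}(z))$ may blow up at rate $1/|m_{t_{n}}(z)|$, no limit need exist, and \eqref{eq:eval_cond} cannot be invoked. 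Ruling out this degeneration of the functionals $T(t)'\delta_{z}$ is precisely the content of \cite[Lemmas 2.9, 2.10, p.~7--8]{bernard2022}, which the paper's proof imports after adapting \cite[Lemmas 2.7, 2.8, p.~8]{bernard2022} to the $\gamma$-dual setting; your proposal is missing this idea (in the unweighted case $g=0$, where $m=\mathds{1}$, your argument is complete---this is the situation of \cite{arendt2019}).

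Two smaller points. First, your phrase ``the trajectory cannot eventually lie in any fixed compact subset'' is weaker than what you use: to make ``\eqref{eq:eval_cond} puts $z^{\star}\in\Omega$'' contradictory you need the strong escape lemma (the maximal solution leaves every compact set \emph{permanently} before $\tau(z)$, so every subsequential limit lies in $(\overline{\Omega}\setminus\Omega)\cup\{\infty\}$); this is standard for the locally Lipschitz field $G$ but should be stated in that form. Second, you never establish that each $\varphi_{t}$ is \emph{holomorphic} in $z$ (needed for ``holomorphic co-semiflow'' and for \prettyref{prop:jointly_cont_int_cocyle} (b)); the paper obtains joint continuity from \cite[Chap.~8, \S 7, Theorem 2, p.~175]{hirsch1974} and holomorphy from the proof of \cite[Chap.~1, Theorem 9, p.~13]{hurewicz1970}. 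Relatedly, the differentiation of $v(s)$ uses the chain rule for $(s,w)\mapsto (T(s)w)$-type composites, which requires joint continuity of $(s,w)\mapsto (T(s)f)'(w)$; this does follow from $\gamma$-strong continuity (which gives $\tau_{\operatorname{co}}$-continuity of the orbit) together with the Weierstrass theorem on locally uniform convergence of derivatives, but it deserves a line, since it is the only place where holomorphy of the elements of $\F$ enters your characteristics computation.
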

\begin{proof}
The proof of \cite[Theorem 2.11, p.~174--175]{bernard2022} carries over to our setting. We only need to adapt the proofs 
of \cite[Lemmas 2.7, 2.8, p.~173]{bernard2022}. We note that $\delta_{z}\in (\F,\gamma)'$ and $\delta_{z}\in (\F,\|\cdot\|)'$ 
for all $z\in\Omega$ where $\delta_{z}(f)\coloneqq f(z)$ for all $f\in\F$ (see the proof of \prettyref{thm:converse_gen}). 
In particular, this means that we 
are not restricted to simply connected sets $\Omega$ in comparison to \cite{bernard2022}. 
We have already observed in \prettyref{rem:converse_gen} that $(A,D(A))$ 
is $\gamma$-densely defined, i.e.~$D(A)$ is $\gamma$-dense in $X$. 
We define the $\gamma$-dual operator $(A',D(A'))$ on $(\F,\gamma)'$ by  
\[
D(A')\coloneqq\{x'\in(\F,\gamma)'\;|\;\exists\;y'\in (\F,\gamma)'\;\forall\;f\in D(A):\;\langle Af,x'\rangle=\langle f,y'\rangle\}
\]
and $A'x'\coloneqq y'$ for $x'\in D(A')$. Using this definition, \cite[Lemma 2.7, p.~173]{bernard2022} is still valid 
with $\Gamma$ and $\Gamma'$ replaced by $A$ and $A'$, respectively. From this adapted version of 
\cite[Lemma 2.7, p.~173]{bernard2022} and $Af=\frac{\d}{\d t}T(t)f$ for all $t\geq 0$ and 
$f\in D(A)$ follows that \cite[Lemma 2.8, p.~173]{bernard2022} is also valid in our setting. 

This implies that \cite[Lemmas 2.9, 2.10, p.~173--174]{bernard2022} hold in our setting as well and thus the semiflow $\varphi$ 
is global with generator $G$ and $T(t)=C_{m,\varphi}(t)$ for all $t\geq 0$ 
with $m_{t}(z)\coloneqq\exp(\int_{0}^{t}g(\varphi_{s}(z))\d s)$ for all $t\geq 0$, $z\in\Omega$. 
By \cite[Chap.~8, \S 7, Theorem 2, p.~175]{hirsch1974} the semiflow $\varphi$ is jointly continuous and 
by the proof of \cite[Chap.~1, Theorem 9, p.~13]{hurewicz1970} holomorphic, hence the semicocycle $m$ 
by \prettyref{prop:jointly_cont_int_cocycle}, too. 
It follows that $\varphi_{(\cdot)}(z)\in\mathcal{C}^{1}[0,\infty)$ and $m_{(\cdot)}(z)\in\mathcal{C}^{1}[0,\infty)$ 
for all $z\in\Omega$. We deduce that $D(A)=\{f\in\F\;|\;Gf'+gf\in\F\}$ by \prettyref{thm:mixed_gen_max_dom_diff_everywhere}
since $\dt{\varphi}_{0}=G$ and $\dt{m}_{0}=g$.
\end{proof}

Again, \prettyref{thm:converse_gen_eval} implies \cite[Theorem 2.11, p.~174--175]{bernard2022} if 
$(T(t))_{t\geq 0}$ is $\|\cdot\|$-strongly continuous and $T(t)\in\mathcal{L}(X,\gamma)$ for all $t\geq 0$.

\section{Applications}
\label{sect:applications}

In this short section we apply our results from the preceding sections. 

\begin{prop}\label{prop:mult_sg_Cv}
Let $\Omega$ be a Hausdorff $k_{\R}$-space, $v\colon\Omega\to(0,\infty)$ continuous, 
$g\colon\Omega\to \C$ continuous such that $M\coloneqq\sup_{x\in\Omega}\re g(x)<\infty$, 
and set $m_{t}(x)\coloneqq \euler^{tg(x)}$ for $t\geq 0$ and $x\in\Omega$. 
Then the following assertions hold.
\begin{enumerate}
\item The weighted composition semigroup 
$(C_{m,\id}(t))_{t\geq 0}$ on $\mathcal{C}v(\Omega)$ w.r.t.~the co-semiflow $(m,\id)$ 
is $\gamma$-strongly continuous, $\tau_{\operatorname{co}}$-bi-continuous, locally $\tau_{\operatorname{co}}$-equicontinuous 
and locally $\gamma$-equicontinuous.  
\item If $\Omega$ is Polish or hemicompact, then $(C_{m,\id}(t))_{t\geq 0}$ is quasi-$\gamma$-equicontinuous 
and quasi-$(\|\cdot\|_{v},\tau_{\operatorname{co}})$-equitight.
\item If $M\leq 0$, then $(C_{m,\id}(t))_{t\geq 0}$ is 
$\tau_{\operatorname{co}}$-equicontinuous and $\gamma$-equicontinuous.
\item The generator $(A,D(A))$ of $(C_{m,\id}(t))_{t\geq 0}$ fulfils
\[
D(A)=\{f\in\mathcal{C}v(\Omega)\;|\;gf\in\mathcal{C}v(\Omega)\},\quad Af=gf,\, f\in D(A).
\]
\end{enumerate}
\end{prop}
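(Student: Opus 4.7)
The plan is to reduce all four parts to the general results of Sections \ref{sect:sg_weighted_comp} and \ref{sect:generators}. The preliminary step, on which everything else rests, is to check that $(m,\id)$ is a jointly continuous co-semiflow for $\mathcal{C}v(\Omega)$ whose induced semigroup is locally bounded. The cocycle axioms $m_{0}=\mathds{1}$ and $m_{t+s}(x)=m_{t}(x)m_{s}(\id_{t}(x))$ are immediate from $m_{t}(x)=\euler^{tg(x)}$, and joint continuity of $(t,x)\mapsto \euler^{tg(x)}$ follows from continuity of $g$ together with continuity of the product. The pointwise estimate $|m_{t}(x)|=\euler^{t\re g(x)}\leq \euler^{tM}$ yields $\|m_{t}\|_{\infty}\leq \euler^{tM}<\infty$ and hence both $C_{m,\id}(t)f=m_{t}f\in\mathcal{C}v(\Omega)$ and $\|C_{m,\id}(t)\|_{\mathcal{L}(\mathcal{C}v(\Omega))}\leq \euler^{tM}$, so the induced semigroup is exponentially (in particular locally) bounded.

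For part (a), the assumption that $\Omega$ is a $k_{\R}$-space together with \prettyref{ex:cont_saks} ensures that $(\mathcal{C}v(\Omega),\|\cdot\|_{v},\tau_{\operatorname{co}})$ is a complete (hence sequentially complete) Saks space with $\gamma=\gamma_{s}$. Applying parts (a) and (b) of \prettyref{thm:sg_weighted_comp_str_cont_equicont} to the jointly continuous co-semiflow $(m,\id)$ then delivers $\gamma$-strong continuity, $\tau_{\operatorname{co}}$-bi-continuity, local $\tau_{\operatorname{co}}$-equicontinuity and local $\gamma$-equicontinuity simultaneously. For part (b), when $\Omega$ is Polish or a hemicompact Hausdorff $k_{\R}$-space, \prettyref{ex:cont_saks} additionally gives that $\mathcal{C}v(\Omega)$ is C-sequential, and since $\gamma=\gamma_{s}$ we can invoke parts (c) and (d) of \prettyref{thm:sg_weighted_comp_str_cont_equicont} to conclude quasi-$\gamma$-equicontinuity and quasi-$(\|\cdot\|_{v},\tau_{\operatorname{co}})$-equitightness.

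For part (c), if $M\leq 0$, then $\|C_{m,\id}(t)\|_{\mathcal{L}(\mathcal{C}v(\Omega))}\leq 1$ uniformly in $t\geq 0$. Setting $I\coloneqq[0,\infty)$, one has $\id_{I}(K)=K$ compact and $m_{I}(K)\subset\{z\in\C\mid |z|\leq 1\}$ bounded for every compact $K\subset\Omega$, so both hypotheses of \prettyref{prop:mixed_equicont} are satisfied. That proposition directly yields $\tau_{\operatorname{co}}$-equicontinuity, and the uniform operator bound upgrades this to $\gamma$-equicontinuity.

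Finally, part (d), where I read the ``$q$'' in the statement as a typo for $g=\dt{m}_{0}$, follows at once from \prettyref{prop:generator_mult_seg}: the map $m_{(\cdot)}(x)=\exp(\,\cdot\,g(x))$ is right-differentiable at $t=0$ with $\dt{m}_{0}(x)=g(x)$, and the remaining hypotheses of that proposition—sequential completeness of the pre-Saks space, joint continuity of $(m,\id)$, and local boundedness of the semigroup—were already established in the preliminary step. No substantive difficulty arises; the only mild accounting point is to track which combination of $k_{\R}$-space, Polish or hemicompact assumptions on $\Omega$ activates which completeness and C-sequentiality clause in \prettyref{ex:cont_saks}.
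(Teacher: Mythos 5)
Your proposal is correct and takes essentially the same route as the paper: it establishes joint continuity of $(m,\id)$ and the exponential bound $\|C_{m,\id}(t)\|_{\mathcal{L}(\mathcal{C}v(\Omega))}\leq\euler^{tM}$, then derives (a), (b) and (d) from \prettyref{ex:cont_saks}, \prettyref{thm:sg_weighted_comp_str_cont_equicont} and \prettyref{prop:generator_mult_seg}, and (c) from \prettyref{prop:mixed_equicont} with $I=[0,\infty)$, exactly as the paper does. Your reading of the ``$q$'' in part (d) as a typo for $g=\dt{m}_{0}$ is also the intended one.
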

\begin{proof}
We note that 
\[
|tg(x)-sg(y)|\leq t|g(x)-g(y)|+|t-s||g(y)|
\]
for all $t,s\geq 0$ and $x,y\in\Omega$, which implies that $m$ is jointly continuous and thus $(m,\id)$ as well. 
Further, $C_{m,\id}(t)f=m_{t}\cdot f$ is continuous on $\Omega$ and
\[
 \|C_{m,\id}(t)f\|_{v}
=\sup_{x\in\Omega}|\euler^{tg(x)}f(x)|v(x)
=\sup_{x\in\Omega}\euler^{t\re g(x)}|f(x)|v(x)
\leq \euler^{tM}\|f\|_{v}
\]
for all $t\in\R$ and $f\in \mathcal{C}v(\Omega)$.
Hence $(m,\id)$ is a jointly continuous co-semiflow for $\mathcal{C}v(\Omega)$ and $(C_{m,\id}(t))_{t\geq 0}$ locally bounded. 
Therefore the parts (a), (b) and (d) follow from \prettyref{ex:cont_saks}, \prettyref{thm:sg_weighted_comp_str_cont_equicont} 
and \prettyref{prop:generator_mult_seg}. 
Part (c) is a consequence of \prettyref{prop:mixed_equicont} with $I\coloneqq[0,\infty)$ because $\id_{I}(K)=K$ and
$\sup\{|x|\;|\;x\in m_{I}(K)\}\leq 1$ for all compact $K\subset\Omega$ as well as 
$\sup_{t\in I}\|C_{m,\id}\|_{\mathcal{L}(\mathcal{C}v(\Omega))}\leq 1$ if $M\leq 0$. 
\end{proof}

For a locally compact Hausdorff space $\Omega$, $v=\mathds{1}$ and $M<0$ it can also be found in \cite[p.~353]{budde2019} 
that $(C_{m,\id}(t))_{t\geq 0}$ is a $\tau_{\operatorname{co}}$-bi-continuous semigroup on $\mathcal{C}_{b}(\Omega)$ 
whose generator $(A,D(A))$ fulfils (d). 

Let $v\colon\R\to(0,\infty)$ be continuous. In the following proposition $\mathcal{C}v(\R)$ denotes the weighted space of 
continuous functions from \prettyref{ex:cont_saks} for $\K=\R$, 
and $\mathcal{C}^{1}v(\R)$ its subspace of functions $f\in\mathcal{C}^{1}(\R)$ such that 
$vf$ and $vf'$ are bounded on $\R$. 
Further, we write $\mathcal{C}_{b}^{1}(\R)\coloneqq\mathcal{C}^{1}v(\R)$ 
for $v=\mathds{1}$. 

\begin{prop}\label{prop:weighted_translation_Cb}
Let $v\colon\R\to(0,\infty)$ be continuous, $\varphi_{t}(x)\coloneqq x+t$, $t,x\in\R$, 
and $m\coloneqq (m_{t})_{t\in\R}$ be a cocycle for $\varphi\coloneqq (\varphi_{t})_{t\in\R}$ such that 
$\lim_{s\to 0}m_{s}(x)=1$ for all $x\in\R$, $m_{t}\in\mathcal{C}_{b}(\R)$ for all $t\in\R$,
$m_{(\cdot)}(x)\in\mathcal{C}^{1}(\R)$, $m_{t}(x)\neq 0$ for all $(t,x)\in\R^2$,
\begin{equation}\label{eq:loc_bound_sg_Cv_1}
K(\varphi_{t})\coloneqq\sup_{x\in\R}\frac{v(x)}{v(t+x)}<\infty
\end{equation}
for all $t\in\R$ and there exists $t_{0}>0$ such that $\sup_{t\in [0,t_{0}]}K(\varphi_{t})<\infty$.
Then the following assertions hold.
\begin{enumerate}
\item The weighted composition semigroup 
$(C_{m,\varphi}(t))_{t\geq 0}$ on $\mathcal{C}v(\R)$ w.r.t.~the co-flow $(m,\varphi)$ 
is $\gamma$-strongly continuous, $\tau_{\operatorname{co}}$-bi-continuous, locally $\tau_{\operatorname{co}}$-equiconti\-nuous, 
quasi-$\gamma$-equicontinuous and quasi-$(\|\cdot\|_{v},\tau_{\operatorname{co}})$-equitight.
\item The generator $(A,D(A))$ of $(C_{m,\varphi}(t))_{t\geq 0}$ fulfils
\[
D(A)=\{f\in\mathcal{C}^{1}(\R)\cap\mathcal{C}v(\R)\;|\;f'+\dt{m}_{0}f\in\mathcal{C}v(\R)\}
\]
and $Af=f'+\dt{m}_{0}f$ for all $f\in D(A)$. If in addition $\dt{m}_{0}\in\mathcal{C}_{b}(\R)$, 
then $D(A)=\mathcal{C}^{1}v(\R)$.
\end{enumerate}
\end{prop}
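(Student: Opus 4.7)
The plan is to reduce everything to applications of Theorem \ref{thm:sg_weighted_comp_str_cont_equicont}, Theorem \ref{thm:loc_bound_sg_Cv} and Theorem \ref{thm:mixed_gen_max_dom_real}, after first checking that their hypotheses are all satisfied by the data at hand. First I would observe that $\R$ is a Polish (hence hemicompact $k_{\R}$-)space, so by \prettyref{ex:cont_saks} the triple $(\mathcal{C}v(\R),\|\cdot\|_{v},\tau_{\operatorname{co}})$ is a complete C-sequential Saks space with $\gamma=\gamma_{s}$. The semiflow $\varphi$ is jointly continuous (the map $(t,x)\mapsto x+t$ is continuous), and the cocycle $m$ is a $C_0$-cocycle by assumption, so $m$ is jointly continuous by \prettyref{prop:jointly_cont_flow_cocyle}~(b). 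Since $m_{t}\in\mathcal{C}_{b}(\R)$ for every $t\in\R$, the implication (d)$\Rightarrow$(e) in \prettyref{prop:semicocycle_bounded} (applied to the semicocycle $(m_{t})_{t\geq 0}$) yields $\limsup_{t\to 0\rlim}\|m_{t}\|_{\infty}<\infty$.

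With this in hand, the growth assumption \eqref{eq:loc_bound_sg_Cv_1} lets us invoke \prettyref{thm:loc_bound_sg_Cv}: $(m,\varphi)$ is a co-semiflow for $\mathcal{C}v(\R)$ and the induced weighted composition semigroup $(C_{m,\varphi}(t))_{t\geq 0}$ is locally bounded. All the continuity and equicontinuity assertions in part (a) then follow at once from \prettyref{thm:sg_weighted_comp_str_cont_equicont} (a)--(d), using that the Saks space is sequentially complete, C-sequential and satisfies $\gamma=\gamma_{s}$.

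For part (b) I would identify the generator of $\varphi$. Since $\dt{\varphi}_{t}(x)=1$ for all $(t,x)$, the function $G\equiv 1$ is the generator of $\varphi$ in the sense of \prettyref{defn:generator_semiflow}, so $N_{G}=\varnothing$ and the set $V_{\varphi}=\R\times\R$ surjects onto $\R$ via $(t,x)\mapsto x+t$. The remaining hypotheses of \prettyref{thm:mixed_gen_max_dom_real} are immediate from the assumptions on $m$: it is a $C_{0}$-cocycle with $m_{(\cdot)}(x)\in\mathcal{C}^{1}(\R)$ and $m_{t}(x)\neq 0$ everywhere. Consequently part (a) of that theorem applies and gives
\[
D(A)=\{f\in\mathcal{C}^{1}(\R)\cap\mathcal{C}v(\R)\;|\;f'+\dt{m}_{0}f\in\mathcal{C}v(\R)\},\quad Af=f'+\dt{m}_{0}f,
\]
since $\dt{\varphi}_{0}\equiv 1$.

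The final clause, in the case $\dt{m}_{0}\in\mathcal{C}_{b}(\R)$, is a short verification: boundedness of $\dt{m}_{0}$ yields $\dt{m}_{0}f\in\mathcal{C}v(\R)$ whenever $f\in\mathcal{C}v(\R)$, so $f'+\dt{m}_{0}f\in\mathcal{C}v(\R)$ if and only if $f'\in\mathcal{C}v(\R)$, i.e.\ $f\in\mathcal{C}^{1}v(\R)$. I do not expect any genuine obstacle; the only points that require a moment's care are (i) promoting the fact that each $m_{t}$ is bounded to the local boundedness near $t=0$ needed in \prettyref{thm:loc_bound_sg_Cv} (which is exactly what \prettyref{prop:semicocycle_bounded} provides), and (ii) keeping in mind that we need a co-semiflow rather than a co-flow, but the restriction of our jointly continuous co-flow $(m,\varphi)$ to $[0,\infty)$ plainly is one.
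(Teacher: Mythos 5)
Your proposal is correct and follows essentially the same route as the paper: joint continuity of the co-flow via \prettyref{prop:jointly_cont_flow_cocyle}, boundedness of the semicocycle near $t=0$ via \prettyref{prop:semicocycle_bounded}, then \prettyref{ex:cont_saks} together with \prettyref{thm:sg_weighted_comp_str_cont_equicont} for part (a), and \prettyref{thm:mixed_gen_max_dom_real} (a) (with $\dt{\varphi}_{0}=\mathds{1}$, $V_{\varphi}=\R^{2}$) for part (b). The only cosmetic difference is that you obtain local boundedness by citing \prettyref{thm:loc_bound_sg_Cv}, whereas the paper performs the underlying estimate $\|C_{m,\varphi}(t)f\|_{v}\leq K(\varphi_{t})\|m_{t}\|_{\infty}\|f\|_{v}$ inline and then invokes \prettyref{prop:loc_bounded_sg}; both rest on the same computation.
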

\begin{proof}
(a) The flow $\varphi$ is clearly $C_{0}$. The assumption $\lim_{s\to 0}m_{s}(x)=1$ for all $x\in\R$ means that its cocycle 
$m$ is also $C_{0}$ and so the co-flow $(m,\varphi)$ is jointly continuous by \prettyref{prop:jointly_cont_flow_cocycle}. 
Further, $C_{m,\varphi}(t)f=m_{t}\cdot(f\circ \varphi_{t})$ is continuous on $\R$ and
\begin{align*}
 \|C_{m,\varphi}(t)f\|_{v}
&=\sup_{x\in\R}|m_{t}(x)f(x+t)|v(t+x)\frac{v(x)}{v(t+x)}
 \leq K(\varphi_{t})\|m_{t}\|_{\infty}\|f\|_{v}
\end{align*}
for all $t\in\R$ and $f\in \mathcal{C}v(\R)$ by \eqref{eq:loc_bound_sg_Cv_1}. 
Hence $(m,\varphi)$ is a jointly continuous co-flow for $\mathcal{C}v(\R)$ and $(C_{m,\varphi}(t))_{t\geq 0}$ 
locally bounded by \prettyref{prop:loc_bounded_sg} since $\sup_{t\in [0,t_{0}]}K(\varphi_{t})<\infty$ and 
$\sup_{t\in [0,t_{1}]}\|m_{t}\|_{\infty}<\infty$ for some $t_{1}> 0$ by \prettyref{prop:semicocycle_bounded}. 
We conclude that statement (a) is valid by \prettyref{ex:cont_saks} and
\prettyref{thm:sg_weighted_comp_str_cont_equicont}.  

(b) The first part of (b) follows from \prettyref{thm:mixed_gen_max_dom_real} (a) since 
$\varphi_{0}(x)=x$ and $\dt{\varphi}_{0}(x)=1$ for all $x\in\R$. 
If additionally $\dt{m}_{0}\in\mathcal{C}_{b}(\R)$, then we have $\dt{m}_{0}f\in\mathcal{C}v(\R)$ 
and thus $D(A)=\mathcal{C}^{1}v(\R)$.
\end{proof}

\prettyref{prop:weighted_translation_Cb} (a) generalises 
\cite[Example 4.2 (a), p.~19]{kruse_schwenninger2022} where $m=v=\mathds{1}$. 
\prettyref{prop:weighted_translation_Cb} (b) generalises \cite[Proposition 1.8, p.~6]{budde2019a} 
where it is shown for $m=v=\mathds{1}$ that $D(A)=\mathcal{C}^{1}_{b}(\R)$ and $Af=f'$ 
for all $f\in D(A)$. \prettyref{prop:weighted_translation_Cb} (b) is also a consequence of 
\prettyref{thm:mixed_gen_max_dom_real} (b) with $G\coloneqq \mathds{1}$ since 
$\dt{\varphi}_{t}(x)=1$ for all $(t,x)\in\R^2$.

\begin{prop}\label{prop:weighted_flow_Cb}
Let $v\colon\R\to(0,\infty)$ be continuous, 
$\varphi_{t}(x)\coloneqq \left(x^{\frac{1}{3}}+\frac{t}{3}\right)^3$, $t,x\in\R$, and $m\coloneqq (m_{t})_{t\in\R}$ be 
a cocycle for $\varphi\coloneqq (\varphi_{t})_{t\in\R}$ such that
$\lim_{s\to 0}m_{s}(x)=1$ for all $x\in\R$, $m_{t}\in\mathcal{C}_{b}(\R)$ for all $t\in\R$,
$m_{(\cdot)}(x)\in\mathcal{C}^{1}(\R)$, $m_{t}(x)\neq 0$ for all $(t,x)\in\R^2$,
\[
K(\varphi_{t})\coloneqq\sup_{x\in\R}\frac{v(x)}{v(\varphi_{t}(x))}<\infty
\]
for all $t\in\R$ and there exists $t_{0}>0$ such that $\sup_{t\in [0,t_{0}]}K(\varphi_{t})<\infty$. 
Then the following assertions hold.
\begin{enumerate}
\item The weighted composition semigroup 
$(C_{m,\varphi}(t))_{t\geq 0}$ on $\mathcal{C}v(\R)$ w.r.t.~the co-flow $(m,\varphi)$ 
is $\gamma$-strongly continuous, $\tau_{\operatorname{co}}$-bi-continuous, locally $\tau_{\operatorname{co}}$-equiconti\-nuous, 
quasi-$\gamma$-equicontinuous and quasi-$(\|\cdot\|_{v},\tau_{\operatorname{co}})$-equitight.
\item The generator $(A,D(A))$ of $(C_{m,\varphi}(t))_{t\geq 0}$ fulfils
\[
D(A)=\{f\in\mathcal{C}^{1}(\R\setminus\{0\})\cap\mathcal{C}v(\R)\;|\;[x\mapsto x^{\frac{2}{3}}f'(x)+\dt{m}_{0}(x)f(x)]\in
\mathcal{C}v(\R)\}
\]
and $Af(x)=x^{\frac{2}{3}}f'(x)+\dt{m}_{0}(x)f(x)$, $x\neq 0$, for all $f\in D(A)$. 
If in addition $\dt{m}_{0}\in\mathcal{C}_{b}(\R)$, 
then $D(A)=\{f\in\mathcal{C}^{1}(\R\setminus\{0\})\cap\mathcal{C}v(\R)\;|\;
[x\mapsto x^{\frac{2}{3}}f'(x)]\in\mathcal{C}v(\R)\}$.
\end{enumerate}
\end{prop}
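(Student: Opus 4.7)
The plan is to verify (a) by checking the hypotheses of \prettyref{thm:sg_weighted_comp_str_cont_equicont} applied to $\mathcal{C}v(\mathbb{R})$ (much as in \prettyref{prop:weighted_translation_Cb}), and (b) by invoking \prettyref{thm:mixed_gen_max_dom_real}~(b) with $G(x)\coloneqq x^{2/3}$.

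First I would confirm that $\varphi$ is a $C_{0}$-flow on $\mathbb{R}$. We have $\varphi_{0}(x)=x$, and using the real cube root one computes
\begin{equation*}
\varphi_{t}(\varphi_{s}(x))=\bigl((x^{1/3}+s/3)+t/3\bigr)^{3}=\varphi_{t+s}(x);
\end{equation*}
each $\varphi_{t}$ is continuous and $\lim_{t\to 0}\varphi_{t}(x)=x$. Thus \prettyref{prop:jointly_cont_flow_cocyle}~(a) yields joint continuity of $\varphi$, and since $m$ is $C_{0}$ by assumption, part (b) of the same proposition makes $(m,\varphi)$ a jointly continuous co-flow. For (a), the estimate
\begin{equation*}
\|C_{m,\varphi}(t)f\|_{v}
=\sup_{x\in\mathbb{R}}|m_{t}(x)f(\varphi_{t}(x))|v(\varphi_{t}(x))\frac{v(x)}{v(\varphi_{t}(x))}
\leq K(\varphi_{t})\|m_{t}\|_{\infty}\|f\|_{v}
\end{equation*}
shows that $(m,\varphi)$ is a co-flow for $\mathcal{C}v(\mathbb{R})$. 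Since $m_{t}\in\mathcal{C}_{b}(\mathbb{R})$ for every $t$, \prettyref{prop:semicocycle_bounded} applied to the semicocycle $(m_{t})_{t\geq 0}$ gives $\sup_{t\in[0,t_{1}]}\|m_{t}\|_{\infty}<\infty$ for some (hence every) $t_{1}\geq 0$; combined with $\sup_{t\in[0,t_{0}]}K(\varphi_{t})<\infty$, the semigroup $(C_{m,\varphi}(t))_{t\geq 0}$ is locally bounded by \prettyref{prop:loc_bounded_sg}. Because $\mathbb{R}$ is a Polish space, \prettyref{ex:cont_saks} gives that $(\mathcal{C}v(\mathbb{R}),\|\cdot\|_{v},\tau_{\operatorname{co}})$ is a sequentially complete C-sequential Saks space with $\gamma=\gamma_{s}$, so \prettyref{thm:sg_weighted_comp_str_cont_equicont} delivers every assertion in (a).

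For (b), differentiating yields $\dt{\varphi}_{t}(x)=(x^{1/3}+t/3)^{2}=\varphi_{t}(x)^{2/3}$, so $\varphi_{(\cdot)}(x)\in\mathcal{C}^{1}(\mathbb{R})$ for every $x$, and $G(x)\coloneqq x^{2/3}\in\mathcal{C}(\mathbb{R})$ is a generator of $(\varphi_{t})_{t\geq 0}$ because $\dt{\varphi}_{t}(x)=G(\varphi_{t}(x))$; this verifies conditions (i) of the proposition and (i) of \prettyref{thm:mixed_gen_max_dom_real}~(b). The zero set $N_{G}=\{0\}$ is a singleton, so for $x=0\in N_{G}$ the set $\{t>0\mid\exists\,y\in N_{G},\,y\neq 0:\varphi_{t}(0)=y\}$ is empty and $t_{0}=\inf\varnothing=+\infty>0$; condition (ii) of \prettyref{thm:mixed_gen_max_dom_real}~(b) is thus vacuously satisfied. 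That theorem, with $\dt{\varphi}_{0}(x)=x^{2/3}$, immediately yields the stated $D(A)$ and the formula $Af(x)=x^{2/3}f'(x)+\dt{m}_{0}(x)f(x)$ on $\mathbb{R}\setminus\{0\}$. Finally, if $\dt{m}_{0}\in\mathcal{C}_{b}(\mathbb{R})$, then $\|\dt{m}_{0}f\|_{v}\leq\|\dt{m}_{0}\|_{\infty}\|f\|_{v}$ for every $f\in\mathcal{C}v(\mathbb{R})$, so $\dt{m}_{0}f\in\mathcal{C}v(\mathbb{R})$, and the condition $[x\mapsto x^{2/3}f'(x)+\dt{m}_{0}(x)f(x)]\in\mathcal{C}v(\mathbb{R})$ reduces to $[x\mapsto x^{2/3}f'(x)]\in\mathcal{C}v(\mathbb{R})$, giving the refined description of $D(A)$.

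The only substantive observation---and the only potential pitfall---is that $\varphi_{t}$ fails to be differentiable in $x$ at $0$ (the real cube root is not), which is precisely why the statement involves $\mathcal{C}^{1}(\mathbb{R}\setminus\{0\})$ rather than $\mathcal{C}^{1}(\mathbb{R})$ and forces us to use part (b) of \prettyref{thm:mixed_gen_max_dom_real} rather than the simpler part (a); the triviality of condition (iii) there (because $|N_{G}|=1$) is what makes this work smoothly.
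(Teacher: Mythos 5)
Your proof is correct and takes essentially the same route as the paper's: part (a) by checking that $(m,\varphi)$ is a jointly continuous $C_{0}$-co-flow via \prettyref{prop:jointly_cont_flow_cocyle}, establishing local boundedness through the estimate $\|C_{m,\varphi}(t)f\|_{v}\leq K(\varphi_{t})\|m_{t}\|_{\infty}\|f\|_{v}$ together with \prettyref{prop:semicocycle_bounded} and \prettyref{prop:loc_bounded_sg}, and then invoking \prettyref{ex:cont_saks} and \prettyref{thm:sg_weighted_comp_str_cont_equicont}; part (b) by applying \prettyref{thm:mixed_gen_max_dom_real}~(b) with $G(x)=x^{2/3}$, $N_{G}=\{0\}$ and $t_{x}=\inf\varnothing=\infty$, exactly as the paper does. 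The only blemish is a harmless labelling slip in your closing paragraph: the vacuously satisfied condition is (ii) of \prettyref{thm:mixed_gen_max_dom_real}~(b), not (iii) (the latter numbering belongs to \prettyref{prop:mixed_gen_max_dom}~(b)).
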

\begin{proof}
(a) The flow $\varphi$ is clearly $C_{0}$. The assumption $\lim_{s\to 0}m_{s}(x)=1$ for all $x\in\R$ means that its cocycle 
$m$ is also $C_{0}$ and so the co-flow $(m,\varphi)$ is jointly continuous by \prettyref{prop:jointly_cont_flow_cocycle}. 
The rest follows like in the proof of \prettyref{prop:weighted_translation_Cb} (a).

(b) Setting $G\colon\R\to\R$, $G(x)\coloneqq x^\frac{2}{3}$, we note that 
\[
\dt{\varphi}_{t}(x)=\left(x^{\frac{1}{3}}+\frac{t}{3}\right)^2=(G\circ\varphi_{t})(x)
\]
for all $(t,x)\in\R^2$. Thus $N_{G}=\{0\}$ and $t_{0}=\inf\varnothing=\infty$. 
We deduce that the first part of (b) follows from \prettyref{thm:mixed_gen_max_dom_real} (b).
If additionally $\dt{m}_{0}\in\mathcal{C}_{b}(\R)$, then we have $\dt{m}_{0}f\in\mathcal{C}v(\R)$ 
and thus $D(A)=\{f\in\mathcal{C}^{1}(\R\setminus\{0\})\cap\mathcal{C}v(\R)\;|\;[x\mapsto x^{\frac{2}{3}}f'(x)]
\in\mathcal{C}v(\R)\}$.
\end{proof}

For $m=v=\mathds{1}$ the statement of \prettyref{prop:weighted_flow_Cb} (a) is known due to 
e.g.~\cite[Theorems 2.1, 2.2, 2.3, p.~5]{dorroh1993}. 
\prettyref{prop:weighted_flow_Cb} (b) generalises \cite[Example 4.2, p.~124--125]{dorroh1996} where $m=v=\mathds{1}$.

\begin{thm}\label{thm:holomorphic_weighted_comp_norm_strong}
Let $\Omega\subset\C$ be open and connected, $(\F,\|\cdot\|,\tau_{\operatorname{co}})$ a sequentially complete Saks 
space such that $\F\subset\mathcal{H}(\Omega)$ and $\{\mathds{1},\id\}\subset\F$, 
and $(C_{m,\varphi}(t))_{t\geq 0}$ the weighted composition semigroup on $\F$ w.r.t.~a holomorphic co-semiflow $(m,\varphi)$. 
Then the following assertions hold.
\begin{enumerate}
\item $(C_{m,\varphi}(t))_{t\geq 0}$ is $\gamma$-strongly continuous and locally $\gamma$-equicontinuous if and only if 
$(m,\varphi)$ is a $C_{0}$-co-semiflow and $(C_{m,\varphi}(t))_{t\geq 0}$ locally bounded. 
\item Suppose that $(\F,\|\cdot\|)$ is reflexive. Then $(C_{m,\varphi}(t))_{t\geq 0}$ is 
$\|\cdot\|$-strongly continuous if and only if $(m,\varphi)$ is a $C_{0}$-co-semiflow and $(C_{m,\varphi}(t))_{t\geq 0}$ 
locally bounded.
\item If $(C_{m,\varphi}(t))_{t\geq 0}$ is $\|\cdot\|$-strongly continuous 
with generator $(A_{\|\cdot\|},D(A_{\|\cdot\|}))$, then we have $\dt{\varphi}_{0},\dt{m}_{0}\in\mathcal{H}(\Omega)$ and 
\[
D(A_{\|\cdot\|})=\{f\in\F\;|\;\dt{\varphi}_{0}f'+\dt{m}_{0}f\in\F\}
\]
and $A_{\|\cdot\|}f=\dt{\varphi}_{0}f'+\dt{m}_{0}f$ for all $f\in D(A_{\|\cdot\|})$.
\end{enumerate}
\end{thm}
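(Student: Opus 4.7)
The plan is to reduce each of the three parts to the structural results of \prettyref{sect:notions}--\prettyref{sect:generators}, using the hypothesis $\{\mathds{1},\id\}\subset\F$ as the bridge between the abstract semigroup action and the pointwise behaviour of the co-semiflow.

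For (a), in the \emph{if}-direction, I will note that $\Omega\subset\C$ is open, hence locally compact, $\sigma$-compact and metrisable, so \prettyref{prop:jointly_cont_semiflow}(a) promotes the $C_{0}$-semiflow $\varphi$ to a jointly continuous one, and \prettyref{prop:jointly_cont_cocyle} does the same for the $C_{0}$-semicocycle $m$. Combined with local boundedness this feeds directly into \prettyref{thm:sg_weighted_comp_str_cont_equicont}(a) to produce $\gamma$-strong continuity. For the \emph{only if}-direction I will exploit the identities $C_{m,\varphi}(t)\mathds{1}=m_{t}$ and $C_{m,\varphi}(t)\id=m_{t}\varphi_{t}$ from \prettyref{rem:co_semiflow_for_space}: $\gamma$-strong continuity at $\mathds{1}$ yields $m_{t}\to\mathds{1}$ pointwise as $t\to 0\rlim$ (since $\gamma\supset\tau_{\operatorname{co}}$), and then $\gamma$-strong continuity at $\id$ forces $\varphi_{t}(z)\to z$ for every $z\in\Omega$.

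For (b), the \emph{only if}-direction is immediate: $\|\cdot\|$-strong continuity implies exponential, hence local, boundedness via \prettyref{prop:loc_bounded_sg} and is automatically $\gamma$-strongly continuous since $\gamma$ is coarser than $\tau_{\|\cdot\|}$, so (a) delivers the $C_{0}$ property. For the \emph{if}-direction, (a) first provides $\gamma$-strong continuity; then \prettyref{prop:norm_generator}(b), applicable through the $\tau_{\operatorname{co}}$-bi-continuity supplied by \prettyref{thm:sg_weighted_comp_str_cont_equicont}(b), upgrades this to $\|\cdot\|$-strong continuity by virtue of reflexivity.

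For (c), the $\|\cdot\|$-strong continuity gives local boundedness and $\gamma$-strong continuity, so (a) produces a $C_{0}$, hence jointly continuous, co-semiflow. Then \prettyref{thm:generator_hol_semiflow} delivers $G=\dt{\varphi}_{0}\in\mathcal{H}(\Omega)$, and since $\mathds{1}\in\F$, \prettyref{cor:cont_diff_semi_cocycle_diff_everywhere} yields $\dt{m}_{0}\in\mathcal{H}(\Omega)$ together with $m_{(\cdot)}(z)\in\mathcal{C}^{1}[0,\infty)$. These are exactly the hypotheses of \prettyref{thm:mixed_gen_max_dom_diff_everywhere}, which identifies the $\gamma$-generator $(A,D(A))$ as the first-order operator $f\mapsto\dt{\varphi}_{0}f'+\dt{m}_{0}f$ on the prescribed domain, and \prettyref{prop:norm_generator}(a) finally equates $(A,D(A))$ with $(A_{\|\cdot\|},D(A_{\|\cdot\|}))$. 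The main obstacle I expect is the converse in (a), where one must extract pointwise $C_{0}$ behaviour of both $\varphi$ and $m$ from a purely abstract $\gamma$-strong continuity statement; the assumption $\{\mathds{1},\id\}\subset\F$ is indispensable here, as $\mathds{1}$ is what isolates $m_{t}$ from the semigroup action and $\id$ is what disentangles $\varphi_{t}$ from the product $m_{t}\varphi_{t}$. Once this step is cleared, the rest assembles essentially mechanically from the preceding sections.
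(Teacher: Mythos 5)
Your proposal is correct, and in parts (b), (c) and the \emph{if} direction of (a) it follows the paper's own proof almost verbatim: promotion of the $C_{0}$ property to joint continuity via \prettyref{prop:jointly_cont_semiflow} and \prettyref{prop:jointly_cont_cocyle} (using that $\Omega$ is open in $\C$, hence a locally compact, $\sigma$-compact metrisable space), then \prettyref{thm:sg_weighted_comp_str_cont_equicont} for $\gamma$-strong continuity, \prettyref{prop:norm_generator} (b) for the reflexive case, and \prettyref{thm:mixed_gen_max_dom_diff_everywhere} together with \prettyref{prop:norm_generator} (a) for the generator. The genuine difference is the \emph{only if} direction of (a): the paper cites \prettyref{prop:strongly_mixed_cont} (b) together with \prettyref{rem:initial_like} (b) to obtain joint continuity of $(m,\varphi)$, whereas you evaluate the semigroup at $\mathds{1}$ and $\id$ and read off the $C_{0}$ property pointwise (using $\gamma\supset\tau_{\operatorname{co}}$). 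Your variant is more elementary and in fact slightly more robust: \prettyref{prop:strongly_mixed_cont} (b) carries the hypothesis $m_{t}(x)\neq 0$ for all $(t,x)$, which in the paper's setting is justified through \prettyref{prop:cocycle_int_hol} (a) only \emph{after} one knows that $(m,\varphi)$ is jointly continuous --- precisely what is being proved at that point --- while your argument needs only $m_{t}(z)\to 1$ as $t\to 0\rlim$ (so $m_{t}(z)\neq 0$ for small $t$, enough to divide in $m_{t}\varphi_{t}\to\id$), and this comes free from evaluating at $\mathds{1}$. The price is that you obtain only the $C_{0}$ property rather than joint continuity, but that is all the statement asks for, and joint continuity is recovered anyway by the promotion results. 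A second, cosmetic difference: in (c) you derive $\dt{m}_{0}\in\mathcal{H}(\Omega)$ and $m_{(\cdot)}(z)\in\mathcal{C}^{1}[0,\infty)$ from \prettyref{cor:cont_diff_semi_cocycle_diff_everywhere} (using $\mathds{1}\in\F$ and the existence of the generator of $\varphi$), while the paper uses \prettyref{prop:jointly_cont_int_cocyle} (a) and \prettyref{prop:cocycle_int_hol} (a) (using simple connectedness of $\Omega\subsetneq\C$); both routes are legitimate under the stated hypotheses.
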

\begin{proof}
First, we note that the open set $\Omega\subset\C$ is a locally compact, $\sigma$-compact space 
w.r.t.~the relative topology induced by the metric space $\C$. In particular, $\Omega$ is a Hausdorff $k_{\R}$-space, and the 
co-semiflow $(m,\varphi)$ is jointly continuous if and only if it is $C_{0}$ by \prettyref{prop:jointly_cont_semiflow} 
and \prettyref{prop:jointly_cont_cocycle}. Second, if $\varphi$ is $C_{0}$, so jointly continuous, 
then $\varphi_{(\cdot)}(z)\in\mathcal{C}^{1}[0,\infty)$ for all $z\in\Omega$ and $\dt{\varphi}_{0}\in\mathcal{H}(\Omega)$ 
by \prettyref{thm:generator_hol_semiflow}.
Third, $m_{(\cdot)}(z)\in\mathcal{C}^{1}[0,\infty)$, $\dt{m}_{0}\in\mathcal{H}(\Omega)$ and $m_{t}(z)\neq 0$ for all $t\geq 0$ 
and $z\in\Omega$ by \prettyref{prop:jointly_cont_int_cocycle} (a) and \prettyref{prop:cocycle_int_hol} 
since $\Omega\subset\C$ is connected.

(a) Due to \cite[Proposition 3.6 (ii), p.~1137]{federico2020} and \cite[I.1.10 Proposition, p.~10]{cooper1978} 
a $\gamma$-strongly continuous, locally $\gamma$-equicontinuous semigroup of linear operators is 
locally bounded. Thus implication $\Rightarrow$ follows from \prettyref{prop:strongly_mixed_cont} (b) 
and \prettyref{rem:initial_like} (b). 
The converse implication $\Leftarrow$ follows from \prettyref{thm:sg_weighted_comp_str_cont_equicont} (a). 

(b) Let $(C_{m,\varphi}(t))_{t\geq 0}$ be $\|\cdot\|$-strongly continuous. 
Then $(C_{m,\varphi}(t))_{t\geq 0}$ is locally bounded by \cite[Chap.~I, 5.5 Proposition, p.~39]{engel_nagel2000} and 
$\gamma$-strongly continuous by \prettyref{prop:mixed_equicont} with $I\coloneqq\{t\}$ for all $t\geq 0$ and since $\gamma$ is coarser 
than the $\|\cdot\|$-topology. 
Due to the assumption $\{\mathds{1},\id\}\subset\F$ the topology of $\Omega$ is initial-like w.r.t.~$(\varphi,\F)$ 
by \prettyref{rem:initial_like} (b), and thus the co-semiflow $(m,\varphi)$ jointly continuous 
by \prettyref{prop:strongly_mixed_cont} (b). 

Let us turn to the converse. Suppose that $(m,\varphi)$ is a $C_{0}$-co-semiflow and 
$(C_{m,\varphi}(t))_{t\geq 0}$ locally bounded. Then $(m,\varphi)$ is jointly continuous and 
$(C_{m,\varphi}(t))_{t\geq 0}$ $\gamma$-strongly continuous by \prettyref{prop:strongly_mixed_cont} (a). 
We deduce from \prettyref{prop:norm_generator} (b) that $(C_{m,\varphi}(t))_{t\geq 0}$ is $\|\cdot\|$-strongly continuous 
since $(\F,\|\cdot\|)$ is reflexive.

(c) By the the first part of the proof of (b) we get that $(C_{m,\varphi}(t))_{t\geq 0}$ is locally bounded 
and $(m,\varphi)$ a $C_{0}$-co-semiflow. Applying \prettyref{thm:mixed_gen_max_dom_diff_everywhere} and 
\prettyref{prop:norm_generator} (a), this finishes the proof of part (c). 
\end{proof}

\prettyref{thm:holomorphic_weighted_comp_norm_strong} (b) implies \cite[Lemma 3.1, p.~474]{koenig1990} 
and \cite[Theorem 1, p.~362]{siskakis1986} for the reflexive Hardy spaces $H^{p}$, $1<p<\infty$, 
by \cite[Definition 1 (i), p.~469]{koenig1990}, \prettyref{ex:hardy_bergman_dirichlet} (a), 
\prettyref{ex:semicoboundary} and \prettyref{thm:loc_bound_sg_hardy_bergman_dirichlet} (a). 
It also answers the questions in \cite[Example 7.4, p.~247--248]{siskakis1998} because it implies 
the $\|\cdot\|$-strong continuity of the weighted composition semigroup induced by the semicoboundary $(\varphi',\varphi)$ 
for a jointly continuous holomorphic semiflow $\varphi$ on $\D$ (see \prettyref{ex:semicoboundary_gen}) on 
reflexive spaces $(\mathcal{F}(\D),\|\cdot\|)$ such that $\mathcal{F}(\D)\subset\mathcal{H}(\D)$ such as the Hardy spaces 
$H^{p}$ for $1<p<\infty$, the weighted Bergman spaces $A_{\alpha}^{p}$ for $\alpha>-1$ and $1<p<\infty$, 
and the Dirichlet space $\mathcal{D}$ due to \prettyref{ex:hardy_bergman_dirichlet} {\color{green}and}
\prettyref{thm:loc_bound_sg_hardy_bergman_dirichlet} (and the computations thereafter). 
Further, \prettyref{thm:holomorphic_weighted_comp_norm_strong} (b) in combination 
with \prettyref{thm:loc_bound_sg_hardy_bergman_dirichlet} (a) and (b) gives back 
\cite[Lemmas 2.13, 2.14, p.~828]{perlich2019} and \cite[Corollaries 3, 4, p.~8--9]{wu2021} 
for $1<p<\infty$ by a different proof.\footnote{We note that the necessary condition that $m$ has to be jointly continuous 
is missing in \cite[Corollaries 3, 4, p.~8--9]{wu2021} (see \cite[p.~2]{wu2021}) even though the cited references 
therein concerning semicocycles like \cite{koenig1990,jafari1997} have this incorporated in their definition of a semicocycle.} 
It also implies in combination with \prettyref{thm:loc_bound_sg_hardy_bergman_dirichlet} (c) 
\cite[Theorem 1, p.~167]{siskakis1996} where $\mathcal{F}(\D)=\mathcal{D}$ and $m=\mathds{1}$. 

If $(\F,\|\cdot\|,\tau_{\operatorname{co}})$ is a sequentially complete Saks such that 
$\F\subset\mathcal{H}(\Omega)$ and $\{\mathds{1},\id\}\subset\F$, 
then \prettyref{thm:holomorphic_weighted_comp_norm_strong} (c) implies \cite[Theorem 2, p.~72]{blasco2013} 
where $\Omega=\D$ and $m=\mathds{1}$ but it is only assumed that $(\F,\|\cdot\|)$ is complete and $\tau_{\operatorname{co}}$ 
coarser than the $\|\cdot\|$-topology (see \cite[p.~67]{blasco2013}).
Moreover, \prettyref{thm:holomorphic_weighted_comp_norm_strong} (c) generalises \cite[Theorem 2, p.~364]{siskakis1986} 
(where $m=m^{\omega}$ is a semicoboundary for $\omega\in\mathcal{H}(\D)$) and 
the first part of \cite[Theorem 2 (b), p.~471]{koenig1990} 
by \prettyref{ex:hardy_bergman_dirichlet} (a) and \prettyref{ex:semicoboundary} 
where $\mathcal{F}(\D)=H^{p}$ is the Hardy space for $1\leq p<\infty$.  
\prettyref{thm:holomorphic_weighted_comp_norm_strong} (c) also yields \cite[Theorem 1 (ii), p.~400--401]{siskakis1987}
by \prettyref{ex:hardy_bergman_dirichlet} (b) where $\mathcal{F}(\D)=A_{\alpha}^{p}$ is the Bergman space for $\alpha>-1$ 
and $1\leq p<\infty$ and $m=\mathds{1}$.
\prettyref{thm:holomorphic_weighted_comp_norm_strong} (b) and (c) 
combined with \prettyref{thm:loc_bound_sg_hardy_bergman_dirichlet} (c) 
also imply the $\|\cdot\|$-strong continuity of the 
weighted composition semigroup on $\mathcal{D}$ w.r.t.~the holomorphic $C_{0}$-co-semiflow $(m,\varphi)$ 
in \cite[Corollary 2, p.~170]{siskakis1996} and the form of its generator where $\operatorname{Fix}(\varphi)=\{0\}$ and 
$m=m^{\omega}$ with $\omega(z)\coloneqq z^{p}$ for all $z\in\D$ for some $p\in\N$. 
If $(\F,\|\cdot\|)$ is a Banach space such that $\tau_{\operatorname{co}}$ is coarser than 
the $\|\cdot\|$-topology and $\F\subset\mathcal{H}(\Omega)$, 
then the assumptions in \cite[Theorem 2.1 (ii), p.~68]{gutierrez2023} that $\Omega=\D$ and the continuity of 
$\mathcal{H}(\overline{\D})\hookrightarrow (\F,\|\cdot\|)$ are stronger than 
in \prettyref{thm:holomorphic_weighted_comp_norm_strong} (c) whereas vice versa the latter theorem has the stronger 
assumption that $(\F,\|\cdot\|,\tau_{\operatorname{co}})$ is a sequentially complete Saks space.

We may also apply \prettyref{thm:holomorphic_weighted_comp_norm_strong} (a) to weighted composition semigroups on 
the Hardy space $H^{1}$ by \prettyref{thm:loc_bound_sg_hardy_bergman_dirichlet} (a), the Bergman space $A_{\alpha}^{1}$ 
for $\alpha>-1$ by \prettyref{thm:loc_bound_sg_hardy_bergman_dirichlet} (b), the Bloch type space $\mathcal{B}_{\alpha}$ 
for $\alpha>0$ by \prettyref{thm:loc_bound_sg_bloch}, the weighted spaces $\mathcal{H}v(\Omega)$ and $\mathcal{C}v(\Omega)$ 
of holomorphic resp.~continuous functions on $\Omega$ by \prettyref{thm:loc_bound_sg_Hv} resp.~\prettyref{thm:loc_bound_sg_Cv} 
if $\{\mathds{1},\id\}\subset\mathcal{H}v(\Omega)$ resp.~$\{\mathds{1},\id\}\subset\mathcal{C}v(\Omega)$. For instance, we have 
the following result for the Hardy space $H^{\infty}$. 

\begin{cor}
Let $(m,\varphi)$ be a holomorphic co-semiflow on $\D$. 
The weighted composition semigroup $(C_{m,\varphi}(t))_{t\geq 0}$ is $\gamma$-strongly continuous and 
locally $\gamma$-equicontinu\-ous on $H^{\infty}$ if and only 
if $(m,\varphi)$ is a $C_{0}$-co-semiflow and $\limsup_{t\to 0\rlim}\|m_{t}\|_{\infty}<\infty$. 
\end{cor}
\begin{proof}
This statement follows from \prettyref{ex:weighted_holom_saks}, \prettyref{thm:loc_bound_sg_Hv} with $v\coloneqq\mathds{1}$ 
and \prettyref{thm:holomorphic_weighted_comp_norm_strong} (a).
\end{proof}

\begin{thm}\label{thm:holomorphic_weighted_comp}
Let $\Omega\subset\C$ be open, $(\F,\|\cdot\|,\tau_{\operatorname{co}})$ a sequentially complete Saks space such that 
$\F\subset\mathcal{H}(\Omega)$, and $(C_{m,\varphi}(t))_{t\geq 0}$ a locally bounded weighted composition semigroup 
on $\F$ w.r.t.~a holomorphic $C_{0}$-co-semiflow $(m,\varphi)$.
Then the following assertions hold.
\begin{enumerate}
\item $(C_{m,\varphi}(t))_{t\geq 0}$ is $\gamma$-strongly continuous, $\tau_{\operatorname{co}}$-bi-continuous, 
locally $\tau_{\operatorname{co}}$-equicon\-tinuous and locally $\gamma$-equicontinuous. 
\item If $(\F,\|\cdot\|,\tau_{\operatorname{co}})$ is a C-sequential Saks space, 
then $(C_{m,\varphi}(t))_{t\geq 0}$ is quasi-$\gamma$-equicontinuous. Furthermore, $(C_{m,\varphi}(t))_{t\geq 0}$ 
is quasi-$(\|\cdot\|,\tau_{\operatorname{co}})$-equitight if additionally $\gamma=\gamma_{s}$.
\item If $m_{(\cdot)}(z)\in\mathcal{C}^{1}[0,\infty)$ for all $z\in\Omega$, 
then $\dt{\varphi}_{0}\in\mathcal{H}(\Omega)$ and the generator $(A,D(A))$ of $(C_{m,\varphi}(t))_{t\geq 0}$ fulfils
\[
D(A)=\{f\in\F\;|\;\dt{\varphi}_{0}f'+\dt{m}_{0}f\in\F\}
\]
and $Af=\dt{\varphi}_{0}f'+\dt{m}_{0}f$ for all $f\in D(A)$.
\item If $m_{(\cdot)}(z)\in\mathcal{C}^{1}[0,\infty)$ for all $z\in\Omega$, 
then $\dt{\varphi}_{0}\in\mathcal{H}(\Omega)$ and 
\[
\bigl[(m,\varphi),\F\bigr]=\overline{\{f\in\F\;|\;\dt{\varphi}_{0}f'+\dt{m}_{0}f\in\F\}}^{\|\cdot\|}
\]
where $\bigl[(m,\varphi),\F\bigr]$ is the space of $\|\cdot\|$-strong continuity of $(C_{m,\varphi}(t))_{t\geq 0}$.
\end{enumerate}
\end{thm}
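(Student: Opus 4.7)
My plan is to handle parts (a), (b) and (c) as immediate consequences of \prettyref{thm:sg_weighted_comp_str_cont_equicont}, once the $C_{0}$ hypothesis on $(m,\varphi)$ is upgraded to joint continuity. Since $\Omega\subset\C$ is open, hence a locally compact, $\sigma$-compact metric space, \prettyref{prop:jointly_cont_semiflow}~(a) turns the $C_{0}$-semiflow $\varphi$ into a jointly continuous one, and \prettyref{prop:jointly_cont_cocyle} then does the same for $m$. Combined with local boundedness, \prettyref{thm:sg_weighted_comp_str_cont_equicont}~(a)--(d) directly yields $\gamma$-strong continuity together with the stated equicontinuity properties (part (a)), $\tau_{\operatorname{co}}$-bi-continuity in the Saks case (part (b)), and quasi-$\gamma$-equicontinuity resp.\ quasi-$(\|\cdot\|,\tau_{\operatorname{co}})$-equitightness under the additional C-sequential and $\gamma=\gamma_{s}$ assumptions (part (c)).

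For (d) I would invoke \prettyref{thm:generator_hol_semiflow} on the jointly continuous holomorphic semiflow $\varphi$: this furnishes $\varphi_{(\cdot)}(z)\in\mathcal{C}^{1}[0,\infty)$ for every $z\in\Omega$ together with a generator lying in $\mathcal{H}(\Omega)$ which, by \prettyref{rem:generator_semiflow}, equals $\dt{\varphi}_{0}$. Together with the standing hypothesis $m_{(\cdot)}(z)\in\mathcal{C}^{1}[0,\infty)$ and the inclusion $\F\subset\mathcal{H}(\Omega)=\mathcal{C}^{1}_{\C}(\Omega)$, this puts all the hypotheses of \prettyref{thm:mixed_gen_max_dom_diff_everywhere} in place, and that theorem supplies both the domain $D(A)=\{f\in\F\mid\dt{\varphi}_{0}f'+\dt{m}_{0}f\in\F\}$ and the action $Af=\dt{\varphi}_{0}f'+\dt{m}_{0}f$.

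For (e) I plan to combine (d) with the identity $[(m,\varphi),\F]=\overline{D(A)}^{\|\cdot\|}$. Since our ambient space is only pre-Saks, \prettyref{prop:norm_generator}~(c) is not directly applicable, so I will argue by hand in three steps. First, $D(A)\subset[(m,\varphi),\F]$ follows from \prettyref{cor:bi_generator}: the generator equals the bi-generator, which forces $\sup_{0<t\leq 1}\|C_{m,\varphi}(t)f-f\|/t<\infty$ on $D(A)$ and hence $\|\cdot\|$-continuity at $t=0$. Second, $[(m,\varphi),\F]$ is $\|\cdot\|$-closed by a standard three-$\varepsilon$ estimate using the local operator-norm bound on the semigroup. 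Third, for $f\in[(m,\varphi),\F]$ the orbit $s\mapsto C_{m,\varphi}(s)f$ is $\|\cdot\|$-continuous on $[0,\infty)$ via the identity $C_{m,\varphi}(s+h)f-C_{m,\varphi}(s)f=C_{m,\varphi}(s)(C_{m,\varphi}(h)f-f)$ combined with local boundedness; the Ces\`{a}ro means $f_{h}\coloneqq\tfrac{1}{h}\int_{0}^{h}C_{m,\varphi}(s)f\,\d s$ then exist as norm-Riemann integrals, obey $\|f_{h}-f\|\leq\tfrac{1}{h}\int_{0}^{h}\|C_{m,\varphi}(s)f-f\|\,\d s\to 0$, and lie in $D(A)$ by the standard telescoping computation for $(C_{m,\varphi}(t)f_{h}-f_{h})/t$ as $t\to 0+$.

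The hardest part will be this last step: matching the norm-theoretic convergence $\|f_{h}-f\|\to 0$ with the identification $f_{h}\in D(A)$, which is most naturally a $\gamma$-limit assertion. I would resolve this by observing that once $\|\cdot\|$-continuity of the orbit on $[(m,\varphi),\F]$ is established, the norm-Riemann integral exists in $\F$ and must coincide with the $\gamma$-Riemann integral supplied by the sequential completeness of $(\F,\gamma)$, so both descriptions point to the same element of $\F$ and the two halves of the argument fit together.
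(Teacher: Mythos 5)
Your proposal is correct, and for parts (a)--(d) it is essentially the paper's own proof: the paper likewise upgrades the $C_{0}$-co-semiflow to a jointly continuous one on the locally compact, $\sigma$-compact metric space $\Omega$ (via \prettyref{prop:jointly_cont_semiflow} and \prettyref{prop:jointly_cont_cocyle}, packaged as ``the first part of the proof of \prettyref{thm:holomorphic_weighted_comp_norm_strong}''), obtains $\dt{\varphi}_{0}\in\mathcal{H}(\Omega)$ from \prettyref{thm:generator_hol_semiflow}, and then quotes \prettyref{thm:sg_weighted_comp_str_cont_equicont} and \prettyref{thm:mixed_gen_max_dom_diff_everywhere}. (Note that both you and the paper only extract \emph{local} $\tau_{\operatorname{co}}$-equicontinuity from \prettyref{thm:sg_weighted_comp_str_cont_equicont}; the unqualified ``$\tau_{\operatorname{co}}$-equicontinuous'' in statement (a) is evidently a missing ``locally'', since $\varphi_{[0,\infty)}(K)$ need not be relatively compact, so this is not a defect of your argument relative to the paper's.)

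For part (e) you take a genuinely different route, and your instinct for doing so is sound. The paper disposes of (e) by citing \prettyref{prop:norm_generator} (c) together with \prettyref{thm:mixed_gen_max_dom_diff_everywhere}; but \prettyref{prop:norm_generator} is stated for sequentially complete \emph{Saks} spaces (its proof runs through the bi-continuity statement \prettyref{thm:sg_weighted_comp_str_cont_equicont} (b)), whereas the present theorem assumes only a pre-Saks space, so the paper's citation carries a hypothesis mismatch that your direct argument avoids. Your three steps all check out: \prettyref{cor:bi_generator} identifies $A$ with the bi-generator, so $\|C_{m,\varphi}(t)f-f\|=O(t)$ for $f\in D(A)$ and hence $D(A)\subset\bigl[(m,\varphi),\F\bigr]$; norm-closedness of $\bigl[(m,\varphi),\F\bigr]$ is the standard three-term estimate from local boundedness; and the Ces\`{a}ro-mean step works because $(\F,\|\cdot\|)$ is complete by \prettyref{rem:seq_complete_mixed_Banach}, each $C_{m,\varphi}(t)$ belongs to $\mathcal{L}(\F)$ (local boundedness), so it commutes with norm-Riemann integrals, and the difference quotient $(C_{m,\varphi}(t)f_{h}-f_{h})/t$ converges in $\|\cdot\|$ to $\tfrac{1}{h}(C_{m,\varphi}(h)f-f)$, hence in $\gamma$, which is all that membership in $D(A)$ requires. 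In particular your closing worry about reconciling the norm- and $\gamma$-Riemann integrals is unnecessary: the entire computation can be run in the norm topology, since norm convergence implies $\gamma$-convergence. The trade-off is clear: the paper's version is a one-line citation (valid verbatim only under the Saks assumption), while yours costs half a page of standard semigroup bookkeeping but proves (e) under exactly the stated pre-Saks hypotheses.
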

\begin{proof}
By the first part of the proof of \prettyref{thm:holomorphic_weighted_comp_norm_strong} we know that 
the $C_{0}$-co-semiflow $(m,\varphi)$ is jointly continuous, $\varphi_{(\cdot)}(z)\in\mathcal{C}^{1}[0,\infty)$ 
for all $z\in\Omega$ and $\dt{\varphi}_{0}\in\mathcal{H}(\Omega)$.

The parts (a), (b) and (c) follow directly from \prettyref{thm:sg_weighted_comp_str_cont_equicont} 
and \prettyref{thm:mixed_gen_max_dom_diff_everywhere}.
Part (d) is a consequence of \prettyref{prop:norm_generator} (c) and \prettyref{thm:mixed_gen_max_dom_diff_everywhere}.
\end{proof}

Comparing \prettyref{thm:holomorphic_weighted_comp} (d) (and \eqref{eq:space_deriv_semiflow_integral}) 
with \cite[Theorem 10, p.~9]{arevalo2017} resp.~\cite[Theorem 1, p.~71]{blasco2013}
where $\Omega=\D$ and $m_{t}=\varphi_{t}'$ for all $t\geq 0$ resp.~$m=\mathds{1}$ we see that the former theorem is more general 
w.r.t.~the semicocycles and does not need the assumption $\mathds{1}\in\F$, 
and less general w.r.t.~the tuples $(\F,\|\cdot\|,\tau_{\operatorname{co}})$. 
In the latter theorem it is only assumed that $(\F,\|\cdot\|)$ is complete and $\tau_{\operatorname{co}}$ coarser than 
the $\|\cdot\|$-topology (see \cite[p.~67]{blasco2013}). 
However, both theorems have the assumption on joint continuity of $\varphi$ (see \cite[(3'), p.~68]{blasco2013}) and 
local boundedness of $(C_{m,\varphi}(t))_{t\geq 0}$ (see \prettyref{prop:loc_bounded_sg}). 

For $H^{\infty}\subset\mathcal{F}(\D)\subset\mathcal{B}_{1}$, $(\mathcal{F}(\D),\|\cdot\|)$ Banach, it holds 
$\bigl[(\mathds{1},\varphi),\mathcal{F}(\D)\bigr]\subsetneq\mathcal{F}(\D)$ by \cite[Theorem 1.1, p.~844]{anderson2017} 
for any non-trivial jointly continuous holomorphic semicocycle $(\mathds{1},\varphi)$ for $\F$ such that 
$C_{\mathds{1},\varphi}(t)\in\mathcal{L}(\mathcal{F}(\D))$ for all $t\geq 0$. 
The weighted version is given in \cite[Theorem 4.1, p.~74]{gutierrez2023}. 

In the case $\mathcal{F}(\D)=H^{\infty}$ it holds $\mathcal{A}\subset\bigl[(\mathds{1},\varphi),H^{\infty}\bigr]$ 
by \cite[Corollary 1.4, p.~844]{anderson2017} for any jointly continuous holomorphic $\varphi$, and 
$\mathcal{A}\neq\bigl[(\mathds{1},\varphi),H^{\infty}\bigr]$ for some $\varphi$ 
by \cite[Proposition 4.3, p.~852]{anderson2017} where $\mathcal{A}$ is the \emph{disc-algebra} of holomorphic functions 
on $\D$ that extend continuously to $\overline{\D}$. If $\varphi$ consists of rotations or dilations, 
then $\mathcal{A}=\bigl[(\mathds{1},\varphi),H^{\infty}\bigr]$ by \cite[Proposition 4.1, p.~850]{anderson2017}. 

Further, we have $\bigl[(\mathds{1},\varphi),\mathcal{B}_{\alpha}\bigr]\subsetneq\mathcal{B}_{\alpha}$ for $\alpha>0$
and any non-trivial jointly continuous holomorphic semicocycle $(\mathds{1},\varphi)$ for $\mathcal{B}_{\alpha}$ such that 
$C_{\mathds{1},\varphi}(t)\in\mathcal{L}(\mathcal{B}_{\alpha})$ for all $t\geq 0$ by \cite[Theorem 3, p.~73]{blasco2013}. 
In the case $\alpha=1$ it holds $\mathcal{B}_{0}\subset\bigl[(\mathds{1},\varphi),\mathcal{B}_{1}\bigr]$ for any $\varphi$ 
by \cite[p.~73]{blasco2013}, and $\mathcal{B}_{0}=\bigl[(\mathds{1},\varphi),\mathcal{B}_{1}\bigr]$ if and only if 
the resolvent operator $R(\lambda,A)\in\mathcal{L}(\mathcal{B}_{1})$ is weakly compact on $\mathcal{B}_{0}$ 
by \cite[Corollary 1, p.~76]{blasco2013} where $\mathcal{B}_{0}$ is the \emph{little Bloch space} of $\mathcal{B}_{1}$, 
i.e.~the space consisting of all $f\in\mathcal{B}_{1}$ such that $\lim_{|z|\to 1\llim}(1-|z|^{2})|f'(z)|=0$. 
The assertion $\mathcal{B}_{0}=\bigl[(\mathds{1},\varphi),\mathcal{B}_{1}\bigr]$ is also equivalent to 
$\varphi$ being elliptic and its generator $G$ fulfilling the logarithmic vanishing Bloch condition 
by \cite[Theorem 1.1, p.~4]{chalmoukis2022}.

\bibliography{biblio_composition_sg}

\begin{thebibliography}{88}
\providecommand{\natexlab}[1]{#1}
\providecommand{\url}[1]{\texttt{#1}}
\expandafter\ifx\csname urlstyle\endcsname\relax
  \providecommand{\doi}[1]{doi: #1}\else
  \providecommand{\doi}{doi: \begingroup \urlstyle{rm}\Url}\fi

\bibitem[Abate(1989)]{abate1989}
M.~Abate.
\newblock \emph{Iteration theory of holomorphic maps on taut manifolds}.
\newblock Research and Lecture Notes in Mathematics. Mediterranean Press,
  Rende, Cosenza, 1989.

\bibitem[Aliprantis and Border(2006)]{aliprantis2006}
C.D. Aliprantis and K.C. Border.
\newblock \emph{Infinite dimensional analysis: A hitchhiker's guide}.
\newblock Springer, Berlin, 3rd edition, 2006.
\newblock \doi{10.1007/3-540-29587-9}.

\bibitem[Anderson et~al.(2017)Anderson, Jovovic, and Smith]{anderson2017}
A.~Anderson, M.~Jovovic, and W.~Smith.
\newblock {Composition semigroups on \textrm{BMOA} and $H^{\infty}$}.
\newblock \emph{J. Math. Anal. Appl.}, 449\penalty0 (1):\penalty0 843--852,
  2017.
\newblock \doi{10.1016/j.jmaa.2016.12.032}.

\bibitem[Arendt and Chalendar(2019)]{arendt2019}
W.~Arendt and I.~Chalendar.
\newblock Generators of semigroups on {B}anach spaces inducing holomorphic
  semiflows.
\newblock \emph{Israel J. Math.}, 229\penalty0 (1):\penalty0 165--179, 2019.
\newblock \doi{10.1007/s11856-018-1793-y}.

\bibitem[Ar\'{e}valo and Oliva(2017)]{arevalo2017}
I.~Ar\'{e}valo and M.~Oliva.
\newblock Semigroups of weighted composition operators in spaces of analytic
  functions, 2017.
\newblock arXiv preprint \url{https://arxiv.org/abs/1706.09001v1}.

\bibitem[Batty(1987)]{batty1987}
C.J.K. Batty.
\newblock Derivations on the line and flows along orbits.
\newblock \emph{Pacific J. Math.}, 126\penalty0 (2):\penalty0 209--225, 1987.
\newblock \doi{10.2140/pjm.1987.126.209}.

\bibitem[Bayart(2002)]{bayart2002}
F.~Bayart.
\newblock Hardy spaces of {D}irichlet series and their composition operators.
\newblock \emph{Monatsh. Math.}, 136\penalty0 (3):\penalty0 203--236, 2002.
\newblock \doi{10.1007/s00605-002-0470-7}.

\bibitem[Bayart et~al.(2021)Bayart, Castillo-Medina, Garc\'ia, Maestre, and
  Sevilla-Peris]{bayart2021}
F.~Bayart, J.~Castillo-Medina, D.~Garc\'ia, M.~Maestre, and P.~Sevilla-Peris.
\newblock Composition operators on spaces of double {D}irichlet series.
\newblock \emph{Rev. Mat. Complut.}, 34\penalty0 (1):\penalty0 215--237, 2021.
\newblock \doi{10.1007/s13163-019-00345-8}.

\bibitem[Berenstein and Gay(1995)]{berenstein_gay1995}
C.A. Berenstein and R.~Gay.
\newblock \emph{Complex analysis and special topics in harmonic analysis}.
\newblock Springer, New York, 1995.
\newblock \doi{10.1007/978-1-4613-8445-8}.

\bibitem[Berkson and Porta(1978)]{berkson1978}
E.~Berkson and H.~Porta.
\newblock Semigroups of analytic functions and composition operators.
\newblock \emph{Michigan Math. J.}, 25\penalty0 (1):\penalty0 101--115, 1978.
\newblock \doi{10.1307/mmj/1029002009}.

\bibitem[Bernard(2022)]{bernard2022}
E.~Bernard.
\newblock Weighted composition semigroups on {B}anach spaces of holomorphic
  functions.
\newblock \emph{Arch. Math. (Basel)}, 119\penalty0 (2):\penalty0 167--178,
  2022.
\newblock \doi{10.1007/s00013-022-01738-w}.

\bibitem[Blasco et~al.(2013)Blasco, Contreras, D\'iaz-Madrigal, Mart\'inez,
  Papadimitrakis, and Siskakis]{blasco2013}
O.~Blasco, M.D. Contreras, S.~D\'iaz-Madrigal, J.~Mart\'inez,
  M.~Papadimitrakis, and A.G. Siskakis.
\newblock Semigroups of composition operators and integral operators in spaces
  of analytic functions.
\newblock \emph{Ann. Acad. Sci. Fenn. Math.}, 38:\penalty0 67--89, 2013.
\newblock \doi{10.5186/aasfm.2013.3806}.

\bibitem[Bracci et~al.(2020)Bracci, Contreras, and D\'iaz-Madrigal]{bracci2020}
F.~Bracci, M.D. Contreras, and S.~D\'iaz-Madrigal.
\newblock \emph{Continuous semigroups of holomorphic self-maps of the unit
  disc}.
\newblock Springer Monogr. Math. Springer, Cham, 2020.
\newblock \doi{10.1007/978-3-030-36782-4}.

\bibitem[Brown and Shields(1984)]{brown1984}
L.~Brown and A.L. Shields.
\newblock Cyclic vectors in the {D}irichlet space.
\newblock \emph{Trans. Amer. Math. Soc.}, 285\penalty0 (1):\penalty0 269--303,
  1984.
\newblock \doi{10.1090/S0002-9947-1984-0748841-0}.

\bibitem[Buchwalter(1972)]{buchwalter1972}
H.~Buchwalter.
\newblock Produit topologique, produit tensoriel et $c$-repl\'etion.
\newblock In \emph{Colloque d'analyse fonctionnelle (Bordeaux, 1971)}, number
  31--32 in Bull. Soc. Math. France, M\'em., pages 51--71. Soc. Math. France,
  Paris, 1972.
\newblock \doi{10.24033/msmf.65}.

\bibitem[Budde(2019)]{budde2019a}
C.~Budde.
\newblock \emph{General extrapolation spaces and perturbations of bi-continuous
  semigroups}.
\newblock PhD thesis, Bergische Universit\"at Wuppertal, 2019.
\newblock doi:\href{https://doi.org/10.25926/7ss7-bt33}{10.25926/7ss7-bt33}.

\bibitem[Budde and Farkas(2019)]{budde2019}
C.~Budde and B.~Farkas.
\newblock Intermediate and extrapolated spaces for bi-continuous operator
  semigroups.
\newblock \emph{J. Evol. Equ.}, 19\penalty0 (2):\penalty0 321--359, 2019.
\newblock \doi{10.1007/s00028-018-0477-8}.

\bibitem[Chalendar and Partington(2023{\natexlab{a}})]{chalendar2022}
I.~Chalendar and J.R. Partington.
\newblock Semigroups of weighted composition operators on spaces of holomorphic
  functions.
\newblock In J.~Mashregi, editor, \emph{Lectures on analytic function spaces
  and their applications}, volume~39 of \emph{Fields Inst. Monogr.}, chapter~7,
  pages 255--281. Springer, Cham, 2023{\natexlab{a}}.
\newblock \doi{10.1007/978-3-031-33572-3_7}.

\bibitem[Chalendar and Partington(2023{\natexlab{b}})]{chalendar2023}
I.~Chalendar and J.R Partington.
\newblock Weighted composition operators on the {F}ock space: iteration and
  semigroups.
\newblock \emph{Acta Sci. Math. (Szeged)}, 89\penalty0 (1-2):\penalty0 93--108,
  2023{\natexlab{b}}.
\newblock \doi{10.1007/s44146-023-00056-z}.

\bibitem[Chalmoukis and Daskalogiannis(2022)]{chalmoukis2022}
N.~Chalmoukis and V.~Daskalogiannis.
\newblock Holomorphic semigroups and {S}arason's characterization of vanishing
  mean oscillation.
\newblock \emph{Rev. Mat. Iberoamericana}, pages 1--20, 2022.
\newblock \doi{10.4171/RMI/1346}.

\bibitem[Chernoff(1975)]{chernoff1975}
P.R. Chernoff.
\newblock Semi-groups of maps in a locally compact space.
\newblock \emph{Proc. Amer. Math. Soc.}, 53\penalty0 (2):\penalty0 318--320,
  1975.
\newblock \doi{10.1090/S0002-9939-1975-0448406-4}.

\bibitem[Contreras and Hern\'andez-D\'iaz(2001)]{contreras2001}
M.D. Contreras and A.G. Hern\'andez-D\'iaz.
\newblock Weighted composition operators on {H}ardy spaces.
\newblock \emph{J. Math. Anal. Appl.}, 263\penalty0 (1):\penalty0 224--233,
  2001.
\newblock \doi{10.1006/jmaa.2001.7610}.

\bibitem[Contreras and Hern\'andez-D\'iaz(2004)]{contreras2004}
M.D. Contreras and A.G. Hern\'andez-D\'iaz.
\newblock Weighted composition operators on spaces of functions with derivative
  in a {H}ardy space.
\newblock \emph{J. Operator Theory}, 52\penalty0 (1):\penalty0 173--184, 2004.

\bibitem[Contreras et~al.(2023)Contreras, G\'omez-Cabello, and
  Rodr\'iguez-Piazza]{contreras2023}
M.D. Contreras, C.~G\'omez-Cabello, and L.~Rodr\'iguez-Piazza.
\newblock Semigroups of composition operators on {H}ardy spaces of {D}irichlet
  series.
\newblock \emph{J. Funct. Anal.}, 285:\penalty0 1--36, 2023.
\newblock \doi{10.1016/j.jfa.2023.110089}.

\bibitem[Cooper(1978)]{cooper1978}
J.B. Cooper.
\newblock \emph{Saks spaces and applications to functional analysis}.
\newblock North-Holland Math. Stud. 28. North-Holland, Amsterdam, 1978.

\bibitem[{\u{C}}u{\u{c}}kovi{\'c} and Zhao(2004)]{cuckovic2004}
{\u{Z}}.~{\u{C}}u{\u{c}}kovi{\'c} and R.~Zhao.
\newblock Weighted composition operators on the {B}ergman space.
\newblock \emph{J. London Math. Soc. (2)}, 70\penalty0 (2):\penalty0 499--511,
  2004.
\newblock \doi{10.1112/S0024610704005605}.

\bibitem[Daskalogiannis and Galanopoulos(2021)]{daskalogiannis2021}
V.~Daskalogiannis and P.~Galanopoulos.
\newblock Semigroups of composition operators and integral operators in
  {BMOA}-type spaces.
\newblock \emph{Complex Anal. Oper. Theory}, 15\penalty0 (8):\penalty0 1--30,
  2021.
\newblock \doi{10.1007/s11785-021-01168-6}.

\bibitem[Dorroh(1967)]{dorroh1967}
J.R. Dorroh.
\newblock Semi-groups of maps in a locally compact space.
\newblock \emph{Canad. J. Math.}, 19:\penalty0 688--696, 1967.
\newblock \doi{10.4153/CJM-1967-063-3}.

\bibitem[Dorroh and Neuberger(1993)]{dorroh1993}
J.R. Dorroh and J.W. Neuberger.
\newblock Lie generators for semigroups of transformations in a {P}olish space.
\newblock \emph{Electron. J. Differential Equations}, 1993\penalty0
  (1):\penalty0 1--7, 1993.

\bibitem[Dorroh and Neuberger(1996)]{dorroh1996}
J.R. Dorroh and J.W. Neuberger.
\newblock A theory of strongly continuous semigroups in terms of {L}ie
  generators.
\newblock \emph{J. Funct. Anal.}, 136\penalty0 (1):\penalty0 114--126, 1996.
\newblock \doi{10.1006/jfan.1996.0023}.

\bibitem[Dunford and Schwartz(1958)]{dunford1958}
N.~Dunford and J.T. Schwartz.
\newblock \emph{{Linear operators, Part 1: General theory}}.
\newblock Pure Appl. Math. (N.Y.) 7. Wiley-Intersci., New York, 1958.

\bibitem[Duren(1970)]{duren1970}
P.L. Duren.
\newblock \emph{Theory of $H^{p}$ spaces}.
\newblock Pure Appl. Math. (Amst.) 38. Academic Press, New York, 1970.

\bibitem[Duren et~al.(1969)Duren, Romberg, and Shields]{duren1969}
P.L. Duren, B.W. Romberg, and A.L. Shields.
\newblock Linear functionals on {$H^p$} spaces with $0<p<1$.
\newblock \emph{J. Reine Angew. Math.}, 238:\penalty0 32--60, 1969.
\newblock \doi{10.1515/crll.1969.238.32}.

\bibitem[Eisner et~al.(2015)Eisner, Farkas, Haase, and Nagel]{eisner2015}
T.~Eisner, B.~Farkas, M.~Haase, and R.~Nagel.
\newblock \emph{Operator theoretic aspects of ergodic theory}.
\newblock Grad. Texts in Math. 272. Springer, Cham, 2015.
\newblock \doi{10.1007/978-3-319-16898-2}.

\bibitem[Elin et~al.(2019)Elin, Jacobzon, and Katriel]{elin2019}
M.~Elin, F.~Jacobzon, and G.~Katriel.
\newblock Continuous and holomorphic semicocycles in {B}anach spaces.
\newblock \emph{J. Evol. Equ.}, 19\penalty0 (4):\penalty0 1199--1221, 2019.
\newblock \doi{10.1007/s00028-019-00509-5}.

\bibitem[Elstrodt(2011)]{elstrodt2005}
J.~Elstrodt.
\newblock \emph{{Ma\ss- und Integrationstheorie}}.
\newblock Grundwissen Mathematik. Springer, Berlin, 7th edition, 2011.
\newblock \doi{10.1007/978-3-642-17905-1}.

\bibitem[Engel and Nagel(2000)]{engel_nagel2000}
K.-J. Engel and R.~Nagel.
\newblock \emph{One-parameter semigroups for linear evolution equations}.
\newblock Grad. Texts in Math. 194. Springer, New York, 2000.
\newblock \doi{10.1007/b97696}.

\bibitem[Es-Sarhir and Farkas(2006)]{es_sarhir2006}
A.~Es-Sarhir and B.~Farkas.
\newblock Perturbation for a class of transition semigroups on the {H}\"older
  space {$C_{b,\operatorname{loc}}^{\theta}(H)$}.
\newblock \emph{J. Math. Anal. Appl.}, 315\penalty0 (2):\penalty0 666--685,
  2006.
\newblock \doi{10.1016/j.jmaa.2005.04.024}.

\bibitem[Farkas(2003)]{farkas2003}
B.~Farkas.
\newblock \emph{Perturbations of bi-continuous semigroups}.
\newblock PhD thesis, E\"otv\"os Lor\'and University, Budapest, 2003.

\bibitem[Farkas(2011)]{farkas2011}
B.~Farkas.
\newblock Adjoint bi-continuous semigroups and semigroups on the space of
  measures.
\newblock \emph{Czech. Math. J.}, 61\penalty0 (2):\penalty0 309--322, 2011.
\newblock \doi{10.1007/s10587-011-0076-0}.

\bibitem[Farkas and Kreidler(2020)]{farkas2020}
B.~Farkas and H.~Kreidler.
\newblock Towards a {K}oopman theory for dynamical systems on completely
  regular spaces.
\newblock \emph{Phil. Trans. R. Soc. A}, 378\penalty0 (2185):\penalty0 1--15,
  2020.
\newblock \doi{10.1098/rsta.2019.0617}.

\bibitem[Federico and Rosestolato(2020)]{federico2020}
S.~Federico and M.~Rosestolato.
\newblock {$C_0$-sequentially equicontinuous semigroups}.
\newblock \emph{Kyoto J. Math.}, 60:\penalty0 1131--1175, 2020.
\newblock \doi{10.1215/21562261-2019-0010}.

\bibitem[Gallardo-Guti\'errez and Yakubovich(2019)]{gutierrez2019}
E.A. Gallardo-Guti\'errez and D.V. Yakubovich.
\newblock On generators of {$C_0$}-semigroups of composition operators.
\newblock \emph{Israel J. Math.}, 229\penalty0 (1):\penalty0 487--500, 2019.
\newblock \doi{10.1007/s11856-018-1815-9}.

\bibitem[Gallardo-Guti\'errez et~al.(2023)Gallardo-Guti\'errez, Siskakis, and
  Yakubovich]{gutierrez2023}
E.A. Gallardo-Guti\'errez, A.G. Siskakis, and D.~Yakubovich.
\newblock Generators of {$C_0$}-semigroups of weighted composition operators.
\newblock \emph{Israel J. Math.}, 255\penalty0 (1):\penalty0 63--80, 2023.
\newblock \doi{10.1007/s11856-022-2389-0}.

\bibitem[Gradshteyn and Ryzhik(2014)]{gradshteyn2014}
I.S. Gradshteyn and I.M. Ryzhik.
\newblock \emph{Table of integrals, series, and products}.
\newblock Academic Press, Amsterdam, 8th edition, 2014.
\newblock \doi{10.1016/C2010-0-64839-5}.
\newblock Edited by D. Zwillinger and V. Moll.

\bibitem[Hirsch and Smale(1974)]{hirsch1974}
M.~Hirsch and S.~Smale.
\newblock \emph{Differential equations, dynamical systems, and linear algebra}.
\newblock Pure Appl. Math. (Amst.) 60. Academic Press, New York, 1974.

\bibitem[Hornor(2003)]{hornor2003}
W.E. Hornor.
\newblock Semigroups of holomorphic self-maps of domains and one-parameter
  semigroups of isometries of {B}ergman spaces.
\newblock \emph{Michigan Math. J.}, 51\penalty0 (2):\penalty0 305--325, 2003.
\newblock \doi{10.1307/mmj/1060013198}.

\bibitem[Hurewicz(1970)]{hurewicz1970}
W.~Hurewicz.
\newblock \emph{Lectures on ordinary differential equations}.
\newblock MIT Press, Cambridge, 2nd edition, 1970.

\bibitem[Jafari et~al.(1997)Jafari, Tonev, Toneva, and Yale]{jafari1997}
F.~Jafari, T.~Tonev, E.~Toneva, and K.~Yale.
\newblock Holomorphic flows, cocycles, and coboundaries.
\newblock \emph{Michigan Math. J.}, 44\penalty0 (2):\penalty0 239--253, 1997.
\newblock \doi{10.1307/mmj/1029005702}.

\bibitem[Jafari et~al.(2005)Jafari, Tonev, and Toneva]{jafari2005}
F.~Jafari, T.~Tonev, and E.~Toneva.
\newblock Automatic differentiability and characterization of cocycles of
  holomorphic flows.
\newblock \emph{Proc. Amer. Math. Soc.}, 133\penalty0 (11):\penalty0
  3389--3394, 2005.
\newblock \doi{10.1090/S0002-9939-05-07904-9}.

\bibitem[Komatsu(1964)]{komatsu1964}
H.~Komatsu.
\newblock Semi-groups of operators in locally convex spaces.
\newblock \emph{J. Math. Soc. Japan}, 16\penalty0 (3):\penalty0 230--262, 1964.
\newblock \doi{10.2969/jmsj/01630230}.

\bibitem[K{\=o}mura(1968)]{komura1968}
T.~K{\=o}mura.
\newblock Semigroups of operators in locally convex spaces.
\newblock \emph{J. Funct. Anal.}, 2\penalty0 (3):\penalty0 258--296, 1968.
\newblock \doi{10.1016/0022-1236(68)90008-6}.

\bibitem[K\"onig(1990)]{koenig1990}
W.~K\"onig.
\newblock Semicocycles and weighted composition semigroups on {$H^p$}.
\newblock \emph{Michigan Math. J.}, 37\penalty0 (3):\penalty0 469--476, 1990.
\newblock \doi{10.1307/mmj/1029004204}.

\bibitem[Kraaij(2016)]{kraaij2016}
R.~Kraaij.
\newblock Strongly continuous and locally equi-continuous semigroups on locally
  convex spaces.
\newblock \emph{Semigroup Forum}, 92\penalty0 (1):\penalty0 158--185, 2016.
\newblock \doi{10.1007/s00233-015-9689-1}.

\bibitem[Kruse(2024)]{kruse2023c}
K.~Kruse.
\newblock Mixed topologies on {S}aks spaces of vector-valued functions.
\newblock \emph{Topology Appl.}, 345:\penalty0 1--27, 2024.
\newblock \doi{10.1016/j.topol.2024.108843}.

\bibitem[Kruse and Schwenninger(2022)]{kruse_schwenninger2022}
K.~Kruse and F.L. Schwenninger.
\newblock On equicontinuity and tightness of bi-continuous semigroups.
\newblock \emph{J. Math. Anal. Appl.}, 509\penalty0 (2):\penalty0 1--27, 2022.
\newblock \doi{10.1016/j.jmaa.2021.125985}.

\bibitem[Kruse and Seifert(2023{\natexlab{a}})]{kruse_seifert2022a}
K.~Kruse and C.~Seifert.
\newblock Final state observability and cost-uniform approximate
  null-controllability for bi-continuous semigroups.
\newblock \emph{Semigroup Forum}, 106\penalty0 (2):\penalty0 421--443,
  2023{\natexlab{a}}.
\newblock \doi{10.1007/s00233-023-10346-1}.

\bibitem[Kruse and Seifert(2023{\natexlab{b}})]{kruse_seifert2022b}
K.~Kruse and C.~Seifert.
\newblock A note on the {L}umer--{P}hillips theorem for bi-continuous
  semigroups.
\newblock \emph{Z. Anal. Anwend.}, 41\penalty0 (3/4):\penalty0 417--437,
  2023{\natexlab{b}}.
\newblock \doi{10.4171/ZAA/1709}.

\bibitem[Kruse et~al.(2021)Kruse, Meichsner, and
  Seifert]{kruse_meichnser_seifert2018}
K.~Kruse, J.~Meichsner, and C.~Seifert.
\newblock Subordination for sequentially equicontinuous equibounded
  {$C_0$}-semigroups.
\newblock \emph{J. Evol. Equ.}, 21\penalty0 (2):\penalty0 2665--2690, 2021.
\newblock \doi{10.1007/s00028-021-00700-7}.

\bibitem[K\"uhnemund(2001)]{kuehnemund2001}
F.~K\"uhnemund.
\newblock \emph{Bi-continuous semigroups on spaces with two topologies:
  {T}heory and applications}.
\newblock PhD thesis, Eberhard-Karls-Universit\"at T\"ubingen, 2001.

\bibitem[K\"uhnemund(2003)]{kuehnemund2003}
F.~K\"uhnemund.
\newblock {A Hille--Yosida theorem for bi-continuous semigroups}.
\newblock \emph{Semigroup Forum}, 67\penalty0 (2):\penalty0 205--225, 2003.
\newblock \doi{10.1007/s00233-002-5000-3}.

\bibitem[Kunze(2009)]{kunze2009}
M.~Kunze.
\newblock Continuity and equicontinuity of semigroups on norming dual pairs.
\newblock \emph{Semigroup Forum}, 79\penalty0 (3):\penalty0 540--560, 2009.
\newblock \doi{10.1007/s00233-009-9174-9}.

\bibitem[MacCluer and Shapiro(1986)]{maccluer1986}
B.D. MacCluer and J.H. Shapiro.
\newblock Angular derivatives and compact composition operators on the {H}ardy
  and {B}ergman spaces.
\newblock \emph{Canad. J. Math.}, 38\penalty0 (4):\penalty0 878--906, 1986.
\newblock \doi{10.4153/CJM-1986-043-4}.

\bibitem[Mart{\'i}n and Vukoti{\'c}(2005)]{martin2005}
M.J. Mart{\'i}n and D.~Vukoti{\'c}.
\newblock Norms and spectral radii of composition operators acting on the
  {D}irichlet space.
\newblock \emph{J. Math. Anal. Appl.}, 304\penalty0 (1):\penalty0 22--32, 2005.
\newblock \doi{10.1016/j.jmaa.2004.09.005}.

\bibitem[Michael(1973)]{michael1973}
E.A. Michael.
\newblock On $k$-spaces, $k_{R}$-spaces and {$k(X)$}.
\newblock \emph{Pacific J. Math.}, 47\penalty0 (2):\penalty0 487--498, 1973.
\newblock \doi{10.2140/pjm.1973.47.487}.

\bibitem[Montes-Rodr\'iguez(2000)]{montes2000}
A.~Montes-Rodr\'iguez.
\newblock Weighted composition operators on weighted {B}anach spaces of
  analytic functions.
\newblock \emph{J. London Math. Soc. (2)}, 61\penalty0 (2):\penalty0 872--884,
  2000.
\newblock \doi{10.1112/S0024610700008875}.

\bibitem[Ohno et~al.(2003)Ohno, Stroethoff, and Zhao]{ohno2003}
S.~Ohno, K.~Stroethoff, and R.~Zhao.
\newblock Weighted composition operators between {B}loch-type spaces.
\newblock \emph{Rocky Mountain J. Math.}, 33\penalty0 (1):\penalty0 191--215,
  2003.
\newblock \doi{10.1216/rmjm/1181069993}.

\bibitem[Parry(1972)]{parry1972}
W.~Parry.
\newblock Cocycles and velocity changes.
\newblock \emph{J. London Math. Soc. (2)}, 5\penalty0 (3):\penalty0 511--516,
  1972.
\newblock \doi{10.1112/jlms/s2-5.3.511}.

\bibitem[Pazy(1983)]{pazy1983}
A.~Pazy.
\newblock \emph{Semigroups of linear operators and applications to partial
  differential equations}.
\newblock Appl. Math. Sci. 44. Springer, New York, 1983.
\newblock \doi{10.1007/978-1-4612-5561-1}.

\bibitem[Perlich(2019)]{perlich2019}
L.~Perlich.
\newblock {D}irichlet-to-{R}obin operators via composition semigroups.
\newblock \emph{Complex Anal. Oper. Theory}, 13\penalty0 (3):\penalty0
  819--837, 2019.
\newblock \doi{10.1007/s11785-018-0806-5}.

\bibitem[Queffelec and Queffelec(2020)]{queffelec2020}
H.~Queffelec and M.~Queffelec.
\newblock \emph{{Diophantine approximation and Dirichlet series}}.
\newblock Texts Read. Math. 80. Springer, Singapore, 2nd edition, 2020.
\newblock \doi{10.1007/978-981-15-9351-2}.

\bibitem[Robinson(1986)]{robinson1986}
D.W. Robinson.
\newblock Smooth cores of {L}ipschitz flows.
\newblock \emph{Publ. Res. Inst. Math. Sci.}, 22\penalty0 (4):\penalty0
  659--669, 1986.
\newblock \doi{10.2977/PRIMS/1195177626}.

\bibitem[Sentilles(1970)]{sentilles1970}
F.D. Sentilles.
\newblock Semigroups of operators in {$C(S)$}.
\newblock \emph{Canad. J. Math.}, 22\penalty0 (1):\penalty0 47--54, 1970.
\newblock \doi{10.4153/CJM-1970-006-8}.

\bibitem[Singh and Summers(1988)]{singh1988}
R.K. Singh and W.H. Summers.
\newblock Composition operators on weighted spaces of continuous functions.
\newblock \emph{J. Aust. Math. Soc. (Series A)}, 45\penalty0 (3):\penalty0
  303--319, 1988.
\newblock \doi{10.1017/S1446788700031013}.

\bibitem[Siskakis(1986)]{siskakis1986}
A.G. Siskakis.
\newblock Weighted composition semigroups on {H}ardy spaces.
\newblock \emph{Linear Algebra Appl.}, 84:\penalty0 359--371, 1986.
\newblock \doi{10.1016/0024-3795(86)90327-7}.

\bibitem[Siskakis(1987)]{siskakis1987}
A.G. Siskakis.
\newblock Semigroups of composition operators on {B}ergman spaces.
\newblock \emph{Bull. Austral. Math. Soc.}, 35\penalty0 (3):\penalty0 397--406,
  1987.
\newblock \doi{10.1017/S0004972700013381}.

\bibitem[Siskakis(1996)]{siskakis1996}
A.G. Siskakis.
\newblock Semigroups of composition operators on the {D}irichlet space.
\newblock \emph{Results Math.}, 30\penalty0 (1-2):\penalty0 165--173, 1996.
\newblock \doi{10.1007/BF03322189}.

\bibitem[Siskakis(1998)]{siskakis1998}
A.G. Siskakis.
\newblock Semigroups of composition operators on spaces of analytic functions,
  a review.
\newblock In F.~Jafari, B.D. MacCluer, C.C. Cowen, and A.D. Porter, editors,
  \emph{{Studies on Composition Operators (Proc., Laramie, 1996)}}, volume 213
  of \emph{Contemp. Math.}, pages 229--252, Providence, RI, 1998. AMS.
\newblock \doi{10.1090/conm/213}.

\bibitem[Snipes(1973)]{snipes1973}
R.F. Snipes.
\newblock C-sequential and {S}-bornological topological vector spaces.
\newblock \emph{Math. Ann.}, 202\penalty0 (4):\penalty0 273--283, 1973.
\newblock \doi{10.1007/BF01433457}.

\bibitem[Stegenga(1980)]{stegenga1980}
D.A. Stegenga.
\newblock Multipliers of the {D}irichlet space.
\newblock \emph{Illinois J. Math.}, 24\penalty0 (1):\penalty0 113--139, 1980.
\newblock \doi{10.1215/ijm/1256047800}.

\bibitem[Vidal et~al.(2022)Vidal, Galicer, and Sevilla-Peris]{vidal2022}
T.F. Vidal, D.~Galicer, and P.~Sevilla-Peris.
\newblock Multipliers for {H}ardy spaces of {D}irichlet series, 2022.
\newblock arXiv preprint \url{https://arxiv.org/abs/2205.07961v1}.

\bibitem[Wilansky(1981)]{wilansky1981}
A.~Wilansky.
\newblock Mazur spaces.
\newblock \emph{Int. J. Math. Math. Sci.}, 4\penalty0 (1):\penalty0 39--53,
  1981.
\newblock \doi{10.1155/S0161171281000021}.

\bibitem[Wiweger(1961)]{wiweger1961}
A.~Wiweger.
\newblock Linear spaces with mixed topology.
\newblock \emph{Studia Math.}, 20\penalty0 (1):\penalty0 47--68, 1961.
\newblock \doi{10.4064/sm-20-1-47-68}.

\bibitem[Wu(2021)]{wu2021}
F.~Wu.
\newblock Weighted composition semigroups on some {B}anach spaces.
\newblock \emph{Complex Anal. Oper. Theory}, 15\penalty0 (8):\penalty0 1--14,
  2021.
\newblock \doi{10.1007/s11785-021-01158-8}.

\bibitem[Xiao(2001)]{xiao2001}
J.~Xiao.
\newblock Composition operators associated with {B}loch-type spaces.
\newblock \emph{Complex Var. Theory Appl.}, 46\penalty0 (2):\penalty0 109--121,
  2001.
\newblock \doi{10.1080/17476930108815401}.

\bibitem[Yosida(1968)]{yosida1968}
K.~Yosida.
\newblock \emph{Functional analysis}.
\newblock Grundlehren Math. Wiss. 123. Springer, Berlin, 2nd edition, 1968.
\newblock \doi{10.1007/978-3-642-96439-8}.

\bibitem[Zhu(1993)]{zhu1993}
K.~Zhu.
\newblock Bloch type spaces of analytic functions.
\newblock \emph{Rocky Mountain J. Math.}, 23\penalty0 (3):\penalty0 1143--1177,
  1993.
\newblock \doi{10.1216/rmjm/1181072549}.

\bibitem[Zhu(2007)]{zhu2007}
K.~Zhu.
\newblock \emph{Operator theory in function spaces}.
\newblock Math. Surveys Monogr. 138. AMS, Providence, RI, 2nd edition, 2007.
\newblock \doi{10.1090/surv/138}.

\end{thebibliography}
\bibliographystyle{plainnat}
\end{document}